\DeclareFontFamily{T1}{pzc}{}
\DeclareFontShape{T1}{pzc}{m}{it}{1.8 <-> pzcmi8t}{}
\DeclareMathAlphabet{\mathpzc}{T1}{pzc}{m}{it}
\theoremstyle{plain}
\newtheorem{prop}{Proposition}[section]
\newtheorem{lem}[prop]{Lemma}
\newtheorem{cor}[prop]{Corollary}
\newtheorem{thm}[prop]{Theorem}
\theoremstyle{definition}
\newtheorem{defn}[prop]{Definition}
\newtheorem{empt}[prop]{}
\newtheorem{exm}[prop]{Example}
\newtheorem{rem}[prop]{Remark}
\newtheorem{fact}[prop]{Fact}
\newcommand{\vertiii}[1]{{\left\vert\kern-0.25ex\left\vert\kern-0.25ex\left\vert #1
		\right\vert\kern-0.25ex\right\vert\kern-0.25ex\right\vert}}
\newcommand{\Ga}{\Gamma}                     
\newcommand{\Coo}{C^\infty}                  
\newbox\ncintdbox \newbox\ncinttbox 
\newcommand{\A}{\mathcal{A}}                 
\newcommand{\C}{\mathbb{C}}                  
\newcommand{\half}{\tfrac{1}{2}}             
\newcommand{\hookto}{\hookrightarrow}        
\newcommand{\La}{\Lambda}                    
\newcommand{\la}{\lambda}                    
\newcommand{\mop}{{\mathchoice{\mathbin{\star_{_\theta}}}
		{\mathbin{\star_{_\theta}}}           
		{{\star_\theta}}{{\star_\theta}}}}    
\newcommand{\N}{\mathbb{N}}                  
\newcommand{\om}{\omega}                     
\newcommand{\eps}{\varepsilon}                    
\newcommand{\Q}{\mathbb{Q}}                  
\newcommand{\R}{\mathbb{R}}                  
\newcommand{\set}[1]{\{\,#1\,\}}             
\renewcommand{\SS}{\mathcal{S}}              
\DeclareMathOperator{\supp}{\mathfrak{supp}}            
\newcommand{\T}{\mathbb{T}}                  
\renewcommand{\th}{\theta}                   
\newcommand{\Z}{\mathbb{Z}}                  
\renewcommand{\.}{\cdot}                     
\newcommand{\sS}{\mathcal{S}}       
\newcommand{\Om}{\Omega}       
\newcommand{\al}{\alpha}          
\newcommand{\ga}{\gamma}          
\newcommand{\Th}{\Theta}          
\renewcommand{\th}{\theta}        
\def\<#1|#2>{\langle#1\stroke#2\rangle} 
\def\?#1|#2?{\{#1\stroke#2\}}        
\def\<#1,#2>{\langle#1,#2\rangle}            
\def\ee_#1{e_{{\scriptscriptstyle#1}}}       
\def\wick:#1:{\mathopen:#1\mathclose:}       
\newbox\ncintdbox \newbox\ncinttbox 
\newcommand{\stroke}{\mathbin|}   
\newcommand{\Hom}{\mathrm{Hom}}       
\newcommand{\Aut}{\mathrm{Aut}}       
\title{Inverse Limits of Noncommutative Covering Projections}
\begin{document}
\maketitle  \setlength{\parindent}{0pt}
\begin{center}
\author{
{\textbf{Petr R. Ivankov*}\\
e-mail: * monster.ivankov@gmail.com \\
}
}
\end{center}

\vspace{1 in}

\begin{abstract}
\noindent

\paragraph{}

The Gelfand - Na\u{i}mark theorem supplies the one to one correspondence between commutative $C^*$-algebras and locally compact Hausdorff spaces. So any noncommutative $C^*$-algebra can be regarded as a generalization of a topological space.  Generalizations of several topological invariants can be defined by algebraical methods. This article contains a pure algebraical construction of inverse limits in the category of (noncommutative) covering projections. It is proven that Moyal planes are  inverse limits of covering projections of noncommutative tori.
\end{abstract}
\tableofcontents

\section{Motivation. Preliminaries}
\paragraph{}
Some notions of the geometry have noncommutative generalizations based on the Gelfand-Na\u{i}mark theorem.
\begin{thm}\label{gelfand-naimark}\cite{arveson:c_alg_invt} (Gelfand-Na\u{i}mark). 
	Let $A$ be a commutative $C^*$-algebra and let $\mathcal{X}$ be the spectrum of A. There is the natural $*$-isomorphism $\gamma:A \to C_0(\mathcal{X})$.
\end{thm}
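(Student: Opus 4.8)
The plan is to realize $\gamma$ as the \emph{Gelfand transform} and to verify that it is an isometric $*$-isomorphism onto $C_0(\mathcal{X})$. Here one takes $\mathcal{X}=\widehat{A}$ to be the set of non-zero characters (multiplicative linear functionals) $\varphi\colon A\to\C$ with the weak-$*$ topology inherited from $A^*$, and sets $\gamma(a)(\varphi):=\varphi(a)$. First I would check that $\mathcal{X}$ is locally compact Hausdorff: it is Hausdorff because the weak-$*$ topology separates functionals, and $\mathcal{X}\cup\{0\}$ is a weak-$*$ closed subset of the closed unit ball of $A^*$ (every character is contractive, and a weak-$*$ limit of multiplicative functionals is again multiplicative), hence weak-$*$ compact by Banach--Alaoglu; thus $\mathcal{X}$ is an open subset of a compact Hausdorff space, so locally compact, and $\gamma(a)$ extends continuously to $\mathcal{X}\cup\{0\}$ by the value $0$, which shows $\gamma(a)\in C_0(\mathcal{X})$. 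That $\gamma$ is an algebra homomorphism is immediate from multiplicativity of the characters.

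Next I would show that $\gamma$ is a $*$-homomorphism, i.e.\ $\varphi(a^*)=\overline{\varphi(a)}$ for every $\varphi$. Passing to the unitization if $A$ is non-unital, it suffices to treat self-adjoint $h=h^*$ and to prove $\varphi(h)\in\R$; this follows because $e^{ith}$ is unitary for real $t$, so $|e^{it\varphi(h)}|=|\varphi(e^{ith})|\le\|e^{ith}\|=1$ for all $t\in\R$, which forces $\varphi(h)$ to be real. Writing a general $a=h+ik$ with $h,k$ self-adjoint then gives $\varphi(a^*)=\varphi(h)-i\varphi(k)=\overline{\varphi(a)}$.

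The heart of the argument is the isometry $\|\gamma(a)\|_\infty=\|a\|$. For self-adjoint $a$ the $C^*$-identity $\|a^2\|=\|a\|^2$ iterates to $\|a\|=\lim_n\|a^{2^n}\|^{1/2^n}=r(a)$, the spectral radius; and by the elementary Gelfand theory of commutative Banach algebras the spectrum of $a$ (in $A$, or in its unitization) is exactly $\{\varphi(a):\varphi\in\mathcal{X}\}$ together with $0$ in the non-unital case, since characters are in bijection with maximal modular ideals and $a-\lambda$ fails to be invertible precisely when it lies in such an ideal. Hence $\|\gamma(a)\|_\infty=\sup_\varphi|\varphi(a)|=r(a)=\|a\|$ for self-adjoint $a$, and for arbitrary $a$ the $C^*$-identity again gives $\|\gamma(a)\|_\infty^2=\|\gamma(a^*a)\|_\infty=\|a^*a\|=\|a\|^2$. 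So $\gamma$ is isometric, in particular injective with closed range. Finally $\gamma(A)$ is a closed $*$-subalgebra of $C_0(\mathcal{X})$ which separates points of $\mathcal{X}$ (distinct characters differ on some element) and vanishes at no point of $\mathcal{X}$ (each $\varphi$ is non-zero), so the locally compact version of the Stone--Weierstrass theorem yields $\gamma(A)=C_0(\mathcal{X})$. Naturality in $A$ is then routine, since a $*$-homomorphism $A\to B$ induces by precomposition a continuous map on the character spaces intertwining the Gelfand transforms. I expect the main obstacle to be precisely the isometry step: coordinating the Banach-algebraic fact that characters compute the spectrum with the $C^*$-identity, and handling the non-unital case through unitization without circularity — one must first know that the unitization is again a $C^*$-algebra and that adjoining a unit adds exactly one new character.
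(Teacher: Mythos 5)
The paper states this theorem as a classical result cited from Arveson and gives no proof of its own, so there is nothing to compare against beyond the standard literature. Your argument is the standard Gelfand-transform proof (character space with the weak-$*$ topology, reality of characters on self-adjoint elements via unitaries in the unitization, isometry from the $C^*$-identity and the spectral radius formula, surjectivity by Stone--Weierstrass), and it is correct and complete, matching the approach of the cited reference.
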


\paragraph{}From the theorem it follows that a (noncommutative) $C^*$-algebra can be regarded as a generalized (noncommutative)  locally compact Hausdorff topological space. Following theorem gives a pure algebraic description of covering projections of compact spaces.
\begin{thm}\label{pavlov_troisky_thm}\cite{pavlov_troisky:cov}
	Suppose $\mathcal X$ and $\mathcal Y$ are compact Hausdorff connected spaces and $p :\mathcal  Y \to \mathcal X$
	is a continuous surjection. If $C(\mathcal Y )$ is a projective finitely generated Hilbert module over
	$C(\mathcal X)$ with respect to the action
	\begin{equation*}
	(f\xi)(y) = f(y)\xi(p(y)), ~ f \in  C(\mathcal Y ), ~ \xi \in  C(\mathcal X),
	\end{equation*}
	then $p$ is a finite-fold (or equivalently a finitely listed) covering.
\end{thm}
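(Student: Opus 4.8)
The plan is to establish, in turn, the three properties a finite‑fold covering requires — finite fibres, locally constant fibre cardinality, and local triviality — reading the first two off finite generation and the last off genuine projectivity. \textbf{Finiteness of the fibres.} Fix $x\in\mathcal X$ and let $I_x\subset C(\mathcal X)$ be the maximal ideal of functions vanishing at $x$. Since $C(\mathcal Y)$ is a direct summand of $C(\mathcal X)^N$ for some $N$, the submodule $C(\mathcal Y)I_x$ is a direct summand of the closed submodule $I_x^N$, hence is itself closed in $C(\mathcal Y)$, while the quotient $C(\mathcal Y)/C(\mathcal Y)I_x\cong C(\mathcal Y)\otimes_{C(\mathcal X)}\bigl(C(\mathcal X)/I_x\bigr)$ is spanned over $\C$ by the images of a finite generating set and so is finite‑dimensional. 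Using the prescribed action $(f\xi)(y)=f(y)\xi(p(y))$ one checks that the hull of the closed ideal $C(\mathcal Y)I_x$ is exactly $p^{-1}(x)$: every generator $f\cdot(\xi\circ p)$ with $\xi\in I_x$ vanishes on $p^{-1}(x)$, and conversely if $p(y_0)\neq x$ then choosing $\xi$ with $\xi(x)=0\neq\xi(p(y_0))$ produces $\xi\circ p\in C(\mathcal Y)I_x$ not vanishing at $y_0$. Hence $C(p^{-1}(x))\cong C(\mathcal Y)/C(\mathcal Y)I_x$ is finite‑dimensional, so $p^{-1}(x)$ is a finite set of cardinality $\dim_{\C}\bigl(C(\mathcal Y)\otimes_{C(\mathcal X)}(C(\mathcal X)/I_x)\bigr)$.

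\textbf{Constancy of the fibre cardinality.} By the previous step $x\mapsto|p^{-1}(x)|$ is the pointwise rank of the finitely generated projective module $C(\mathcal Y)$ (equivalently, the rank of the vector bundle associated to it by Serre--Swan); this function is locally constant, and $\mathcal X$ is connected, so $|p^{-1}(x)|\equiv n$ for a fixed $n$. Here I would use projectivity once more: for each $x_0\in\mathcal X$ there are a neighbourhood $U_0\ni x_0$ and elements $s_1,\dots,s_n\in C(\mathcal Y)$ whose images form a $\C$‑basis of $C(\mathcal Y)\otimes_{C(\mathcal X)}(C(\mathcal X)/I_x)\cong C(p^{-1}(x))$ for all $x\in U_0$; equivalently, the $n\times n$ evaluation matrix $M(x):=\bigl(s_j(y)\bigr)_{y\in p^{-1}(x),\,1\le j\le n}$, well defined up to a permutation of its rows, is invertible throughout $U_0$, so $|\det M|$ is a strictly positive continuous function on $U_0$.

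\textbf{Local triviality.} Write $p^{-1}(x_0)=\{y_1,\dots,y_n\}$, fix a compact neighbourhood $K\subseteq U_0$ of $x_0$ and set $\delta:=\min_{K}|\det M|>0$. Choosing $\eps>0$ small enough that $2\eps\sqrt n\,\bigl(\sqrt n\,\max_j\|s_j\|_\infty\bigr)^{n-1}<\delta$, pick pairwise disjoint open sets $W_i\ni y_i$ with $|s_j(y)-s_j(y_i)|<\eps$ for all $j$ and all $y\in W_i$, and then — using that $p$, being a continuous map of compact Hausdorff spaces, is closed — choose an open $U$ with $x_0\in U\subseteq K$ and $p^{-1}(U)\subseteq\bigsqcup_i W_i$. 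If for some $x\in U$ the fibre $p^{-1}(x)$ had two distinct points in one $W_i$, the two corresponding rows of $M(x)$ would differ in Euclidean norm by at most $2\eps\sqrt n$, and subtracting one from the other and applying Hadamard's inequality would give $|\det M(x)|<\delta$, contradicting $x\in K$. So each $p^{-1}(x)$ with $x\in U$ meets each $W_i$ in at most one point, hence — there being $n$ points and $n$ sets — in exactly one. Consequently $p^{-1}(U)=\bigsqcup_{i=1}^n U_i$ with $U_i:=W_i\cap p^{-1}(U)$ open, and each $p|_{U_i}\colon U_i\to U$ is a continuous bijection that is also closed (it is $p$ restricted to a subset clopen in $p^{-1}(U)$, with $U$ open in $\mathcal X$), hence a homeomorphism. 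Thus $p$ is an $n$‑fold covering near every point, i.e. a finite‑fold covering.

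\textbf{The main obstacle.} The first two steps are essentially bookkeeping; the real content is the local triviality, and within it the fact that \emph{projectivity} — not mere finite generation — forces the fibre configuration $x\mapsto\{(s_1(y),\dots,s_n(y)):y\in p^{-1}(x)\}$ to vary continuously, so that nearby fibres cannot suddenly collide or teleport. The determinant estimate above is one way to package this; an alternative is to pass to the Serre--Swan bundle $E$ with $E_x=C(p^{-1}(x))$, note that fibrewise multiplication makes every $E_x$ isomorphic as an algebra to $\C^n$, and observe that the $n$ minimal idempotents of $E_x$ are the roots of a monic degree‑$n$ family with continuously varying coefficients, so that $p$ is locally the evaluation map over a map into a configuration space, which is always a covering. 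For a merely finitely generated, non‑projective module of constant fibre cardinality the conclusion genuinely fails (for instance, an arc with a further arc attached over an interior point of the base), which is exactly why the projectivity hypothesis is indispensable.
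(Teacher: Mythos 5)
The paper does not prove this theorem at all --- it is quoted from Pavlov--Troitsky \cite{pavlov_troisky:cov} --- so your argument can only be judged on its own merits. Most of it holds up: the identification $C(\mathcal Y)/C(\mathcal Y)I_x\cong C\bigl(p^{-1}(x)\bigr)$ via the hull--kernel correspondence, the finiteness and (by local constancy of the rank plus connectedness) constancy of the fibre cardinality, the existence of $s_1,\dots,s_n$ restricting to a basis of every fibre over a neighbourhood $U_0$, and the Hadamard-inequality separation argument of Step 3 are all correct \emph{granted} the input $\delta=\min_K|\det M|>0$.

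The genuine gap is the clause ``so $|\det M|$ is a strictly positive continuous function on $U_0$''. Pointwise invertibility of $M(x)$ gives positivity, but continuity (or even a uniform lower bound on a compact set, which is what you actually use) does not follow from anything you have established: $|\det M(x)|^2=\det G(x)$ with $G(x)_{jk}=\sum_{y\in p^{-1}(x)}s_j(y)\overline{s_k(y)}$, and fibrewise sums $x\mapsto\sum_{y\in p^{-1}(x)}h(y)$ over the fibres of a bare continuous surjection are in general neither continuous nor semicontinuous --- for the branched cover $z\mapsto z^2$ of the disc with $s_1=1$, $s_2=\mathrm{id}$ one gets $|\det M(x)|=2\sqrt{|x|}\to 0$ at the branch point. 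So the uniform positivity of $|\det M|$ is exactly the place where projectivity must do real work, and your proof invokes it only rhetorically; your own ``main obstacle'' paragraph names this as the heart of the matter but then points back to the determinant estimate, which presupposes it. The gap is reparable inside your framework: over a compact neighbourhood $\overline{U}\subset U_0$ the restriction $C(\mathcal Y)\otimes_{C(\mathcal X)}C(\overline{U})\cong C\bigl(p^{-1}(\overline{U})\bigr)$ is free on $s_1,\dots,s_n$, so the bijection $(\xi_1,\dots,\xi_n)\mapsto\sum_j\xi_js_j$ has bounded inverse by the open mapping theorem, say $\max_j\|\xi_j\|\le C\|f\|$; applying this to Urysohn functions equal to $1$ at one point of $p^{-1}(x)$ and $0$ at the others shows every entry of $M(x)^{-1}$ is bounded by $C$, whence $|\det M(x)|\ge (n!\,C^n)^{-1}$ uniformly on $\overline{U}$. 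With that lemma inserted, the rest of your Step 3 goes through.
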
 
\paragraph{} This article contains a pure algebraic construction of inverse limits in the category of covering projections. 

\paragraph{} This article assumes an elementary knowledge of following subjects:
\begin{enumerate}
	\item Set theory \cite{halmos:set};
	\item Category theory  \cite{spanier:at};
	\item General topology \cite{munkres:topology};
	\item Algebraic topology \cite{spanier:at};

	\item $C^*$-algebras, $C^*$-Hilbert modules and $K$-theory  \cite{arveson:c_alg_invt, blackadar:ko,pedersen:ca_aut,murphy,takesaki:oa_ii}.
	
\end{enumerate}
\paragraph{}
The terms ``set'', ``family'' and ``collection'' are synonyms.

\paragraph{}Following table contains a list of special symbols.
\newline
\begin{tabular}{|c|c|}
	\hline
	Symbol & Meaning\\
	\hline
 & \\
	$A^G$  & Algebra of $G$-invariants, i.e. $A^G = \{a\in A \ | \ ga=a, \forall g\in G\}$\\
	$\mathrm{Aut}(A)$ & Group of *-automorphisms of $C^*$-algebra $A$\\
	$B(H)$ & Algebra of bounded operators on Hilbert space $H$\\
	$\mathbb{C}$ (resp. $\mathbb{R}$)  & Field of complex (resp. real) numbers \\
	$C(\mathcal{X})$ & $C^*$-algebra of continuous complex valued \\
	& functions on a compact space $\mathcal{X}$\\
	$C_0(\mathcal{X})$ & $C^*$-algebra of continuous complex valued functions\\ 
	&  on a locally compact topological  space $\mathcal{X}$ equal to $0$ at infinity\\
	 $C_b(\mathcal{X})$ & $C^*$ - algebra of bounded  continuous complex valued \\
	  & functions on a topological space $\mathcal{X}$ \\
	$C_c(\mathcal{X})$ & Algebra of continuous complex valued functions on a \\
	 &  topological  space $\mathcal{X}$ with compact support\\
	$G(\widetilde{\mathcal{X}} | \mathcal{X})$ & Group of covering transformations of a covering projection  $\widetilde{\mathcal{X}} \to \mathcal{X}$ \cite{spanier:at}  \\
$\mathfrak{ess~supp}\left( f\right)$ & Essential support of a Borel-measured function $f$ \cite{elliot:an} \\
	$H$ & Hilbert space \\
	$\varinjlim$ & Inductive limit \\
	$\varprojlim$ & Projective limit \\
	$M(A)$  & The multiplier algebra of $C^*$-algebra $A$\\
	$\mathbb{N}$  & Set of positive integer numbers\\
	$\mathbb{N}^0$  & Set of nonnegative integer numbers\\
	

	
	$\supp \varphi$ & Support of a continuous map $\varphi: \mathcal X \to \mathbb{C}$,\\
	 & which is the closure of the set $\left\{x \in \mathcal X~|~ \varphi\left( x\right)\neq 0 \right\}$\\
	$U(A) \subset A $ & Group of unitary operators of algebra $A$\\
	
	$\mathbb{Z}$ & Ring of integers \\
	
	$\mathbb{Z}_n$ & Ring of integers modulo $n$ \\
	$\overline{k} \in \mathbb{Z}_n$ & An element in $\mathbb{Z}_n$ represented by $k \in \mathbb{Z}$  \\
	$\overline{p} \in \mathbb{R}$ & An element in $\mathbb{T}^1\approx S^1= \R / 2\pi \Z$ represented by $p \in \mathbb{R}$  \\
	$X \backslash A$ & Difference of sets  $X \backslash A= \{x \in X \ | \ x\notin A\}$\\
	$|X|$ & Cardinal number of the finite set $X$\\ 
	$f|_{A'}$& Restriction of a map $f: A\to B$ to $A'\subset A$, i.e. $f|_{A'}: A' \to B$\\ 
	\hline
\end{tabular}
\paragraph{}


	



\subsection{Prototype. Inverse limits of covering projections in topology}\label{inf_to}
\subsubsection{Topological construction}
\paragraph{} This subsection is concerned with a topological construction of the inverse limit in the category of covering projections. 
\begin{defn}\label{comm_cov_pr_defn}\cite{spanier:at}
	Let $\widetilde{\pi}: \widetilde{\mathcal{X}} \to \mathcal{X}$ be a continuous map. An open subset $\mathcal{U} \subset \mathcal{X}$ is said to be {\it evenly covered } by $\widetilde{\pi}$ if $\widetilde{\pi}^{-1}(\mathcal U)$ is the disjoint union of open subsets of $\widetilde{\mathcal{X}}$ each of which is mapped homeomorphically onto $\mathcal{U}$ by $\widetilde{\pi}$. A continuous map $\widetilde{\pi}: \widetilde{\mathcal{X}} \to \mathcal{X}$ is called a {\it covering projection} if each point $x \in \mathcal{X}$ has an open neighborhood evenly covered by $\widetilde{\pi}$. $\widetilde{\mathcal{X}}$ is called the {
		\it covering space} and $\mathcal{X}$ the {\it base space} of the covering projection.
\end{defn}
\begin{defn}\cite{spanier:at}
	A fibration $p: \mathcal{\widetilde{X}} \to \mathcal{X}$ with unique path lifting is said to be  {\it regular} if, given any closed path $\omega$ in $\mathcal{X}$, either every lifting of $\omega$ is closed or none is closed.
\end{defn}
\begin{defn}\cite{spanier:at}
A topological space $\mathcal X$ is said to be \textit{locally path-connected} if the path components of open sets are open.
\end{defn}
\paragraph{} Denote by $\pi_1$ the functor of fundamental group \cite{spanier:at}.
\begin{thm}\label{locally_path_lem}\cite{spanier:at}
Let $p: \widetilde{\mathcal X} \to \mathcal X$ be a fibration with unique path lifting and assume that a nonempty $\widetilde{\mathcal X}$ is a locally path-connected space. Then $p$ is regular if and only if for some $\widetilde{x}_0 \in  \widetilde{\mathcal X}$, $\pi_1\left(p\right)\pi_1\left(\widetilde{\mathcal X}, \widetilde{x}_0\right)$ is a normal subgroup of $\pi_1\left(\mathcal X, p\left(\widetilde{x}_0\right)\right)$.
\end{thm}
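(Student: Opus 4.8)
The plan is to translate both sides of the equivalence into the standard correspondence between liftings of loops and subgroups of $\pi_1$, using only the two formal consequences of ``fibration with unique path lifting''. Write $x_0 = p(\widetilde x_0)$ and, for each $\widetilde x \in p^{-1}(x_0)$, put $H_{\widetilde x} = \pi_1(p)\,\pi_1(\widetilde{\mathcal X}, \widetilde x) \subseteq \pi_1(\mathcal X, x_0)$. First I would record that for such a $p$ every path in $\mathcal X$ lifts uniquely once an initial point is chosen, and every path-homotopy (rel endpoints) in $\mathcal X$ lifts to a path-homotopy; an immediate consequence is that $\pi_1(p)$ is injective, and that for a loop $\omega$ at $x_0$ with lift $\widetilde\omega$ starting at $\widetilde x$, the lift $\widetilde\omega$ is closed if and only if $[\omega]\in H_{\widetilde x}$.

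The second step is to see how $H_{\widetilde x}$ varies as $\widetilde x$ runs over the fibre. If $\widetilde x, \widetilde x'$ lie in one path component of $\widetilde{\mathcal X}$, pick a path $\widetilde\gamma$ from $\widetilde x$ to $\widetilde x'$ and set $\gamma = p\circ\widetilde\gamma$; conjugating loops by $\widetilde\gamma$ gives $H_{\widetilde x'} = [\gamma]^{-1}H_{\widetilde x}[\gamma]$. Conversely, for any $[\gamma]\in\pi_1(\mathcal X, x_0)$ the unique lift of $\gamma$ starting at $\widetilde x$ ends at a point $\widetilde x'\in p^{-1}(x_0)$ in the same path component, and that $\widetilde x'$ realises the conjugate $[\gamma]^{-1}H_{\widetilde x}[\gamma]$. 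Hence $\{H_{\widetilde x} : \widetilde x\in p^{-1}(x_0)\text{ in the component of }\widetilde x_0\}$ is exactly the conjugacy class of $H_{\widetilde x_0}$ in $\pi_1(\mathcal X, x_0)$. Local path-connectedness of $\widetilde{\mathcal X}$ enters here: it makes path components open, hence clopen, so that lifts of loops (being paths) stay in a single component, the base point $\widetilde x_0$ may be moved freely inside its component, and the fibrewise analysis is legitimately confined to that component.

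Finally I would combine these. By the first step, $p$ is regular precisely when, for every loop $\omega$, the class $[\omega]$ lies in all of the subgroups $H_{\widetilde x}$ attached to the base point of $\omega$ or in none of them; equivalently, when $H_{\widetilde x}$ is independent of $\widetilde x$ in each fibre. Restricting to the component of $\widetilde x_0$ and using the second step, independence of $H_{\widetilde x}$ over the fibre says exactly that the conjugacy class of $H_{\widetilde x_0}$ consists of a single subgroup, i.e.\ that $\pi_1(p)\pi_1(\widetilde{\mathcal X}, \widetilde x_0)$ is normal in $\pi_1(\mathcal X, p(\widetilde x_0))$; and since any two base points in one component give conjugate, hence simultaneously normal or not, subgroups, ``for some $\widetilde x_0$'' and ``for any $\widetilde x_0$'' coincide. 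The lifting machinery — uniqueness of path-homotopy lifts, injectivity of $\pi_1(p)$, the conjugation formula — is routine and formal from the hypotheses; I expect the genuine care to be in the connectivity bookkeeping, namely matching ``every lift of every loop is closed or none is'' with fibrewise constancy of $H_{\widetilde x}$ and justifying the reduction to the path component of $\widetilde x_0$, which is precisely the step where local path-connectedness of $\widetilde{\mathcal X}$ is needed.
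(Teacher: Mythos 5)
The paper offers no proof of this statement: it is transcribed from Spanier and used as a black box, so the only thing to assess is your argument on its own terms. What you write is the standard loop-lifting proof (a lift of $\omega$ starting at $\widetilde x$ is closed iff $[\omega]\in H_{\widetilde x}$; the subgroups $H_{\widetilde x}$ attached to the points of a fibre lying in one path component form exactly one conjugacy class; regularity is fibrewise constancy of $H_{\widetilde x}$), and the direction ``regular $\Rightarrow$ normal'' is complete as written.

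For the converse there is one genuine gap, which is really a defect of the statement as transcribed rather than of your reasoning, but your proof inherits it. Regularity quantifies over \emph{all} liftings of a closed path, hence over the \emph{entire} fibre, whereas your conjugacy-class description of $\left\{H_{\widetilde x}\right\}$ only covers the part of the fibre lying in the path component of $\widetilde x_0$; you say the analysis is ``legitimately confined to that component'' but never explain why the other components can be ignored. Without a connectedness hypothesis they cannot: let $\mathcal X = S^1\vee S^1$, let $H$ be a non-normal subgroup of the free group $\pi_1\left(\mathcal X\right)=\langle a,b\rangle$ (say $H=\langle a\rangle$), and let $\widetilde{\mathcal X}$ be the disjoint union of the identity covering of $\mathcal X$ and the covering corresponding to $H$. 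This is a covering projection, hence a fibration with unique path lifting, and $\widetilde{\mathcal X}$ is nonempty and locally path-connected; for $\widetilde x_0$ in the first component $\pi_1(p)\pi_1\left(\widetilde{\mathcal X},\widetilde x_0\right)$ is the whole group, hence normal; yet the loop $a$ has a closed lift and a non-closed lift in the second component (since $a\in H\setminus b^{-1}Hb$), so $p$ is not regular. The intended hypothesis — present in Corollary \ref{top_cov_from_pi1_cor} and satisfied everywhere the theorem is invoked in this paper — is that $\widetilde{\mathcal X}$ is also connected: then local path-connectedness makes path components clopen, connectedness forces a single path component, the whole fibre carries one conjugacy class, and your argument (including the passage from the basepoint $x_0$ to arbitrary basepoints in the image of $p$ via change of basepoint) closes correctly. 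You should either add that hypothesis explicitly or flag that the ``if'' direction requires it.
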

\begin{defn}\label{cov_proj_cov_grp}\cite{spanier:at}
	Let $p: \mathcal{\widetilde{X}} \to \mathcal{X}$ be a covering projection.  A self-equivalence is a homeomorphism $f:\mathcal{\widetilde{X}}\to\mathcal{\widetilde{X}}$ such that $p \circ f = p$. This group of such homeomorphisms is said to be the {\it group of covering transformations} of $p$ or the {\it covering group}. Denote by $G(\mathcal{\widetilde{X}}|\mathcal{X})$ this group.
\end{defn}

\begin{thm}\label{g_cov_thm}\cite{spanier:at}
Let $G$ be a properly discontinuous group of homeomorphisms of a space $\mathcal X$. Then the projection $\mathcal X \to \mathcal X/G$ of $\mathcal X$ to the orbit space $\mathcal X/G$ is a covering projection. If $\mathcal X$ is connected then this covering projection is regular and $G$ is its group of covering transformations.
\end{thm}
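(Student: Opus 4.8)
The plan is to handle the two assertions separately. Recall first that ``$G$ acts properly discontinuously'' means that every $x \in \mathcal X$ has an open neighbourhood $U_x$ with $g U_x \cap U_x = \emptyset$ for all $g \in G \setminus \{e\}$; in particular the action is free, since $g x = x$ would put $x$ in $g U_x \cap U_x$. The orbit projection $p \colon \mathcal X \to \mathcal X / G$ is continuous by definition of the quotient topology, and it is \emph{open}: for open $V \subseteq \mathcal X$ one has $p^{-1}(p(V)) = \bigcup_{g \in G} g V$, a union of open sets, so $p(V)$ is open.

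To verify Definition \ref{comm_cov_pr_defn}, fix $x$ and put $\mathcal U := p(U_x)$, an open neighbourhood of $p(x)$. Then $p^{-1}(\mathcal U) = \bigcup_{g \in G} g U_x$, and this union is \emph{disjoint}: if $g U_x \cap h U_x \neq \emptyset$ then $(h^{-1} g) U_x \cap U_x \neq \emptyset$, forcing $h^{-1} g = e$. For each $g$ the restriction $p|_{g U_x} \colon g U_x \to \mathcal U$ is a continuous open surjection (since $p$ is open and $g$ preserves orbits), and it is injective: two distinct points of $g U_x$ lying in one $G$-orbit, say $y$ and $k y$ with $y, k y \in g U_x$, would give $y \in g U_x \cap k^{-1} g U_x$, hence $U_x \cap (g^{-1} k^{-1} g) U_x \neq \emptyset$ and $k = e$. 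Thus $p|_{g U_x}$ is a homeomorphism onto $\mathcal U$, so $\mathcal U$ is evenly covered; as $x$ was arbitrary, $p$ is a covering projection.

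Now assume $\mathcal X$ connected. Every $g \in G$ permutes $G$-orbits, so $p \circ g = p$, i.e.\ $g$ is a self-equivalence in the sense of Definition \ref{cov_proj_cov_grp}; hence $G \subseteq G(\mathcal X \mid \mathcal X / G)$, and $G$ acts transitively on each fibre $p^{-1}(p(x)) = G x$. Since a covering projection has unique path lifting, I would prove regularity directly from the definition: fix $\widetilde x_0 \in \mathcal X$, let $\omega$ be a closed path at $p(\widetilde x_0)$, and let $\widetilde\omega$ be its lift with $\widetilde\omega(0) = \widetilde x_0$; its endpoint $\widetilde\omega(1)$ lies in $p^{-1}(p(\widetilde x_0)) = G \widetilde x_0$, so $\widetilde\omega(1) = g_\omega \widetilde x_0$ for a \emph{unique} $g_\omega \in G$ by freeness. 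Any lift of $\omega$ starts at some $h \widetilde x_0$; since $p \circ h = p$, the path $h \circ \widetilde\omega$ is a lift of $\omega$ starting there, hence is that lift, and it ends at $h g_\omega \widetilde x_0$, which equals $h \widetilde x_0$ iff $g_\omega = e$ --- a condition independent of $h$. So either all lifts of $\omega$ are closed or none is, and $p$ is regular. Finally, if $f \in G(\mathcal X \mid \mathcal X / G)$, then $f(\widetilde x_0) \in p^{-1}(p(\widetilde x_0))$ gives $f(\widetilde x_0) = g \widetilde x_0$ for some $g \in G$; as $f$ and $g$ are covering transformations of $p$ agreeing at $\widetilde x_0$ and $\mathcal X$ is connected, the uniqueness of lifts forces $f = g$, so $G(\mathcal X \mid \mathcal X / G) = G$.

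The main obstacle is not any single computation but keeping the hypotheses minimal in the connected case: the regularity claim and the identification of the covering group both hinge on the fact that a lift of a map out of a \emph{connected} space is determined by its value at one point, which I would isolate as a lemma (the set where two lifts agree is open and closed, the disjoint sheets of an evenly covered neighbourhood providing both the openness and the separation, with no need for local path-connectedness or a Hausdorff hypothesis). One could alternatively derive regularity from Theorem \ref{locally_path_lem} by checking that $\pi_1(p)\pi_1(\mathcal X, \widetilde x_0)$ --- precisely the subgroup of classes of closed paths at $p(\widetilde x_0)$ admitting a closed lift --- is normal, using the fibre-transitivity of $G$; but that route requires $\mathcal X$ to be locally path-connected, so the direct path-lifting argument above is preferable here.
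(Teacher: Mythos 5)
Your argument is correct, and it is essentially the standard proof of this classical fact; the paper itself offers no proof, simply citing the result from Spanier, so there is nothing to diverge from. All the key points are in place: disjointness of the translates $g U_x$ gives the even covering, freeness plus transitivity of $G$ on fibres gives regularity via unique path lifting, and the identification $G(\mathcal X\,|\,\mathcal X/G)=G$ follows from the unique-lifting lemma for connected domains, which you correctly note needs only the disjoint sheets (no Hausdorff or local path-connectedness hypotheses).
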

\begin{empt}\label{fin_prop_disc_empt}
	\cite{spanier:at} A finite group action without fixed points on a Hausdorff space is properly discontinuous.
\end{empt}
\begin{prop}\cite{spanier:at}
	If $p: \mathcal{\widetilde{X}} \to \mathcal{X}$ is a regular covering projection and $\mathcal{\widetilde{X}}$ is connected and locally path connected, then $\mathcal{X}$ is homeomorphic to space of orbits of $G(\mathcal{\widetilde{X}}|\mathcal{X})$, i.e. $\mathcal{X} \approx \mathcal{\widetilde{X}}/G(\mathcal{\widetilde{X}}|\mathcal{X})$. So $p$ is a principal bundle.
\end{prop}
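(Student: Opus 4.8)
The plan is to exhibit an explicit homeomorphism between the orbit space $\widetilde{\mathcal X}/G$, where $G = G(\widetilde{\mathcal X}|\mathcal X)$, and the base $\mathcal X$, and then combine this with Theorem \ref{g_cov_thm} and freeness of the $G$-action to identify $p$ with a principal bundle.

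First I would record two preliminary observations. A covering projection is a local homeomorphism, and $p$ is surjective, so local path-connectedness of $\widetilde{\mathcal X}$ descends to $\mathcal X$. Also a covering projection is a fibration with unique path lifting, so Theorem \ref{locally_path_lem} applies: regularity of $p$ means that $\pi_1(p)\pi_1(\widetilde{\mathcal X},\widetilde x_0)$ is a normal subgroup of $\pi_1(\mathcal X, p(\widetilde x_0))$ for some (hence, by conjugation, any) choice of basepoint.

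The crucial step is to show that $G$ acts transitively on each fibre $p^{-1}(x)$. Given $\widetilde x_1,\widetilde x_2\in p^{-1}(x)$, choose a path in the connected, locally path-connected space $\widetilde{\mathcal X}$ from $\widetilde x_1$ to $\widetilde x_2$; its image under $p$ is a loop at $x$, and conjugation by this loop carries $\pi_1(p)\pi_1(\widetilde{\mathcal X},\widetilde x_1)$ onto $\pi_1(p)\pi_1(\widetilde{\mathcal X},\widetilde x_2)$. By the normality recorded above these two subgroups of $\pi_1(\mathcal X,x)$ coincide, so the lifting criterion for covering projections produces a continuous map $f:\widetilde{\mathcal X}\to\widetilde{\mathcal X}$ with $p\circ f = p$ and $f(\widetilde x_1)=\widetilde x_2$. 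Exchanging the roles of $\widetilde x_1$ and $\widetilde x_2$ produces a two-sided inverse, so $f$ is a self-equivalence, i.e. $f\in G$, and $f\widetilde x_1 = \widetilde x_2$. I expect this to be the main obstacle, since it is the only place where regularity is genuinely exploited, and it relies on both the lifting criterion and its uniqueness clause.

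Granting transitivity, $p$ is constant on $G$-orbits by the very definition of $G$, hence factors through a continuous surjection $\bar p:\widetilde{\mathcal X}/G\to\mathcal X$; transitivity on fibres makes $\bar p$ injective. Since $p$ is an open map (being a covering projection) and the quotient map $q:\widetilde{\mathcal X}\to\widetilde{\mathcal X}/G$ is open with $\bar p(V)=p(q^{-1}(V))$, the map $\bar p$ is open as well. An open continuous bijection is a homeomorphism, so $\mathcal X\approx\widetilde{\mathcal X}/G$. Finally, a covering transformation of a connected covering space that fixes a point must be the identity, by the uniqueness part of path lifting, so $G$ acts freely; it is also properly discontinuous because Theorem \ref{g_cov_thm} applies to the covering projection $\widetilde{\mathcal X}\to\widetilde{\mathcal X}/G$, which under the identification just established is $p$ itself. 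Hence $p$ is a principal $G$-bundle, which completes the argument.
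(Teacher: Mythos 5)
The paper states this proposition as a quoted result from Spanier and gives no proof of its own, so there is nothing internal to compare against; your argument is the standard textbook one and is essentially correct. The core of the matter is exactly where you locate it: transitivity of $G(\widetilde{\mathcal X}\,|\,\mathcal X)$ on fibres, obtained by transporting $\pi_1(p)\pi_1(\widetilde{\mathcal X},\widetilde x_1)$ to $\pi_1(p)\pi_1(\widetilde{\mathcal X},\widetilde x_2)$ along the projected path, invoking normality (via Theorem \ref{locally_path_lem}) to identify the two subgroups, and then applying the lifting criterion together with uniqueness of lifts to manufacture a deck transformation carrying $\widetilde x_1$ to $\widetilde x_2$. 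The passage from transitivity to the homeomorphism $\widetilde{\mathcal X}/G \approx \mathcal X$ via the open continuous bijection $\bar p$ is also correct.

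One small repair in the last step: you justify proper discontinuity of $G$ by appealing to Theorem \ref{g_cov_thm}, but that theorem runs in the opposite direction — proper discontinuity is its \emph{hypothesis}, and the conclusion is that the orbit map is a covering projection. What you actually need is the elementary direct argument: if $\mathcal U$ is evenly covered by $p$ and $\widetilde{\mathcal U}$ is the sheet over $\mathcal U$ containing $\widetilde x$, then for $g \in G$ the set $g\widetilde{\mathcal U}$ is again a sheet over $\mathcal U$, and if $g\widetilde{\mathcal U} \cap \widetilde{\mathcal U} \neq \emptyset$ the two sheets coincide, forcing $g|_{\widetilde{\mathcal U}} = \mathrm{id}$ (since $p$ is injective on $\widetilde{\mathcal U}$) and hence $g = \mathrm{id}$ by uniqueness of lifts on the connected space $\widetilde{\mathcal X}$. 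This gives both freeness and proper discontinuity in one stroke, after which the identification of $p$ with the principal $G$-bundle $\widetilde{\mathcal X} \to \widetilde{\mathcal X}/G$ goes through as you describe.
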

\begin{cor}\label{top_cov_from_pi1_cor}\cite{spanier:at}
	Let $p: \widetilde{\mathcal X} \to \mathcal X$ be a fibration with a unique path lifting. If $ \widetilde{\mathcal X}$ is connected and locally path-connected and $\widetilde{x}_0 \in \widetilde{\mathcal X}$ then $p$ is regular if and only if $G\left(\widetilde{\mathcal X}~|~{\mathcal X} \right)$ transitively acts on each fiber of $p$, in which case 
	$$
	\psi: G\left(\widetilde{\mathcal X}~|~{\mathcal X} \right) \approx \pi_1\left(\mathcal X, p\left( \widetilde{x}_0\right)  \right) / \pi_1\left( p\right)\pi_1\left(\widetilde{\mathcal X}, \widetilde{x}_0 \right).  
	$$
\end{cor}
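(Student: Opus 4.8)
The plan is to combine Theorem~\ref{locally_path_lem} with the standard monodromy action of the fundamental group on a fibre. Put $x_0 = p(\widetilde{x}_0)$ and let $F = p^{-1}(x_0)$. Since $p$ has unique path lifting, for a loop $\omega$ at $x_0$ and a point $\widetilde{x} \in F$ there is a unique lift of $\omega$ starting at $\widetilde{x}$; sending $\widetilde{x}$ to the endpoint of that lift defines a right action of $\pi_1(\mathcal X, x_0)$ on $F$. Because $\widetilde{\mathcal X}$ is connected and locally path-connected it is path-connected, so any two points of $F$ are joined by a path in $\widetilde{\mathcal X}$ whose projection is a loop at $x_0$; hence this action is transitive, and the stabiliser of $\widetilde{x}_0$ is exactly $H := \pi_1(p)\pi_1(\widetilde{\mathcal X}, \widetilde{x}_0)$. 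Thus the orbit map gives a bijection $\pi_1(\mathcal X, x_0)/H \to F$.

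Next I would identify $G(\widetilde{\mathcal X}\,|\,\mathcal X)$ with a subset of $F$. Since $\widetilde{\mathcal X}$ is connected and locally path-connected, two lifts of $p$ that agree at one point agree everywhere, so a covering transformation $f$ is determined by the single value $f(\widetilde{x}_0)$, and $f(\widetilde{x}_0) \in F$ because $p \circ f = p$; thus $f \mapsto f(\widetilde{x}_0)$ embeds $G(\widetilde{\mathcal X}\,|\,\mathcal X)$ into $F$. Conversely, the lifting criterion (this is where connectedness and local path-connectedness of $\widetilde{\mathcal X}$ enter) shows that a covering transformation with $f(\widetilde{x}_0) = \widetilde{x}$ exists if and only if $\pi_1(p)\pi_1(\widetilde{\mathcal X}, \widetilde{x}) = H$ inside $\pi_1(\mathcal X, x_0)$; but $\pi_1(p)\pi_1(\widetilde{\mathcal X}, \widetilde{x})$ is the conjugate of $H$ by any element of $\pi_1(\mathcal X, x_0)$ carrying $\widetilde{x}_0$ to $\widetilde{x}$ under the monodromy action. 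Hence $G(\widetilde{\mathcal X}\,|\,\mathcal X)$ acts transitively on $F$ if and only if $H$ coincides with all of its conjugates, i.e.\ $H$ is normal in $\pi_1(\mathcal X, x_0)$, which by Theorem~\ref{locally_path_lem} is equivalent to $p$ being regular. One also checks directly that transitivity of $G(\widetilde{\mathcal X}\,|\,\mathcal X)$ on $F$ forces every lift of a closed loop to be closed or none to be, which is the definition of regularity; and since path-lifting along a path between two basepoints conjugates the monodromy actions on the respective fibres, transitivity on one fibre entails it on every fibre.

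Finally, assuming $p$ is regular, both $f \mapsto f(\widetilde{x}_0)$ and the orbit map are bijections onto $F$, so their composite is a bijection $\psi : G(\widetilde{\mathcal X}\,|\,\mathcal X) \to \pi_1(\mathcal X, p(\widetilde{x}_0))/\pi_1(p)\pi_1(\widetilde{\mathcal X}, \widetilde{x}_0)$; note the target is a group precisely because $H$ is normal. It remains to see that $\psi$ is a homomorphism. If $f, g \in G(\widetilde{\mathcal X}\,|\,\mathcal X)$ with $f(\widetilde{x}_0) = \widetilde{x}_0 \cdot [\omega]$ and $g(\widetilde{x}_0) = \widetilde{x}_0 \cdot [\sigma]$ for the right monodromy action, then, using that a covering transformation commutes with path-lifting (so $f(\widetilde{x}\cdot[\tau]) = f(\widetilde{x})\cdot[\tau]$), one computes $(f \circ g)(\widetilde{x}_0) = f(\widetilde{x}_0 \cdot [\sigma]) = f(\widetilde{x}_0)\cdot[\sigma] = (\widetilde{x}_0 \cdot [\omega])\cdot[\sigma] = \widetilde{x}_0 \cdot ([\omega][\sigma])$, whence $\psi(f \circ g) = \psi(f)\psi(g)$. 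The genuinely delicate point, and the main obstacle, is exactly this last part: pinning down the interaction between the deck-transformation action and the monodromy action (and the left/right conventions) so that $\psi$ emerges as a group isomorphism rather than a mere bijection, together with the bookkeeping — invoking Theorem~\ref{locally_path_lem} — that makes the quotient a group in the first place.
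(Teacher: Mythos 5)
The paper does not prove this corollary --- it is quoted verbatim from Spanier with a citation --- so there is no internal proof to compare against. Your argument is the standard one (and essentially Spanier's own): monodromy action on the fibre with stabiliser $\pi_1(p)\pi_1(\widetilde{\mathcal X},\widetilde{x}_0)$, the lifting criterion identifying deck transformations with cosets, normality $\Leftrightarrow$ transitivity $\Leftrightarrow$ regularity via Theorem~\ref{locally_path_lem}, and compatibility of deck transformations with path lifting to make $\psi$ a homomorphism; it is correct, including the points you flag about conventions and about transitivity on one fibre propagating to all fibres.
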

\begin{defn}\label{top_sec_defn}
	Let $\mathcal{X}$ be a  second-countable \cite{munkres:topology} locally compact connected Hausdorff space. The sequence of finitely listed covering projections  	
	\begin{equation*}
	\mathcal{X} = \mathcal{X}_0 \xleftarrow{}... \xleftarrow{} \mathcal{X}_n \xleftarrow{} ... 
	\end{equation*}
	is said to be a \textit{(topological)  finite covering sequence} if following conditions hold:
	\begin{itemize}
		\item   The space $\mathcal{X}_n$ is a  second-countable \cite{munkres:topology} locally compact connected Hausdorff space for any $n \in \mathbb{N}^0$;
		\item Any  map $\mathcal{X}_m  \xleftarrow{} \mathcal{X}_n$ is a regular finitely listed covering projection;
		\item If $k < l < m$ are any nonnegative integer numbers then there is the natural exact sequence
		$$
	\{e\}\to	G\left(\mathcal X_m~|~\mathcal X_l\right) \to 	G\left(\mathcal X_m~|~\mathcal X_k\right)\to 	G\left(\mathcal X_l~|~\mathcal X_k\right)\to \{e\}.
		$$ 
			\end{itemize} 
		For any finite covering sequence we will use following notation
	\begin{equation*}
		\mathfrak{S} = \left\{\mathcal{X} = \mathcal{X}_0 \xleftarrow{}... \xleftarrow{} \mathcal{X}_n \xleftarrow{} ...\right\}= \left\{ \mathcal{X}_0 \xleftarrow{}... \xleftarrow{} \mathcal{X}_n \xleftarrow{} ...\right\},~~\mathfrak{S} \in \mathfrak{FinTop}.
	\end{equation*}

\end{defn}
\begin{exm}
Let $	\mathfrak{S} = \left\{	\mathcal{X} = \mathcal{X}_0 \xleftarrow{}... \xleftarrow{} \mathcal{X}_n \xleftarrow{} ... \right\}$ be a sequence of second-countable  locally compact connected Hausdorff spaces and finitely listed covering projections such that $\mathcal X_n$ is  locally path-connected for any $n \in \mathbb{N}$. It follows from Lemma \ref{locally_path_lem} that if $p > q$ and $f_{pq}:\mathcal X_p \to \mathcal X_q$ then $\pi_1\left(f_{pq}\right)\pi_1\left(\mathcal X_p, x_0\right)$ is a normal subgroup of $\pi_1\left(\mathcal X_q, f_{pq}\left(x_0\right) \right)$. Otherwise from the Corollary \ref{top_cov_from_pi1_cor} it follows that
$$
G\left(\mathcal X_p~|~\mathcal X_q\right) \approx \pi_1\left(\mathcal X_q, f_{pq}\left(x_0\right)\right) / \pi_1\left(f_{pq}\right)\pi_1\left(\mathcal X_p, x_0\right).
$$
Following sequence 
\begin{equation*}
		\begin{split}
	\{e\}\to	\pi_1\left(\mathcal X_l, f_{ml}\left(x_0\right) \right)/ \pi_1\left(f_{ml}\right)\pi_1\left(\mathcal X_m, x_0\right) \to 
	\pi_1\left(\mathcal X_k, f_{mk}\left(x_0\right)\right) / \pi_1\left(f_{mk}\right)\pi_1\left(\mathcal X_m, x_0\right)\to \\
	\to \pi_1\left(\mathcal X_k, f_{mk}\left( x_0\right) \right)/ \pi_1\left(f_{lk}\right)\pi_1\left(\mathcal X_l, f_{ml}\left( x_0\right) \right)	\to \{e\}
	\end{split}
\end{equation*}
is exact. Above sequence is equivalent to the following  sequence
		$$
	\{e\}\to	G\left(\mathcal X_m~|~\mathcal X_l\right) \to 	G\left(\mathcal X_m~|~\mathcal X_k\right)\to 	G\left(\mathcal X_l~|~\mathcal X_k\right)\to \{e\}
		$$ 
	which is also exact. Thus $\mathfrak{S} \in \mathfrak{FinTop}$.
\end{exm}
\begin{defn}\label{top_cov_trans_defn} Let  $\left\{\mathcal{X} = \mathcal{X}_0 \xleftarrow{}... \xleftarrow{} \mathcal{X}_n \xleftarrow{} ...\right\} \in \mathfrak{FinTop}$, and let
 $\widehat{\mathcal{X}} = \varprojlim \mathcal{X}_n$ be the inverse limit  in the category of topological spaces and continuous maps (See \cite{spanier:at}). If $\widehat{\pi}^0: \widehat{\mathcal{X}} \to \mathcal{X}_0$ is the natural continuous map then homeomorphism $g$ of the space $\widehat{\mathcal{X}}$ is said to be a \textit{covering  transformation} if the following condition holds
	$$
	\widehat{\pi}^0 = \widehat{\pi}^0 \circ g.
	$$
	The group $\widehat{G}$ of covering homeomorphisms is said to be the \textit{group of  covering  transformations} of $\mathfrak S$. Denote by $G\left(\widehat{\mathcal{X}}~|~\mathcal X \right)\stackrel{\text{def}}{=}\widehat{G}$. 
\end{defn}
\begin{lem}\label{top_surj_group_lem}
Let  $\left\{\mathcal{X} = \mathcal{X}_0 \xleftarrow{}... \xleftarrow{} \mathcal{X}_n \xleftarrow{} ...\right\} \in \mathfrak{FinTop}$, and let
$\widehat{\mathcal{X}} = \varprojlim \mathcal{X}_n$ be the inverse limit  in the category of topological spaces and continuous maps. For any $n \in \mathbb{N}$ there is the natural surjective homomorphism $h_n:G\left(\widehat{\mathcal{X}}~|~\mathcal X \right) \to G\left(\mathcal{X}_n~|~\mathcal X \right)$ and $\bigcap_{n \in \mathbb{N}} \ker h_n$ is a trivial group.
\end{lem}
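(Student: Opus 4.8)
The plan is to identify $G\left(\widehat{\mathcal X}~|~\mathcal X\right)$ with $\varprojlim_n G\left(\mathcal X_n~|~\mathcal X\right)$, taking $h_n$ to be the projection onto the $n$-th factor; granted this, surjectivity of $h_n$ will follow because the exact sequences in Definition~\ref{top_sec_defn} make the bonding maps of that inverse system surjective, and $\bigcap_n\ker h_n=\{e\}$ will be immediate from the universal property of $\varprojlim$. Write $f_{m,n}\colon\mathcal X_m\to\mathcal X_n$ ($m\ge n$) for the covering projections of the sequence, so that the canonical maps $\widehat\pi^n\colon\widehat{\mathcal X}\to\mathcal X_n$ satisfy $f_{m,n}\circ\widehat\pi^m=\widehat\pi^n$ and, in particular, $\widehat\pi^0=f_{n,0}\circ\widehat\pi^n$. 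Because each $f_{m,n}$ is a surjection of connected spaces with finite fibres, $\widehat{\mathcal X}$ is nonempty, every $\widehat\pi^n$ is surjective, and the family $\{\widehat\pi^n\}_n$ separates the points of $\widehat{\mathcal X}$. I shall also use that $\widehat{\mathcal X}$ is connected; being an inverse limit of the connected spaces $\mathcal X_n$, this holds whenever $\mathcal X$ is compact, the case relevant to the Moyal plane.

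First I would construct $h_n$. Fix $g\in G\left(\widehat{\mathcal X}~|~\mathcal X\right)$ and a base point $\widehat x_0\in\widehat{\mathcal X}$. From $\widehat\pi^0\circ g=\widehat\pi^0$ the points $\widehat\pi^n(\widehat x_0)$ and $\widehat\pi^n(g(\widehat x_0))$ lie in one fibre of the regular covering $f_{n,0}$, and the group of covering transformations of a regular finitely listed covering acts transitively on each fibre (cf.~Corollary~\ref{top_cov_from_pi1_cor} and the Proposition preceding it), so there is a unique $g_n\in G\left(\mathcal X_n~|~\mathcal X\right)$ with $g_n(\widehat\pi^n(\widehat x_0))=\widehat\pi^n(g(\widehat x_0))$. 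Now $g_n\circ\widehat\pi^n$ and $\widehat\pi^n\circ g$ are two continuous maps $\widehat{\mathcal X}\to\mathcal X_n$ whose composite with $f_{n,0}$ equals $\widehat\pi^0$ and which agree at $\widehat x_0$; as $\widehat{\mathcal X}$ is connected, uniqueness of lifts along the covering $f_{n,0}$ forces $g_n\circ\widehat\pi^n=\widehat\pi^n\circ g$. Put $h_n(g):=g_n$. Surjectivity of $\widehat\pi^n$ makes $h_n$ well defined, turns the relation $h_n(g)\circ\widehat\pi^n=\widehat\pi^n\circ g$ into the homomorphism property, and (comparing two levels) gives $f_{m,n}\circ h_m(g)=h_n(g)\circ f_{m,n}$, so the $h_n$ are compatible with the descent maps and assemble into $h\colon G\left(\widehat{\mathcal X}~|~\mathcal X\right)\to\varprojlim_n G\left(\mathcal X_n~|~\mathcal X\right)$. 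I expect this descent to be the main obstacle: without connectedness of $\widehat{\mathcal X}$ and regularity of the $f_{n,0}$, an arbitrary covering transformation of the inverse limit need not descend to the finite levels $\mathcal X_n$.

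Finally, the two assertions. If $g\in\bigcap_n\ker h_n$ then $\widehat\pi^n\circ g=h_n(g)\circ\widehat\pi^n=\widehat\pi^n$ for every $n$, so $g=\id$ because $\{\widehat\pi^n\}_n$ separates the points of $\widehat{\mathcal X}$; hence $\bigcap_n\ker h_n=\{e\}$. For surjectivity of $h_n$, fix $\sigma\in G\left(\mathcal X_n~|~\mathcal X\right)$. Taking $k=0$ in the third condition of Definition~\ref{top_sec_defn} makes each bonding map $G\left(\mathcal X_m~|~\mathcal X\right)\to G\left(\mathcal X_{m-1}~|~\mathcal X\right)$ surjective, so $\sigma$ extends — by descent below level $n$ and repeated choice of preimages above it — to a compatible family $(\sigma_m)_{m\ge 0}$ with $\sigma_n=\sigma$ and $\sigma_0=\id$, the latter because $G\left(\mathcal X_0~|~\mathcal X\right)$ is trivial. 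The formula $(x_m)_m\mapsto(\sigma_m(x_m))_m$ then defines a self-homeomorphism $\widehat g$ of $\widehat{\mathcal X}=\varprojlim_m\mathcal X_m$ — the compatibility $f_{m,m-1}\circ\sigma_m=\sigma_{m-1}\circ f_{m,m-1}$ keeps the image inside the inverse limit — and $\sigma_0=\id$ gives $\widehat\pi^0\circ\widehat g=\widehat\pi^0$, so $\widehat g\in G\left(\widehat{\mathcal X}~|~\mathcal X\right)$ with $\widehat\pi^n\circ\widehat g=\sigma\circ\widehat\pi^n$, whence $h_n(\widehat g)=\sigma$. (The same construction applied to an arbitrary element of $\varprojlim_n G\left(\mathcal X_n~|~\mathcal X\right)$ shows $h$ is in fact an isomorphism.)
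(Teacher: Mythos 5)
Your argument is essentially correct and lands on the same two pillars as the paper's proof (descent of a deck transformation to each finite level, then point-separation by the $\widehat\pi^n$ for the triviality of $\bigcap_n\ker h_n$), but it differs in how the two nontrivial steps are carried out. For the construction of $h_n$, you fix $g$, use transitivity of $G\left(\mathcal X_n~|~\mathcal X\right)$ on a fibre to produce $g_n$ at the base point, and then invoke unique lifting on the connected space $\widehat{\mathcal X}$ to get $g_n\circ\widehat\pi^n=\widehat\pi^n\circ g$ globally. The paper instead parametrizes $G\left(\widehat{\mathcal X}~|~\mathcal X\right)$ by the fibre $\left(\widehat\pi^0\right)^{-1}(x_0)$ and $G\left(\mathcal X_n~|~\mathcal X\right)$ by $\left(\pi^n\right)^{-1}(x_0)$, and lets $h_n$ be induced by the surjection of fibres; surjectivity of $h_n$ is then free, whereas you obtain it separately by lifting $\sigma$ through the surjective bonding maps coming from the exact sequences of Definition~\ref{top_sec_defn} and assembling $\widehat g$ coordinatewise. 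Your route buys an explicit identification of $G\left(\widehat{\mathcal X}~|~\mathcal X\right)$ with $\varprojlim_n G\left(\mathcal X_n~|~\mathcal X\right)$, which the paper never states; the paper's route avoids any appeal to connectedness of $\widehat{\mathcal X}$ in the surjectivity step.

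The one point to be careful about is the caveat you yourself flag: your descent step ($g_n\circ\widehat\pi^n=\widehat\pi^n\circ g$ from agreement at one point) needs $\widehat{\mathcal X}$ connected, which you only justify when $\mathcal X$ is compact, while the lemma is stated for all of $\mathfrak{FinTop}$, i.e.\ for locally compact base spaces. This is the same delicate point that the paper's proof passes over when it asserts that $\varphi:\left(\widehat\pi^0\right)^{-1}(x_0)\to G\left(\widehat{\mathcal X}~|~\mathcal X\right)$ is a bijection (surjectivity of $\varphi$ is exactly the statement that every deck transformation of $\widehat{\mathcal X}$ acts coordinatewise), so you have not introduced a new gap so much as made an existing one visible. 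If you want the general locally compact case, you should either prove connectedness of $\widehat{\mathcal X}$ under the standing hypotheses ($\sigma$-compactness lets you exhaust by compact connected pieces) or replace the unique-lifting argument by a direct verification that $\widehat\pi^n(\widehat x)=\widehat\pi^n(\widehat y)$ implies $\widehat\pi^n(g\widehat x)=\widehat\pi^n(g\widehat y)$.
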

\begin{proof}
	For any $n \in \mathbb{N}$ there is the natural continuous map $\widehat{\pi}^n:\widehat{\mathcal{X}} \to \mathcal{X}_n$. Let $x_0 \in \mathcal{X}_0$ and $\widehat{x}_0 \in \widehat{\mathcal{X}}$ be such that $\widehat{\pi}^0\left( \widehat{x}_0\right) = x_0$. Let $\widehat{x}' \in \widehat{\mathcal{X}}$ be such that $\widehat{\pi}^0\left( \widehat{x}'\right)=x_0$.  If  $x'_n = \widehat{\pi}^n\left(\widehat{x}' \right)$, $x_{n} = \widehat{\pi}^n\left(\widehat{x}_0 \right)$ then $\pi^n\left(x_{n} \right)=\pi^n\left(x'_{n} \right)$, where $\pi^n : \mathcal X_n \to \mathcal X$. Since $\pi^n$ is regular there is the unique $g_n \in G\left( \mathcal{X}_n~|~\mathcal{X}\right)$ such that $x'_n = g_n x_{n}$. The sequence $\left\{g_n \in G\left( \mathcal{X}_n~|~\mathcal{X}\right)\right\}_{n \in \N}$ satisfies to the following condition
\begin{equation*}
g_m \circ	\pi^n_m = \pi^n_m \circ g_n
\end{equation*}
	where $n > m$ and $\pi^n_m : \mathcal X_n \to \mathcal X_m$. Let us define $\widehat{g} \in G\left(\widehat{\mathcal{X}}~|~\mathcal X \right)$ such that
\begin{equation*}\tag{*}
	\widehat{\pi}^n\left( \widehat{g}\widehat{x}''\right) = g_n \widehat{\pi}^n \left(\widehat{x}''\right); ~~ \forall n \in \N, ~ \forall \widehat{x}'' \in \widehat{\mathcal{X}}.
\end{equation*}

	If $x''_n = \widehat{\pi}^n\left(\widehat{x}''\right)$ then the sequence $\left\{x''^{g}_n = g_nx''_n \right\}$ satisfies to the following condition
	\begin{equation*}\tag{**}
\pi^n_m\left(x''^{g}_n \right) = {x}''^{g}_m.
	\end{equation*}
	From (**) and properties of inverse limits it follows that there is the unique $\widehat{x}''^g \in \widehat{\mathcal{X}}$ such that 
	$$
	\widehat{\pi}^n \left( \widehat{x}''^g\right) = x''^g_n; ~~ \forall n \in \N.
	$$
The required homeomorphism $\widehat{g} \in  G\left(\widehat{\mathcal{X}}~|~\mathcal X \right)$ which satisfies to (*) is given by
	$$
	\widehat{g} \widehat{x}'' = \widehat{x}''^g.
	$$
The homeomorphism $\widehat{g}$ uniquely depends on $\widehat{x}'\in \left( \widehat{\pi}^0\right)^{-1} \left( x_0\right)$  it follows that there is the 1-1 map $\varphi:\left( \widehat{\pi}^0 \right)^{-1}\left(x_0 \right)\xrightarrow{\approx} G\left(\widehat{\mathcal{X}}~|~\mathcal X \right)$. Since the covering projection $\pi^n : \mathcal{X}_n\to\mathcal X$ is regular there is the 1-1 map	$\varphi^n:\left( \pi^n \right)^{-1}\left(x_0 \right)\xrightarrow{\approx} G\left(\mathcal{X}_n~|~\mathcal X \right)$. The surjective map
	$$
	\left( \widehat{\pi}^0 \right)^{-1}\left(x_0 \right) \to \left( \pi^n \right)^{-1}\left(x_0 \right)
	$$
	induces the surjective homomorphism $G\left(\widehat{\mathcal{X}}~|~\mathcal X \right) \to G\left(\mathcal{X}_n~|~\mathcal X \right)$. If $\widehat{g} \in \bigcap_{n \in \mathbb{N}} \ker h_n$ is not trivial then $\widehat{g} \widehat{x}_0 \neq  \widehat{x}_0$ and there is $n \in \mathbb{N}$ such that $\widehat{\pi}_n\left(\widehat{x}_0\right)\neq \widehat{\pi}_n\left(\widehat{g}\widehat{x}_0\right)= h_n\left(\widehat{g} \right) \widehat{\pi}_n\left(\widehat{x}_0\right)$ so $h_n\left(\widehat{g} \right)  \in G\left(\mathcal{X}_n~|~\mathcal X \right)$ is not trivial and $\widehat{g} \notin \ker h_n$. From this contradiction it follows that $\bigcap_{n \in \mathbb{N}} \ker h_n$ is a trivial group.
\end{proof}
\begin{defn}\label{top_?oh_defn}
	Let $\mathfrak{S} = \left\{ \mathcal{X}_0 \xleftarrow{}... \xleftarrow{} \mathcal{X}_n \xleftarrow{} ...\right\}$ be 
	a finite covering sequence. The pair $\left(\mathcal{Y},\left\{\pi^{\mathcal Y}_n\right\}_{n \in \mathbb{N}} \right) $ of a (discrete) set $\mathcal{Y}$ with and  
	 surjective  maps $\pi^{\mathcal Y}_n:\mathcal{Y} \to \mathcal X_n$ is said to be a \textit{coherent system} if for any $n \in \mathbb{N}^0$ following diagram  
	\newline
	\begin{tikzpicture}
	\matrix (m) [matrix of math nodes,row sep=3em,column sep=4em,minimum width=2em]
	{
		& \mathcal{Y}  &  \\
		\mathcal{X}_n &  &  \mathcal{X}_{n-1} \\};
	\path[-stealth]
	(m-1-2) edge node [left] {$\pi^{\mathcal Y}_n~$} (m-2-1)
	(m-1-2) edge node [right] {$~\pi^{\mathcal Y}_{n-1}$} (m-2-3)
	(m-2-1) edge node [above] {$\pi^n$}  (m-2-3);
	
	\end{tikzpicture}
	\newline
	is commutative.	
\end{defn}

\begin{defn}\label{comm_top_constr_defn}
	Let $\mathfrak{S} = \left\{ \mathcal{X}_0 \xleftarrow{}... \xleftarrow{} \mathcal{X}_n \xleftarrow{} ...\right\}$ be 
	a topological finite covering sequence. A coherent system $\left(\mathcal{Y},\left\{\pi^{\mathcal Y}_n\right\} \right)$ is said to
 be a \textit{covering projection} of $\mathfrak{S}$ if $\mathcal Y$ is a topological space and $\pi^{\mathcal Y}_n$ is a regular covering projection for any $n \in \mathbb{N}$.  We will use following notation $\left(\mathcal{Y},\left\{\pi^{\mathcal Y}_n\right\} \right)\downarrow \mathfrak{S}$ or simply $\mathcal{Y} \downarrow \mathfrak{S}$.
\end{defn}
\begin{defn}\label{top_spec_defn}
	Let $\left(\mathcal{Y},\left\{\pi^{\mathcal Y}_n\right\} \right)$ be  a coherent system   of $\mathfrak{S}$ and $y \in \mathcal{Y}$. A subset  $\mathcal V \subset \mathcal{Y}$ is said to be \textit{special} if $\pi^{\mathcal Y}_0\left(\mathcal{V} \right)$ is evenly covered by $\mathcal{X}_1 \to \mathcal{X}_0$ and for any  $n \in \mathbb{N}^0$ following conditions hold:
	\begin{itemize}
		\item $\pi^{\mathcal Y}_n\left(\mathcal{V} \right) \subset \mathcal X_n$ is  an open connected set. 
		\item The restriction $\pi^{\mathcal Y}_n|_{\mathcal V}:\mathcal{V}\to \pi^{\mathcal Y}_n\left( {\mathcal V}\right) $ is a bijection;
	\end{itemize}
	
\end{defn}
\begin{rem}
	If $\left(\mathcal{Y},\left\{\pi^{\mathcal Y}_n\right\} \right)$ is  a covering projection of $\mathfrak{S}$ then the topology of $\mathcal{Y}$ is generated by special sets.
\end{rem}
\begin{lem}\label{top_equ_borel_set_lem}
Let $\widehat{\mathcal{X}} = \varprojlim \mathcal{X}_n$ be the inverse limit of the sequence $\mathcal{X}_0 \xleftarrow{}... \xleftarrow{} \mathcal{X}_n \xleftarrow{} ...$ in the category of topological spaces and continuous maps. Any special set of $\widehat{\mathcal{X}}$ is a Borel subset of $\widehat{\mathcal{X}}$.
	
	\end{lem}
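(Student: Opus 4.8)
The plan is to exhibit an arbitrary special set $\mathcal V\subset\widehat{\mathcal X}$ as a countable intersection of open subsets of $\widehat{\mathcal X}$, so that it is a $G_\delta$ set and therefore Borel. The topological content of the statement is essentially nil; everything reduces to bookkeeping with the projection maps of the inverse system together with the bijectivity clauses of Definition \ref{top_spec_defn}.

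Concretely, for each $n\in\mathbb{N}^0$ I would set $U_n:=\widehat{\pi}^n(\mathcal V)\subset\mathcal X_n$; this is an open set by Definition \ref{top_spec_defn}. Since $\widehat{\pi}^n:\widehat{\mathcal X}\to\mathcal X_n$ is continuous, each $\bigl(\widehat{\pi}^n\bigr)^{-1}(U_n)$ is open in $\widehat{\mathcal X}$. The key step is the identity
\[
\mathcal V=\bigcap_{n\in\mathbb{N}^0}\bigl(\widehat{\pi}^n\bigr)^{-1}(U_n).
\]
The inclusion $\subset$ is immediate from $\widehat{\pi}^n(\mathcal V)=U_n$. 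For $\supset$, take $\widehat x\in\widehat{\mathcal X}$ with $\widehat{\pi}^n(\widehat x)\in U_n$ for every $n$. For each $n$, injectivity and surjectivity of $\widehat{\pi}^n|_{\mathcal V}:\mathcal V\to U_n$ give a unique $v_n\in\mathcal V$ with $\widehat{\pi}^n(v_n)=\widehat{\pi}^n(\widehat x)$. Using the compatibility relation $\pi^n_m\circ\widehat{\pi}^n=\widehat{\pi}^m$ of the inverse system, for $n>m$ one computes $\widehat{\pi}^m(v_n)=\pi^n_m\bigl(\widehat{\pi}^n(v_n)\bigr)=\pi^n_m\bigl(\widehat{\pi}^n(\widehat x)\bigr)=\widehat{\pi}^m(\widehat x)=\widehat{\pi}^m(v_m)$, and injectivity of $\widehat{\pi}^m|_{\mathcal V}$ forces $v_n=v_m$. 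Hence all the $v_n$ equal a single $v\in\mathcal V$ with $\widehat{\pi}^n(v)=\widehat{\pi}^n(\widehat x)$ for all $n$; since a point of the inverse limit $\widehat{\mathcal X}\subset\prod_n\mathcal X_n$ is determined by the tuple of its projections, $v=\widehat x$, so $\widehat x\in\mathcal V$.

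Finally, $\mathbb{N}^0$ is countable, so the displayed identity presents $\mathcal V$ as a countable intersection of open sets, i.e. a $G_\delta$ subset of $\widehat{\mathcal X}$, which is in particular Borel. The only place demanding a little care is the reverse inclusion above, and there the whole difficulty is organizing the projections $\widehat{\pi}^n$, $\pi^n_m$ correctly and invoking the fact that points of $\varprojlim\mathcal X_n$ are separated by the $\widehat{\pi}^n$; no input beyond continuity of the $\widehat{\pi}^n$ and the bijectivity built into ``special'' is used.
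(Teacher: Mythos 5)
Your proof is correct and follows essentially the same route as the paper: both exhibit the special set as the countable intersection $\bigcap_n\bigl(\widehat{\pi}^n\bigr)^{-1}\bigl(\widehat{\pi}^n(\mathcal V)\bigr)$ of open sets, hence a $G_\delta$ and therefore Borel. The only difference is that the paper simply asserts this set identity, while you carefully verify the reverse inclusion using the bijectivity clause of Definition \ref{top_spec_defn} and the fact that points of $\varprojlim\mathcal X_n$ are separated by the projections.
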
\label{top_spec_borel_lem}
	\begin{proof}
		If $\mathcal U_n\subset \mathcal X_n$ is an open set then $\widehat{\pi}_n^{-1} \left(\mathcal U_n \right) \subset \widehat{\mathcal X}$ is open. 
	If $\widehat{\mathcal U}$ is a special set then $\widehat{\mathcal U} = \bigcap_{n \in \mathbb{N}} \widehat{\pi}_n^{-1} \circ \widehat{\pi}_n\left(\widehat{\mathcal U}\right)$, i.e. $\widehat{\mathcal U}$ is a countable intersection of open sets. So $\widehat{\mathcal U}$ is a Borel subset.	
	\end{proof}
\begin{defn}\label{comm_top_constr_morph_defn}
	Let us consider the situation of the Definition \ref{comm_top_constr_defn}. A \textit{morphism} from $\left(\mathcal{Y}',\left\{\pi^{\mathcal Y'}_n\right\}\right)\downarrow\mathfrak{S}$ to $\left(\mathcal{Y}'',\left\{\pi^{\mathcal Y''}_n\right\}\right)\downarrow\mathfrak{S}$ is a covering projection $f: \mathcal{Y}' \to \mathcal{Y}''$ such that
	$$
	\pi_n^{\mathcal Y''} \circ f= \pi_n^{\mathcal Y'} 
	$$
	for any $n \in \N$.
	
\end{defn}
\begin{empt}\label{comm_top_constr}
	There is a category with objects and morphisms described by Definitions \ref{comm_top_constr_defn}, \ref{comm_top_constr_morph_defn}. Denote by $\downarrow \mathfrak S$ this category.
		\end{empt}
		\begin{lem}\label{comm_universal_covering_lem}
			There is the final object of the category $\downarrow \mathfrak S$  described in \ref{comm_top_constr}.
		\end{lem}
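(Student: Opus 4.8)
\emph{Proof idea.} By the Remark following Definition~\ref{top_spec_defn}, any covering projection of $\mathfrak S$ carries the topology generated by its special sets, so the candidate terminal object must be obtained by putting exactly that topology on the inverse limit of the underlying sets. Let $\mathcal Z$ be the set $\varprojlim\mathcal X_n$ with coordinate maps $p_n\colon\mathcal Z\to\mathcal X_n$ (so $\pi^{n}_{n-1}\circ p_n=p_{n-1}$), equipped with the topology whose basis is the family of special sets of the coherent system $\bigl(\mathcal Z,\{p_n\}\bigr)$ in the sense of Definition~\ref{top_spec_defn}. The plan is to show (i) that $\bigl(\mathcal Z,\{p_n\}\bigr)$ is an object of $\downarrow\mathfrak S$, and (ii) that it is terminal.

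For (i) I would first check that the special sets form a basis. Stability under passing to subsets inside an intersection is a routine sheet-chase; the real point is that each $\widehat x=(x_n)\in\mathcal Z$ lies in a special set. For this one takes a connected open $\mathcal U\ni x_0$ evenly covered by $\mathcal X_1\to\mathcal X_0$ and, using that every $\pi^{n}_0\colon\mathcal X_n\to\mathcal X_0$ is a \emph{finite}-fold covering, shrinks $\mathcal U$ by a diagonal argument over the levels so that for every $n$ the sheet of $\pi^{n}_0$ through $x_n$ is mapped homeomorphically onto $\mathcal U$; the coherent family of these sheets is a special set through $\widehat x$. Since an inverse limit of nonempty finite sets is nonempty, each $p_n$ is onto, so $\bigl(\mathcal Z,\{p_n\}\bigr)$ is a coherent system. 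Each $p_n$ is then continuous and open (basic opens of $\mathcal X_n$ pull back to unions of special sets; the $p_n$-image of a special set is its $n$-th projection), its fibres are discrete because $p_n$ is injective on every special set, and over a sufficiently small connected neighbourhood in $\mathcal X_n$ its sheets are precisely special sets — so $p_n$ is a covering projection. Regularity of $p_n$ follows from the regularity of all the $\pi^{m}_n$ with $m\ge n$, which is part of the definition of $\mathfrak{FinTop}$: a loop in $\mathcal X_n$ lifts to a closed path in $\mathcal Z$ iff each of its lifts to the $\mathcal X_m$ is closed, and by regularity of $\pi^{m}_n$ this is an all-or-nothing condition on the starting fibre, passing to the inverse limit exactly as in the proof of Lemma~\ref{top_surj_group_lem}. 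Hence $\bigl(\mathcal Z,\{p_n\}\bigr)\downarrow\mathfrak S$.

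For (ii), given any $\bigl(\mathcal Y,\{\pi^{\mathcal Y}_n\}\bigr)\downarrow\mathfrak S$, coherence forces $\bigl(\pi^{\mathcal Y}_n(y)\bigr)_n\in\varprojlim\mathcal X_n$ for every $y\in\mathcal Y$, so $f\colon\mathcal Y\to\mathcal Z$, $f(y):=\bigl(\pi^{\mathcal Y}_n(y)\bigr)_n$, is well defined and is the \emph{only} map with $p_n\circ f=\pi^{\mathcal Y}_n$ for all $n$; thus uniqueness of a morphism $\mathcal Y\to\mathcal Z$ over $\mathfrak S$ is automatic. It remains to see $f$ is a covering projection: it is continuous because the $f$-preimage of a special set of $\mathcal Z$ is a union of special sets of $\mathcal Y$ (using that $\mathcal Y$ carries the special-set topology, by the Remark again); its fibres $f^{-1}(\widehat x)=\bigcap_n(\pi^{\mathcal Y}_n)^{-1}(x_n)$ are discrete since they meet each special set of $\mathcal Y$ in at most one point; and a special set $\mathcal V\subset\mathcal Z$ is evenly covered because its $f$-preimage is a disjoint union of special sets of $\mathcal Y$, each carried homeomorphically onto $\mathcal V$, the sheet structures over each $\mathcal X_n$ matching level by level through the coherence relations. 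So $f$ is the unique morphism to $\mathcal Z$, and $\mathcal Z$ is the terminal object of $\downarrow\mathfrak S$.

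The step I expect to be the main obstacle is (i): genuinely verifying that the special sets cover $\mathcal Z$ — i.e.\ simultaneously controlling the lifted sheets over all levels of the tower — and that the resulting $p_n$ are \emph{regular} coverings. This is exactly where the finiteness of the covering maps and the exact sequences defining $\mathfrak{FinTop}$ enter; everything in (ii) is formal once $\mathcal Z$ is known to be an object.
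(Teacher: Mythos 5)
Your construction is the same as the paper's. The paper forms $\widehat{\mathcal X}=\varprojlim\mathcal X_n$ in the category of topological spaces, re-topologizes the underlying set by the special sets to obtain $\widetilde{\mathcal X}$, checks that each projection $\pi^{\widetilde{\mathcal X}}_n$ is a covering projection by exhibiting the decomposition $(\widehat{\pi}^n)^{-1}(\mathcal U_n)=\bigsqcup_{g\in\ker(G(\widehat{\mathcal X}|\mathcal X)\to G(\mathcal X_n|\mathcal X))}g\widehat{\mathcal U}$ over special sets, and then obtains the universal morphism $\mathcal Y\to\widetilde{\mathcal X}$ from the set-level universal property of the inverse limit exactly as in your part (ii). So there is no difference of route, and your terminality argument matches the paper's.

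The one step of your sketch that would actually fail is the one you yourself flag as the main obstacle: producing a special set through an arbitrary point $\widehat x=(x_n)$. Your ``diagonal argument'' shrinks the base neighbourhood once per level of the tower; but a set evenly covered by $\pi^1_0$ need not admit a homeomorphic open lift through $x_n$ for large $n$ without a further shrink, so the process does not terminate at any finite stage, and after infinitely many shrinkings the candidate base $\bigcap_n\mathcal U^{(n)}$ is a countable decreasing intersection of open neighbourhoods of $x_0$, which need not be open nor even a neighbourhood of $x_0$. Hence the diagonal shrinking does not produce a special set. What is really needed is a single open connected $\mathcal U_0\ni x_0$ admitting, for every $n$ \emph{simultaneously}, an open connected subset of $\mathcal X_n$ through $x_n$ mapped bijectively onto $\mathcal U_0$ --- a condition in the spirit of semi-local simple connectivity relative to the tower, which does not follow formally from the axioms of $\mathfrak{FinTop}$. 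To be fair, the paper does not prove this either: it simply asserts that ``there is a special subset $\widehat{\mathcal U}$ such that $\widetilde x\in\widehat{\mathcal U}$''. So your writeup reproduces the paper's argument, including its one unproved step, at the same place; everything else (surjectivity of the $p_n$, the even-covering decomposition, uniqueness and covering property of the comparison map) agrees with the paper and is sound.
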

\begin{proof}
	Let $\widehat{\mathcal{X}} = \varprojlim \mathcal{X}_n$ be the inverse limit of the sequence $\mathcal{X}_0 \xleftarrow{}... \xleftarrow{} \mathcal{X}_n \xleftarrow{} ...$ in the category of topological spaces and continuous maps. Denote by $\widetilde{\mathcal X}$ a topological space which coincides with  $\widehat{\mathcal X}$ as a set and the topology on $\widetilde{\mathcal X}$ is generated by special sets.  If $x_n \in \mathcal X_n$ is a point then there is $\widetilde{x}\in \widetilde{\mathcal X}=\widehat{\mathcal X}$ such that $x_n = \widehat{\pi}^n\left( \widetilde{x}\right)$ and there is a special subset $\widehat{\mathcal U}$ such that $\widetilde{x} \in \widehat{\mathcal U}$.  From the construction of special subsets it follows that
	\begin{itemize}
		\item $\mathcal U_n = \widehat{\pi}^n\left( \widehat{\mathcal U}\right)$ is an open neighborhood of $x_n$;
		\item  $$\left(\widehat{\pi}^n \right)^{-1} \left(\mathcal U_n \right) = \bigsqcup_{g \in \ker\left(G\left(\widehat{\mathcal{X}}~|~\mathcal X \right) \to G\left(\mathcal{X}_n~|~\mathcal X \right) \right)  } g \widehat{\mathcal U};$$

		\item For any $g \in \ker\left(G\left(\widehat{\mathcal{X}}~|~\mathcal X \right) \to G\left(\mathcal{X}_n~|~\mathcal X \right) \right)$ the set $g \widehat{\mathcal U}$  mapped homeomorphically onto $\mathcal{U}_n$.
	\end{itemize}
So the natural map 	$\pi^{\widetilde{\mathcal X}}_n:\widetilde{\mathcal X} \to \mathcal X_n$ is a covering projection and $\widetilde{\mathcal X}$ is the object of $\downarrow \mathfrak S$. If $\left(\mathcal{Y},\left\{\pi^{\mathcal Y}_n\right\} \right)$ is  a covering projection of $\mathfrak{S}$ then there is the natural continuous map $\mathcal{Y}\to \widehat{\mathcal{X}}$. Since the continuous map $\widetilde{\mathcal{X}}\to \widehat{\mathcal{X}}$ is bijective there is the natural map $\widetilde{\pi}:\mathcal{Y} \to \widetilde{\mathcal{X}}$. If $\widetilde{x} \in \widetilde{\mathcal{X}}$ is a point then there is $y \in \mathcal Y$ such that $\widetilde{x} = \widetilde{\pi}\left( y\right)$. Let $G \subset G\left(\mathcal Y~|~\mathcal X \right)$ be such that $\widetilde{\pi}\left( Gy\right)  =  \widetilde{x}$. If $\widehat{\mathcal U}$ is a special neighborhood of $\widetilde{x}$ then there is a connected neighborhood $\mathcal U_y$ of $y$ which is mapped homeomorphically onto $\widehat{\pi}_0\left(\widehat{\mathcal U}\right) \subset \mathcal X_0$. It follows that
$$
\widetilde{\pi}^{-1}\left(\widehat{\mathcal U}\right)= \bigsqcup_{g \in G}g \mathcal U_y,
$$
i.e. $\widehat{\mathcal U}$ is evenly covered by $\widetilde{\pi}$, i.e. $\widetilde{\pi}$ is a covering projection.
\end{proof}
\begin{defn}\label{top_topological_inv_lim_defn}
	 The final object $\left(\widetilde{\mathcal{X}},\left\{\pi^{\widetilde{\mathcal X}}_n\right\} \right)$ of the category $\downarrow\mathfrak{S}$ is said to be the \textit{inverse limit} of $\mathfrak{S}$.  The notation $\left(\widetilde{\mathcal{X}},\left\{\pi^{\widetilde{\mathcal X}}_n\right\} \right) = \varprojlim \mathfrak{S}$ or simply $~\widetilde{\mathcal{X}} =  \varprojlim \mathfrak{S}$ will be used.
\end{defn}
\begin{lem}\label{top_biject_lem}
If
		$\mathfrak{S} = \left\{\mathcal{X} = \mathcal{X}_0 \xleftarrow{}... \xleftarrow{} \mathcal{X}_n \xleftarrow{} ...\right\} \in \mathfrak{FinTop}$, $~\widehat{\mathcal X} = \varprojlim  \mathcal X_n$, $~\widetilde{\mathcal X} = \varprojlim \mathfrak{S}$ then there is the unique isomorphism $G\left(\widetilde{\mathcal{X}}~|~\mathcal{X} \right) \xrightarrow{\approx} G\left(\widehat{\mathcal{X}}~|~\mathcal{X} \right)$ induced by the map $\widetilde{\mathcal{X}} \to \widehat{\mathcal{X}}$.

\end{lem}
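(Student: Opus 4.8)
The plan is to exploit what the proof of Lemma~\ref{comm_universal_covering_lem} actually produces: $\widetilde{\mathcal X}=\varprojlim\mathfrak S$ and $\widehat{\mathcal X}=\varprojlim\mathcal X_n$ have the same underlying set, the topology of $\widetilde{\mathcal X}$ being the finer one generated by special sets, and the natural map $\iota\colon\widetilde{\mathcal X}\to\widehat{\mathcal X}$ is a continuous bijection equal to the identity on points, with $\widehat\pi^n\circ\iota=\pi^{\widetilde{\mathcal X}}_n$ for all $n$. Hence both $G(\widetilde{\mathcal X}\,|\,\mathcal X)$ and $G(\widehat{\mathcal X}\,|\,\mathcal X)$ are subgroups of the group of those self-bijections of this common set which commute with the single set map $\widehat\pi^0=\pi^{\widetilde{\mathcal X}}_0$; the isomorphism ``induced by $\iota$'' is exactly the claim that these two subgroups coincide, and uniqueness is then immediate from bijectivity of $\iota$. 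So the whole content is the equality $G(\widetilde{\mathcal X}\,|\,\mathcal X)=G(\widehat{\mathcal X}\,|\,\mathcal X)$ as subgroups of $\mathrm{Sym}$ of that set, which I would prove by two inclusions.

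First, $G(\widehat{\mathcal X}\,|\,\mathcal X)\subseteq G(\widetilde{\mathcal X}\,|\,\mathcal X)$. Given $\widehat g\in G(\widehat{\mathcal X}\,|\,\mathcal X)$, formula $(*)$ in the proof of Lemma~\ref{top_surj_group_lem}, together with the identification $g_n=h_n(\widehat g)\in G(\mathcal X_n\,|\,\mathcal X)$ made there, gives the intertwining $\widehat\pi^n\circ\widehat g=g_n\circ\widehat\pi^n$ for every $n$. Since each $g_n$ is a homeomorphism of $\mathcal X_n$, one checks directly from Definition~\ref{top_spec_defn} that $\widehat g$ carries any special set $\widehat{\mathcal U}$ to a special set: $\widehat\pi^0(\widehat g\widehat{\mathcal U})=\widehat\pi^0(\widehat{\mathcal U})$ is evenly covered by $\mathcal X_1\to\mathcal X_0$, $\widehat\pi^n(\widehat g\widehat{\mathcal U})=g_n\bigl(\widehat\pi^n(\widehat{\mathcal U})\bigr)$ is open and connected, and $\widehat\pi^n|_{\widehat g\widehat{\mathcal U}}$ is a bijection onto it. Applying the same to $\widehat g^{-1}\in G(\widehat{\mathcal X}\,|\,\mathcal X)$, we see $\widehat g$ is a homeomorphism of $\widetilde{\mathcal X}$; since $\pi^{\widetilde{\mathcal X}}_0\circ\widehat g=\widehat\pi^0\circ\widehat g=\widehat\pi^0=\pi^{\widetilde{\mathcal X}}_0$, it lies in $G(\widetilde{\mathcal X}\,|\,\mathcal X)$. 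This inclusion is a group homomorphism and is compatible with the actions of both groups on the fibre $F:=(\widehat\pi^0)^{-1}(x_0)$.

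Second, $G(\widetilde{\mathcal X}\,|\,\mathcal X)\subseteq G(\widehat{\mathcal X}\,|\,\mathcal X)$. By the proof of Lemma~\ref{top_surj_group_lem} the orbit map $\widehat g\mapsto\widehat g\,\widehat x_0$ is a bijection $G(\widehat{\mathcal X}\,|\,\mathcal X)\xrightarrow{\ \approx\ }F$, so $G(\widehat{\mathcal X}\,|\,\mathcal X)$ acts transitively on $F$. Moreover, any $\widetilde g\in G(\widetilde{\mathcal X}\,|\,\mathcal X)$ fixing $\widehat x_0$ is the identity: its fixed‑point set is closed, and it is open because $\pi^{\widetilde{\mathcal X}}_0$ is a covering projection with which $\widetilde g$ commutes (near a fixed point $\widetilde g$ permutes the sheets over an evenly covered neighbourhood and fixes one of them, hence is the identity on it), so by connectedness of $\widetilde{\mathcal X}$ the fixed‑point set is all of $\widetilde{\mathcal X}$. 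Now, for an arbitrary $\widetilde g\in G(\widetilde{\mathcal X}\,|\,\mathcal X)$, the point $\widetilde g\,\widehat x_0$ lies in $F$ (as $\widetilde g$ commutes with $\widehat\pi^0$), so by transitivity there is $\widehat g\in G(\widehat{\mathcal X}\,|\,\mathcal X)$ with $\widehat g\,\widehat x_0=\widetilde g\,\widehat x_0$; by the first inclusion $\widehat g$ also lies in $G(\widetilde{\mathcal X}\,|\,\mathcal X)$, so $\widehat g^{-1}\widetilde g$ fixes $\widehat x_0$ and is trivial, i.e.\ $\widetilde g=\widehat g\in G(\widehat{\mathcal X}\,|\,\mathcal X)$. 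The two inclusions yield the isomorphism, which is the one induced by $\iota$.

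The step costing real work is the one used implicitly above, namely that $\widetilde{\mathcal X}$ is connected. I would obtain it from the finality of $\widetilde{\mathcal X}$ in $\downarrow\mathfrak S$: being locally homeomorphic to the $\mathcal X_n$ via special sets, $\widetilde{\mathcal X}$ is locally connected, so each component $C$ is open and closed; the restrictions $\pi^{\widetilde{\mathcal X}}_n|_C\colon C\to\mathcal X_n$ are again surjective regular covering projections (a covering projection restricted to a union of components is a surjective covering projection onto the connected base, and regularity is inherited since closedness of a lifted path is not changed), so $(C,\{\pi^{\widetilde{\mathcal X}}_n|_C\})$ is an object of $\downarrow\mathfrak S$; comparing it with $\widetilde{\mathcal X}$ through the universal property and using that a deck transformation of a connected regular covering is pinned down by one value, one gets $C=\widetilde{\mathcal X}$. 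A reader willing to take connectedness of $\widetilde{\mathcal X}$ for granted (it holds in all cases treated later) may skip this; everything else is bookkeeping over Lemmas~\ref{top_surj_group_lem} and~\ref{comm_universal_covering_lem} and Definition~\ref{top_spec_defn}.
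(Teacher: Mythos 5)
Your first inclusion, $G\left(\widehat{\mathcal{X}}~|~\mathcal{X}\right)\subseteq G\left(\widetilde{\mathcal{X}}~|~\mathcal{X}\right)$ via preservation of special sets and the intertwining relation from Lemma~\ref{top_surj_group_lem}, is essentially the paper's own argument and is fine. The genuine gap is in your second inclusion: it rests entirely on the connectedness of $\widetilde{\mathcal X}=\varprojlim\mathfrak S$, and that is false in general. Take $\mathfrak S=\left\{S^1\leftarrow S^1\leftarrow\cdots\right\}$ with every bonding map of degree $2$, so that $\widehat{\mathcal X}$ is the dyadic solenoid. Every special set maps bijectively onto an open arc in each $\mathcal X_n$, hence (by lifting paths coherently) lies inside a single path component (leaf) of $\widehat{\mathcal X}$; therefore in the special-set topology each leaf is open, and $\widetilde{\mathcal X}$ is the topological disjoint union of uncountably many leaves, each homeomorphic to $\R$. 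The same happens in the paper's motivating tower of tori. Your sketched proof of connectedness does not close this: restricting the structure maps to a clopen component $C$ does give an object of $\downarrow\mathfrak S$, but finality of $\widetilde{\mathcal X}$ only supplies a morphism $C\to\widetilde{\mathcal X}$, which cannot force $C=\widetilde{\mathcal X}$ (in the solenoid example $C\cong\R$ while $\widetilde{\mathcal X}$ is a huge disjoint union). Once connectedness fails, so does the rigidity statement that a covering transformation of $\widetilde{\mathcal X}$ fixing $\widehat x_0$ is the identity --- such a map can be trivial on the component of $\widehat x_0$ and nontrivial elsewhere --- and with it your whole derivation of $G\left(\widetilde{\mathcal{X}}~|~\mathcal{X}\right)\subseteq G\left(\widehat{\mathcal{X}}~|~\mathcal{X}\right)$.

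Note also that the paper does not take this route at all: for that direction it simply asserts, in one sentence, that because $\widetilde{\mathcal X}$ and $\widehat{\mathcal X}$ share the same underlying set and the topology of $\widetilde{\mathcal X}$ is finer, there is a natural injection $G\left(\widetilde{\mathcal{X}}~|~\mathcal{X}\right)\to G\left(\widehat{\mathcal{X}}~|~\mathcal{X}\right)$; no connectedness or fibre-transitivity is invoked. So your proposal is not a repackaging of the paper's proof with a small omission: its central mechanism (transitivity on the fibre plus the fixed-point rigidity of deck transformations of a connected space) is unavailable here, and the inclusion $G\left(\widetilde{\mathcal{X}}~|~\mathcal{X}\right)\subseteq G\left(\widehat{\mathcal{X}}~|~\mathcal{X}\right)$ remains unproved as written.
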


\begin{proof}

Since $\widetilde{\mathcal X}$ coincides with $\widehat{\mathcal X}$ as a set, and  the topology of $\widetilde{\mathcal X}$ is finer then the topology of $\widehat{\mathcal X}$ there is the natural injective map $G\left(\widetilde{\mathcal{X}}~|~\mathcal{X} \right)\to G\left(\widehat{\mathcal{X}}~|~\mathcal{X} \right)$. Let 

If $\widehat{g}\in G\left(\widehat{\mathcal{X}}~|~\mathcal{X} \right)$ and $\widehat{\mathcal U}$ is a special set, then for any $n \in \mathbb{N}$ following condition holds:
\begin{equation}\tag{*}
\widehat{\pi}^n\left(\widehat{g} \widehat{\mathcal U} \right)= h_n\left( \widehat{g}\right)\circ\widehat{\pi}^n\left( \widehat{\mathcal U} \right)  
\end{equation}
where $\widehat{\pi}^n: \widehat{\mathcal X} \to \mathcal X_n$ is the natural map, and $h_n : G\left(\widehat{\mathcal{X}}~|~\mathcal{X} \right)\to G\left(\mathcal{X}_n~|~\mathcal{X} \right)$ is given by the Lemma \ref{top_surj_group_lem}. $h_n(g)$ is a homeomorphism, so from (*) it follows that $\widehat{\pi}^n\left(\widehat{g} \widehat{\mathcal U} \right)$ is an open subset of $\mathcal X_n$. Hence $\widehat{g} \widehat{\mathcal U}$ is special. So $g$ maps special sets to special sets. Since topology of $\widetilde{\mathcal X}$ is generated by special sets the map $\widehat{g}$ is a homeomorphism of $\widetilde{\mathcal X}$, i.e. $\widehat{g} \in  G\left(\widetilde{\mathcal{X}}~|~\mathcal{X} \right)$.

\end{proof}

\subsubsection{Algebraic construction in brief}\label{comm_alg_constr_susub}
\paragraph{}
The inverse limit of covering projections $\widetilde{\mathcal X}$ is obtained from inverse limit of topological spaces $\widehat{\mathcal X}$ by a change of a topology. The topology of  $\widetilde{\mathcal X}$ is finer then topology of $\widehat{\mathcal X}$, it means that $C_0\left(\widehat{\mathcal X}\right)$ is a subalgebra of $C_b\left(\widetilde{\mathcal X}\right)$. The topology of $\widetilde{\mathcal X}$ is obtained from topology of $\widehat{\mathcal X}$ by addition of special subsets. Addition of new sets to a topology is equivalent to addition of new elements to $C_0\left(\widehat{\mathcal X}\right)$. To obtain $C_b\left(\widetilde{\mathcal X}\right)$ we will add to  $C_0\left(\widehat{\mathcal X} \right) $ special elements (See Definition \ref{special_el_defn}). If $\widetilde{\mathcal U}\subset \widetilde{\mathcal X}$ is a special set and $\widetilde{a} \in C_c\left( \widetilde{\mathcal X}\right)$ is positive element such that $\widetilde{a}|_{\widetilde{\mathcal X} \backslash \widetilde{\mathcal U}}= \{0\}$ then following condition holds
$$
a\left(  \widetilde{\pi}_n\left(\widetilde{x}\right) \right)= \left( \sum_{\widehat{g} \in \widehat{G}} \widehat{g} \widetilde{a}\right) \left(  \widetilde{\pi}_n\left(\widetilde{x}\right) \right)= 
\left\{\begin{array}{c l}
\widetilde{a}\left( \widetilde{x} \right)  & \widetilde{x} \in \widetilde{\mathcal U} \\
0 & \widetilde{\pi}_n\left(\widetilde{x}\right) \notin \widetilde{\pi}_n\left(\widetilde{\mathcal U}\right)
\end{array}\right. \in C_c\left(\mathcal X_0\right) ,
$$
From above equation it follows that
\begin{equation}\tag{*}
a^2 = \sum_{\widehat{g} \in \widehat{G}} \widehat{g} \widetilde{a}^2.
\end{equation}
The equation (*) is purely algebraic and related to special subsets. From the Theorem \ref{comm_main_thm} it follows that the algebraic condition (*)  is sufficient for  construction of  $C_0\left( \widetilde{\mathcal X}\right)$. Thus noncommutative inverse limits of covering projections can be constructed by purely algebraic methods.

\subsection{Locally compact spaces and partitions of unity}

\paragraph*{} In this article we consider second-countable locally compact Hausdorff spaces only \cite{munkres:topology}. So we will say a "topological space" (resp. "compact  space" ) instead "locally compact second-countable Hausdorff space" (resp. "compact second-countable Hausdorff space"). This subsection contains well known facts, I follow to \cite{chun-yen:separability,munkres:topology}. 
\begin{defn}\cite{munkres:topology}
If $\phi: \mathcal X \to \mathbb{R}$ is continuous then \textit{support} of $\phi$ is defined to be the closure of the set $\phi^{-1}\left(\mathbb{R}\backslash \{0\}\right)$ Thus if $x$ lies outside the support, there is some neighborhood of $x$ on which $\phi$ vanishes. Denote by $\supp \phi$ the support of $\phi$.
\end{defn}
\paragraph{}
Similarly we define the support of $\C$-valued continuous functions.
There are two equivalent definitions of $C_0\left(\mathcal{X}\right)$ and both of them are used in this article.
\begin{defn}\label{c_c_def_1}
An algebra $C_0\left(\mathcal{X}\right)$ is the norm closure of the algebra $C_c\left(\mathcal{X}\right)$ of compactly supported continuous functions.
\end{defn}
\begin{defn}\label{c_c_def_2}
A $C^*$-algebra $C_0\left(\mathcal{X}\right)$ is given by the following equation
\begin{equation*}
C_0\left(\mathcal{X}\right) = \left\{\varphi \in C_b\left(\mathcal{X}\right) \ | \ \forall \varepsilon > 0 \ \ \exists K \subset \mathcal{X} \ ( K \text{ is compact}) \ \& \ \forall x \in \mathcal X \backslash K \ \left|\varphi\left(x\right)\right| < \varepsilon  \right\},
\end{equation*}
i.e.
\begin{equation*}
\left\|\varphi|_{\mathcal X \backslash K}\right\| < \varepsilon.
\end{equation*}
\end{defn}
\begin{thm}\label{comm_sep_thm}\cite{chun-yen:separability}
For a locally compact Hausdorff space $\mathcal X$, the following are
equivalent:
\begin{enumerate}
\item[(a)] The abelian $C^*$-algebra $C_0\left(\mathcal X \right)$  is separable;
\item[(b)]$\mathcal X$ is $\sigma$-compact and metrizable;
\item[(c)]  $\mathcal X$ is second countable.
\end{enumerate}

\end{thm}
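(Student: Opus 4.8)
The plan is to prove the cycle of implications $(a)\Rightarrow(c)\Rightarrow(b)\Rightarrow(a)$; each arrow is a routine argument combining the Urysohn lemma, the Urysohn metrization theorem, one-point compactification, and the Stone--Weierstrass theorem.

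For $(a)\Rightarrow(c)$ I would fix a countable dense set $\{f_n\}_{n\in\mathbb{N}}\subset C_0(\mathcal{X})$ and claim that the countable family of open sets $U_{n,q}=\{\,x\in\mathcal{X}:|f_n(x)|>q\,\}$, indexed by $n\in\mathbb{N}$ and positive rational $q$, is a basis for $\mathcal{X}$. Indeed, given $x\in\mathcal{X}$ and an open neighbourhood $V\ni x$, local compactness and the Urysohn lemma yield $g\in C_c(\mathcal{X})$ with $0\le g\le1$, $g(x)=1$ and $\supp g\subset V$; picking $n$ with $\|g-f_n\|<\tfrac14$ gives $|f_n(x)|>\tfrac34$ while $|f_n|<\tfrac14$ off $V$, whence $x\in U_{n,1/2}\subset V$. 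Thus $\mathcal{X}$ is second countable.

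For $(c)\Rightarrow(b)$: a locally compact Hausdorff space is regular (in fact completely regular), so second countability together with the Urysohn metrization theorem gives metrizability; and the basic open sets with compact closure already cover $\mathcal{X}$ (each point has a precompact neighbourhood, which contains such a basic set), so $\mathcal{X}$ is $\sigma$-compact. For $(b)\Rightarrow(a)$ I would first re-derive $(c)$ from $(b)$: $\sigma$-compactness and local compactness produce an exhaustion $K_1\subset\mathrm{int}\,K_2\subset K_2\subset\cdots$ by compact sets, each $K_m$ is metrizable compact hence second countable, and $\mathcal{X}=\bigcup_m\mathrm{int}\,K_{m+1}$ is a countable union of open second-countable subspaces, hence second countable. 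Then the one-point compactification $\mathcal{X}^+$ is compact Hausdorff and second countable — adjoin to a countable basis of $\mathcal{X}$ the sets $\mathcal{X}^+\setminus K_m$ — so $\mathcal{X}^+$ is metrizable, and therefore $C(\mathcal{X}^+)$ is separable: for a countable dense $\{x_k\}\subset\mathcal{X}^+$ the functions $y\mapsto d(y,x_k)$ separate points, so by Stone--Weierstrass the unital real algebra they generate is dense in $C(\mathcal{X}^+,\mathbb{R})$, it has an evident countable dense subset, and complexifying gives separability of $C(\mathcal{X}^+)$. Finally $C_0(\mathcal{X})$ is the closed subspace $\{\varphi\in C(\mathcal{X}^+):\varphi(\infty)=0\}$ of a separable Banach space, hence separable, which is $(a)$.

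The argument has no genuine obstacle; the points needing a little care are the appeal to the Urysohn metrization theorem (legitimate because a locally compact Hausdorff space is completely regular) and the classical separability of $C(K)$ for $K$ compact metrizable, both of which are standard. Everything else is bookkeeping with countable bases, compact exhaustions, and uniform approximation.
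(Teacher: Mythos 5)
Your proof is correct. Note that the paper does not actually prove this statement: it is quoted verbatim from the cited reference \cite{chun-yen:separability} and used as a black box, so there is no in-paper argument to compare against. Your cycle $(a)\Rightarrow(c)\Rightarrow(b)\Rightarrow(a)$ is the standard one and all three arrows check out: the sets $U_{n,q}=\{x: |f_n(x)|>q\}$ do form a countable basis (the $\tfrac14$--$\tfrac34$ estimate correctly places $x$ in $U_{n,1/2}\subset V$); regularity of locally compact Hausdorff spaces legitimizes the appeal to Urysohn metrization, and the precompact basic sets give $\sigma$-compactness; and the passage through the one-point compactification, its metrizability, separability of $C(\mathcal{X}^+)$ via Stone--Weierstrass applied to the distance functions, and restriction to the closed subspace $\{\varphi:\varphi(\infty)=0\}$ is sound. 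The only steps worth flagging as implicitly used are the existence of a compact exhaustion with $K_m\subset\mathrm{int}\,K_{m+1}$ (standard for $\sigma$-compact locally compact Hausdorff spaces) and the fact that any compact subset is absorbed by some $K_m$, which you need so that the sets $\mathcal{X}^+\setminus K_m$ form a neighborhood basis at $\infty$; both are routine.
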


\begin{thm}\cite{munkres:topology}
	Every metrizable space is paracompact.
\end{thm}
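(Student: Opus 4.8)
The plan is to reproduce M.~E.~Rudin's short argument that every metric space has, for each open cover, a locally finite open refinement covering it, which is exactly paracompactness. Since paracompactness is a topological property, I would fix a metric $d$ inducing the topology of the given metrizable space $X$, let $\mathcal U$ be an open cover, and well-order $\mathcal U$ as $\{U_\alpha\}_{\alpha<\kappa}$ by an ordinal $\kappa$. The idea is to shrink each $U_\alpha$ to a union of small balls, but to do this in countably many successive ``stages'' $n\in\mathbb N$, using at stage $n$ only points not already absorbed at an earlier stage; this stratification by $n$ is what will ultimately force local finiteness.

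First I would define, by recursion on $n$, open sets $V_{n,\alpha}$: let $V_{n,\alpha}$ be the union of all balls $B(x,2^{-n})$ over points $x$ such that (i) $\alpha$ is the least index with $x\in U_\alpha$, (ii) $x\notin V_{m,\beta}$ for all $m<n$ and all $\beta$, and (iii) $B(x,3\cdot 2^{-n})\subseteq U_\alpha$. This is a legitimate recursion since $V_{n,\alpha}$ depends only on the $V_{m,\beta}$ with $m<n$. Each $V_{n,\alpha}$ is open, and $V_{n,\alpha}\subseteq U_\alpha$ by (iii), so $\mathcal V=\{V_{n,\alpha}:n\in\mathbb N,\ \alpha<\kappa\}$ refines $\mathcal U$; it also covers $X$, for given $p\in X$ one picks $\alpha$ minimal with $p\in U_\alpha$ and $n$ with $B(p,3\cdot 2^{-n})\subseteq U_\alpha$, and then either $p$ already lies in some $V_{m,\beta}$ with $m<n$, or $p$ meets (i)--(iii) at stage $(n,\alpha)$ and so $p\in B(p,2^{-n})\subseteq V_{n,\alpha}$.

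The delicate point, and the step I expect to be the main obstacle, is local finiteness, where the slack between the radii $2^{-n}$ and $3\cdot 2^{-n}$ is spent through the triangle inequality. Given $p\in X$, I would fix $(n,\alpha)$ with $p\in V_{n,\alpha}$ and then $j\geq 1$ with $B(p,2^{-j})\subseteq V_{n,\alpha}$; the claim is that the ball $W=B(p,2^{-n-j})$ meets only finitely many members of $\mathcal V$. If $i\geq n+j$ and a ball $B(x,2^{-i})$ used to form some $V_{i,\beta}$ met $W$, then $d(x,p)<2^{-i}+2^{-n-j}\leq 2^{1-n-j}\leq 2^{-j}$, so $x\in B(p,2^{-j})\subseteq V_{n,\alpha}$, contradicting condition (ii) for $V_{i,\beta}$ as $n<i$; hence $W$ misses every $V_{i,\beta}$ with $i\geq n+j$. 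If instead $i<n+j$ is fixed and balls $B(x,2^{-i})$ and $B(x',2^{-i})$ used to form $V_{i,\beta}$ and $V_{i,\beta'}$ with $\beta<\beta'$ both met $W$, then $d(x,x')<2(2^{-i}+2^{-n-j})\leq 3\cdot 2^{-i}$, so $x'\in B(x,3\cdot 2^{-i})\subseteq U_\beta$ by condition (iii) for $V_{i,\beta}$, while minimality of $\beta'$ together with $\beta<\beta'$ gives $x'\notin U_\beta$, a contradiction; hence $W$ meets $V_{i,\beta}$ for at most one $\beta$. Therefore $W$ meets at most $n+j-1$ members of $\mathcal V$, proving local finiteness and finishing the argument. (Alternatively one could route through ``metrizable $\Rightarrow$ fully normal'' or the Nagata--Smirnov framework, but the construction above is self-contained.)
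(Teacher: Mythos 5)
Your argument is a correct reproduction of M.~E.~Rudin's proof of A.~H.~Stone's theorem, which is precisely the proof given in the cited source (Munkres, Theorem 41.4); the paper itself only quotes the result without proof. All the estimates check out (note that the inequality $2^{1-n-j}\leq 2^{-j}$ in your first case uses $n\geq 1$, which holds under the paper's convention that $\mathbb{N}$ denotes the positive integers), so there is nothing to add.
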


\begin{defn}\cite{munkres:topology}
	Let $\left\{\mathcal U_\alpha\in \mathcal X\right\}_{\alpha \in J}$ be an indexed open covering of $\mathcal{X}$. An indexed family of functions 
	\begin{equation*}
	\phi_\alpha : \mathcal X \to \left[0,1\right]
	\end{equation*}
	is said to be a {\it partition of unity }, dominated by $\left\{\mathcal{U}_\alpha \right\}_{\alpha \in J}$, if:
	\begin{enumerate}
		\item[(a)] $\mathfrak{supp}\left( \phi_\alpha\right) \subset U_\alpha$;
		\item[(b)] The family $\left\{\supp \phi_\al\right\}$ is locally finite;
		\item[(c)] $\sum_{\alpha \in J}\phi_\alpha\left(x\right)=1$ for any $x \in \mathcal X$.
	\end{enumerate}
\end{defn}

\begin{thm}\cite{munkres:topology}
	Let $\mathcal X$ be a paracompact Hausdorff space, let $\left\{\mathcal U_\alpha\in \mathcal X\right\}_{\alpha \in J}$ be an indexed open covering of $\mathcal{X}$. Then there exists a partition of unity, dominated by $\left\{\mathcal{U}_\alpha \right\}$.  
\end{thm}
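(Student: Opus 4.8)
The plan is to reduce the statement to two classical facts about paracompact Hausdorff spaces --- that such a space is normal, and that an open cover of it admits a ``precise'' locally finite open shrinking whose closures still lie inside members of the original cover --- and then to glue together local Urysohn functions and normalise. First I would recall (or cite from \cite{munkres:topology}) that a paracompact Hausdorff space $\mathcal X$ is regular, and then normal; in the setting actually used in this paper this step is free, since a second-countable locally compact Hausdorff space is metrizable, hence normal. Next, using paracompactness together with regularity, I choose a locally finite open refinement $\{\mathcal W_\beta\}_{\beta\in K}$ of $\{\mathcal U_\alpha\}$ together with a function $r\colon K\to J$ such that $\overline{\mathcal W_\beta}\subset \mathcal U_{r(\beta)}$ for every $\beta$: for each $x$ pick $\alpha$ with $x\in\mathcal U_\alpha$ and, by regularity, an open $\mathcal O_x$ with $x\in\mathcal O_x\subset\overline{\mathcal O_x}\subset\mathcal U_\alpha$, then take a locally finite open refinement of $\{\mathcal O_x\}$. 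Since a locally finite family is point-finite, the shrinking lemma (normality plus point-finiteness) then yields a further open cover $\{\mathcal V_\beta\}_{\beta\in K}$ of $\mathcal X$ with $\overline{\mathcal V_\beta}\subset \mathcal W_\beta$, still locally finite.

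Now for each $\beta$ I apply Urysohn's lemma to the disjoint closed sets $\overline{\mathcal V_\beta}$ and $\mathcal X\setminus\mathcal W_\beta$ to get a continuous $\psi_\beta\colon\mathcal X\to[0,1]$ with $\psi_\beta\equiv 1$ on $\overline{\mathcal V_\beta}$ and $\psi_\beta\equiv 0$ off $\mathcal W_\beta$, so that $\supp\psi_\beta\subset\overline{\mathcal W_\beta}\subset\mathcal U_{r(\beta)}$. Because $\{\mathcal W_\beta\}$ is locally finite, so are $\{\overline{\mathcal W_\beta}\}$ and hence $\{\supp\psi_\beta\}$; therefore $\Psi\eqdef\sum_{\beta\in K}\psi_\beta$ is locally a finite sum of continuous functions, hence continuous, and $\Psi(x)\ge 1$ for every $x$ because the $\mathcal V_\beta$ cover $\mathcal X$ and $\psi_\beta\equiv 1$ on $\overline{\mathcal V_\beta}$. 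Put $\phi_\beta\eqdef\psi_\beta/\Psi$; then $\{\phi_\beta\}_{\beta\in K}$ is a partition of unity with $\supp\phi_\beta\subset\mathcal U_{r(\beta)}$ and $\{\supp\phi_\beta\}$ locally finite. Finally I re-index over $J$ by $\phi_\alpha\eqdef\sum_{\beta\in r^{-1}(\alpha)}\phi_\beta$ (an empty sum being $0$): local finiteness of $\{\supp\phi_\beta\}$ makes this sum locally finite, so $\phi_\alpha$ is continuous, $\{\supp\phi_\alpha\}$ is locally finite, and $\supp\phi_\alpha\subset\bigcup_{\beta\in r^{-1}(\alpha)}\supp\phi_\beta\subset\mathcal U_\alpha$ since for a locally finite family the closure of the union is the union of the closures. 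Interchanging a (pointwise finite) sum gives $\sum_{\alpha\in J}\phi_\alpha=\sum_{\beta\in K}\phi_\beta\equiv 1$, so $\{\phi_\alpha\}_{\alpha\in J}$ is the desired partition of unity dominated by $\{\mathcal U_\alpha\}$.

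The main obstacle is the shrinking lemma, i.e.\ passing from a point-finite open cover $\{\mathcal W_\beta\}_{\beta\in K}$ of a normal space to an open cover $\{\mathcal V_\beta\}_{\beta\in K}$ with $\overline{\mathcal V_\beta}\subset\mathcal W_\beta$. This requires a well-ordering of $K$ and a transfinite recursion: at stage $\beta$ one uses normality to separate the closed set $\mathcal X\setminus\bigl(\bigcup_{\gamma<\beta}\mathcal V_\gamma\cup\bigcup_{\gamma>\beta}\mathcal W_\gamma\bigr)$ from $\mathcal X\setminus\mathcal W_\beta$, and point-finiteness is exactly what guarantees that the running family still covers $\mathcal X$, both at successor and at limit stages. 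Everything else in the argument is bookkeeping with local finiteness. I note that in the situation actually needed in this paper $\mathcal X$ is second countable, hence $\sigma$-compact and metrizable (Theorem~\ref{comm_sep_thm}), and metrizable $\Rightarrow$ paracompact; then one may take the refinement $\{\mathcal W_\beta\}$ to be countable and locally finite, and the recursion above becomes an ordinary induction over $\mathbb N$, dispensing with the transfinite machinery altogether.
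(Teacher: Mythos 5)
Your proof is correct; the paper does not actually prove this statement but merely cites it from Munkres, and your argument (normality, a precise locally finite shrinking via the point-finite shrinking lemma, Urysohn functions, normalisation by the locally finite sum, and re-indexing along $r\colon K\to J$) is essentially the standard proof given in that reference. The closing remark that in this paper's setting $\mathcal X$ is second countable and metrizable, so the transfinite recursion collapses to an ordinary induction, is a valid and relevant simplification.
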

  \begin{defn}\label{lift_desc_defn}
  	Let $\widetilde{\pi}: \widetilde{\mathcal X} \to \mathcal X$ be a covering projection and let $\widetilde{\mathcal U}\subset \widetilde{\mathcal X}$ be a connected open set which is mapped homeomorphically onto $\mathcal U= \widetilde{\pi}\left(\widetilde{\mathcal U}\right)$. If $\varphi \in C_0\left(\mathcal{X}\right)$ is such that $\mathfrak{supp}\left( \varphi\right) \subset \mathcal U$ then a function $\widetilde{\varphi} \in C_0\left(\widetilde{\mathcal X}\right)$ given by
  	\begin{equation*}
  	\widetilde{\varphi}\left(\widetilde{x}\right)=\left\{
  	\begin{array}{c l}
  	\varphi\left(\widetilde{\pi}\left(\widetilde{x}\right)\right) & \widetilde{x} \in \widetilde{\mathcal{U}}  \\
  	0 & \widetilde{x} \notin \widetilde{\mathcal{U}}
  	\end{array}\right.
  	\end{equation*}
  	is said to be the $\widetilde{\mathcal{U}}$-{\it lift } of $\varphi$. 
  	If $\widetilde{\varphi} \in C_0\left(\widetilde{\mathcal{X}}\right)$  is such that $\mathfrak{supp}\left( \widetilde{\varphi}\right)  \subset \widetilde{\mathcal U}$  then a function $\varphi \in C_0\left(\mathcal X\right)$ given by
  	\begin{equation*}
  	\varphi\left(x\right)=\left\{
  	\begin{array}{c l}
  	\widetilde{\varphi}\left(\widetilde{x}\right) & x \in \mathcal{U}~ \&  \  \widetilde{x} \in \widetilde{\mathcal U}~ \& \  \widetilde{\pi}\left(\widetilde{x}\right)=x   \\
  	0 & x \notin \mathcal{U}      \end{array}\right.
  	\end{equation*}
  	is said to be the {\it descent} of $\widetilde{\varphi}$, and we write $\varphi \stackrel{\mathrm{def}}{=} \mathfrak{Desc}_{\widetilde{\pi}}\left(\widetilde{\varphi}\right)$.  
  	If the closure $\overline{\widetilde{\mathcal{U}}}$ of $\widetilde{\mathcal{U}}$ is compact and mapped homeomorphically on  $\overline{\pi}\left(\overline{\widetilde{\mathcal{U}}}\right)$ and $\phi \in C_0\left(\mathcal{X}\right)$ then  $\widetilde{\phi} \in C\left(\overline{\widetilde{\mathcal{U}}}\right)$ given by $\widetilde{\phi}(x)= \phi\left(\widetilde{\pi}\left(x\right)\right)$ is said to be  the \textit{compact}  $\overline{\widetilde{\mathcal{U}}}$-\textit{lift}. 
  \end{defn}

\paragraph{}

Results of this subsection are used for a construction of partitions of unity  of covering projections. Let $\mathcal{X}$ be a second countable locally compact Hausdorff space, and let $\widetilde{\pi}:\widetilde{\mathcal X} \to \mathcal X$ be a regular covering projection. Let $\left\{\mathcal U_\iota \subset  \mathcal X\right\}_{\iota \in I}$ be a family such that $ \mathcal X = \bigcup_{\iota \in I}  \mathcal U_\iota$ and for any $\iota\in I$
\begin{itemize}
\item  $\mathcal U_\iota$ is an open connected and relatively compact subset of  $\mathcal X$;
\item  $\mathcal U_\iota$ is evenly covered by $\widetilde{\pi}$;
\end{itemize}
There is the partition of unity dominated by $\left\{\mathcal U_\iota\right\}_{\iota \in I}$ given by
\begin{equation*} 
1_{C_b\left(\mathcal{X}\right)}=1_{M\left(C_0\left(\mathcal{X}\right)\right)}= \sum_{\iota \in I}a_\iota~,
\end{equation*}
i.e. $\supp \left( a_\iota\right) \subset \mathcal{U}_\iota$. Since $\mathcal U_\iota$ is relatively compact we have $a_\iota \in C_c\left(\mathcal X\right)$. For any $\iota$ we can select an open connected subset $\widetilde{\mathcal{U}}_\iota$ such that the restriction $\widetilde{\pi}|_{\widetilde{\mathcal{U}}_\iota}: \widetilde{\mathcal{U}}_\iota \to \mathcal{U}_\iota$ is a homeomorphism and an element $\widetilde{a}_\iota\in C_c\left(\widetilde{\mathcal X}\right)$ which is the $\widetilde{\mathcal U}_\iota$-lift of $a_\iota$.
There is the following partition of the unity
\begin{equation}\label{covering_partition_eqn}
1_{C_b\left(\widetilde{\mathcal{X}}\right)}= 1_{M\left(C_0\left(\widetilde{\mathcal{X}}\right)\right)}= \sum_{(g,\iota) \in G\left(\widetilde{\mathcal{X}}|\mathcal{X}\right) \times I}g \widetilde{a}_\iota.
\end{equation}
If $\widetilde{e}_\iota = \sqrt{\widetilde{a}_\iota}$ and $g \in G\left(\widetilde{\mathcal{X}}|\mathcal{X}\right)$ is a nontrivial element then from $g\widetilde{\mathcal{U}}_\iota \bigcap \widetilde{\mathcal{U}}_\iota= \emptyset$ it follows that
\begin{equation}\label{zero_ge_iota_eqn}
\widetilde{e}_\iota\left( g \widetilde{e}_\iota\right) = \left(g \widetilde{e}_\iota \right) \widetilde{e}_\iota=0;~ \text{ for any nontrivial } g \in G\left(\widetilde{\mathcal{X}}|\mathcal{X}\right).
\end{equation}

\begin{defn}\label{cov_cov_part_unity_defn}
If $\widetilde{\pi}:\widetilde{\mathcal X} \to \mathcal X$ is a covering projection then a described above family $\left\{\widetilde{\mathcal{U}}_\iota \subset\widetilde{\mathcal X}\right\}_{\iota \in I}$ is said to be a \textit{generalized fundamental domain}. 
The partition of unity \eqref{covering_partition_eqn} is said to be a \textit{covering partition of unity} (subordinated to $\left\{\widetilde{\mathcal{U}}_\iota \right\})$.
\end{defn}

\subsection{Hilbert $C^*$-modules}
\paragraph{} We refer to \cite{blackadar:ko} 
for definition of Hilbert $C^*$-modules, or simply Hilbert modules. If $X_A$ is a right Hilbert $C^*$-module then we denote by $\langle\cdot,\cdot\rangle_{X_A}$ the $A$-valued sesquilinear product on $X_A$, i.e. $\langle\xi,\eta\rangle_{X_A}\in A$; $\forall\xi,\eta \in X_A$.
 For any $\xi, \zeta \in X_A$ let us define an $A$-endomorphism $\theta_{\xi, \zeta}$ given by  $\theta_{\xi, \zeta}(\eta)=\xi \langle \zeta, \eta \rangle_{X_A}$ where $\eta \in X_A$. Operator  $\theta_{\xi, \zeta}$ is said to be a {\it rank one} operator and will be denoted by $\xi \rangle\langle \zeta$. The norm completion of an algebra generated by rank-one operators  $\theta_{\xi, \zeta}$ is said to be the {\it algebra of compact operators $\mathcal{K}(X_A)$}. We suppose that there is a left action of $\mathcal{K}(X_A)$ on $X_A$ which is $A$-linear, i.e. action of  $\mathcal{K}(X_A)$ commutes with action of $A$.

\subsection{Strong and/or weak extension}
\paragraph{}In this section I follow to \cite{pedersen:ca_aut}.
\begin{defn}\cite{pedersen:ca_aut}
	Let $A$ be a $C^*$-algebra.  The {\it strict topology} on the multiplier algebra $M(A)$ is the topology generated by seminorms $\vertiii{x}_a = \|ax\| + \|xa\|$, ($a\in A$). If $x \in M(A)$  and a sequence of partial sums $\sum_{i=1}^{n}a_i$ ($n = 1,2, ...$), ($a_i \in A$) tends to $x$ in the strict topology then we shall write
	\begin{equation*}
	x = \sum_{i=1}^{\infty}a_i.
	\end{equation*}
\end{defn}
\begin{defn}\cite{pedersen:ca_aut} Let $H$ be a Hilbert space. The {\it strong} topology on $B\left(H\right)$ is the locally convex vector space topology associated with the family of seminorms of the form $x \mapsto \|x\xi\|$, $x \in B(H)$, $\xi \in H$.
\end{defn}
\begin{defn}\cite{pedersen:ca_aut} Let $H$ be a Hilbert space. The {\it weak} topology on $B\left(H\right)$ is the locally convex vector space topology associated with the family of seminorms of the form $x \mapsto \left|\left(x\xi, \eta\right)\right|$, $x \in B(H)$, $\xi, \eta \in H$.
\end{defn}

\begin{thm}\label{vN_thm}\cite{pedersen:ca_aut}
	Let $M$ be a $C^*$-subalgebra of $B(H)$, containing the identity operator. The following conditions are equivalent:
	\begin{itemize}
		\item $M=M''$ where $M''$ is the bicommutant of $M$;
		\item $M$ is weakly closed;
		\item $M$ is strongly closed.
	\end{itemize}
\end{thm}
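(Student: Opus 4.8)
The statement is von Neumann's bicommutant (double commutant) theorem, and the plan is to prove the cycle of implications $M = M'' \Rightarrow M \text{ weakly closed} \Rightarrow M \text{ strongly closed} \Rightarrow M = M''$.

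For the first implication I would first record that for \emph{any} subset $S \subseteq B(H)$ the commutant $S'$ is weakly closed: if a net $x_\alpha \to x$ weakly and each $x_\alpha$ commutes with a fixed $s$, then for all $\xi,\eta \in H$ we have $(xs\xi,\eta) = \lim_\alpha (x_\alpha s\xi,\eta) = \lim_\alpha (x_\alpha \xi, s^*\eta) = (x\xi, s^*\eta) = (sx\xi,\eta)$, so $xs = sx$; intersecting over $s \in S$ preserves weak closedness. Applying this to $S = M'$ shows $M'' = (M')'$ is weakly closed, so $M = M''$ forces $M$ weakly closed. The second implication is immediate: since $|(x\xi,\eta)| \le \|x\xi\|\,\|\eta\|$, every weak seminorm is dominated by a strong one, hence the weak topology is coarser than the strong topology and a weakly closed set is strongly closed.

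The real content is $M$ strongly closed $\Rightarrow M = M''$. As $M \subseteq M''$ always holds, it suffices to show every $T \in M''$ lies in the strong closure of $M$, i.e. that for every $\xi_1,\dots,\xi_n \in H$ and $\varepsilon > 0$ there is $S \in M$ with $\|(T-S)\xi_i\| < \varepsilon$ for all $i$. I would first treat $n = 1$: given $\xi$, set $K = \overline{M\xi}$ and let $p$ be the orthogonal projection onto $K$. Since $M$ is a $*$-closed subalgebra, both $K$ and $K^\perp$ are $M$-invariant, hence $p \in M'$; as $T \in M'' = (M')'$ we get $Tp = pT$, so $T$ leaves $K$ invariant, and since $1 \in M$ gives $\xi \in K$, we conclude $T\xi \in K = \overline{M\xi}$, which yields the desired $S$. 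For general $n$ I would amplify: with $H^{(n)} = H \oplus \cdots \oplus H$ and $\pi : B(H) \to B(H^{(n)})$ the diagonal embedding $\pi(S) = \operatorname{diag}(S,\dots,S)$, a direct matrix computation gives $\pi(M)' = M_n(M')$ (the algebra of $n\times n$ matrices over $M'$) and hence $\pi(M)'' = \pi(M'')$; thus $\pi(T) \in \pi(M)''$, and the $n=1$ case applied to $\pi(M) \subseteq B(H^{(n)})$ with the vector $(\xi_1,\dots,\xi_n)$ produces $S \in M$ with $\sum_i \|(T-S)\xi_i\|^2 < \varepsilon^2$. Hence $T$ lies in the strong closure of $M$, which equals $M$ by hypothesis, closing the cycle.

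The main obstacle is the $n = 1$ step — in particular checking that the orthogonal projection onto the cyclic subspace $\overline{M\xi}$ lies in $M'$, which uses that $M$ is $*$-closed and is precisely where the $C^*$-algebra hypothesis is needed — together with the amplification bookkeeping $\pi(M)'' = \pi(M'')$ that upgrades the single-vector statement to finite tuples; everything else is formal manipulation of the defining seminorms.
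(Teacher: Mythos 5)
Your proof is correct and complete: the cycle $M=M''\Rightarrow$ weakly closed $\Rightarrow$ strongly closed $\Rightarrow M=M''$ is handled properly, with the cyclic-subspace projection argument and the $n$-fold amplification doing the real work in the last implication, and you correctly use both hypotheses (that $M$ is $*$-closed and contains $1$). The paper does not prove this theorem itself but simply cites Pedersen, and your argument is essentially the standard one found there, so there is nothing to reconcile.
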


\begin{defn}
	Any $C^*$-algebra $M$ is said to be a {\it von Neumann algebra} or a {\it $W^*$- algebra} if $M$ satisfies to the conditions of the Theorem \ref{vN_thm}.
\end{defn}

\begin{defn}\cite{pedersen:ca_aut}
	Let  $A \subset B(H)$ be a $C^*$-algebra and  $A$ acts non-degenerately on $H$. Denote by $A''$ the strong closure of $A$ in $B(H)$. $A''$ is an unital weakly closed $C^*$-algebra . The algebra  $A''$ is said to be the  {\it enveloping von Neumann algebra} or  the {\it enveloping $W^*$-algebra}  of $A$. 
\end{defn}

\begin{lem}\label{increasing_convergent_w}\cite{pedersen:ca_aut} Let $\Lambda$ be an increasing net in the partial ordering.  Let $\left\{x_\lambda \right\}_{\la \in \La}$ be an increasing net of self-adjoint operators in $B\left(H\right)$, i.e. $\la \le \mu$ implies $x_\la \le x_\mu$. If $\left\|x_\la\right\| \le \ga$ for some $\ga \in \mathbb{R}$ and all $\la$ then $\left\{x_\lambda \right\}$ is strongly convergent to a self-adjoint element $x \in B\left(H\right)$ with $\left\|x_\la\right\| \le \ga$.
\end{lem}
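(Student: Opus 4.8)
The plan is to first establish \emph{weak} convergence of the net by a Riesz--representation argument, and then to upgrade it to \emph{strong} convergence, using monotonicity together with the Cauchy--Schwarz inequality for positive operators.

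First I would fix $\xi \in H$ and examine the net of real numbers $t_\la = \langle x_\la \xi, \xi\rangle$. Self-adjointness makes each $t_\la$ real, monotonicity of $\{x_\la\}$ makes $\{t_\la\}$ increasing, and the uniform bound gives $|t_\la| \le \ga\,\|\xi\|^2$; hence $\{t_\la\}$ converges in $\mathbb{R}$. By the polarization identity the net $\langle x_\la \xi, \eta\rangle$ then converges for every pair $\xi, \eta \in H$; denote the limit by $B(\xi, \eta)$. The form $B$ is sesquilinear and satisfies $|B(\xi,\eta)| \le \ga\,\|\xi\|\,\|\eta\|$, so by the Riesz representation theorem for bounded sesquilinear forms there is a unique $x \in B(H)$ with $\langle x\xi, \eta\rangle = B(\xi,\eta)$ and $\|x\| \le \ga$. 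Passing to the limit in $\langle x_\la \xi, \eta\rangle = \overline{\langle x_\la \eta, \xi\rangle}$ yields $\langle x\xi, \eta\rangle = \overline{\langle x\eta, \xi\rangle}$, i.e. $x = x^*$. Thus $x_\la \to x$ weakly, $x$ is self-adjoint, and since $\langle(x - x_\la)\xi,\xi\rangle = \lim_{\mu} \langle (x_\mu - x_\la)\xi, \xi\rangle \ge 0$ for all $\mu \ge \la$, we also get $x_\la \le x$ for every $\la$.

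For strong convergence, fix $\la$ and set $T_\la = x - x_\la$, so that $T_\la \ge 0$ and $\|T_\la\| \le \|x\| + \|x_\la\| \le 2\ga$. Using $T_\la^{1/2}$ and the ordinary Cauchy--Schwarz inequality,
\[
\|T_\la \xi\|^2 = \langle T_\la^{1/2}\xi, T_\la^{3/2}\xi\rangle \le \|T_\la^{1/2}\xi\|\,\|T_\la^{3/2}\xi\| \le \|T_\la\|\,\langle T_\la \xi, \xi\rangle \le 2\ga\,\langle (x - x_\la)\xi, \xi\rangle .
\]
Since $\langle (x - x_\la)\xi, \xi\rangle \to 0$ by weak convergence, we conclude $\|(x - x_\la)\xi\| \to 0$ for every $\xi \in H$, which is precisely the strong convergence $x_\la \to x$.

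The only delicate points are the appeal to the Riesz representation theorem (to manufacture $x$ together with its norm bound) and the positive-operator Cauchy--Schwarz estimate in the last display; everything else is bookkeeping with monotone nets of reals. I expect the passage from weak to strong convergence to be the main step, since it is the one place where monotonicity of $\{x_\la\}$ is genuinely used rather than merely its boundedness.
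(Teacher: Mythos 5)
Your argument is correct and complete: the reduction to monotone bounded nets of reals, polarization plus the Riesz representation theorem to produce the weak limit $x$ with $\|x\|\le\ga$, and the estimate $\|(x-x_\la)\xi\|^2\le\|x-x_\la\|\,\langle(x-x_\la)\xi,\xi\rangle$ via $T_\la^{1/2}$ are exactly the standard route (this is the proof in Pedersen's book, which the paper simply cites without reproducing). The only remark worth making is that the lemma's conclusion ``with $\left\|x_\la\right\| \le \ga$'' is evidently a typo for $\left\|x\right\| \le \ga$, which is what you prove.
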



\begin{empt}\label{comm_gns_constr}
	Any state $\tau$ on a $C^*$-algebra $A$  induces a  GNS representation  \cite{murphy}. There is a $\mathbb{C}$-valued product on $A$ given by
	\begin{equation*}
	\left(a, b\right)=\tau\left(a^*b\right).
	\end{equation*}
	This product induces a scalar product on $A/\mathcal{I}_\tau$ where $\mathcal{I}_\tau =\left\{a \in A \ | \ \tau(a^*a)=0\right\}$. So $A/\mathcal{I}_\tau$ is a phe-Hilbert space. Let denote by $L^2\left(A, \tau\right)$ the Hilbert  completion of $A/\mathcal{I}_\tau$.  The Hilbert space  $L^2\left(A, \tau\right)$ is a space of a  GNS representation of $A$.
\end{empt}

\begin{empt}\label{l2_mu}
	If $\mathcal X$ is a second-countable locally compact Hausdorff space then from the Theorem \ref{comm_sep_thm} it follows that $C_0\left(\mathcal X\right)$ is a separable algebra.  $C_0\left(\mathcal X\right)$ has a state $\tau$ such that associated with $\tau$   GNS representation  \cite{murphy} is faithful. From \cite{bogachev_measure_v2} it follows that the state $\tau$ can be represented as the following integral
	\begin{equation}\label{hilb_integral}
	\tau\left(a\right)= \int_{\mathcal X}a \ d\mu
	\end{equation}
where $\mu$ is a positive Radon measure. In analogy with the Riemann integration, one can define the integral of a 	bounded continuous function $a$ on $\mathcal{X}$. There is a $\mathbb{C}$-valued product on $C_0\left(\mathcal X\right)$ given by
	\begin{equation*}
	\left(a, b\right)=\tau\left(a^*b\right)= \int_{\mathcal X}a^*b \ d\mu,
	\end{equation*}
	whence $C_0\left(\mathcal X\right)$ is a phe-Hilbert space. Denote by $L^2\left(C_0\left(\mathcal X\right), \tau\right)$ or $L^2\left(\mathcal X, \mu\right)$ the Hilbert space completion of $C_0\left(\mathcal X\right)$. From  \cite{murphy,takesaki:oa_ii} it   follows that $W^*$-enveloping algebra of $C_0\left(\mathcal X\right)$ is isomorphic to the algebra $L^{\infty}\left(\mathcal X, \mu\right)$ (of classes of) essentially bounded complex-valued measurable functions. The space $L^{\infty}\left(\mathcal X, \mu\right)$ depends only on null-measurable sets and does not depend on $\mu$-measures of non-null measurable \cite{bogachev_measure_v2}  sets, so the notation $L^{\infty}\left(\mathcal X, \mu\right)$ will be replaced with $L^{\infty}\left(\mathcal X\right)$. 
\end{empt}
\subsection{Galois extensions and covering projections}\label{galois_subsection}

 \paragraph*{}This article compiles ideas of algebra and topology. 
 \begin{empt}\label{alg_top_constr}
 
 Following table contains the mapping between Galois extension of fields and topological covering projections.
\newline
\begin{tabular}{|c|c|}
	\hline
Topology & Theory of fields\\
\hline
	Covering projection & Algebraic extension of the field\\
	Regular covering projection & Normal extension\\
	Unramified covering projection  & 	Separable extension\\
	Universal covering projection  & 	Algebraic closure \\
	\hline
\end{tabular}
 \paragraph*{}
 Complex algebraic varieties have structure of both functional fields \cite{hartshorne:ag} and Hausdorff locally compact spaces, i.e. both columns of the above table reflect the same phenomenon. Analogy between Galois extensions of fields and covering projections is discussed in \cite{hajac:toknotes, milne:etale}. During last decades the advanced theory of Galois extensions of noncommutative algebras had been developed.  Galois extensions became special cases of Hopf-Galois ones. There is the  Category Theory approach to Hopf-Galois extensions, based on theory of monads, comonads and adjoint functors \cite{hajac:toknotes}. But this approach cannot be used for nonunital algebras. The Category Theory replaces elements of algebras with endomorphisms of objects. However any object of the Category Theory contains the identity endomorphism which corresponds to the unity of the algebra. To avoid this obstacle the notion of the non-unital $C^*$-category was introduced \cite{mitchener:c_cat}. But this notion is incompatible with adjoint functors. So the Category Theory of Galois extensions cannot be directly used for nonunial $C^*$-algebras. But there is a modification of the Galois theory which  replaces the unity with the approximate unity.  Following theory which had been developed long time ago, is used as a prototype.
 
  \end{empt}
 \begin{empt}\textit{Galois theory of noncommutative algebras}. I follow to \cite{demeyer:genreal_galois,miyashita_fin_outer_gal}.
Let $K$ be a field. Throughout $\La$ will denote a $K$ algebra, $C$ will denote the center of $\La$ ($C=\mathfrak{Z}(\La)$). $G$ will denote a finite group represented as ring automorphisms of $\La$ and $\Ga$ the subring of all elements of $\La$ left invariant by all the automorphisms in $G$ ($\Ga = \La^G$). Let $\Delta\left(\La:G\right)$ be the crossed product of $\La$ and $G$ with trivial factor
set. That is 
$$\Delta\left(\La:G\right) = \sum_{\sigma \in G} \La U_\sigma \text{ such that}$$
$$
x_1U_\sigma x_2 U_\tau = x_1 \sigma(x_1)U_{\sigma  \tau}; ~ x_1, x_2 \in \La; ~ \sigma,\tau \in G.
$$
View $\La$ as a right $\Ga$ module and define $$j : \Delta\left(\La:G\right) \to \Hom_{\Ga}\left(\La, \La\right) \text{ by}$$ 
$$
j\left(aU_\sigma\right)\left( x\right)  = a \sigma(x);~a,x \in \La;~\sigma \in G. 
$$
 \end{empt}
 \begin{thm}\label{galois_equiv_thm}\cite{demeyer:genreal_galois} The following are equivalent:
 \begin{enumerate}
 \item[(a)]$\La$ is finitely generated projective as a right $\Ga$ module and
$j : \Delta\left(\La:G\right) \to \Hom_{\Ga}\left(\La, \La\right)$ is an isomorphism.
 \item[(b)] There exists $x_1,...,x_n, y_1,...,y_n \in \La$ such that
\begin{equation*}
\sum_{j =1}^{n} x_j\sigma(y_j)=\left\{
\begin{array}{c l}
   1 & \sigma \in  G \text{ is trivial}\\
   0 & \sigma \in  G \text{ is not trivial}
\end{array}\right.
\end{equation*}
 \end{enumerate}
 \end{thm}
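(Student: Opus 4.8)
The plan is to prove the two implications separately; in each direction the engine is the ``change of coordinates'' between $\Delta\left(\La:G\right)$ and $\Hom_\Ga\left(\La,\La\right)$ provided by $j$ (note $\sigma(-)$, left multiplications, and $\Ga\hookrightarrow\La$ are all right $\Ga$-linear because every $\sigma\in G$ fixes $\Ga=\La^G$, so all the maps below do live in $\Hom_\Ga\left(\La,\La\right)$).

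\emph{(b) $\Rightarrow$ (a).} First I would record the trace $\tr\colon\La\to\Ga$, $\tr(\lambda)=\sum_{\sigma\in G}\sigma(\lambda)$, which lands in $\Ga$ since $G$ permutes its elements, and then note that applying each $\nu\in G$ to the defining relation $\sum_j x_j\sigma(y_j)=\delta_{\sigma,e}$ and reindexing by $\pi=\nu\sigma$ gives the orthogonality identities $\sum_j\nu(x_j)\pi(y_j)=\delta_{\nu,\pi}$ for all $\nu,\pi\in G$; these carry the whole argument. Setting $f_j(\lambda):=\tr(y_j\lambda)$ produces right $\Ga$-linear functionals $f_j\colon\La\to\Ga$ with $\sum_j x_j f_j(\lambda)=\sum_\sigma\bigl(\sum_j x_j\sigma(y_j)\bigr)\sigma(\lambda)=\lambda$, so $(x_j,f_j)$ is a dual basis and $\La$ is finitely generated projective as a right $\Ga$-module. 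Surjectivity of $j$ is then immediate: for $\varphi\in\Hom_\Ga\left(\La,\La\right)$ one has $\varphi(\lambda)=\sum_j\varphi(x_j)f_j(\lambda)=\sum_\sigma\bigl(\sum_j\varphi(x_j)\sigma(y_j)\bigr)\sigma(\lambda)$, which is $j$ of an explicit element of $\Delta\left(\La:G\right)$. Injectivity is the only place a small trick is needed: if $\sum_\pi a_\pi\pi(\lambda)=0$ for all $\lambda$, substitute $\lambda=x_j$, multiply on the \emph{right} by $\tau(y_j)$, sum over $j$, and apply the orthogonality identities $\sum_j\pi(x_j)\tau(y_j)=\delta_{\pi,\tau}$ to collapse the sum to $a_\tau$; hence every $a_\tau$ vanishes and $j$ is an isomorphism.

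\emph{(a) $\Rightarrow$ (b).} Pick a dual basis $(z_i,g_i)_{i=1}^n$ for $\La$ over $\Ga$ with $g_i\in\Hom_\Ga(\La,\Ga)\subset\Hom_\Ga(\La,\La)$ and $\sum_i z_i g_i(\lambda)=\lambda$. Since $j$ is onto, write $g_i=j\bigl(\sum_\sigma b_{i,\sigma}U_\sigma\bigr)$, i.e.\ $g_i(\lambda)=\sum_\sigma b_{i,\sigma}\sigma(\lambda)$, where the coefficients $b_{i,\sigma}\in\La$ are \emph{uniquely} determined because $j$ is injective. Now exploit that $g_i$ is $\Ga$-valued: applying any $\tau\in G$ to the formula for $g_i(\lambda)$ and comparing coefficients (uniqueness again) forces $b_{i,\rho}=\tau\bigl(b_{i,\tau^{-1}\rho}\bigr)$ for all $\tau,\rho$, and taking $\tau=\rho$ yields $b_{i,\sigma}=\sigma(y_i)$ with $y_i:=b_{i,e}$. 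Finally compare the two presentations of the identity endomorphism: $\id_\La=j(1\cdot U_e)$, while also $\id_\La(\lambda)=\sum_i z_i g_i(\lambda)=\sum_\sigma\bigl(\sum_i z_i\sigma(y_i)\bigr)\sigma(\lambda)$; injectivity of $j$ gives $\sum_i z_i\sigma(y_i)=\delta_{\sigma,e}$, which is exactly condition (b) with $x_i=z_i$ (and $n$ the size of the dual basis).

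All the computations are routine manipulations inside $\Delta\left(\La:G\right)$; the one genuinely load-bearing fact is that, via $j^{-1}$, every element of $\Hom_\Ga(\La,\La)$ has unique ``Fourier coefficients'' $b_\sigma\in\La$ along the $U_\sigma$. That is what converts the $\Ga$-valuedness of the dual functionals into the identity $b_{i,\sigma}=\sigma(y_i)$ in the second implication, and what makes the orthogonality relations suffice for injectivity in the first. I expect the only real care to be needed in keeping left versus right multiplications straight and in checking that the various $\sigma$-reindexings are consistent; there is no deeper obstacle, because the Galois coordinates $x_j,y_j$ (equivalently, the isomorphism $j$) already encode all the ``independence of automorphisms'' that in the classical field case has to be proved by hand.
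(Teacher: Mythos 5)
Your proof is correct. The paper states this theorem only as a citation to DeMeyer and supplies no proof of its own, so there is nothing internal to compare against; your argument --- building a dual basis from the trace functionals $f_j(\lambda)=\tr(y_j\lambda)$, deriving the orthogonality relations $\sum_j \nu(x_j)\pi(y_j)=\delta_{\nu,\pi}$ to get injectivity of $j$, and in the converse direction using uniqueness of the $U_\sigma$-coefficients to force $b_{i,\sigma}=\sigma(b_{i,e})$ --- is the standard one and matches DeMeyer's original proof in substance.
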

\begin{defn}\label{galois_unital_defn}\cite{demeyer:genreal_galois}
If condition (a) the Theorem \ref{galois_equiv_thm} holds then $\La$ is said to be a \textit{Galois extension} of $\Ga$.
\end{defn}

\section{Noncommutative finite covering projections}
\begin{defn}
	If $A$ is a $C^*$-algebra then an action of a group $G$ is said to be {\it involutive } if $ga^* = \left(ga\right)^*$ for any $a \in A$ and $g\in G$. Action is said to be \textit{non-degenerated} if for any nontrivial $g \in G$ there is $a \in A$ such that $ga\neq a$. 
\end{defn}
\paragraph{}A substantial feature of topological algebras is an existence of limits and infinite sums. We would like supply a $C^*$-algebraic  (nonunital) version  of the Theorem \ref{galois_equiv_thm} and the Definition \ref{galois_unital_defn} which used infinite sums instead finite ones. 
\begin{defn}\label{fin_def}
Let $\pi:A \to \widetilde{A}$ be an injective *-homomorphism of $C^*$-algebras, and let $G$ be a finite group such that following conditions hold:
\begin{enumerate}
\item[(a)] There is a non-degenerated involutive  action of $G$ on $\widetilde{A}$ such that $\widetilde{A}^G=A$, where $\widetilde{A}^G$ is the algebra of $G$-invariants, i. e. $\widetilde{A}^G = \left\{\widetilde{a} \in \widetilde{A}~|~g\widetilde{a}=\widetilde{a};~\forall g\in G\right\}$; 
\item[(b)] There is a finite or countable set $I$ and an indexed by $I$ subset $\{a_{\iota}\}_{\iota \in I} \subset \widetilde{A}$ such that
\begin{equation}\label{can_nc}
\sum_{\iota \in I} a_{\iota}(ga^*_\iota)=\left\{
\begin{array}{c l}
    1_{M(\widetilde{A})} & g \in  G \text{ is trivial}\\
   0 & g \in  G \text{ is not trivial}
\end{array}\right.
\end{equation}
where the sum of the series means the strict convergence \cite{blackadar:ko}.

\end{enumerate}

Then $\pi$ is said to be a {\it finite  noncommutative covering projection}, $G$ is said to be the {\it covering transformation group}. Denote by $G\left(\widetilde{A}~|~A\right)=G$. The algebra $\widetilde{A}$ is said to be the {
\it covering algebra}, and $A$ is called the {\it base algebra} of the covering projection. A triple $\left(A, \widetilde{A}, G\right)$ is also  said to be a {\it finite  noncommutative covering projection}.
\end{defn}
\begin{rem}
	The Theorem \ref{pavlov_troisky_thm} gives an algebraic characterization of finitely listed covering projections of compact Hausdorff topological spaces and covering projections should not be regular, the Definition \ref{fin_def} concerns with regular covering projections of locally compact spaces.
\end{rem}
\begin{rem}
	The Definition  \ref{fin_def} is motivated by the Theorem \ref{comm_fin_thm}.
\end{rem}

\begin{defn}
	Let $\left(A, \widetilde{A}, G\right)$ be a noncommutative finite covering projection.  Algebra  $\widetilde{A}$  is a  countably generated  Hilbert $A$-module with a sesquilinear product given by
	\begin{equation}\label{fin_form_a}
	\left\langle a, b \right\rangle_{\widetilde{A}} = 
	 \sum_{g \in G} g(a^*b).
	\end{equation}
	We say that the structure of Hilbert $A$-module is {\it induced by the covering projection} $\left(A, \widetilde{A}, G\right)$. Henceforth we shall consider $\widetilde{A}$ as a right $A$-module, so we will write $\widetilde{A}_A$. 
\end{defn}

\begin{rem}
	In the commutative case  the product \eqref{fin_form_a}  coincides with the product given by the Theorem \ref{pavlov_troisky_thm}.
\end{rem}
\begin{rem}
From \eqref{can_nc} it follows that
\begin{equation}\label{can_nc_hilb}
1_{M\left(A\right)}=\sum_{\iota \in I} a_\iota \rangle\langle a_\iota~
\end{equation}
where above series is strictly convergent.
 \end{rem}

 \paragraph{}   The    algebra of compact operators $\mathcal{K}\left(\widetilde{A}_A\right)$ as well as $\widetilde{A}$ non-degenerately acts on $\widetilde{A}_A$. So we can formulate the following lemma.
    \begin{lem}\label{fin_compact_lem}
    If  $\left(A, \widetilde{A}, G\right)$ is a finite noncommutative covering projection then  $\widetilde{A} \subset\mathcal{K}\left(\widetilde{A}_A\right)$.
    \end{lem}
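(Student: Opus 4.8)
The plan is to realize every element $\widetilde a\in\widetilde A$, acting on $\widetilde A_A$ by left multiplication $L_{\widetilde a}\colon\eta\mapsto\widetilde a\eta$, as a norm limit of finite sums of rank-one operators. Note first that $L_{\widetilde a}$ is adjointable with $L_{\widetilde a}^{*}=L_{\widetilde a^{*}}$ (immediate from \eqref{fin_form_a}), so a priori $L_{\widetilde a}\in\mathcal L(\widetilde A_A)=M(\mathcal K(\widetilde A_A))$; the content of the lemma is that it actually lies in the closed ideal $\mathcal K(\widetilde A_A)$. For a finite subset $F\subseteq I$ I would set $p_{F}:=\sum_{\iota\in F}\theta_{a_\iota,a_\iota}=\sum_{\iota\in F}a_\iota\rangle\langle a_\iota\in\mathcal K(\widetilde A_A)$. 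A direct computation using \eqref{can_nc} shows that $\sum_{\iota\in I}\theta_{a_\iota,a_\iota}$ acts as the identity operator of $\widetilde A_A$ (this is \eqref{can_nc_hilb}) and that $L_{\widetilde a}p_{F}=\sum_{\iota\in F}\theta_{\widetilde a a_\iota,\,a_\iota}$, again a finite sum of rank-one operators and hence an element of $\mathcal K(\widetilde A_A)$. Since $\mathcal K(\widetilde A_A)$ is norm closed, it then suffices to prove that $\|L_{\widetilde a}-L_{\widetilde a}p_{F}\|\to0$ along the net of finite subsets of $I$ ordered by inclusion.

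To establish this I would expand, for $\eta$ in the module $\widetilde A_A$ and using that each $g\in G$ is a $*$-homomorphism,
\begin{equation*}
\bigl(L_{\widetilde a}-L_{\widetilde a}p_{F}\bigr)\eta=\widetilde a\,\eta-\widetilde a\sum_{g\in G}s_{F,g}\,g(\eta)=\widetilde a\,(1_{M(\widetilde A)}-s_{F,e})\,\eta-\sum_{g\neq e}\widetilde a\,s_{F,g}\,g(\eta),
\end{equation*}
where $s_{F,g}:=\sum_{\iota\in F}a_\iota\,g(a_\iota^{*})\in M(\widetilde A)$. By \eqref{can_nc} the net $(s_{F,g})_{F}$ converges strictly in $M(\widetilde A)$ to $1_{M(\widetilde A)}$ when $g=e$ and to $0$ when $g\neq e$; since $\widetilde a\in\widetilde A$, this means precisely that $\|\widetilde a\,(1_{M(\widetilde A)}-s_{F,e})\|\to0$ and $\|\widetilde a\,s_{F,g}\|\to0$ for each $g\neq e$. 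Combining this with the norm equivalence $\|\cdot\|_{\widetilde A}\le\|\cdot\|_{\widetilde A_A}\le|G|^{1/2}\|\cdot\|_{\widetilde A}$ on $\widetilde A_A$ (which follows from the positivity of every summand $g(\eta^{*}\eta)$ of $\langle\eta,\eta\rangle_{\widetilde A}$ together with $g$ being isometric) and with $\|g(\eta)\|_{\widetilde A}=\|\eta\|_{\widetilde A}$, one gets
\begin{equation*}
\bigl\|L_{\widetilde a}-L_{\widetilde a}p_{F}\bigr\|\le|G|^{1/2}\Bigl(\|\widetilde a\,(1_{M(\widetilde A)}-s_{F,e})\|+\sum_{g\neq e}\|\widetilde a\,s_{F,g}\|\Bigr),
\end{equation*}
and the right-hand side, a finite sum, tends to $0$. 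Hence $L_{\widetilde a}=\lim_{F}L_{\widetilde a}p_{F}$ in operator norm and $L_{\widetilde a}\in\mathcal K(\widetilde A_A)$.

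The only delicate point is the norm convergence $\|L_{\widetilde a}-L_{\widetilde a}p_{F}\|\to0$, and it is worth stressing why \emph{strict} convergence alone does not give it: strictly, $\sum_{\iota}\theta_{a_\iota,a_\iota}$ already converges to the identity, but that only places $L_{\widetilde a}$ back in $M(\mathcal K(\widetilde A_A))$, which is where we started. The upgrade to norm convergence uses genuinely that $\widetilde a$ is an element of $\widetilde A$ rather than merely of $M(\widetilde A)$: multiplying a strictly null net of multipliers by a fixed element of $\widetilde A$ produces a norm null net, which is exactly the step $\|\widetilde a\,s_{F,g}\|\to0$ above. The remaining ingredients — adjointability of $L_{\widetilde a}$, the identity $\sum_{\iota}\theta_{a_\iota,a_\iota}=\mathrm{id}$, and the norm equivalence on $\widetilde A_A$ — are routine, so I expect essentially all of the work to sit in making this one estimate precise.
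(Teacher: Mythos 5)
Your proposal is correct and follows essentially the same route as the paper: approximate $L_{\widetilde a}$ by the finite partial sums of $\sum_{\iota}\theta_{a_\iota,a_\iota}$ composed with $L_{\widetilde a}$ and upgrade the strict convergence of \eqref{can_nc_hilb} to norm convergence using that $\widetilde a\in\widetilde A$. The only differences are cosmetic (you compose the rank-one projections on the right rather than the left) and that you supply the explicit estimate via $s_{F,g}$ and the norm comparison on $\widetilde A_A$, which the paper merely asserts in one line.
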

\begin{proof}
If $\widetilde{a}\in \widetilde{A}$ then from \eqref{can_nc_hilb} it follows that
$$
\widetilde{a} = \sum_{\iota \in I} a_\iota \rangle\langle a_\iota\cdot \widetilde{a} = \sum_{\iota \in I} a_\iota \rangle\langle a_\iota  \widetilde{a}.
$$
The series $\sum_{\iota \in I} a_\iota \rangle\langle a_\iota  \widetilde{a}$ is norm convergent because $\sum_{\iota \in I} a_\iota \rangle\langle a_\iota$ is strictly convergent. So $\widetilde{a}$ is compact, i.e. $\widetilde{a} \in \mathcal{K}\left(\widetilde{A}_A\right)$.
\end{proof}
\begin{rem}
The Lemma \ref{fin_compact_lem} may be regarded as a generalization of the Theorem \ref{galois_equiv_thm} because if $\widetilde{A}$ is unital then $\widetilde{A}$ is finitely generated projective right $A$-module, if and only if  $\widetilde{A} \subset\mathcal{K}\left(\widetilde{A}_A\right)$ (See \cite{clare_crisp_higson:adj_hilb}).
\end{rem}
\begin{exm}\label{circle_fin}{\it Finite covering projections of the circle $S^1$.} 
There is the universal covering projection $\widetilde{\pi}: \mathbb{R}\to S^1$. 
Let $\widetilde{\mathcal{U}}_1,\ \widetilde{\mathcal{U}}_2 \subset \mathbb{R}$ be such that
\begin{equation*}
\widetilde{\mathcal{U}}_1 = (-\pi -1/2,  1/2), \ \widetilde{\mathcal{U}}_2 = (-1/2, \pi +  1/2). 
\end{equation*} For any $i \in \{1,2\}$ the set $\mathcal{U}_i = \widetilde{\pi}(\widetilde{\mathcal{U}}_i) \subset S^1$ is open, connected and evenly covered. Since $S^1=\mathcal{U}_1\bigcup\mathcal{U}_2$ there is  a partition of unity $a_1, a_2$ dominated by $\{\mathcal{U}_i\}_{i\in \{1,2\}}$ \cite{munkres:topology}, i.e.  $a_i: S^1 \to [0,1]$ are such that
\begin{equation*}
\left\{
\begin{array}{c l}
    a_i(x)>0 & x \in \mathcal{U}_i \\
    a_i(x)=0 & x \notin \mathcal{U}_i
\end{array}\right.; \ i \in \{1,2\}.
\end{equation*}
and $a_1+a_2 = 1_{C(S^1)}$. 
If $e_1, e_2\in C\left(S^1\right)$ are given by
\begin{equation*}
e_i = \sqrt{a_i}; \ i=1,2;
\end{equation*}
then
\begin{equation*}
\left(e_1\right)^2+\left(e_2\right)^2= 1_{C_0(S^1)};~ e^*_1 = e_1;~e^*_2=e_2.
\end{equation*}
Let $\widetilde{e}_i \in C_c(\mathbb{R})$ be given by
\begin{equation}\label{tilde_e_eqn}
\widetilde{e}_i(\widetilde{x})=\left\{
\begin{array}{c l}
e_i(\widetilde{\pi}(\widetilde{x})) & \widetilde{x} \in \widetilde{\mathcal{U}}_i \\
0 & \widetilde{x} \notin \widetilde{\mathcal{U}}_i
\end{array}\right.; \ i \in \{1,2\}.
\end{equation}
Let $\widetilde{\pi}^n: \mathcal{X}_n\to S^1$ be an $n$-listed covering projection then $G(\mathcal{X}_n~|~S^1)\approx \mathbb{Z}_n$. It is well known that $\mathcal{X}_n\approx S^1$ but we use the  $\mathcal{X}_n$ notion for clarity. From $C\left(S^1 \right)= C\left(\mathcal{X}_n \right)^{\mathbb{Z}_n} $ it follows that the condition (a) of the Definition \ref{fin_def} holds. There is a sequence of covering projections $\mathbb{R}\xrightarrow{\pi^n}\mathcal{X}_n \to S^1$. If $\mathcal{U}^n_i = \pi^n\left(\widetilde{\mathcal{U}}_i\right)$ then $\mathcal{U}^n_i \bigcap g\mathcal{U}^n_i = \emptyset$ for any nontrivial $g \in G(\mathcal{X}_n, S^1)$. If $e^n_i \in C(\mathcal{X}_n)$ is given by
\begin{equation}\label{e_n_i}
e^n_i=\mathfrak{Desc}_{\pi^n}\left(\widetilde{e}_i \right) ; \ i \in \{1,2\}
\end{equation}
then
\begin{equation*}
\sum_{i \in \{1,2\}; \ g \in G(\mathcal{X}_n, S^1)} g\left(e^n_i\right)^2 = 1_{C_0(\mathcal{X}_n)}; 
\end{equation*}
and from $g \mathcal U^n_i \bigcap \mathcal U^n_i=\emptyset$ for any notrivial $g \in G(\mathcal{X}_n|S^1)$ it follows that
\begin{equation*} 
 e^n_i(ge^n_i) = 0; \ \text{ for any notrivial } g \in G(\mathcal{X}_n|S^1).
\end{equation*}
If $I_n = G(\mathcal{X}_n|S^1) \times \{1,2\}$
and
\begin{equation}\label{iota_def}
e^n_{\iota} = ge_i^n; \text{ where } \iota = (g,i) \in I_n
\end{equation}
then
\begin{equation}\label{circ_sum}
\sum_{\iota \in I_n} e_{\iota}(ge^*_\iota)=\sum_{\iota \in I_n} e_{\iota}(ge_\iota)=\left\{
\begin{array}{c l}
    1_{C_0(\mathcal{X}_n)} & g \in  G(\mathcal{X}_n~|~S^1) \text{ is trivial}\\
   0 & g \in  G(\mathcal{X}_n~|~S^1) \text{ is not trivial}
\end{array}\right..
\end{equation}

So a natural *-homomorphism $\pi:C(S^1)\to C(\mathcal{X}_n)$ satisfies the condition (b) of the Definition \ref{fin_def}.  So a triple $\left(C_0(S^1),C_0(\mathcal{X}_n), \mathbb{Z}_n\right)$ is a finite noncommutative covering projection.
\end{exm}
\begin{thm}\label{comm_pure_state_thm}\cite{murphy}
The state $\rho$ on an Abelian $C^*$-algebra is pure if and only if $\rho$ is a character.
\end{thm}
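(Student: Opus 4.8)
The plan is to prove the two implications separately, noting that only ``pure $\Rightarrow$ character'' uses commutativity. Throughout I would work with the GNS representation $\left(\pi_\rho, H_\rho, \xi_\rho\right)$ attached to $\rho$ as in \ref{comm_gns_constr}, and I would invoke the standard fact (see \cite{murphy}) that a state on a $C^*$-algebra is pure if and only if its GNS representation is irreducible, together with Schur's lemma in the form: $\pi_\rho$ is irreducible if and only if its commutant $\pi_\rho\left(A\right)'$ equals $\C\,1_{H_\rho}$.

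\textbf{The easy direction} ($\Leftarrow$). Suppose $\rho$ is a character, i.e.\ a nonzero $*$-homomorphism $\rho : A \to \C$. Then $\rho\left(a^*a\right) = \overline{\rho\left(a\right)}\rho\left(a\right) = \left|\rho\left(a\right)\right|^2$, so the left kernel $\mathcal I_\rho = \left\{a \in A : \rho\left(a^*a\right) = 0\right\}$ from \ref{comm_gns_constr} coincides with $\ker \rho$, a subspace of codimension one. Hence $H_\rho$ is the completion of the one-dimensional space $A/\mathcal I_\rho$, so $\dim H_\rho = 1$ and $\pi_\rho$ is trivially irreducible; therefore $\rho$ is pure. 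This direction in fact holds for an arbitrary $C^*$-algebra.

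\textbf{The main direction} ($\Rightarrow$). Suppose $\rho$ is pure, so $\pi_\rho$ is irreducible and $\pi_\rho\left(A\right)' = \C\,1_{H_\rho}$. Since $A$ is abelian, $\pi_\rho\left(A\right)$ is a commutative subalgebra of $B\left(H_\rho\right)$, hence $\pi_\rho\left(A\right) \subseteq \pi_\rho\left(A\right)' = \C\,1_{H_\rho}$. Thus for each $a \in A$ there is $\lambda\left(a\right) \in \C$ with $\pi_\rho\left(a\right) = \lambda\left(a\right)\,1_{H_\rho}$; the assignment $a \mapsto \lambda\left(a\right)$ is a $*$-homomorphism into $\C$ because $\pi_\rho$ is, and it is nonzero because $\pi_\rho$ is nondegenerate. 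Finally, using $\left\|\xi_\rho\right\| = 1$,
$$
\rho\left(a\right) = \langle \pi_\rho\left(a\right)\xi_\rho, \xi_\rho\rangle = \lambda\left(a\right)\left\|\xi_\rho\right\|^2 = \lambda\left(a\right),
$$
so $\rho = \lambda$ is a character. As an alternative more in the spirit of the present article, one may argue via Gelfand--Na\u{i}mark (Theorem \ref{gelfand-naimark}): identifying $A$ with $C_0\left(\mathcal X\right)$ for $\mathcal X$ the spectrum, states correspond to Radon probability measures on $\mathcal X$ (cf.\ \ref{l2_mu}) and characters to the point masses $\delta_x$, $x \in \mathcal X$; one then checks that a probability measure is an extreme point of the convex set of probability measures precisely when it is a point mass, since a measure whose support contains two distinct points can be split, along disjoint open neighbourhoods of them, into a nontrivial convex combination of two probability measures.

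The argument is entirely standard, so I do not anticipate a genuine obstacle. The only points requiring care are the appeal to the ``pure $\Leftrightarrow$ irreducible GNS'' dictionary and to Schur's lemma (both in \cite{murphy}), and the bookkeeping of nondegeneracy and of $\left\|\xi_\rho\right\| = 1$ in the non-unital case, which is handled by the usual approximate-unit arguments.
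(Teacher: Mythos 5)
The paper does not prove this statement at all — it is quoted verbatim from \cite{murphy} as a known result — so there is no internal proof to compare against. Your argument is the standard and correct one (it is essentially Murphy's own): both directions are sound, the character direction correctly identifies $\mathcal I_\rho$ with $\ker\rho$ to get a one-dimensional GNS space, and the pure direction correctly uses $\pi_\rho\left(A\right)\subseteq\pi_\rho\left(A\right)'=\C\,1_{H_\rho}$ for abelian $A$; the only external inputs are the ``pure $\Leftrightarrow$ irreducible GNS'' equivalence and Schur's lemma, both of which are in the cited reference.
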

\begin{empt}\label{comm_pure_state_map_empt}
If $\mathcal X$ is a locally compact Hausdorff space then from Theorems \ref{gelfand-naimark}, \ref{comm_pure_state_thm} it follows that the set of pure states of $C_0\left( \mathcal X\right)$ coincides with  $\mathcal X$. If $f: C_0\left( \mathcal X\right) \to   C_0\left( \mathcal Y\right)$ is a *-homomorphism  then the natural map $ f^*:\mathcal Y \to  \mathcal X$ is given by
\begin{equation}\label{comm_state_map}
\rho_y \mapsto \tau_{f^*\left(y \right) }
\end{equation}
where $\rho_x : C_0\left( \mathcal Y\right) \to \C$  and  $\tau_{f^*\left(y \right)} : C_0\left( \mathcal X\right) \to \C$ are pure states given by
$$
\rho_y\left( b\right) =b\left(y \right); ~~ \tau_{f^*\left(y \right)}\left( a\right) =a\left(f^*\left( y\right)  \right) ; ~~ \forall b \in C_0\left(  \mathcal Y\right),~~\forall a \in C_0\left(  \mathcal X\right)
$$
such that the following condition holds
$$
\tau_{f^*\left(y \right)}\left(a \right)= \rho_y\left(f\left(a \right)  \right).  
$$
\end{empt}
\begin{thm}\label{ext_pure_state_thm}\cite{murphy}
Let $B$ be a $C^*$-subalgebra of a $C^*$-algebra $A$ and let $\rho$ be a pure state on $B$. Then there is a pure state $\rho'$ on $A$ extending $\rho$.
\end{thm}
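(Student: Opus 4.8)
The plan is to combine the Hahn--Banach theorem with the Krein--Milman theorem: first produce \emph{some} state on $A$ restricting to $\rho$, then choose an extreme point of the compact convex set of all such state extensions, and finally observe that this extreme point is automatically an extreme point of the whole state space of $A$, i.e.\ a pure state.

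First I would build a state extension. Viewing $\rho$ as a norm-one functional on $B$, the Hahn--Banach theorem gives $\widetilde\rho\in A^*$ with $\widetilde\rho|_B=\rho$ and $\|\widetilde\rho\|=1$. When $A$ is unital with $1\in B$ we have $\widetilde\rho(1)=\rho(1)=1=\|\widetilde\rho\|$, and a norm-one functional attaining the value $1$ at the unit is positive, hence a state; in the general case one first extends $\rho$ to the unitization $\widetilde B\subset\widetilde A$ by $\rho(b+\lambda 1)=\rho(b)+\lambda$ (still a state), applies the unital argument on $\widetilde A$, and restricts to $A$. Thus the set $S=\{\sigma\in S(A):\sigma|_B=\rho\}$ of state extensions is nonempty. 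It is also convex, and it is the intersection of the weak-$*$ compact (quasi-)state space of $A$ with the weak-$*$ closed affine condition $\sigma|_B=\rho$, hence weak-$*$ compact. By Krein--Milman, $S$ has an extreme point $\rho'$.

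It remains to see that $\rho'$ is pure as a state on $A$. Suppose $\rho'=t\sigma_1+(1-t)\sigma_2$ with $\sigma_1,\sigma_2\in S(A)$ and $0<t<1$. Restricting to $B$ yields $\rho=t(\sigma_1|_B)+(1-t)(\sigma_2|_B)$, a convex combination of states of $B$; since $\rho$ is pure it is extreme in the state space of $B$, so $\sigma_1|_B=\sigma_2|_B=\rho$, i.e.\ $\sigma_1,\sigma_2\in S$. Because $\rho'$ is extreme in $S$ this forces $\sigma_1=\sigma_2=\rho'$, so $\rho'$ is extreme in $S(A)$, that is, a pure state extending $\rho$. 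The only genuinely delicate point is the first step in the non-unital setting---ensuring the Hahn--Banach extension can be taken \emph{positive}---which the passage to unitizations handles cleanly; the rest is a routine application of standard convexity facts about state spaces.
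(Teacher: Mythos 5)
The paper gives no proof of this statement—it is quoted from Murphy—and your argument is exactly the standard one found there: Hahn--Banach (via unitization) to get a nonempty weak-$*$ compact convex set of state extensions, Krein--Milman to pick an extreme point, and the restriction argument to see that such an extreme point is extreme in all of $S(A)$. The proof is correct; the only detail left implicit is that the restriction to $A$ of the extended state on $\widetilde A$ still has norm $1$ (it dominates $\rho$ in norm), which is immediate.
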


\begin{thm}\label{comm_fin_thm}
Following conditions hold:
\begin{enumerate}
\item[(a)] If $\widetilde{p}:\widetilde{\mathcal X} \to \mathcal X$ is a finite  topological regular covering projection and both $\mathcal X$ and $\widetilde{\mathcal X}$ are locally compact Hausdorff spaces then $\left(C_0\left(\mathcal X \right)  , C_0\left(\widetilde{\mathcal X} \right), G\left(\widetilde{\mathcal X} ~|~ \mathcal X \right) \right)$ is a finite noncommutative covering projection;
\item[(b)] If $\mathcal X$ and  $\widetilde{\mathcal X}$ are locally compact Hausdorff spaces and $\left(C_0\left(\mathcal X \right)  , C_0\left(\widetilde{\mathcal X} \right), G \right)$ is a finite noncommutative covering projection then the natural continuous map   $\widetilde{p}:\widetilde{\mathcal X} \to \mathcal X$ is a  regular finitely listed covering projection such that $G\left(\widetilde{\mathcal X} ~|~ \mathcal X \right) \approx G$.  
\end{enumerate}
\end{thm}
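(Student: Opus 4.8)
The plan is to prove the two implications separately. Throughout, Gelfand--Na\u{i}mark duality (Theorem \ref{gelfand-naimark}) identifies $\widetilde{\mathcal X}$ and $\mathcal X$ with the pure state spaces of $C_0(\widetilde{\mathcal X})$ and $C_0(\mathcal X)$, so (as in \ref{comm_pure_state_map_empt}) the injective $*$-homomorphism $\pi$ and the continuous map $\widetilde p$ determine one another. The family $\{a_\iota\}$ required by Definition \ref{fin_def}(b) will be produced from the covering partition of unity of Definition \ref{cov_cov_part_unity_defn} and equations \eqref{covering_partition_eqn}--\eqref{zero_ge_iota_eqn}; the converse direction rests on an algebraic argument extracting freeness of a group action from \eqref{can_nc}.

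\textbf{Proof of (a).} Put $\pi\colon C_0(\mathcal X)\to C_0(\widetilde{\mathcal X})$, $\pi(f)=f\circ\widetilde p$. A finite-fold covering is proper, so $f\circ\widetilde p$ vanishes at infinity and $\pi$ is a well-defined $*$-homomorphism, injective because $\widetilde p$ is onto. The deck group $G\eqdef G(\widetilde{\mathcal X}\mid\mathcal X)$ acts on $C_0(\widetilde{\mathcal X})$ by $(g\cdot f)(\widetilde x)=f(g^{-1}\widetilde x)$; this action is involutive, and non-degenerate since a nontrivial deck transformation of the (connected) covering moves a point, hence a function. As $\widetilde p$ is regular, $\mathcal X\approx\widetilde{\mathcal X}/G$, and a continuous function on $\widetilde{\mathcal X}$ is $G$-invariant exactly when it factors through the orbit map; with properness this gives $C_0(\widetilde{\mathcal X})^G=\pi(C_0(\mathcal X))$, i.e. condition (a) of Definition \ref{fin_def}. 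Finally fix a generalized fundamental domain $\{\widetilde{\mathcal U}_\iota\}_{\iota\in I}$ (with $I$ countable, by second countability), set $\widetilde{e}_\iota=\sqrt{\widetilde{a}_\iota}$ as in \eqref{covering_partition_eqn}, and index by $I'=G\times I$ with $a_{(h,\iota)}=h\cdot\widetilde{e}_\iota\in C_c(\widetilde{\mathcal X})$. Since $G$ acts by $*$-automorphisms and $\widetilde{e}_\iota$ is self-adjoint,
$$
\sum_{(h,\iota)\in I'}a_{(h,\iota)}\bigl(g\cdot a_{(h,\iota)}^*\bigr)=\sum_{h\in G,\ \iota\in I}h\cdot\Bigl(\widetilde{e}_\iota\,\bigl((h^{-1}gh)\cdot\widetilde{e}_\iota\bigr)\Bigr),
$$
which by \eqref{zero_ge_iota_eqn} vanishes term by term when $g\neq e$ and equals $\sum_{h,\iota}h\cdot\widetilde{a}_\iota=1_{M(C_0(\widetilde{\mathcal X}))}$ when $g=e$ by \eqref{covering_partition_eqn}. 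This is precisely \eqref{can_nc}, so the triple is a finite noncommutative covering projection.

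\textbf{Proof of (b).} The $G$-action on $C_0(\widetilde{\mathcal X})$ induces an action on $\widetilde{\mathcal X}$ by homeomorphisms, and $\widetilde p$ is the map on pure state spaces induced by $\pi$. This action is free: if $g\neq e$ fixed a point $\widetilde x$, then evaluating the $g$-equation in \eqref{can_nc} at $\widetilde x$ gives $\sum_{\iota}|a_\iota(\widetilde x)|^2=0$, so $a_\iota(\widetilde x)=0$ for all $\iota$, contradicting the $e$-equation $\sum_{\iota}|a_\iota(\widetilde x)|^2=1$. A free action of a finite group on a Hausdorff space is properly discontinuous (\ref{fin_prop_disc_empt}), so by Theorem \ref{g_cov_thm} the orbit map $q\colon\widetilde{\mathcal X}\to\widetilde{\mathcal X}/G$ is a regular covering projection with covering group $G$, finitely listed with fibres of size $|G|$. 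It remains to identify $q$ with $\widetilde p$: by hypothesis $\pi(C_0(\mathcal X))=C_0(\widetilde{\mathcal X})^G=C_0(\widetilde{\mathcal X}/G)$, so Gelfand--Na\u{i}mark yields a homeomorphism $\mathcal X\approx\widetilde{\mathcal X}/G$ intertwining $\widetilde p$ and $q$. Hence $\widetilde p$ is a regular finitely listed covering projection with $G(\widetilde{\mathcal X}\mid\mathcal X)\approx G$.

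\textbf{Main obstacle.} The delicate part is the bookkeeping forced by non-compactness: one must check that $\pi$ and the lifts/descents of Definition \ref{lift_desc_defn} stay inside $C_0$ (via properness of finite-fold coverings), and establish $C_0(\widetilde{\mathcal X})^G=C_0(\widetilde{\mathcal X}/G)$, which needs the orbit map to be a proper quotient map. The genuinely algebraic heart, and the only place where hypothesis \eqref{can_nc} does real work, is the freeness argument in (b): the orthogonality packaged in \eqref{can_nc} is exactly what excludes fixed points, after which \ref{g_cov_thm} and \ref{fin_prop_disc_empt} finish the job. (Connectedness of $\widetilde{\mathcal X}$, part of the running conventions, is what pins the covering group down to $G$ rather than something larger.)
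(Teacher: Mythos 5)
Your proposal is correct, and part (a) together with the freeness argument at the start of part (b) follow the paper's proof essentially verbatim: the family $a_{(h,\iota)}=h\widetilde{e}_\iota$ is exactly the paper's $\widetilde{e}_{(g,\iota)}=\sqrt{g\widetilde{a}_\iota}$, and the fixed-point contradiction obtained by evaluating \eqref{can_nc} at a putative fixed point is the paper's argument word for word. Where you genuinely diverge is the final identification of $\mathcal X$ with $\widetilde{\mathcal X}/G$. The paper does this by hand on the level of points: surjectivity of $\widetilde p$ via extension of pure states (Theorem \ref{ext_pure_state_thm}), then injectivity of the induced map $p\colon\widetilde{\mathcal X}/G\to\mathcal X$ by choosing disjoint $G$-saturated neighborhoods of two orbits, building bump functions $\widetilde a_j$, averaging them over $G$ to land in $C_0(\mathcal X)$, and deriving $1=(a_1a_2)(x)=0$. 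You instead invoke $C_0(\widetilde{\mathcal X})^G=C_0(\widetilde{\mathcal X}/G)$ and let Gelfand--Na\u{i}mark functoriality produce the homeomorphism directly. Your route is shorter and arguably more in the spirit of the duality the paper is built on, but it shifts the burden onto the identity $C_0(\widetilde{\mathcal X})^G=C_0(\widetilde{\mathcal X}/G)$, which requires the orbit map of the (free, finite) action to be proper and open --- true, and you flag it, but it is the one step you assert rather than prove; the paper's separating-functions argument is precisely the elementary substitute for that lemma. Either way the conclusion $G(\widetilde{\mathcal X}\mid\mathcal X)\approx G$ comes from Theorem \ref{g_cov_thm} in both treatments, so no gap remains.
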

\begin{proof} (a) There is an involutive continuous action of the covering  transformation group  $G = G\left(\widetilde{\mathcal X}~|~ \mathcal X\right)$ on $C_0\left(\widetilde{\mathcal X}\right)$ arising from the action of $G$ on $\widetilde{\mathcal X}$,  and $C_0\left(\mathcal X\right)=C_0\left(\widetilde{\mathcal X}\right)^G$, i.e. condition (a) of the Definition \ref{fin_def} holds. Let 
$$
1_{C_b\left(\widetilde{\mathcal{X}}\right)}= 1_{M\left(C_0\left(\widetilde{\mathcal{X}}\right)\right)}= \sum_{(g,\iota) \in G\left(\widetilde{\mathcal{X}}|\mathcal{X}\right) \times I}g \widetilde{a}_\iota
$$ be a covering partition of unity
(See Definition \ref{cov_cov_part_unity_defn}). If $g \in G$ and $\widetilde{e}_{\left(g,\iota\right)}\in C_0\left(\widetilde{\mathcal X}\right)$ is given by
 \begin{equation*}
 \widetilde{e}_{\left(g,\iota\right)} = \sqrt{g \widetilde{a}_\iota}.
 \end{equation*}
then from \eqref{covering_partition_eqn} and \eqref{zero_ge_iota_eqn} it follows that
 \begin{equation*}
 \sum_{\left(g',\iota\right) \in G \times I} \widetilde{e}_{\left(g',\iota\right)}\left(g\widetilde{e}^*_{\left(g',\iota\right)}\right)=\sum_{\left(g',\iota\right) \in G \times I} \widetilde{e}_{\left(g',\iota\right)}\left(g\widetilde{e}_{\left(g',\iota\right)}\right)=\left\{
 \begin{array}{c l}
     1_{M\left(C_0\left(\widetilde{\mathcal{X}}\right)\right)} & g \in  G \text{ is trivial}\\
    0 & g \in  G \text{ is not trivial}
 \end{array}\right..
 \end{equation*}
 So condition (b) of the Definition \ref{fin_def} holds, whence the triple
  $\left(C_0\left(\mathcal{X}\right),C_0\left(\widetilde{\mathcal{X}}\right), G\left(\widetilde{\mathcal X}~|~ \mathcal X\right)\right)$ is a finite noncommutative covering projection.
  \newline
  \newline
  (b) The action of $G$ on $C_0\left( \widetilde{\mathcal X}\right) $ induces the action of $G$ on $\widetilde{\mathcal X}$. If $\widetilde{x} \in \widetilde{\mathcal X}$ is such that $g \widetilde{x} = \widetilde{x}$ for nontrivial $g \in G$ then from \eqref{can_nc} it follows that
  $$
  1=1_{M\left(C_0\left( \widetilde{\mathcal X}\right)\right)}\left(\widetilde{x}\right)= \sum_{\iota \in I} a_{\iota}\left(\widetilde{x}\right)a^*_\iota\left(\widetilde{x}\right)=\sum_{\iota \in I} a_{\iota}\left(\widetilde{x}\right)a^*_\iota\left(g\widetilde{x}\right)=\left(\sum_{\iota \in I} a_{\iota}\left(ga^*_\iota\right)\right)\left(\widetilde{x}\right) = 0.
  $$
  From this contradiction it follows that the  action of $G$ does not have fixed points. 
  From \ref{g_cov_thm} and \ref{fin_prop_disc_empt}  it follows that $\widetilde{\mathcal X}\to\widetilde{\mathcal X}/G$ is a regular covering projection. 
From \ref{comm_pure_state_map_empt} it follows that the natural continuous map $\widetilde{p}:\widetilde{\mathcal X}\to \mathcal X$ is be given by  \eqref{comm_state_map}, i.e. if $\widetilde{\rho}_{\widetilde{x}} : C_0\left(\widetilde{\mathcal X}\right) \to \C$  and  $\rho_{\widetilde{p}\left(\widetilde{x} \right)} : C_0\left( \mathcal X\right) \to \C$ are pure states given by
$$
\widetilde{\rho}_{\widetilde{x}}\left( \widetilde{a}\right) =\widetilde{a}\left(\widetilde{x} \right); ~~ \rho_{\widetilde{p}\left(\widetilde{x} \right)}\left( a\right) =a\left(\widetilde{p}\left(\widetilde{x}  \right)\right)  
$$
then following condition holds
\begin{equation}\tag{*}
\rho_{\widetilde{p}\left(\widetilde{x}\right) }\left(a \right)= \widetilde{\rho}_{\widetilde{x}}\left( a\right); ~~ \forall a \in C\left(  \mathcal X\right).
\end{equation}
 From the Theorem \ref{ext_pure_state_thm} it follows that for any pure state $\rho_x: C_0\left(\mathcal X \right) \to \C$ there is a pure state $\widetilde{\rho}_{\widetilde{x}}: C_0\left(\widetilde{\mathcal X} \right) \to \C$ such that
 \begin{equation}\tag{**}
 \rho_{x}\left(a \right)= \widetilde{\rho}_{\widetilde{x}}\left( a\right); ~~ \forall a \in C\left(  \mathcal X\right).
 \end{equation}
 From (*) and (**) it follows that for any $x\in \mathcal X$ there is $\widetilde{x}\in \widetilde{\mathcal X}$ such that $x = \widetilde{p}\left( \widetilde{x}\right)$, i.e. the map $\widetilde{p}$ is surjective. If $a \in C_0\left(\mathcal X \right)$ then $ga = a$ for any $g \in G$, it follows that
 $$
 \rho_{\widetilde{p}\left(\widetilde{x} \right)}\left( a\right)= \rho_{\widetilde{p}\left(\widetilde{x} \right)}\left( g a\right)= \rho_{\widetilde{p}\left(g\widetilde{x} \right)}\left( a\right),
 $$ 
 i.e.
 \begin{equation}\tag{***}
\widetilde{p}\left(\widetilde{x} \right) = \widetilde{p}\left(g\widetilde{x} \right); ~~ \forall g \in G,~~ \forall \widetilde{x} \in \widetilde{\mathcal X}
 \end{equation}
 From the equation (***) it follows that the surjective map $\widetilde{p}$ induces the surjective map $p:\widetilde{\mathcal X}/G \to \mathcal{X}$. If $p$ is not injective then there are $x_1, x_2 \in \widetilde{\mathcal X}/G$ are such that $x_1\neq x_2$ and $p\left(x_1 \right)=p\left(x_2 \right) = x$. There are $\widetilde{x}_1, \widetilde{x}_2 \in  \widetilde{\mathcal X}$ mapped onto $x_1,x_2$. From $x_1\neq x_2$ it follows that $G\widetilde{x}_1 \bigcap G\widetilde{x}_2 = \emptyset$. Since $G$ is finite and $\widetilde{\mathcal X}$ is Hausdorff there are open sets $\widetilde{\mathcal U}_1, \widetilde{\mathcal U}_2$ such that $\widetilde{x}_j \in \widetilde{\mathcal U}_j$ for any $j=1,2$ and 
 \begin{equation*}\tag{****}
 g' \widetilde{\mathcal U}_j\bigcap g'' \widetilde{\mathcal U}_k = \emptyset \text{ for any  } \left(g',j \right)\neq \left(g'',k \right) \in G \times \{1,2\}.
 \end{equation*}
 
 For any $j = 1,2$ there is a function $\widetilde{a}_j \in C_0\left( \widetilde{\mathcal X}\right) $ such that
  $a_j\left(\widetilde{x}_j \right) = 1$ and 
  $\supp a_j \subset \widetilde{\mathcal U}_j $.
  If $a_j =  \sum_{g \in G} g \widetilde{a}_j$ then $a_j \in C_0\left(\mathcal X \right)$ and $a_j\left( x\right)= 1$, whence $\left(a_1a_2 \right)\left(x \right)=1$  . However from (****) it follows that  $\left(a_1a_2 \right)\left(x \right)=0$. We have a contradiction which proves that the map $p$ is injective.  $p:\widetilde{\mathcal X}/G \to \mathcal{X}$ is a homeomorphism because $p$ is an injective and surjective map. 
 \end{proof}

\begin{defn}
A ring is said to be {\it irreducible } if it is not a direct sum of more than one nontrivial ring. A finite covering projection $(A, \widetilde{A}, G)$ is said to be {\it irreducible} if both $A$ and $\widetilde{A}$ are irreducible. Otherwise  $(A, \widetilde{A}, G)$ is said to be {\it reducible}.
\end{defn}

\paragraph{} Following definition contains a manual composition of noncommutative covering projections. 
\begin{defn}\label{comp_defn}
Let
\begin{equation*}
A=A_0 \xrightarrow{\pi^1} A_1 \xrightarrow{\pi^2} ... \xrightarrow{\pi^n} A_n \xrightarrow{\pi^{n+1}} ...
\end{equation*}
be a finite or countable sequence of $C^*$-algebras and finite noncommutative covering projections. The sequence is said to be {\it composable} if following conditions hold:
\begin{enumerate}
\item[(a)] Any composition $\pi^{n_1}\circ ...\circ\pi^{n_0+1}\circ\pi^{n_0}:A_{n_0}\to A_{n_1}$ corresponds to the noncommutative covering projection $\left(A_{n_0}, A_{n_1}, G\left(A_{n_1}~|~A_{n_0}\right)\right)$;
\item[(b)] If $k < l < m$ then $G\left( A_m~|~A_k\right)A_l = A_l$ (Action of $G\left( A_m~|~A_k\right)$ on $A_l$ means that $G\left( A_m~|~A_k\right)$ acts on $A_m$, so $G\left( A_m~|~A_k\right)$ acts on $A_l$ since $A_l$ a subalgebra of $A_m$);
\item[(c)] If $k < l < m$ are nonegative integers then there is the natural exact sequence of covering transformation groups
\begin{equation*}
 \{e\}\to G\left(A_{m}~|~A_{l}\right) \xrightarrow{\iota} G\left(A_{m}~|~A_{k}\right)\xrightarrow{\pi}G\left(A_{l}~|~A_{k}\right)\to\{e\}
 \end{equation*}
 where the existence of the homomorphism $G\left(A_{m}~|~A_{k}\right)\xrightarrow{\pi}G\left(A_{l}~|~A_{k}\right)$ follows from (b).

\end{enumerate}

\end{defn}
\begin{lem}

If
  $
 \mathcal{X} = \mathcal{X}_0 \xleftarrow{}... \xleftarrow{} \mathcal{X}_n \xleftarrow{} ... 
 $
is  a topological  finite covering sequence such that  $\mathcal{X}_n$ is a second-countable locally compact Hausdorff space for any $n \in \N^0$ then 
\begin{equation*}
C_0\left(\mathcal X\right)=C_0\left(\mathcal X_0\right) \xrightarrow{\pi_1} C_0\left(\mathcal X_1\right) \xrightarrow{} ... \xrightarrow{\pi_n} C_0\left(\mathcal X_n\right) \xrightarrow{} ...
\end{equation*}
is a composable sequence of finite noncommutative covering projections.
\end{lem}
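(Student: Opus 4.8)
The plan is to match, one by one, the three defining conditions of a composable sequence (Definition \ref{comp_defn}) against the three conditions built into a topological finite covering sequence (Definition \ref{top_sec_defn}), using Theorem \ref{comm_fin_thm} as the bridge between the topological and the $C^*$-algebraic pictures. By functoriality of $C_0$, for $k<m$ the composite $\pi_m\circ\dots\circ\pi_{k+1}\colon C_0(\mathcal{X}_k)\to C_0(\mathcal{X}_m)$ is the $*$-homomorphism induced by pullback along the covering projection $p_{mk}\colon\mathcal{X}_m\to\mathcal{X}_k$. Since $\mathfrak{S}\in\mathfrak{FinTop}$, this $p_{mk}$ is a regular finitely listed covering projection between second-countable locally compact Hausdorff spaces, so Theorem \ref{comm_fin_thm}(a) applies verbatim and shows that $\left(C_0(\mathcal{X}_k),C_0(\mathcal{X}_m),G(\mathcal{X}_m\,|\,\mathcal{X}_k)\right)$ is a finite noncommutative covering projection, the covering transformation group attached to this datum being literally $G(\mathcal{X}_m\,|\,\mathcal{X}_k)$ acting by pullback. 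This is precisely condition (a) of Definition \ref{comp_defn}. (One may note in passing that $C_0(\mathcal{X}_n)$ is separable for each $n$ by Theorem \ref{comm_sep_thm}, so the index set $I$ furnished by Theorem \ref{comm_fin_thm}(a) is at most countable, as Definition \ref{fin_def} requires.)

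The substantive step is condition (b): that $G(\mathcal{X}_m\,|\,\mathcal{X}_k)$ preserves the subalgebra $C_0(\mathcal{X}_l)\subset C_0(\mathcal{X}_m)$ for $k<l<m$. I would argue this topologically. Under the pullback embedding, an element of $C_0(\mathcal{X}_l)$ is a function of the form $f\circ p_{ml}$, and for $g\in G(\mathcal{X}_m\,|\,\mathcal{X}_k)$ one has $g\cdot(f\circ p_{ml})=f\circ p_{ml}\circ g^{-1}$; this lies again in the image of the pullback exactly when $g^{-1}$ sends each fibre of $p_{ml}$ into a fibre of $p_{ml}$. Now $p_{ml}$ is a regular covering projection, so by Theorem \ref{g_cov_thm} (equivalently Corollary \ref{top_cov_from_pi1_cor}) its fibres are precisely the orbits of $G(\mathcal{X}_m\,|\,\mathcal{X}_l)$ on $\mathcal{X}_m$; and by the third bullet of Definition \ref{top_sec_defn} the subgroup $G(\mathcal{X}_m\,|\,\mathcal{X}_l)$ is \emph{normal} in $G(\mathcal{X}_m\,|\,\mathcal{X}_k)$, so conjugation by $g$ carries this subgroup, hence its orbits, to themselves. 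Therefore $g$ descends to a homeomorphism $\overline{g}$ of $\mathcal{X}_l$ with $p_{ml}\circ g=\overline{g}\circ p_{ml}$, whence $g\cdot(f\circ p_{ml})=(f\circ\overline{g}^{-1})\circ p_{ml}\in C_0(\mathcal{X}_l)$. Letting $g$ range over the group gives $G(A_m\,|\,A_k)\,A_l=A_l$, which is condition (b), and simultaneously produces the homomorphism $g\mapsto\overline{g}$, $G(\mathcal{X}_m\,|\,\mathcal{X}_k)\to G(\mathcal{X}_l\,|\,\mathcal{X}_k)$.

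Condition (c) is then essentially free: the third bullet of Definition \ref{top_sec_defn} supplies the exact sequence $\{e\}\to G(\mathcal{X}_m\,|\,\mathcal{X}_l)\to G(\mathcal{X}_m\,|\,\mathcal{X}_k)\to G(\mathcal{X}_l\,|\,\mathcal{X}_k)\to\{e\}$, and by the identifications above these are exactly the groups $G(A_m\,|\,A_l)$, $G(A_m\,|\,A_k)$, $G(A_l\,|\,A_k)$ appearing in Definition \ref{comp_defn}(c). The injection is the tautological inclusion of covering transformation subgroups in both formulations, and the surjection is the restriction-of-action map $g\mapsto\overline{g}$ constructed in the previous paragraph, which is precisely the homomorphism whose existence Definition \ref{comp_defn}(c) asserts should follow from (b); so exactness of the topological sequence transports to exactness of the algebraic one.

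I expect the only genuine obstacle to be the normality invoked in condition (b): that is the single place where the hypothesis $\mathfrak{S}\in\mathfrak{FinTop}$ is used in its full strength, rather than merely that each $\mathcal{X}_m\to\mathcal{X}_k$ is a regular finite covering; without the compatibility exact sequence of Definition \ref{top_sec_defn}, the larger covering group need not stabilize the intermediate algebra and the sequence would fail to be composable. Everything else is bookkeeping with the functoriality of $C_0$ and a direct appeal to Theorem \ref{comm_fin_thm}(a).
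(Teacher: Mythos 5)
Your proposal is correct and follows essentially the same route as the paper: condition (a) via Theorem \ref{comm_fin_thm}(a) applied to the composite regular coverings, and conditions (b) and (c) via the exact sequence built into Definition \ref{top_sec_defn}. Your treatment of (b) is in fact more explicit than the paper's — the paper only remarks that $G(\mathcal X_m\,|\,\mathcal X_l)$ acts trivially on $C_0(\mathcal X_l)$ and appeals to the exact sequence, whereas you spell out the underlying normality argument (conjugation preserves the normal subgroup, hence its orbits, hence the fibres of $p_{ml}$, so each $g$ descends to $\mathcal X_l$), which is the correct justification for why the larger covering group stabilizes the intermediate algebra.
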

\begin{proof}
		We should check conditions (a), (b), (c) of the Definition \ref{comp_defn}.
		\newline (a) Let $\pi^{n_1}\circ ...\circ\pi^{n_0+1}\circ\pi^{n_0}:C\left(\mathcal X_{n_0}\right)\to C\left(\mathcal X_{n_1}\right)$ be any composition, and $p = n_0,~q=n_1$. Any composition of regular covering projections is a regular covering projection. It follows from  the Theorem \ref{comm_fin_thm} it follows that for any $p < q$ there is a finite noncommutative covering projection $C\left(\mathcal X_{p}\right) \to C\left(\mathcal X_{q}\right)$ such that  $$G\left(C\left(\mathcal X_q \right)~|~C\left(\mathcal X_p \right)\right) \approx G\left(\mathcal X_q ~|~\mathcal X_p \right).$$ 
		So $\pi_{n_1}\circ ...\circ\pi_{n_0+1}\circ\pi_{n_0}$ corresponds to the finite noncommutative covering projection $C\left(\mathcal X_{p}\right) \to C\left(\mathcal X_{q}\right)$.
	\newline 
	(b) If $k < l < m$ then from the Definition \ref{top_sec_defn} it follows that the sequence
	\begin{equation*}\tag{*}
	\{e\}\to	G\left(\mathcal X_m~|~\mathcal X_l\right) \to 	G\left(\mathcal X_m~|~\mathcal X_k\right)\to 	G\left(\mathcal X_l~|~\mathcal X_k\right)\to \{e\}
	\end{equation*}
	is exact. Clearly $G\left(\mathcal X_m~|~\mathcal X_l\right)$ trivially acts on $C\left(\mathcal X_l \right)$, so from (*) it follows that
	$$
		G\left(C\left(\mathcal X_m \right)~|~C\left(\mathcal X_k \right)\right)C\left(\mathcal X_l \right)=G\left(C\left(\mathcal X_l \right)~|~C\left(\mathcal X_k \right)\right)C\left(\mathcal X_l \right)=C\left(\mathcal X_l \right).
	$$
\newline
(c)	
	The sequence
	\begin{equation*}\tag{**}
	\begin{split}
	\{e\}\to	G\left(C_0\left( \mathcal X_m\right) ~|~C_0\left( \mathcal X_l\right) \right) \to 	G\left(C_0\left( \mathcal X_m\right) ~|~C_0\left( \mathcal X_k\right) \right)\to \\ \to 	G\left(C_0\left( \mathcal X_l\right) ~|~C_0\left( \mathcal X_k\right)\right) \to \{e\}
	\end{split}
\end{equation*}
is equivalent to (*). From  exactness of (*) 		it follows that the sequence (**) is exact.

\end{proof}

\section{Noncommutative inverse limits}\label{bas_constr}
\paragraph{}
This section is concerned with a noncommutative generalization of the described in the Section \ref{inf_to} construction. 
\begin{defn}
The composable sequence 
\begin{equation*}
\mathfrak{S} =\left\{ A =A_0 \xrightarrow{\pi^1} A_1 \xrightarrow{\pi^2} ... \xrightarrow{\pi^n} A_n \xrightarrow{\pi^{n+1}} ...\right\}
\end{equation*}
of irreducible separable  $C^*$-algebras and  finite noncommutative covering projections 
	is said to be an \textit{(algebraical)  finite covering sequence}.
		For any finite covering sequence we will use the notation $\mathfrak{S} \in \mathfrak{FinAlg}$.
\end{defn}
\begin{defn}\label{regular_defn}
Let $\mathfrak{S} =\left\{ A =A_0 \xrightarrow{\pi^1} A_1 \xrightarrow{\pi^2} ... \xrightarrow{\pi^n} A_n \xrightarrow{\pi^{n+1}} ...\right\}$ be an algebraical  finite covering sequence. Let $\widehat{A} = \varinjlim A_n$ be the $C^*$-inductive limit \cite{murphy,takeda:inductive}, and let $\widehat{G}\subset \Aut\left( \widehat{A}\right) $ be such that $A =\widehat{A}^{\widehat{G}}=\left\{\widehat{a} \in \widehat{A}~|~ \widehat{G}\widehat{a}=\left\{\widehat{a}\right\}\right\}$.  The sequence $\mathfrak{S}$ is said to be \textit{regular} if for any $n \in \mathbb{N}$ following conditions hold:
\begin{itemize}
\item[(a)] $\widehat{G}A_n = A_n$;
\item[(b)] The given by $\widehat{g} \mapsto \widehat{g}|_{A_n}$ group homomorphism $h_n: \widehat{G} \to G\left( A_n~|~A \right)$  is surjective.
\end{itemize}
The group $\widehat{G}$ is said to be the {\it  covering transformation group} of $\mathfrak{S}$. We will use the notation $G\left(\widehat{A}~|~A \right) \stackrel{\text{def}}{=}\widehat{G}$.
\end{defn}
 
\begin{lem}
If the sequence $\mathfrak{S}$ is regular then $\bigcap_{n \in \mathbb{N}} \ker\left( G\left(\widehat{A}~|~A \right) \to G\left(A_n~|~A \right)\right)$ is the trivial group.  
\end{lem}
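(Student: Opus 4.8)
The plan is to exploit the fact that a $*$-automorphism of a $C^*$-algebra is isometric, hence norm-continuous, and is therefore completely determined by its restriction to any dense subset.

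First I would observe that, since $\widehat{A} = \varinjlim A_n$ is the $C^*$-inductive limit of the system and each $\pi^n$ is an injective $*$-homomorphism, the union $\bigcup_{n \in \mathbb{N}} A_n$, regarded as a $*$-subalgebra of $\widehat{A}$ through the canonical embeddings, is norm-dense in $\widehat{A}$; this is immediate from the construction of the inductive limit as the completion of the algebraic inductive limit. Next I would take an arbitrary element $\widehat{g}$ of $\bigcap_{n \in \mathbb{N}} \ker h_n$, where $h_n : \widehat{G} \to G\left(A_n ~|~ A\right)$ is the restriction homomorphism of Definition \ref{regular_defn} (well defined by condition (a) there, since $\widehat{g} A_n = A_n$ and $\widehat{g}$ fixes $A = \widehat{A}^{\widehat{G}}$). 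Membership of $\widehat{g}$ in every $\ker h_n$ says precisely that $\widehat{g}(a) = a$ for all $a \in A_n$ and all $n$, i.e. $\widehat{g}$ restricts to the identity on the dense subalgebra $\bigcup_{n} A_n$.

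Finally, because $\widehat{g} \in \Aut\left(\widehat{A}\right)$ is isometric, it is continuous on $\widehat{A}$; a continuous self-map of $\widehat{A}$ that agrees with $\mathrm{id}_{\widehat{A}}$ on the dense set $\bigcup_n A_n$ must coincide with $\mathrm{id}_{\widehat{A}}$ everywhere. Hence $\widehat{g} = \mathrm{id}_{\widehat{A}}$, so $\bigcap_{n \in \mathbb{N}} \ker h_n = \{e\}$. There is no serious obstacle here; the only points deserving a line of justification are the density of $\bigcup_n A_n$ inside the inductive limit and the well-definedness of the maps $h_n$, the latter being part of the regularity hypothesis. I note that surjectivity of the $h_n$, i.e. condition (b) of Definition \ref{regular_defn}, is not needed for this statement.
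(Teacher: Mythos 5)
Your argument is correct and is essentially the paper's own proof: the paper states in one line that an element of every $\ker h_n$ acts trivially on each $A_n$ and is therefore trivial, which implicitly rests on exactly the density of $\bigcup_n A_n$ in $\varinjlim A_n$ and the norm-continuity of $*$-automorphisms that you spell out. Your additional remarks (well-definedness of $h_n$ from condition (a), non-use of surjectivity) are accurate but do not change the route.
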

\begin{proof}
If $\widehat{g}\in\bigcap_{n \in \mathbb{N}} \ker\left( G\left(\varinjlim A_n~|~A \right) \to G\left(A_n~|~A \right)\right)$ then $\widehat{g}$ induces the trivial automorphism of $A_n$ for any $n \in \mathbb{N}$. Hence $\widehat{g}$ is the trivial. 
\end{proof}
Following definition is motivated by  the construction \ref{comm_alg_constr_susub}.
\begin{defn}\label{special_el_defn}
Let $\mathfrak{S} =\left\{ A =A_0 \xrightarrow{\pi^1} A_1 \xrightarrow{\pi^2} ... \xrightarrow{\pi^n} A_n \xrightarrow{\pi^{n+1}} ...\right\}$ be a regular  algebraical  finite covering sequence. Let $\widehat{A} = \varinjlim A_n$  be the $C^*$-inductive limit, and let $\widehat{G}= G\left(\widehat{A}~|~A \right) $ be the covering transformation group of $\mathfrak{S}$. Let $\overline{A}$ be a $C^*$-algebra with action of $\widehat{G}$ and the injective $\widehat{G}$- equivariant inclusion   $\varphi:\widehat{A} \xrightarrow{\subset} \overline{A}$, i.e. $\varphi\left(\widehat{g}\widehat{a} \right) = \widehat{g}\varphi\left(\widehat{a} \right)$ for any $\widehat{g}\in\widehat{G}$ and  $\widehat{a}\in\widehat{A}$. A positive element  $\overline{a}  \in \overline{A}$ is said to be \textit{special} if following conditions hold:
	\begin{enumerate}
		\item[(a)]  If $n \in \mathbb{N}^0$ and $J_n = \ker\left( \widehat{G} \to  G\left( A_n~|~A \right)\right) $  then following two  series 
		\begin{equation*}
		a_n = \sum_{g \in J_n} g \overline{a};~	b_n = \sum_{g \in J_n} g \overline{a}^2;
		\end{equation*}
		are strongly convergent and the sums lie in $A_n$, i.e. $a_n, b_n \in A_n$;
		\item[(b)] For any $\eps > 0$ there is $N \in \mathbb{N}$ such that if $n \ge N$ then
		\begin{equation}\label{spec_square_eqn}
\left\|a_n^2 - b_n\right\|	< \eps.
		\end{equation}
	\end{enumerate}
\end{defn}
\begin{lem}\label{special_in_w_lem} Let $\mathfrak{S} =\left\{ A =A_0 \xrightarrow{\pi^1} A_1 \xrightarrow{\pi^2} ... \xrightarrow{\pi^n} A_n \xrightarrow{\pi^{n+1}} ...\right\}$ be a regular  algebraical  finite covering sequence. 
If $\widehat{A}''$ is the enveloping $W^*$-algebra of $\widehat{A}= \varinjlim A_n$ and $\overline{a}\in \overline{A}$ is a special element then $\overline{a} \in \widehat{A}''$.
\end{lem}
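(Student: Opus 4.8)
The plan is to fix a faithful non-degenerate representation of $\overline{A}$ on a Hilbert space $H$, so that $A_n\subset\widehat{A}\subset\overline{A}\subset B(H)$ and, by Theorem \ref{vN_thm}, the strong closure $\widehat{A}''$ of $\widehat{A}$ in $B(H)$ is a von Neumann algebra containing every $A_n$. It then suffices to exhibit $\overline{a}$ as a \emph{strong} limit of elements of $\widehat{A}$, because a von Neumann algebra is strongly closed. The natural candidate is the sequence $\left\{a_n=\sum_{g\in J_n}g\overline{a}\right\}_{n\in\mathbb{N}^0}$ from condition (a) of Definition \ref{special_el_defn}, each $a_n$ lying in $A_n$. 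I would first note that, since $A=\widehat{A}^{\widehat{G}}$, every $g\in\widehat{G}$ fixes $A_0=A$ pointwise, so $J_0=\widehat{G}$; hence condition (a) with $n=0$ provides a strongly convergent sum of positive operators $a_0=\sum_{g\in\widehat{G}}g\overline{a}\in A$, which will play the role of a uniform majorant.

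Next I would record that $J_0\supseteq J_1\supseteq\cdots$: the restriction homomorphism $\widehat{G}\to G\left(A_{n}~|~A\right)$ factors through $\widehat{G}\to G\left(A_{n+1}~|~A\right)$ via the restriction map $G\left(A_{n+1}~|~A\right)\to G\left(A_{n}~|~A\right)$ furnished by Definition \ref{comp_defn}, so $J_{n+1}\subseteq J_n$; and $\bigcap_{n\in\mathbb{N}}J_n=\{e\}$ by the lemma stated just before Definition \ref{special_el_defn}. Now set $d_n:=a_n-\overline{a}=\sum_{g\in J_n\setminus\{e\}}g\overline{a}$. As a sub-sum over positive summands of the strongly convergent sum $a_0$, each $d_n$ is strongly convergent with $0\le d_n\le a_0$, hence $\|d_n\|\le\|a_0\|$ (here Lemma \ref{increasing_convergent_w} turns boundedness of the increasing net of finite partial sums into strong convergence). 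The core step is to prove $d_n\to 0$ strongly: fixing $\xi\in H$ and $\eps>0$, strong convergence of the partial sums of $a_0$ lets one choose a finite $F\subset\widehat{G}$ with $e\in F$ and $\left\langle\left(a_0-\sum_{g\in F}g\overline{a}\right)\xi,\xi\right\rangle<\eps$; since $F$ is finite and $\bigcap_n J_n=\{e\}$, for all large $n$ one has $J_n\setminus\{e\}\subset\widehat{G}\setminus F$, so $0\le d_n\le a_0-\sum_{g\in F}g\overline{a}$ and $\|d_n\xi\|^2\le\|d_n\|\,\langle d_n\xi,\xi\rangle\le\|a_0\|\,\eps$. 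Thus $a_n=\overline{a}+d_n\to\overline{a}$ strongly, and since each $a_n\in A_n\subset\widehat{A}\subset\widehat{A}''$ with $\widehat{A}''$ strongly closed, we conclude $\overline{a}\in\widehat{A}''$.

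The difficulties are matters of care rather than of ideas. One must fix unambiguously the Hilbert space in which the strong convergence of Definition \ref{special_el_defn} and the algebra $\widehat{A}''$ are taken, and check that arbitrary sub-sums over subsets of $\widehat{G}$ of the positive strongly convergent series $a_0$ are again strongly convergent and dominated by $a_0$; this is routine given Lemma \ref{increasing_convergent_w}. The genuinely essential point is the passage from a finite truncation $F$ of the index set $\widehat{G}$ to a threshold $N$ beyond which $J_n\setminus\{e\}$ avoids $F$ — exactly where $\bigcap_n J_n=\{e\}$ is used, and which is what makes hypothesis \eqref{spec_square_eqn} of Definition \ref{special_el_defn} unnecessary for this lemma. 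If one prefers to use \eqref{spec_square_eqn}, the same computation run with $b_n=\sum_{g\in J_n}g\overline{a}^2$ in place of $a_n$ yields $b_n\to\overline{a}^2$ strongly, whereupon $\overline{a}\in\widehat{A}''$ follows from strong continuity of the square root on norm-bounded sets of positive operators together with $\sqrt{b_n}\in A_n$.
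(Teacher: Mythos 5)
Your proposal is correct and follows essentially the same route as the paper: the paper's proof also takes the decreasing sequence $a_n=\sum_{g\in J_n}g\overline{a}\in A_n\subset\widehat{A}$, invokes Lemma \ref{increasing_convergent_w} for strong convergence, and concludes $\overline{a}=\lim_n a_n\in\widehat{A}''$. The only difference is that the paper simply asserts that the strong limit equals $\overline{a}$, whereas you actually justify this via $J_0=\widehat{G}$, the nesting $J_{n+1}\subseteq J_n$ with $\bigcap_n J_n=\{e\}$, and the tail estimate $0\le d_n\le a_0-\sum_{g\in F}g\overline{a}$ — a worthwhile addition, since that is the only nontrivial point of the lemma.
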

\begin{proof}
Use notation of the Definition \ref{special_el_defn}. There is a decreasing  sequence of positive elements  $\left\{a_n  \in \widehat{A}\right\}_{n \in \N}$ such that $a_n = \sum_{g \in J_n} g \overline{a}$. From the Lemma \ref{increasing_convergent_w} it follows that the sequence is strongly convergent, so $\lim_{n \to \infty} a_n \in \widehat{A}''$. However  $\lim_{n \to \infty} a_n = \overline{a}$ is sense of the strong convergence, whence $\overline{a} \in \widehat{A}''$. 
\end{proof}
\begin{rem}
Enveloping $W^*$-algebras are associated with  Borel sets, so the Lemma   \ref{special_in_w_lem} can be regarded as a noncommutative  analog  of the Lemma \ref{top_equ_borel_set_lem}.
\end{rem}

\begin{defn}\label{main_defn}
Let $\mathfrak{S} =\left\{ A =A_0 \xrightarrow{\pi^1} A_1 \xrightarrow{\pi^2} ... \xrightarrow{\pi^n} A_n \xrightarrow{\pi^{n+1}} ...\right\}$ be a regular  algebraical  finite covering sequence. Let $\widetilde{A}$ be the $C^*$-norm completion of an algebra generated by $\mathbb{C}$-linear span of special elements. We say that $\widetilde{A}$ is the {\it inverse noncommutative limit} of $\mathfrak{S}$. We will use following notation  $ \varprojlim  \mathfrak{S}\stackrel{\text{def}}{=}\widetilde{A}$.
\end{defn}
\begin{rem}
	Above definition is motivated by the Definition \ref{top_topological_inv_lim_defn} and the Theorem \ref{comm_main_thm}.
\end{rem}

\begin{lem}\label{ext_group_action_lem}
Use notation of the Definition \ref{main_defn}.
If $\mathfrak{S}$ is regular $\widehat A = \varinjlim A_n$ and  $\widehat{G} = G\left(\widehat A~|~A \right)$ then there is the action of $\widehat{G}$ on the inverse limit $\widetilde{A} = \varprojlim  \mathfrak{S}$. 

\end{lem}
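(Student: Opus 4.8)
The plan is to show that each $\widehat g \in \widehat G$, acting as a $*$-automorphism of the ambient algebra $\overline A$ (equivalently of the enveloping $W^*$-algebra $\widehat A''$, into which every special element embeds by Lemma \ref{special_in_w_lem}), carries special elements to special elements. Once this is established, $\widehat g$ preserves the $\mathbb C$-linear span of special elements, the $*$-subalgebra it generates and, being isometric, the $C^*$-norm completion $\widetilde A = \varprojlim \mathfrak S$; the assignment $\widehat g \mapsto \widehat g|_{\widetilde A}$ is then the required homomorphism $\widehat G \to \Aut\left(\widetilde A\right)$.

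First I would record that for each $n$ the subgroup $J_n = \ker\left(\widehat G \to G\left(A_n\mid A\right)\right)$ is normal in $\widehat G$, being the kernel of a group homomorphism. Fix a special element $\overline a$ with associated sums $a_n = \sum_{g \in J_n} g\overline a \in A_n$ and $b_n = \sum_{g \in J_n} g\overline a^2 \in A_n$, and fix $\widehat g \in \widehat G$. Writing $g\widehat g = \widehat g\left(\widehat g^{-1} g \widehat g\right)$ and using normality of $J_n$, the substitution $g'' = \widehat g^{-1} g \widehat g$ reindexes the series and yields
\[
\sum_{g \in J_n} g\left(\widehat g\,\overline a\right) = \widehat g \sum_{g'' \in J_n} g''\overline a = \widehat g\, a_n,
\qquad
\sum_{g \in J_n} g\left(\left(\widehat g\,\overline a\right)^2\right) = \widehat g\, b_n .
\]
By the regularity hypothesis $\widehat G A_n = A_n$, so $\widehat g\, a_n, \widehat g\, b_n \in A_n$; and since $\widehat g$ extends to a normal automorphism of $\widehat A''$ (strongly continuous on bounded sets) while the nets $\{a_n\}$ and $\{a_n^2\}$ are norm-bounded, the two series above are strongly convergent. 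Thus $\widehat g\,\overline a$ satisfies condition (a) of Definition \ref{special_el_defn}, with partial sums $\widehat g\, a_n$ and $\widehat g\, b_n$. For condition (b), isometry of $\widehat g$ gives $\left\|\left(\widehat g\, a_n\right)^2 - \widehat g\, b_n\right\| = \left\|\widehat g\left(a_n^2 - b_n\right)\right\| = \left\|a_n^2 - b_n\right\|$, so the estimate \eqref{spec_square_eqn} passes verbatim to $\widehat g\,\overline a$ with the same $N$. Hence $\widehat g\,\overline a$ is special, and applying the same to $\widehat g^{-1}$ shows $\widehat g$ permutes the set of special elements bijectively.

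The only point requiring genuine care is the preservation of strong convergence under $\widehat g$: this is where one invokes that a $*$-automorphism of $\widehat A = \varinjlim A_n$ extends uniquely to a normal $*$-automorphism of the enveloping $W^*$-algebra $\widehat A''$, combined with the boundedness of the decreasing net $\{a_n\}$ already used in the proof of Lemma \ref{special_in_w_lem} (and of $\{a_n^2\}$). Everything else — normality of $J_n$, the coset reindexing, and the fact that an isometric $*$-automorphism preserves linear spans, generated $*$-algebras and norm completions — is routine bookkeeping, so I expect the write-up to be short once the normality of the $W^*$-extension is cited.
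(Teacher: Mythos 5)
Your argument is essentially the paper's own proof: both reindex the series over $J_n$ using normality of $J_n$ (the coset identity $\{\widehat g g\}=\{g\widehat g\}=h_n^{-1}\circ h_n(\widehat g)$), identify the transformed partial sums with $h_n(\widehat g)a_n$ and $h_n(\widehat g)b_n$, use isometry of the $*$-automorphism to transfer the estimate \eqref{spec_square_eqn}, and conclude via density of the span of special elements. Your explicit justification of the strong convergence of the reindexed series through the normal extension of $\widehat g$ to $\widehat A''$ is a point the paper passes over silently, but it does not change the route.
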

\begin{proof}
$\widehat{A}$ is strongly dense in $\widehat{A}''$, it follows that action $\widehat{G} \times \widehat{A} \to \widehat{A}$ can be uniquely extended to the action $\widehat{G} \times \widehat{A}'' \to \widehat{A}''$. If $\overline{a}$ is a special element then from the Lemma \ref{special_in_w_lem} it follows that $\overline{a} \in  \widehat{A}''$. Let 
		\begin{equation*}
		a_n = \sum_{g \in J_n} g \overline{a};~	b_n = \sum_{g \in J_n} g \overline{a}^2.
		\end{equation*}
		If $\eps > 0$ then from (b) of the Definition \ref{special_el_defn}  it follows that there is $N \in \N$ such that for any $n \ge N$ following condition holds
			\begin{equation*}\tag{*}
			\left\|a_n^2 - b_n\right\|	< \eps.
			\end{equation*}
If $\widehat{g} \in \widehat{G}$ is any element,  $h_n:\widehat{G} \to G\left(A_n~|~A \right)$ is the homomorphism from the Definition \ref{regular_defn}, $J_n = \ker h_n$ then $\left\{\widehat{g}g | g \in J_n \right\}=  \left\{g\widehat{g} | g \in J_n \right\}= h_n^{-1}\circ h_n	\left( \widehat{g}\right)$. If $\overline{a}' = \widehat{g}\overline{a}$ then
		\begin{equation*}
	\sum_{g \in J_n} g \overline{a}'=\sum_{g \in J_n} g \widehat{g}\overline{a}=\sum_{g \in J_n}  \widehat{g}g\overline{a} = \widehat{g}\sum_{g \in J_n} g \overline{a}=h_n\left( \widehat{g}\right) a_n;~
		\end{equation*}
		\begin{equation*}
	\sum_{g \in J_n} g \overline{a}'^2=\sum_{g \in J_n} g \widehat{g}\overline{a}^2=\sum_{g \in J_n}  \widehat{g}g\overline{a}^2 = \widehat{g}\sum_{g \in J_n} g \overline{a}^2=h_n\left( \widehat{g}\right) b_n;
		\end{equation*}
$$
\left\|\left( \sum_{g \in J_n} g \overline{a}'\right)^2 - \sum_{g \in J_n} g \overline{a}'^2 \right\|= \left\|\left( h_n\left( \widehat{g}\right) a_n\right)^2 - h_n\left( \widehat{g}\right) b_n \right\|.
$$ 
Since $h_n\left( \widehat{g}\right) $ is *-automorphism following condition holds
$$
\left\|\left( h_n\left( \widehat{g}\right) a_n\right)^2 - h_n\left( \widehat{g}\right) b_n \right\| = \left\| h_n\left( \widehat{g}\right) \left( a_n^2  - b_n\right)  \right\|=	\left\|a_n^2 - b_n\right\|	< \eps~, 
$$
whence $\overline{a}' = \widehat{g}\overline{a}$ is special, i.e. $\widehat{g}$ maps special elements to special ones. Since a linear span of special elements is dense in $\widetilde{A}$ it follows that $\widehat{G}\widetilde{A} = \widetilde{A}$, i.e. $\widehat{G}$ naturally acts on $\widetilde{A}$. 
\end{proof}
\begin{defn}\label{main_defn_group_defn}
Use notation of the Definition \ref{main_defn} and suppose that $\mathfrak{S}$ is regular. The group $\widehat{G}$ is said to be the \textit{covering transformation group} of the inverse limit $\widetilde{A}=\varprojlim \mathfrak{S}$. We will use the notation   $G\left(\widetilde{A}~|~ A\right) \stackrel{\text{def}}{=}G\left(\widehat{A}~|~A \right)$. As well as the algebra $\widetilde{A}$ the triple
 $\left(A, \widetilde{A}, \widehat{G}\right)$ (or $\left(A, \widetilde{A}, G\left(\widetilde{A}~|~ A\right)\right)$) is said to be the  {\it infinite noncommutative covering projection} of $\mathfrak{S}$.
\end{defn}
\begin{rem}
This definition is motivated by the Lemma \ref{top_biject_lem} and the Lemma \ref{ext_group_action_lem}. 
\end{rem}

\section{Commutative case}\label{comm_case_sec}
\paragraph*{} This section supplies a purely algebraic  analog of the topological construction given by the Subsection \ref{inf_to}. 
We need the following theorem.
\begin{thm}\label{direct_lim_state_thm}\cite{takeda:inductive}
	If a $C^*$-algebra $A$ is a $C^*$-inductive limit of $A_\ga$ ($\ga \in \Ga$), the
	state space $\Om$ of A is homeomorphic to the projective limit of the state spaces $\Om_\ga$ of $A_\ga$.
\end{thm}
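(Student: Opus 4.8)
Since Theorem \ref{direct_lim_state_thm} is quoted from \cite{takeda:inductive}, I only outline how one would prove it: the plan is to write down the obvious comparison map between the two spaces and check that it is a homeomorphism. Write the system as $\{A_\ga,\ \phi_{\ga\delta}:A_\ga\to A_\delta\}$ over the directed index set $\Ga$, with canonical $*$-homomorphisms $\phi_\ga:A_\ga\to A$ satisfying $\phi_\delta\circ\phi_{\ga\delta}=\phi_\ga$ and $\overline{\bigcup_\ga\phi_\ga(A_\ga)}=A$. Transposition sends each connecting $*$-homomorphism to a weak-$*$ continuous affine map $\phi_{\ga\delta}^*:\Om_\delta\to\Om_\ga$, $\rho\mapsto\rho\circ\phi_{\ga\delta}$ (in the non-degenerate case this preserves states; in general one should replace the state spaces by the quasi-state spaces throughout, as indicated at the end), and these compose correctly, so $\{\Om_\ga,\ \phi_{\ga\delta}^*\}$ is an inverse system whose limit $\varprojlim\Om_\ga$ is a closed subspace of $\prod_\ga\Om_\ga$. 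I would then define
\begin{equation*}
\Phi:\Om\longrightarrow\varprojlim\Om_\ga,\qquad \Phi(\rho)=(\rho\circ\phi_\ga)_{\ga\in\Ga},
\end{equation*}
which is well defined since $\phi_\delta\circ\phi_{\ga\delta}=\phi_\ga$ makes $(\rho\circ\phi_\ga)_\ga$ a compatible family, and is continuous because each coordinate $\rho\mapsto\rho(\phi_\ga(a))$ is weak-$*$ continuous.

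The core of the proof is bijectivity of $\Phi$. Injectivity is immediate: if $\rho\circ\phi_\ga=\rho'\circ\phi_\ga$ for all $\ga$, then $\rho$ and $\rho'$ agree on the dense subalgebra $\bigcup_\ga\phi_\ga(A_\ga)$, hence coincide. For surjectivity I would start from a compatible family $(\rho_\ga)_\ga$ (so $\rho_\ga=\rho_\delta\circ\phi_{\ga\delta}$ whenever $\ga\le\delta$), define a functional $\rho_0$ on the dense $*$-subalgebra $A_\infty=\bigcup_\ga\phi_\ga(A_\ga)$ by $\rho_0(\phi_\ga(a))=\rho_\ga(a)$, and check that this is unambiguous: if $\phi_\ga(a)=\phi_\delta(b)$ in $A$, choose a common index $\eta\ge\ga,\delta$ for which $\phi_{\ga\eta}(a)$ and $\phi_{\delta\eta}(b)$ are norm-close, and apply $\rho_\eta$ together with the compatibility relations to force $\rho_\ga(a)=\rho_\delta(b)$. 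Then $\rho_0$ is positive and satisfies $|\rho_0(x)|\le\|x\|$ on $A_\infty$, so it extends uniquely to a positive functional $\rho$ on $A$ with $\|\rho\|\le 1$; pushing an approximate unit of $A_\ga$ into $A$ gives $\|\rho\|\ge\|\rho_\ga\|=1$, so $\rho\in\Om$, and by construction $\Phi(\rho)=(\rho_\ga)_\ga$.

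It remains to upgrade the continuous bijection $\Phi$ to a homeomorphism, which I would do by compactness. When $A$ (and hence each $A_\ga$) is unital, $\Om$ is weak-$*$ compact and $\varprojlim\Om_\ga$ is Hausdorff, so $\Phi$ is automatically a homeomorphism. In general I would rerun the whole argument with the weak-$*$ compact quasi-state spaces $Q(A_\ga)=\{\rho\ge 0:\ \|\rho\|\le 1\}$ in place of $\Om_\ga$ — which the transposes always preserve — obtaining a homeomorphism $Q(A)\cong\varprojlim Q(A_\ga)$ since it is a continuous bijection from a compact space to a Hausdorff one; and then, using density of $A_\infty$, a quasi-state $\rho$ on $A$ lies in $\Om$ precisely when every $\rho\circ\phi_\ga$ does, so this homeomorphism restricts to $\Om\cong\varprojlim\Om_\ga$. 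I expect the only genuinely delicate step to be the well-definedness and boundedness of $\rho_0$ on the algebraic inductive limit sitting inside the $C^*$-completion $A$ — this is exactly where the $C^*$-inductive-limit construction, rather than a purely algebraic colimit, is essential — together with the norm-one verification in the non-unital case via approximate units; the remaining steps are formal manipulations with transposes and density.
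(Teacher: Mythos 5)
The paper gives no proof of this theorem --- it is quoted verbatim from Takeda's 1955 article --- so there is nothing internal to compare your argument against. Your sketch is the standard transpose-and-compactness proof, and in the setting of the cited reference (unital algebras, unital injective connecting maps, so that each $\phi_{\ga\delta}^*$ genuinely maps $\Om_\delta$ into $\Om_\ga$) it is complete and correct: the well-definedness of $\rho_0$ via the norm estimate $\|\phi_\ga(a)-\phi_\delta(b)\|=\lim_\eta\|\phi_{\ga\eta}(a)-\phi_{\delta\eta}(b)\|$, the bounded extension to $A$, and the continuous-bijection-from-a-compact-space conclusion are exactly the right steps.

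The one claim I would push back on is the final restriction step in the non-unital case: it is not true in general that a quasi-state $\rho$ on $A$ lies in $\Om$ precisely when every $\rho\circ\phi_\ga$ lies in $\Om_\ga$. The ``if'' direction is fine ($\|\rho\|\ge\|\rho\circ\phi_\ga\|$), but the ``only if'' direction fails: a state on $A$ need not restrict to a state on each $A_\ga$. What density of $\bigcup_\ga\phi_\ga(A_\ga)$ actually gives (when the connecting maps are non-degenerate, so that the images of approximate units of the $A_\ga$ assemble into an approximate unit of $A$) is $\|\rho\|=\lim_\ga\|\rho\circ\phi_\ga\|=\sup_\ga\|\rho\circ\phi_\ga\|$; each individual $\|\rho\circ\phi_\ga\|$ can be strictly less than $1$. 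Consequently your homeomorphism $Q(A)\cong\varprojlim Q(A_\ga)$ restricts to a homeomorphism of $\varprojlim\Om_\ga$ onto a possibly \emph{proper} closed subset of $\Om$, and the theorem as literally stated requires the unital (or at least state-preserving) hypothesis that Takeda works under. Since this same subtlety is relevant to how the present paper applies the result to the non-unital algebras $C_0(\mathcal X_n)$, it is worth stating the hypothesis explicitly rather than deferring it to a parenthetical.
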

	
\begin{cor}\label{direct_lim_state_cor}\cite{takeda:inductive}
	If a commutative $C^*$-algebra $A$ is a $C^*$-inductive limit of the
	commutative  $C^*$-algebras $A_\ga$ ($\ga \in \Ga$), the spectrum $\mathcal X$ of $A$ is the projective limit of spectrums $\mathcal X_\ga$ of $A_\ga$ ($\ga \in \Ga$).
\end{cor}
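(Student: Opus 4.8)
The plan is to derive this from Takeda's homeomorphism of state spaces (Theorem \ref{direct_lim_state_thm}) by cutting it down to pure states, which for an abelian $C^*$-algebra are exactly the characters, i.e.\ the points of the spectrum (Theorems \ref{gelfand-naimark}, \ref{comm_pure_state_thm} and \ref{comm_pure_state_map_empt}). First I would record the standard facts about a $C^*$-inductive limit: the structure maps $\iota_\gamma\colon A_\gamma \to A$ are injective and $\bigcup_\gamma \iota_\gamma(A_\gamma)$ is norm-dense in $A = \varinjlim A_\gamma$. Dually, each $\iota_\gamma$ induces a continuous map $\mathcal{X}\to\mathcal{X}_\gamma$ (send a character $\chi$ of $A$ to $\chi\circ\iota_\gamma$ when this is nonzero, and to the point at infinity otherwise), and by functoriality these are compatible with the connecting maps $\mathcal{X}_\gamma\to\mathcal{X}_{\gamma'}$; hence they assemble into a canonical continuous map $\Phi\colon\mathcal{X}\to\varprojlim\mathcal{X}_\gamma$. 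It then suffices to prove that $\Phi$ is a homeomorphism.

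The key step is a characterization of characters among states that is visible from the inductive-limit structure. Under the homeomorphism of Theorem \ref{direct_lim_state_thm}, which is implemented precisely by restriction of states $\omega\mapsto(\omega\circ\iota_\gamma)_\gamma$, I claim that a state $\omega$ of $A$ is a character if and only if every $\omega\circ\iota_\gamma$ is a character (or the zero functional). One implication is immediate, since a multiplicative functional stays multiplicative on any subalgebra. For the converse, if $\omega$ is multiplicative on each $\iota_\gamma(A_\gamma)$ then it is multiplicative on the dense $*$-subalgebra $\bigcup_\gamma\iota_\gamma(A_\gamma)$, and since $\omega$ and the product of $A$ are norm-continuous, $\omega$ is multiplicative on $A$; equivalently, a compatible family of characters of the $A_\gamma$ defines a character of $\bigcup_\gamma\iota_\gamma(A_\gamma)$ which extends uniquely by continuity to $A$. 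This identifies the pure-state part of the state space of $A$ with $\varprojlim\mathcal{X}_\gamma$ (the zero functional, the only non-pure extreme point in the non-unital case, being matched consistently with the point at infinity). Since the Gelfand topology on each spectrum is the subspace topology induced by the weak-$*$ topology on the ambient (quasi-)state space, and $\Phi$ is the restriction of the Takeda homeomorphism to these subspaces, $\Phi$ is automatically a homeomorphism, so $\mathcal{X}\approx\varprojlim\mathcal{X}_\gamma$.

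The routine points to verify are that Takeda's homeomorphism is indeed given by restriction of states (this is how it is constructed in the proof of Theorem \ref{direct_lim_state_thm}), the density of $\bigcup_\gamma\iota_\gamma(A_\gamma)$ and injectivity of the $\iota_\gamma$ for $C^*$-inductive limits, and the non-unital bookkeeping: a character of $A$ may restrict to $0$ on some $A_\gamma$, so ``projective limit of the spectra'' must be read in the category of pointed compact spaces, equivalently via the unitizations, with the added point at infinity corresponding to the $0$ state. I expect this last item to be the main obstacle — making sure the correspondence between compatible families of characters and characters of $A$ is a genuine bijection, with no family collapsing and every character of $A$ accounted for, which ultimately rests on the density of $\bigcup_\gamma\iota_\gamma(A_\gamma)$ together with the fact that a character has norm one and therefore extends uniquely from a dense subalgebra.
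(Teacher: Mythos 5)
The paper offers no proof of this corollary at all: it is quoted verbatim from Takeda \cite{takeda:inductive}, immediately after Theorem \ref{direct_lim_state_thm}, so there is nothing internal to compare against. Your derivation --- restrict Takeda's state-space homeomorphism $\omega\mapsto(\omega\circ\iota_\gamma)_\gamma$ to characters, using that for abelian $C^*$-algebras pure states are exactly characters (Theorem \ref{comm_pure_state_thm}), that multiplicativity passes to subalgebras, and that it extends by norm-continuity from the dense $*$-subalgebra $\bigcup_\gamma\iota_\gamma(A_\gamma)$ (which is a subalgebra because $\Gamma$ is directed) --- is the natural and intended route, and it is correct. The one caveat you flag is real and worth stating explicitly: as literally phrased the corollary presupposes that each character of $A$ restricts to a \emph{nonzero} functional on every $\iota_\gamma(A_\gamma)$, i.e.\ that the inclusions are non-degenerate (or the $A_\gamma$ unital, which is Takeda's original setting); otherwise the connecting maps on spectra only exist after one-point compactification and the "projective limit of spectra" must be taken in the pointed category, exactly as you describe. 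In every instance where this paper invokes the corollary the inclusions come from surjective covering maps and are non-degenerate, so the clean statement applies.
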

If $\mathfrak{S}_{\mathcal X} = \left\{\mathcal{X} = \mathcal{X}_0 \xleftarrow{}... \xleftarrow{} \mathcal{X}_n \xleftarrow{} ...\right\} \in \mathfrak{FinTop}$ then from the Theorem \ref{comm_fin_thm} it follows that
$\mathfrak{S}_{C_0\left(\mathcal{X}\right)}=
\left\{C_0(\mathcal{X})=C_0(\mathcal{X}_0)\to ... \to C_0(\mathcal{X}_n) \to ...\right\} \in \mathfrak{FinAlg}$.

\begin{lem}\label{comm_reg_lem}
If $\mathfrak{S}_{\mathcal X} = \left\{\mathcal{X} = \mathcal{X}_0 \xleftarrow{}... \xleftarrow{} \mathcal{X}_n \xleftarrow{} ...\right\} \in \mathfrak{FinTop}$ then the sequence
$\mathfrak{S}_{C_0\left(\mathcal{X}\right)}=
\left\{C_0(\mathcal{X})=C_0(\mathcal{X}_0)\to ... \to C_0(\mathcal{X}_n) \to ...\right\} \in \mathfrak{FinAlg}$ is regular.
\end{lem}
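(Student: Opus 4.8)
The plan is to realize the $C^*$-inductive limit $\widehat{A} := \varinjlim C_0(\mathcal X_n)$ as $C_0(\widehat{\mathcal X})$, where $\widehat{\mathcal X} := \varprojlim \mathcal X_n$ is the topological inverse limit, and then to exhibit the group $\widehat{G}$ required in Definition \ref{regular_defn} as $\widehat{G} := G(\widehat{\mathcal X}~|~\mathcal X)$ from Definition \ref{top_cov_trans_defn}. By Corollary \ref{direct_lim_state_cor} the spectrum of $\widehat{A}$ is exactly $\widehat{\mathcal X}$, so Theorem \ref{gelfand-naimark} gives $\widehat{A} \cong C_0(\widehat{\mathcal X})$. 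Since every $\widehat{g} \in \widehat{G}$ is a homeomorphism of $\widehat{\mathcal X}$ commuting with $\widehat{\pi}^0$, it induces a $*$-automorphism of $\widehat{A}$ by $(\widehat{g}\,\widehat{a})(\widehat{x}) = \widehat{a}(\widehat{g}^{-1}\widehat{x})$. I would first record the basic compatibility: by Lemma \ref{top_surj_group_lem} there are surjective homomorphisms $h_n \colon \widehat{G} \to G(\mathcal X_n~|~\mathcal X)$, and since $\widehat{\pi}^n \circ \widehat{g} = h_n(\widehat{g}) \circ \widehat{\pi}^n$ (the relation built into the definition of $h_n$), the automorphism $\widehat{g}$ of $\widehat{A}$ leaves $A_n = C_0(\mathcal X_n)$ invariant and restricts there to the automorphism induced by $h_n(\widehat{g})$; in particular $\widehat{G}A_n = A_n$ for every $n$.

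Next I would check that $A = \widehat{A}^{\widehat{G}}$. One inclusion is immediate: $A = A_0 = C_0(\mathcal X)$ and $h_0(\widehat{G}) \subseteq G(\mathcal X~|~\mathcal X) = \{e\}$, so every $\widehat{g}$ fixes $A_0$ pointwise and $A \subseteq \widehat{A}^{\widehat{G}}$. For the reverse inclusion I would average over the finite groups $G(\mathcal X_n~|~\mathcal X)$. Given $\widehat{a} \in \widehat{A}^{\widehat{G}}$ and $\varepsilon > 0$, density of $\bigcup_n A_n$ in $\widehat{A}$ provides $n$ and $a_n \in A_n$ with $\|a_n - \widehat{a}\| < \varepsilon$. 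For each $g \in G(\mathcal X_n~|~\mathcal X)$ choose, by surjectivity of $h_n$, an element $\widehat{g} \in \widehat{G}$ with $h_n(\widehat{g}) = g$; then inside $\widehat{A}$ we have $g a_n = \widehat{g} a_n$, and, using $\widehat{g}\widehat{a} = \widehat{a}$ and that $\widehat{g}$ is isometric, $\|g a_n - \widehat{a}\| = \|\widehat{g}(a_n - \widehat{a})\| = \|a_n - \widehat{a}\| < \varepsilon$. Hence the average $a'_n := \frac{1}{|G(\mathcal X_n~|~\mathcal X)|}\sum_{g \in G(\mathcal X_n~|~\mathcal X)} g a_n$ lies in $C_0(\mathcal X_n)^{G(\mathcal X_n~|~\mathcal X)} = C_0(\mathcal X) = A$ (condition (a) of Definition \ref{fin_def}, via Theorem \ref{comm_fin_thm}) and satisfies $\|a'_n - \widehat{a}\| < \varepsilon$. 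As $A$ is norm-closed, $\widehat{a} \in A$; therefore $\widehat{A}^{\widehat{G}} = A$.

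With $\widehat{G} := G(\widehat{\mathcal X}~|~\mathcal X)$ now a legitimate choice of the group in Definition \ref{regular_defn}, conditions (a) and (b) of that definition follow from the first step: (a) is exactly the identity $\widehat{G}A_n = A_n$ recorded there, and (b) holds because, under the canonical identification $G(A_n~|~A) \approx G(\mathcal X_n~|~\mathcal X)$ of Theorem \ref{comm_fin_thm}, the restriction homomorphism $\widehat{g} \mapsto \widehat{g}|_{A_n}$ coincides with $h_n$, which is surjective by Lemma \ref{top_surj_group_lem}. Hence $\mathfrak{S}_{C_0(\mathcal X)}$ is regular, with covering transformation group $G(\widehat{A}~|~A) = G(\widehat{\mathcal X}~|~\mathcal X)$.

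The step I expect to be the main obstacle is the inclusion $\widehat{A}^{\widehat{G}} \subseteq A$: one cannot literally average a $\widehat{G}$-invariant element over the (possibly infinite) group $\widehat{G}$, so the argument must descend to a finite level $A_n$, approximate there, and average over the finite quotient $G(\mathcal X_n~|~\mathcal X) = h_n(\widehat{G})$, exploiting that an invariant $\widehat{a}$ stays within $\varepsilon$ of the entire finite orbit of a good approximant. Everything else is routine bookkeeping with the identifications $\widehat{A} \cong C_0(\widehat{\mathcal X})$ and $G(A_n~|~A) \approx G(\mathcal X_n~|~\mathcal X)$ together with the surjectivity in Lemma \ref{top_surj_group_lem}.
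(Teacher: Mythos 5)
Your proposal is correct and follows essentially the same route as the paper: identify $\varinjlim C_0(\mathcal X_n)$ with $C_0\left(\varprojlim \mathcal X_n\right)$ via Corollary \ref{direct_lim_state_cor} and Theorem \ref{gelfand-naimark}, take $\widehat{G} = G\left(\widehat{\mathcal X}~|~\mathcal X\right)$, and deduce conditions (a) and (b) of Definition \ref{regular_defn} from the relation $\widehat{g}\,a = h_n(\widehat{g})\,a$ and the surjectivity of $h_n$ in Lemma \ref{top_surj_group_lem}. Your additional averaging argument establishing $\widehat{A}^{\widehat{G}} = A$ is a genuine (and correct) improvement, since Definition \ref{regular_defn} formally requires this identity and the paper's proof leaves it unverified.
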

\begin{proof}
We should proof (a) and (b) of the Definition \ref{regular_defn}. 
\newline
(a)
If $\widehat{\mathcal X} = \varinjlim {\mathcal X}_n$ then from the Corollary \ref{direct_lim_state_cor} it follows that $\varinjlim C_0\left(\mathcal X_n \right)= C_0\left( \widehat{\mathcal X}\right)$. It follows from the Theorem \ref{gelfand-naimark}  that there is the natural *-isomorphism $G\left(\widehat{\mathcal X}~|~ {\mathcal X} \right)\approx G\left(C_0\left( \widehat{\mathcal X}\right) ~|~ C_0\left( {\mathcal X}\right)  \right)$. If $a \in C_0\left(\mathcal X_n \right)$ and $\widehat{g} \in G\left(\widehat{\mathcal X}~|~ {\mathcal X} \right)$  then from the Lemma \ref{top_surj_group_lem} it follows that
\begin{equation}\tag{*}
\widehat{g} a = h_n(\widehat{g}) a
\end{equation}
where 
\begin{equation}\tag{**}
h_n:G\left(\widehat{\mathcal X}~|~ {\mathcal X} \right)\to G\left(\mathcal X_n~|~ {\mathcal X} \right)
\end{equation}
is a group homomorphism.
From (*) and $h_n(\widehat{g}) a \in C\left(\mathcal X_n \right)$ it follows that  $$G\left(C_0\left( \widehat{\mathcal X}\right)~|~C_0\left({\mathcal X}\right)\right)  C_0\left( \mathcal X_n\right)=  C_0\left( \mathcal X_n\right).$$
\newline
(b) From the Lemma \ref{top_surj_group_lem} it follows that homomorphism (**) is surjective. However from the Theorem \ref{gelfand-naimark} it follows the homomorphism (**) is equivalent to $h'_n: G\left(C_0\left( \widehat{\mathcal X}\right)~|~C_0\left({\mathcal X}\right)\right) \to G\left(C_0\left( {\mathcal X}_n\right)~|~C_0\left({\mathcal X}\right)\right)$. So $h'_n$ is surjective. 
\end{proof}
\begin{lem}\label{comm_l_infty_lem}
If $\mathfrak{S}_{\mathcal X} = \left\{\mathcal{X} = \mathcal{X}_0 \xleftarrow{}... \xleftarrow{} \mathcal{X}_n \xleftarrow{} ...\right\} \in \mathfrak{FinTop}$, $\widetilde{\mathcal X} = \varprojlim \mathfrak{S}_{\mathcal X}$ and 
$\mathfrak{S}_{C_0\left(\mathcal{X}\right)}=
\left\{C_0(\mathcal{X})=C_0(\mathcal{X}_0)\to ... \to C_0(\mathcal{X}_n) \to ...\right\} \in \mathfrak{FinAlg}$, then there is the natural $G\left(\widetilde{\mathcal X} | \mathcal X\right)$ equivariant inclusion 
$$\left( \varinjlim C_0\left(\mathcal X_n\right)\right)''=\left( C_0\left(\varprojlim\mathcal X_n\right)\right)'' = L^\infty\left(\varprojlim \mathcal X_n \right) \xrightarrow{\subset}L^\infty\left(\widetilde{\mathcal X} \right).
$$
\end{lem}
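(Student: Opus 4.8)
Set $\widehat{\mathcal X} := \varprojlim \mathcal X_n$ (inverse limit of topological spaces). By Corollary \ref{direct_lim_state_cor}, $\varinjlim C_0(\mathcal X_n) = C_0(\widehat{\mathcal X})$, and by \ref{l2_mu} its enveloping $W^*$-algebra, realized in the GNS representation on $L^2(\widehat{\mathcal X},\mu)$ of a faithful state, is $L^\infty(\widehat{\mathcal X})$; here I would fix $\mu = \varprojlim\mu_n$, where $\mu_n$ is the normalised pull-back to $\mathcal X_n$ of a fixed faithful probability measure $\mu_0$ on $\mathcal X$ (these cohere because the bonding maps are finite-fold coverings, so $\mu$ is a well-defined faithful probability measure on $\widehat{\mathcal X}$). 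Recall from Lemma \ref{comm_universal_covering_lem} and Definition \ref{top_topological_inv_lim_defn} that $\widetilde{\mathcal X} = \varprojlim\mathfrak S_{\mathcal X}$ equals $\widehat{\mathcal X}$ as a set but has the finer topology generated by special sets, and that each $\pi^{\widetilde{\mathcal X}}_n\colon\widetilde{\mathcal X}\to\mathcal X_n$ is a genuine regular covering projection; pulling $\mu_0$ back along $\pi^{\widetilde{\mathcal X}}_0$ gives a locally finite, full-support, $G(\widetilde{\mathcal X}|\mathcal X)$-invariant measure $\widetilde\mu$ on $\widetilde{\mathcal X}$, relative to which $L^\infty(\widetilde{\mathcal X})$ is the weak closure of $C_0(\widetilde{\mathcal X})$ acting on $L^2(\widetilde{\mathcal X},\widetilde\mu)$.

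The plan is to transport this identification along the identity map $p\colon\widetilde{\mathcal X}\to\widehat{\mathcal X}$, which is a continuous bijection, equivariant once $G(\widetilde{\mathcal X}|\mathcal X)$ is identified with $G(\widehat{\mathcal X}|\mathcal X)$ (Lemma \ref{top_biject_lem}); hence $\phi\mapsto\phi\circ p$ is an injective equivariant $*$-homomorphism $C_0(\widehat{\mathcal X})\hookrightarrow C_b(\widetilde{\mathcal X}) = M\bigl(C_0(\widetilde{\mathcal X})\bigr)$, faithful as a representation on $L^2(\widetilde{\mathcal X},\widetilde\mu)$ (a non-zero continuous function on $\widehat{\mathcal X}$ stays non-zero on $\widetilde{\mathcal X}$). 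By the universal property of the enveloping $W^*$-algebra this extends to a normal $*$-homomorphism out of $C_0(\widehat{\mathcal X})''$, and I would argue that it descends to a normal, unital, equivariant $*$-homomorphism $P\colon L^\infty(\widehat{\mathcal X})\to L^\infty(\widetilde{\mathcal X})$ extending $\phi\mapsto\phi\circ p$ (equivariance and normality then force the range into $L^\infty(\widetilde{\mathcal X})$ and make everything natural). The whole content is thus concentrated in two assertions about $P$: that it is well defined on $\mu$-classes (a priori the universal property only gives $P$ on the bidual $C_0(\widehat{\mathcal X})^{**}$, so one must see it factors through $C_0(\widehat{\mathcal X})^{**}\twoheadrightarrow L^\infty(\widehat{\mathcal X},\mu)$) and that the resulting map is injective.

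Both assertions I would attack via the special-set machinery, and this is where I expect the real difficulty. Over each evenly covered open $\mathcal U_0\subseteq\mathcal X$ there is a local product decomposition $\widehat\pi_0^{-1}(\mathcal U_0)\approx\mathcal U_0\times\mathcal F$, with $\mathcal F$ — the fibre of $\widehat{\mathcal X}\to\mathcal X$, a torsor over $G(\widehat{\mathcal X}|\mathcal X)$ — compact and carrying full-support Haar measure inside $\widehat{\mathcal X}$ but discrete inside $\widetilde{\mathcal X}$; the special sets are the slices $\mathcal U_0\times\{t\}$ and their deck translates, and on such a chart $\mu\approx\mu_0|_{\mathcal U_0}\times\nu_{\mathcal F}$ with $\nu_{\mathcal F}$ Haar, while $\widetilde\mu\approx\mu_0|_{\mathcal U_0}\times c_{\mathcal F}$ with $c_{\mathcal F}$ counting measure. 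Lemma \ref{top_equ_borel_set_lem} ensures the slices are Borel in $\widehat{\mathcal X}$, so the slicing is measurable. The plan for injectivity is a Fubini argument: given Borel $E\subseteq\widehat{\mathcal X}$ with $\mu(E) > 0$, restrict to a chart on which $\mu(E) > 0$, use Fubini to find a parameter $t$ with $\mu_0\bigl(\{u:(u,t)\in E\}\bigr) > 0$, and build a positive $\phi\in C_c(\widetilde{\mathcal X})$ supported in the special set $\mathcal U_0\times\{t\}$ whose associated normal functional $x\mapsto\int_{\widetilde{\mathcal X}}\phi\,x\,\phi\,d\widetilde\mu$ on $L^\infty(\widetilde{\mathcal X})$ does not vanish on $P(\chi_E)$; the same local analysis should settle well-definedness. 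The subtle point throughout is that a $\mu$-positive Borel subset of $\widehat{\mathcal X}$ may be $\mu$-null on every individual special set yet $\mu_0$-positive along a slice, so the argument hinges entirely on correctly relating the Haar measure on the transversal $\mathcal F$ to the counting measure on the same transversal as seen through $P$ — the one genuinely non-formal step, whose abstract counterpart is Lemma \ref{special_in_w_lem}, and where I would concentrate the work.
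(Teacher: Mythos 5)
Your first two steps coincide with the paper's entire proof: the paper invokes Corollary \ref{direct_lim_state_cor} to get $\varinjlim C_0(\mathcal X_n)=C_0(\widehat{\mathcal X})$, notes that the continuous bijection $\widetilde{\mathcal X}\to\widehat{\mathcal X}$ yields an injective equivariant $*$-homomorphism $C_0(\widehat{\mathcal X})\to C_b(\widetilde{\mathcal X})$, and then simply asserts that this ``induces'' an inclusion of enveloping $W^*$-algebras. You are right that this last step is where all the content lies, and your measure-theoretic framing (inverse-limit measure $\mu$ on $\widehat{\mathcal X}$, invariant lift $\widetilde\mu$ on $\widetilde{\mathcal X}$, local charts $\mathcal U_0\times\mathcal F$ with Haar versus counting measure on the fibre) is the right place to test it; in that respect you go further than the paper, which offers no justification for the step at all.

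However, the half you defer --- well-definedness of $P$ on $\mu$-classes --- is not merely delicate; it is obstructed. A single slice $\mathcal U_0\times\{t\}$ is a Borel subset of $\widehat{\mathcal X}$ which is $\mu$-null whenever $\mathcal F$ is infinite (for any finite or $\sigma$-finite $\mu$, at most countably many of the uncountably many disjoint slices can carry positive measure), yet it has $\widetilde\mu$-measure $\mu_0(\mathcal U_0)>0$. Hence the push-forward of $\widetilde\mu$ along the identity $p\colon\widetilde{\mathcal X}\to\widehat{\mathcal X}$ is not absolutely continuous with respect to $\mu$, the assignment $[f]_\mu\mapsto[f]_{\widetilde\mu}$ does not descend to $L^\infty(\widehat{\mathcal X},\mu)$, and no normal $*$-homomorphism $L^\infty(\widehat{\mathcal X},\mu)\to L^\infty(\widetilde{\mathcal X},\widetilde\mu)$ extending $\phi\mapsto\phi\circ p$ can exist, because the representation of $C_0(\widehat{\mathcal X})$ on $L^2(\widetilde{\mathcal X},\widetilde\mu)$ is not quasi-contained in the one on $L^2(\widehat{\mathcal X},\mu)$. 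Your Fubini argument settles only the easy direction ($\widetilde\mu$-null implies $\mu$-null, giving injectivity where the map exists). What survives --- and is all the paper actually uses downstream, via Lemma \ref{special_in_w_lem} --- is the statement for elements that arise as strong limits of decreasing sequences from the $C_0(\mathcal X_n)$: such an element has a canonical pointwise (upper semicontinuous) representative on the common underlying set $\widehat{\mathcal X}=\widetilde{\mathcal X}$, and it is that representative, not an abstract $\mu$-class, that defines an element of $L^\infty(\widetilde{\mathcal X})$. Restricting the lemma to those elements and working with the pointwise infimum directly lets your local product analysis go through and leaves the rest of Section \ref{comm_case_sec} intact.
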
 
\begin{proof}

From the Corollary \ref{direct_lim_state_cor} it follows that $\varinjlim C_0\left(\mathcal X_n\right) = C_0\left(\widehat{\mathcal X}\right)$ where $\widehat{\mathcal X} = \varprojlim \mathcal X_n$ is the inverse limit of topological  spaces. If $\widetilde{\mathcal X}$ is the inverse limit of $\mathfrak{S}_{\mathcal X}$ then from the Lemma \ref{top_biject_lem} it follows that the continuous map $\widetilde{\mathcal X}\to \widehat{\mathcal X}$ is a bijection.  So the natural *-homomorphism $\varphi:C_0\left(\widehat{\mathcal X}\right) \to C_b\left(\widetilde{\mathcal X}\right)$ is injective and  $G\left(\widehat{\mathcal X}, \mathcal X\right)$-equivariant. The *-homomorphism $\varphi$ induces the  $G\left(\widehat{\mathcal X}, \mathcal X\right)$-equivariant inclusion $\left( \varinjlim C_0\left(\mathcal X_n\right)\right)'' \xrightarrow{\subset}\left(C_b\left((\widetilde{\mathcal X} \right)  \right)''= L^\infty\left(\widetilde{\mathcal X} \right)$
of enveloping $W^*$-algebras.
\end{proof}
\begin{lem}\label{comm_c_is_spec_lem}
On the notation of the Lemma \ref{comm_l_infty_lem} any positive element $\widetilde{a} \in C_c\left(\widetilde{\mathcal X} \right)$ is special. 
\end{lem}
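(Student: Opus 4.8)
The plan is to apply Definition~\ref{special_el_defn} with $\overline A = C_b\bigl(\widetilde{\mathcal X}\bigr)$, carrying the action of $\widehat G = G\bigl(\widetilde{\mathcal X}~|~\mathcal X\bigr)$ on $\widetilde{\mathcal X}$ and the injective $\widehat G$-equivariant inclusion $\varinjlim C_0(\mathcal X_n) = C_0\bigl(\widehat{\mathcal X}\bigr)\hookrightarrow C_b\bigl(\widetilde{\mathcal X}\bigr)$ constructed in the proof of Lemma~\ref{comm_l_infty_lem}. So let a positive $\widetilde a\in C_c\bigl(\widetilde{\mathcal X}\bigr)$ be given, put $K = \supp\widetilde a$, a compact subset of $\widetilde{\mathcal X}$, and recall that $J_n = \ker\bigl(\widehat G\to G(\mathcal X_n~|~\mathcal X)\bigr)$ is the group of covering transformations of the regular covering projection $\pi^{\widetilde{\mathcal X}}_n\colon\widetilde{\mathcal X}\to\mathcal X_n$ of Lemma~\ref{comm_universal_covering_lem}, that the $J_n$ decrease, and that $\bigcap_n J_n = \{e\}$.

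The first and only substantial step is to show that the set $S = \{\,g\in\widehat G \mid gK\cap K\neq\emptyset\,\}$ is finite. Here $\widehat\pi^0\colon\widetilde{\mathcal X}\to\mathcal X_0$ is a covering projection --- the composite of $\pi^{\widetilde{\mathcal X}}_1$ with the finite covering $\mathcal X_1\to\mathcal X_0$ --- and $\widetilde{\mathcal X}$ is locally compact Hausdorff. I would cover the compact set $\widehat\pi^0(K)\subset\mathcal X_0$ by finitely many evenly covered open sets $\mathcal W_1,\dots,\mathcal W_r$ with equivariant trivialisations $(\widehat\pi^0)^{-1}(\mathcal W_s)\cong\mathcal W_s\times\widehat G$, on which $\widehat G$ acts by left translation in the second factor; then, combining the resulting sheet decomposition with compactness of $K$, write $K = K_1\cup\dots\cup K_m$ with each $K_i$ compact and contained in a single sheet $\mathcal W_{s(i)}\times\{g_i\}$. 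If $g\in S$, pick $\widetilde z\in gK\cap K$; then $\widetilde z\in K_j$ and $g^{-1}\widetilde z\in K_i$ for some $i,j$, so (using $\widehat\pi^0\circ g=\widehat\pi^0$) $\widetilde z$ lies at once in the sheet $\mathcal W_{s(i)}\times\{gg_i\}$ and in the sheet $\mathcal W_{s(j)}\times\{g_j\}$, over a common base point $y = \widehat\pi^0(\widetilde z)$ in the compact set $\widehat\pi^0(K_i)\cap\widehat\pi^0(K_j)$. Comparing the two trivialisations over $\mathcal W_{s(i)}\cap\mathcal W_{s(j)}$ through the locally constant transition cocycle $\psi$, this forces $(gg_i)\psi(y) = g_j$, so $g$ is determined by $g_i$, $g_j$ and the value $\psi(y)$; since $\psi$ is locally constant it takes only finitely many values on the compact set $\widehat\pi^0(K_i)\cap\widehat\pi^0(K_j)$, and $i,j$ run over a finite set, whence $g$ runs over a finite set. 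I expect this to be the principal obstacle: it is in substance a proper-discontinuity statement for the $\widehat G$-action on $\widetilde{\mathcal X}$ that must be wrung out of the bare sheet combinatorics of $\widehat\pi^0$; it becomes immediate when the $\mathcal X_n$ are locally connected (take the $\mathcal W_s$ connected, so $\psi$ is literally constant), but in general needs the compactness bookkeeping just sketched.

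Granting finiteness of $S$, the rest is routine. At any $\widetilde x\in\widetilde{\mathcal X}$ the elements $g\in J_n$ with $g^{-1}\widetilde x\in K$ all lie in the finite set $g_1 S$ for any one such $g_1$, and likewise the family $\{\,gK \mid g\in J_n\,\}$ is locally finite; hence $a_n = \sum_{g\in J_n}g\widetilde a$ and $b_n = \sum_{g\in J_n}g(\widetilde a^2)$ are, locally on $\widetilde{\mathcal X}$, finite sums of continuous functions, are $J_n$-invariant, and are supported in the compact set $\pi^{\widetilde{\mathcal X}}_n(K)$, so they descend to elements of $C_c(\mathcal X_n)\subseteq A_n$. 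Their partial sums increase (as $\widetilde a\ge 0$) and are bounded by $|S|\,\|\widetilde a\|$, so Lemma~\ref{increasing_convergent_w} gives the strong convergence required by part~(a) of Definition~\ref{special_el_defn}. For part~(b), choose $N$ with $J_N\cap S = \{e\}$, which is possible because the $J_n$ decrease to $\{e\}$ and $S$ is finite; then for $n\ge N$ the translates $\{\,gK \mid g\in J_n\,\}$ are pairwise disjoint, so $(g\widetilde a)(h\widetilde a) = 0$ whenever $g\neq h$ in $J_n$, and
\[
a_n^2 \;=\; \sum_{g,h\in J_n}(g\widetilde a)(h\widetilde a) \;=\; \sum_{g\in J_n}(g\widetilde a)^2 \;=\; \sum_{g\in J_n}g(\widetilde a^2) \;=\; b_n .
\]
Thus $\|a_n^2 - b_n\| = 0 < \eps$ for every $\eps>0$ and $n\ge N$; both conditions of Definition~\ref{special_el_defn} hold, and $\widetilde a$ is special.
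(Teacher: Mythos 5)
Your proof is correct, and it reaches the same pivotal conclusion as the paper's argument --- that for all sufficiently large $n$ the translates $\left\{g\widetilde a\right\}_{g\in J_n}$ have pairwise disjoint supports, so that $a_n^2=b_n$ exactly and condition (b) of Definition~\ref{special_el_defn} holds with room to spare --- but by a different mechanism. The paper covers $K=\supp\widetilde a$ by finitely many \emph{special} open sets (Definition~\ref{top_spec_defn}), observes that any two of them which are disjoint in $\widetilde{\mathcal X}$ but have overlapping images in $\mathcal X_0$ already have disjoint images in some $\mathcal X_{n_{jk}}$, and takes $N=\max n_{jk}$, so that their union is mapped injectively into $\mathcal X_N$ and $\widetilde a$ descends via $\mathfrak{Desc}_{\widetilde\pi^n}$. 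You instead prove outright that $\widehat G$ acts properly discontinuously on $\widetilde{\mathcal X}$ --- finiteness of $S=\left\{g : gK\cap K\neq\emptyset\right\}$ via the locally constant transition cocycle of the principal $\widehat G$-covering $\widehat\pi^0$ --- and then invoke $\bigcap_n J_n=\{e\}$ (Lemma~\ref{top_surj_group_lem}) to find $N$ with $J_N\cap S=\{e\}$. Your route is more self-contained: it does not lean on the special-set basis of the topology of $\widetilde{\mathcal X}$, and the properness statement you extract does extra work for free (it yields the local finiteness and the uniform bound $|S|\,\|\widetilde a\|$ that justify the strong convergence in part (a), a point the paper passes over in silence). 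The cost is the bookkeeping with equivariant trivialisations and cocycles, which is needed precisely because the $\mathcal X_n$ are not assumed locally connected; the paper's version is shorter because special sets are, by construction, already bijective onto their images at every level.
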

\begin{proof}
	Denote by $\widetilde{\pi}^n: \widetilde{\mathcal{X}} \to \mathcal X_n$. 
If $K = \supp \widetilde{a}$ then $K$ can be covered by special open sets. Since $K$ is compact there is a finite family $\widetilde{\mathcal U}_1,..., \widetilde{\mathcal U}_m$ of special open sets such that $K \subset \widetilde{\mathcal U}_1\bigcup, \dots, \bigcup\widetilde{\mathcal U}_m$.  If $~\widetilde{\mathcal U}_j \bigcap \widetilde{\mathcal U}_k = \emptyset$  and $ \widetilde{\pi}^0\left(\widetilde{\mathcal U}_j \right)\bigcap\widetilde{\pi}^0\left(\widetilde{\mathcal U}_k \right) \neq \emptyset$ then for any  $ x \in \widetilde{\pi}^0\left(\widetilde{\mathcal U}_j \right)\bigcap\widetilde{\pi}^0\left(\widetilde{\mathcal U}_k \right)$  there are  $ x' \in \widetilde{\mathcal U}_j$ and $ x'' \in \widetilde{\mathcal U}_k$ such that $\widetilde{\pi}^0\left(x'\right)=\widetilde{\pi}^0\left(x''\right)=x$. Since $x' \neq x''$ there is $n_{jk}\in \mathbb{N}$ such that $\widetilde{\pi}^{n_{jk}}\left(x'\right)\neq\widetilde{\pi}^{n_{jk}}\left(x''\right)$, whence 
$
\widetilde{\pi}^{n_{jk}}\left(\widetilde{\mathcal U}_j\right)\bigcap\widetilde{\pi}^{n_{jk}}\left(\widetilde{\mathcal U}_k\right)= \emptyset$. If $N = \max n_{jk}$ and $\widetilde{\mathcal U}= \widetilde{\mathcal U}_1\bigcup, \dots, \bigcup\widetilde{\mathcal U}_m$ then $\widetilde{\mathcal U}$ is mapped homemorphically onto $\widetilde{\pi}^N\left(\widetilde{\mathcal U}\right)$. For any $n \ge N$ let $a_n \in C_c\left(\mathcal X_n \right)$ be given by 
$
a_n = \mathfrak{Desc}_{ \widetilde{\pi}^n}\left( \widetilde{a}\right). 
$
Since for any $n > N$ the set $\widetilde{\mathcal U}$ is mapped homeomorphically on $\widetilde{\pi}^{n}\left( \widetilde{\mathcal U}\right)$ following conditions hold:

$$
\sum_{\widetilde{g}\in \ker\left(G\left( \widetilde{\mathcal X}|\mathcal X\right)\to G\left( \mathcal X_n|\mathcal X\right)  \right)  } \widetilde{g}~\widetilde{a} = a_n,
$$
$$
b_n =\sum_{\widetilde{g}\in \ker\left(G\left( \widetilde{\mathcal X}|\mathcal X\right)\to G\left( \mathcal X_n|\mathcal X\right)  \right)  } \widetilde{g}~\widetilde{a}^2 = a^2_n
$$
where sums of series mean strong limits. 
From the above equations it follows that 
$$
a_n,b_n \in C_0\left(\mathcal{X} \right);  
$$
$$
a_n^2= b_n; 
$$
i. e. $\widetilde{a}$ satisfies to the Definition \ref{special_el_defn}.
\end{proof}
\begin{cor}\label{comm_a_in_c0_cor}
If $\widetilde{A}$ is the inverse limit of $\mathfrak{S}_{C_0\left(\mathcal{X}\right)}$ then $\widetilde{A} \subset C_0\left(  \widetilde{\mathcal X}\right)$.
\end{cor}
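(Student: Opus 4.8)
The plan is to reduce the inclusion to a statement about individual special elements and then to promote each such element — a priori only a bounded Borel function on $\widetilde{\mathcal X}$ — to a member of $C_0(\widetilde{\mathcal X})$, by extracting continuity and vanishing at infinity from the two defining conditions of Definition \ref{special_el_defn}. Since $C_0(\widetilde{\mathcal X})$ is a norm-closed $*$-subalgebra of $L^\infty(\widetilde{\mathcal X})$ and, by Definition \ref{main_defn}, $\widetilde A$ is the norm closure of the $*$-algebra generated by the $\mathbb C$-linear span of special elements, it suffices to show that every special element $\overline a$ lies in $C_0(\widetilde{\mathcal X})$; by Definition \ref{special_el_defn} such $\overline a$ is positive. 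By Lemma \ref{special_in_w_lem} together with Lemma \ref{comm_l_infty_lem}, $\overline a\in\bigl(\varinjlim C_0(\mathcal X_n)\bigr)''=L^\infty(\widehat{\mathcal X})\subseteq L^\infty(\widetilde{\mathcal X})$. Writing $J_n=\ker\bigl(\widehat G\to G(\mathcal X_n\,|\,\mathcal X)\bigr)$, $a_n=\sum_{g\in J_n}g\overline a\in C_0(\mathcal X_n)$ and $\widetilde\pi^n\colon\widetilde{\mathcal X}\to\mathcal X_n$, the pulled-back functions $a_n\circ\widetilde\pi^n$ are continuous on $\widetilde{\mathcal X}$; they decrease with $n$ (the terms dropped in passing from $J_n$ to $J_{n+1}$ are positive), and since $\bigcap_n J_n=\{e\}$ and $\sum_{g\in\widehat G}g\overline a$ converges (part (a) of Definition \ref{special_el_defn}) one gets $a_n\circ\widetilde\pi^n\downarrow\overline a$ pointwise. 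In particular $\overline a$ is upper semicontinuous.

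To obtain continuity of $\overline a$ I would use part (b) of Definition \ref{special_el_defn}. Set $\varepsilon_n:=\|a_n^2-b_n\|\to 0$ and $r_n:=a_n\circ\widetilde\pi^n-\overline a=\sum_{g\in J_n\setminus\{e\}}g\overline a\ge 0$. In the commutative algebra $L^\infty(\widetilde{\mathcal X})$ one has, with $a_n,b_n$ regarded via $\widetilde\pi^n$,
$$
a_n^2-b_n=2\,\overline a\,r_n+\sum_{\substack{g,g'\in J_n\setminus\{e\}\\ g\ne g'}}(g\overline a)(g'\overline a),
$$
and all summands are positive, so $\|\overline a\,r_n\|\le\tfrac12\varepsilon_n\to 0$. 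Consequently $r_n\to 0$ uniformly on each set $\{\overline a\ge\delta\}$ ($\delta>0$), whence $\overline a=\lim_n a_n\circ\widetilde\pi^n$ uniformly there and $\overline a$ is continuous on $\{\overline a\ge\delta\}$; together with upper semicontinuity (which already gives continuity at every point where $\overline a$ vanishes) this should yield continuity of $\overline a$ on all of $\widetilde{\mathcal X}$. The delicate point here is to rule out that $\overline a$ drops along a net converging to a point where it is positive; for this I would exploit that $r_n=\sup_{m\ge n}\bigl(a_n\circ\widetilde\pi^n-a_m\circ\widetilde\pi^m\bigr)$ is lower semicontinuous, that $r_n\to 0$ pointwise, and once more the displayed estimate.

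For vanishing at infinity, fix $\varepsilon>0$ and choose $n$ with $\varepsilon_n<\varepsilon^2$. On the closed set $\mathcal K_\varepsilon:=\{\overline a\ge\varepsilon\}$ the map $\widetilde\pi^n$ is injective: if $\widetilde x\ne\widetilde x'$ in $\mathcal K_\varepsilon$ satisfy $\widetilde\pi^n\widetilde x=\widetilde\pi^n\widetilde x'$ then $\widetilde x'=g\widetilde x$ for some $g\in J_n\setminus\{e\}$, and the corresponding cross-term forces $(a_n^2-b_n)(\widetilde\pi^n\widetilde x)\ge\overline a(\widetilde x)\,\overline a(\widetilde x')\ge\varepsilon^2>\varepsilon_n$, a contradiction. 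Since moreover $\widetilde\pi^n(\mathcal K_\varepsilon)\subseteq\{a_n\ge\varepsilon\}$, which is compact in $\mathcal X_n$, I would then use that the topology of $\widetilde{\mathcal X}$ is generated by relatively compact special sets on each of which $\widetilde\pi^n$ is a homeomorphism: covering $\mathcal K_\varepsilon$ by such sets and using the injectivity just established, one checks that $\widetilde\pi^n$ maps $\mathcal K_\varepsilon$ homeomorphically onto a closed — hence compact — subset of $\{a_n\ge\varepsilon\}$, so $\mathcal K_\varepsilon$ is compact in $\widetilde{\mathcal X}$. Then $(\overline a-\varepsilon)_+\in C_c(\widetilde{\mathcal X})$ and $\|\overline a-(\overline a-\varepsilon)_+\|\le\varepsilon$; letting $\varepsilon\downarrow0$ gives $\overline a\in\overline{C_c(\widetilde{\mathcal X})}=C_0(\widetilde{\mathcal X})$, and therefore $\widetilde A\subseteq C_0(\widetilde{\mathcal X})$.

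I expect the principal obstacle to be exactly this passage — in both the continuity step and the vanishing-at-infinity step — from the operator-algebraic data of Definition \ref{special_el_defn}, which lives only up to null sets inside the coarse algebra $L^\infty(\widehat{\mathcal X})$ and controls the approximants $a_n$ only asymptotically, to honest statements about the strictly finer topology of $\widetilde{\mathcal X}$. Because $C_0(\widehat{\mathcal X})$ is \emph{not} contained in $C_0(\widetilde{\mathcal X})$ (the finer topology has fewer compact sets), ``vanishing at infinity'' is a genuine extra constraint that must be squeezed out of the special-set structure together with the injectivity of $\widetilde\pi^n$ on the level sets of $\overline a$; carrying out the compactness argument for $\mathcal K_\varepsilon$ inside $\widetilde{\mathcal X}$, rather than merely inside each $\mathcal X_n$, is the technical heart of the proof.
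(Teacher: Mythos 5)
Your overall strategy is in substance the paper's own, even though the paper's stated one-line proof of this Corollary cites Lemma \ref{comm_c_is_spec_lem}, which in fact yields the \emph{reverse} inclusion $C_0(\widetilde{\mathcal X})\subset\widetilde A$; the inclusion actually asserted here is established in the paper by the chain Lemma \ref{special_in_w_lem} $\to$ Lemma \ref{comm_l_infty_lem} $\to$ Lemma \ref{comm_spec_cont_lem} $\to$ Lemma \ref{comm_c0_lem}, and that is exactly the chain you reproduce: special $\Rightarrow$ lies in $L^\infty(\widetilde{\mathcal X})$ $\Rightarrow$ continuous $\Rightarrow$ vanishes at infinity. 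Your vanishing-at-infinity step --- injectivity of $\widetilde\pi^n$ on $\{\overline a\ge\varepsilon\}$ forced by the cross-term bound, compactness of that level set, $(\overline a-\varepsilon)_+\in C_c(\widetilde{\mathcal X})$, and a norm limit as $\varepsilon\downarrow0$ --- is the same as the second half of Lemma \ref{comm_spec_cont_lem} combined with Lemma \ref{comm_c0_lem}, and your use of $\{a_n\ge\varepsilon\}$ being compact because $a_n\in C_0(\mathcal X_n)$ handles the locally compact case more directly than the paper's two-lemma split.

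The genuine gap is the one you flag yourself, in the continuity step, and your proposed repair does not close it. From $\|\overline a\,r_n\|\le\tfrac12\varepsilon_n$ you only control $r_n$ on $\{\overline a\ge\delta\}$, and uniform convergence of $a_n\circ\widetilde\pi^n$ to $\overline a$ \emph{restricted to} that closed set gives continuity of the restriction, not continuity of $\overline a$ at points of that set as a function on $\widetilde{\mathcal X}$: upper semicontinuity bounds the $\limsup$ along nets, but nothing you write forbids $\overline a$ from dropping along a net converging to a point where $\overline a\ge\delta$, and ``$r_n$ is lower semicontinuous and tends to $0$ pointwise'' is not an argument. The paper closes this (proof of Lemma \ref{comm_spec_cont_lem}) by localizing to a special neighborhood with compact closure and running the dichotomy on $a_n$ rather than on $\overline a$: where $a_n\circ\widetilde\pi^n<\varepsilon$ one gets $0\le a_n\circ\widetilde\pi^n-\overline a\le a_n\circ\widetilde\pi^n<\varepsilon$ for free, and where $a_n\circ\widetilde\pi^n\ge\varepsilon$ the cross-term estimate forces $r_n<\varepsilon$; together this gives genuine uniform convergence on a compact neighborhood of every point, hence continuity. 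If you adopt this, be aware that the paper's display establishing $\bigl\|(a_n\circ\widetilde\pi^n)\,r_n\bigr\|\le\tfrac12\|a_n^2-b_n\|$ rests on an identity whose first term should be $2\widetilde a'\sum_{g\in J_n^0}\widetilde a'_g$ rather than $2\widetilde a'_n\sum_{g\in J_n^0}\widetilde a'_g$, so the second branch needs an extra line --- for instance combining $\overline a\,r_n<\varepsilon^2$ with the pointwise bound $r_n^2\le\sum_{g\in J_n^0}(g\overline a)^2+\varepsilon_n$ and the observation that at most one $g\in J_n$ can satisfy $(g\overline a)\ge\varepsilon$ at a given point. Until the continuity of $\overline a$ is actually established, the rest of your argument (which presupposes it when forming $(\overline a-\varepsilon)_+$ and $\mathcal K_\varepsilon$) does not go through.
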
 
\begin{proof}
Follows from the Lemma \ref{comm_c_is_spec_lem} and the Definition \ref{c_c_def_1}.
\end{proof}
We need following fact.

\begin{fact}\label{closure_fact}
 If $f: \mathcal X \to \mathcal Y$ is a continuous map, $\mathcal X' \subset \mathcal X$ and $\mathcal Y' \subset \mathcal Y$ then $f\left(\overline{\mathcal{X}}'\right) \subset \overline{f\left(\mathcal{X}'\right)}$ and $f^{-1}\left(\overline{\mathcal{Y}}'\right) \subset \overline{f^{-1}\left(\mathcal{Y}'\right)}$, where the $\overline{~\cdot~} $ operation means the topological closure.
\end{fact}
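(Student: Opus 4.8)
The plan is to prove the two inclusions by complementary devices: the first rests on continuity alone and is obtained by pulling back a closed set, while the second is the genuinely delicate half and will be obtained by a neighborhood argument that uses the openness of $f$. Openness is not a restriction in practice: every map to which this Fact is applied in the sequel---the covering projections $\widetilde\pi$ and the canonical projections $\widehat\pi^{\,n}$ of the inverse limits---is an open map, so I carry that property along from the outset.

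For the first inclusion I would argue that $\overline{f(\mathcal X')}$ is closed in $\mathcal Y$, hence by continuity its preimage $f^{-1}\bigl(\overline{f(\mathcal X')}\bigr)$ is a closed subset of $\mathcal X$. Since $\mathcal X'\subset f^{-1}(f(\mathcal X'))\subset f^{-1}\bigl(\overline{f(\mathcal X')}\bigr)$, this preimage contains $\mathcal X'$, and being closed it therefore contains $\overline{\mathcal X'}$. Applying $f$ and using $f\bigl(f^{-1}(S)\bigr)\subset S$ yields $f(\overline{\mathcal X'})\subset\overline{f(\mathcal X')}$, which is the claim. This step is entirely routine and I expect no obstacle.

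For the second inclusion I would fix $x\in f^{-1}(\overline{\mathcal Y'})$, so that $f(x)\in\overline{\mathcal Y'}$, and take an arbitrary open neighborhood $U$ of $x$. Because $f$ is open, $f(U)$ is an open neighborhood of $f(x)$; as $f(x)$ lies in the closure of $\mathcal Y'$, the open set $f(U)$ meets $\mathcal Y'$ at some point $f(u)$ with $u\in U$, whence $u\in U\cap f^{-1}(\mathcal Y')$. Thus every neighborhood of $x$ meets $f^{-1}(\mathcal Y')$, i.e. $x\in\overline{f^{-1}(\mathcal Y')}$, which gives $f^{-1}(\overline{\mathcal Y'})\subset\overline{f^{-1}(\mathcal Y')}$. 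The main obstacle is precisely this inclusion: the closed-preimage device of the first part has no counterpart for preimages of closures, and the point-to-neighborhood passage collapses unless $f(U)$ is again a neighborhood of $f(x)$. It is therefore the openness of the map---available here because the projections in question are open---that makes the second inclusion go through, and it is the one step I would single out for care.
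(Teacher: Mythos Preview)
The paper states this as a \emph{Fact} without proof, so there is no argument to compare against directly. Your analysis is the right one, and in fact you have caught a genuine inaccuracy in the paper's formulation: the second inclusion $f^{-1}(\overline{\mathcal Y'})\subset\overline{f^{-1}(\mathcal Y')}$ is \emph{false} for arbitrary continuous maps. A simple counterexample is the constant map $f:\R\to\R$, $f\equiv 0$, with $\mathcal Y'=(0,1)$: then $f^{-1}(\overline{\mathcal Y'})=f^{-1}([0,1])=\R$ while $\overline{f^{-1}(\mathcal Y')}=\overline{\emptyset}=\emptyset$. Your decision to add openness of $f$ is exactly the needed hypothesis, and your neighborhood argument under that assumption is correct.

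Your first paragraph, proving $f(\overline{\mathcal X'})\subset\overline{f(\mathcal X')}$ via the closed preimage, is standard and complete. Your second paragraph is likewise correct once openness is assumed. You are also right that the only place the Fact is invoked (in the proof of Lemma~\ref{comm_spec_cont_lem}) concerns a covering projection, which is open, so the repaired statement suffices for the paper's purposes. One small caveat: in that proof the map is actually a restriction $\widetilde\pi^n|_{\widetilde K_{\eps'}}$ to a closed subset, and restrictions of open maps need not be open; however, since the paper has already shown that restriction to be injective and the argument only concerns specific sets whose images and preimages are explicitly identified, the use there can be justified directly. Your proof of the Fact itself, with openness added, is sound.
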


\begin{lem}\label{comm_spec_cont_lem}

If $\mathcal X$ is a compact second-countable Hausdorff space then any special element $\widetilde{a}  \in  L^\infty\left(\widetilde{\mathcal X}\right)$ lies in $C_0\left(\widetilde{\mathcal X}\right)$.
\end{lem}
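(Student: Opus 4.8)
The plan is to produce a $C_0(\widetilde{\mathcal X})$‑representative of $\widetilde a$ by extracting from hypothesis (b) of Definition \ref{special_el_defn} a quantitative ``almost disjointness'' of the $\widehat G$‑translates of $\widetilde a$, and then using it to control $\widetilde a$ locally (continuity) and globally (decay). First I would fix notation: write $J_n=\ker\bigl(\widehat G\to G(\mathcal X_n\mid\mathcal X)\bigr)$, $\widetilde\pi^n\colon\widetilde{\mathcal X}\to\mathcal X_n$, and set $a_n=\sum_{g\in J_n}g\widetilde a\in C(\mathcal X_n)$, $b_n=\sum_{g\in J_n}g\widetilde a^2\in C(\mathcal X_n)$ (these lie in $A_n$ by hypothesis (a)). By Lemma \ref{comm_l_infty_lem} we may view $\widetilde a\in L^\infty(\widetilde{\mathcal X})=L^\infty(\widehat{\mathcal X})$, the two Borel structures coinciding. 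Since the $J_n$ decrease, $a_n\circ\widetilde\pi^n$ and $b_n\circ\widetilde\pi^n$ decrease and, exactly as in the proof of Lemma \ref{special_in_w_lem} (via Lemma \ref{increasing_convergent_w}), converge strongly to $\widetilde a$ and $\widetilde a^2$; hence $\widetilde a=\inf_n\,a_n\circ\widetilde\pi^n$ is an upper‑semicontinuous representative. In particular $\widetilde a$ is u.s.c., so it is continuous at every point where its value is $0$, and $\{\widetilde a\ge c\}$ is closed for every $c$.

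The second step is the key estimate. Expanding $a_n^2=\sum_{g,h\in J_n}(g\widetilde a)(h\widetilde a)$ and keeping the terms with $h=e$ gives $a_n^2-b_n\ge\widetilde a\,(a_n-\widetilde a)\ge 0$, so $\bigl\|\widetilde a\,(a_n\circ\widetilde\pi^n-\widetilde a)\bigr\|\le\|a_n^2-b_n\|=:\varepsilon_n\to 0$. Reading this fibrewise over $y\in\mathcal X_n$, with $\{\widetilde x_i\}=(\widetilde\pi^n)^{-1}(y)$ and $v_i=\widetilde a(\widetilde x_i)$ (so $\sum_i v_i=a_n(y)$ and $\sum_i v_i^2=b_n(y)$), I get $\sum_{i\ne j}v_iv_j\le\varepsilon_n$, and hence: (i) at most one $v_i$ exceeds $\sqrt{\varepsilon_n}$; (ii) $v_i\ge\delta>0\ \Rightarrow\ 0\le a_n(y)-v_i\le\varepsilon_n/\delta$; (iii) if $\varepsilon_n<c^2/4$ and $a_n(y)\ge c$ then some $v_i>c/2$ — otherwise all $v_i\le c/2$, so $b_n(y)\le\tfrac c2 a_n(y)$ and $a_n(y)^2\le\tfrac c2 a_n(y)+\varepsilon_n<c^2$, contradicting $a_n(y)\ge c$.

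Then continuity of $\widetilde a$ on $\widetilde{\mathcal X}$. By the semicontinuity remark I only need, at a point $\widetilde x_0$ with $v:=\widetilde a(\widetilde x_0)>0$, that $\liminf_{\widetilde x\to\widetilde x_0}\widetilde a(\widetilde x)=v$. From (ii), $a_n(\widetilde\pi^n\widetilde x_0)\to v$; taking $\widetilde x_j\to\widetilde x_0$ with $\widetilde a(\widetilde x_j)\to v'$ and letting $j\to\infty$ in $\widetilde a(\widetilde x_j)\bigl(a_n(\widetilde\pi^n\widetilde x_j)-\widetilde a(\widetilde x_j)\bigr)\le\varepsilon_n$ (using continuity of $a_n\circ\widetilde\pi^n$ and $a_n(\widetilde\pi^n\widetilde x_0)\ge v$) gives $v'(v-v')\le\varepsilon_n$ for all $n$, so $v'\in\{0,v\}$. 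Ruling out $v'=0$ is the crux. Here I would fix a special neighbourhood $\widetilde{\mathcal V}\ni\widetilde x_0$ and use the sheet decomposition $(\widetilde\pi^n)^{-1}\bigl(\widetilde\pi^n(\widetilde{\mathcal V})\bigr)=\bigsqcup_{g\in J_n}g\widetilde{\mathcal V}$ from the proof of Lemma \ref{comm_universal_covering_lem}: if $\widetilde a$ dropped to $0$ along $\widetilde x_j\to\widetilde x_0$, then, $a_n\circ\widetilde\pi^n$ being continuous on the compact $\mathcal X_n$ with $a_n(\widetilde\pi^n\widetilde x_j)\to v$, the mass $\approx v$ would have to sit on a sheet $g\widetilde{\mathcal V}$ with $g\ne e$ over $\widetilde\pi^n\widetilde x_j$; by (i) only one sheet carries mass above $\sqrt{\varepsilon_n}$, and (using $b_n\circ\widetilde\pi^n$ to pin its mass near $v$ and continuity again) pushing $\widetilde x_j\to\widetilde x_0$ forces a point $g^{-1}\widetilde x_0$ with $g\ne e$ and $\widetilde a(g^{-1}\widetilde x_0)$ close to $v>\sqrt{\varepsilon_n}$, contradicting (i) at the fibre over $\widetilde\pi^n\widetilde x_0$, whose unique large point is $\widetilde x_0$; letting $n\to\infty$ removes the slack and excludes $v'=0$. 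Thus $\widetilde a$ is continuous on each special set, hence on $\widetilde{\mathcal X}$. I expect this last case to be the only real obstacle: it alone uses (b) in its full uniform strength, interlocking the finer topology of $\widetilde{\mathcal X}$ with the compactness (and continuity of the $a_n$) on the $\mathcal X_n$; the remaining steps are soft semicontinuity‑and‑covering bookkeeping.

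Finally, decay. Given $\varepsilon>0$, pick $n$ with $\varepsilon_n<\varepsilon^2/4$. By (i) the closed set $\{\widetilde a\ge\varepsilon\}$ meets each $J_n$‑orbit in at most one point, so $\widetilde\pi^n$ is injective on it; by continuity $\{\widetilde a>\varepsilon/2\}$ is open, and since $\widetilde\pi^n$ is an open map it maps $\{\widetilde a>\varepsilon/2\}$ homeomorphically onto the open set $V:=\widetilde\pi^n\bigl(\{\widetilde a>\varepsilon/2\}\bigr)\subseteq\mathcal X_n$, which by (iii) contains $\{a_n\ge\varepsilon\}\supseteq\widetilde\pi^n(\{\widetilde a\ge\varepsilon\})$. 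Since $\{a_n\ge\varepsilon\}$ is compact (closed in $\mathcal X_n$) and $\widetilde\pi^n(\{\widetilde a\ge\varepsilon\})$ is closed in $V$, hence closed in $\{a_n\ge\varepsilon\}$, it is compact; therefore $\{\widetilde a\ge\varepsilon\}$ is compact. Together with continuity this gives $\widetilde a\in C_0(\widetilde{\mathcal X})$.
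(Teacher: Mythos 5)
Your overall architecture is sound and, in its correct parts, closely parallels the paper: the fibrewise estimates obtained by expanding $a_n^2-b_n=\sum_{g\ne h}(g\widetilde a)(h\widetilde a)$, the choice of the upper semicontinuous representative $\widetilde a=\inf_n a_n\circ\widetilde\pi^n$, and the final compactness step (injectivity of $\widetilde\pi^n$ on $\{\widetilde a\ge\varepsilon\}$ and compactness of its image) are all fine and match the paper's second half. The gap is exactly where you place the crux: ruling out $v'=0$. Your only quantitative input there is $\widetilde a\cdot(a_n\circ\widetilde\pi^n-\widetilde a)\le\varepsilon_n$, which by itself permits $\widetilde a$ to drop to $O(\varepsilon_n/v)$ at points where $a_n\circ\widetilde\pi^n\approx v$, provided the missing mass reappears on another sheet of the $J_n$-fibre. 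To contradict (i) you need the heavy points $\widetilde z_j=g_j\widetilde x_j$ (with $\widetilde a(\widetilde z_j)\approx v$ and $g_j\in J_n\setminus\{e\}$) to accumulate at some $g\widetilde x_0$ in the fibre over $\widetilde\pi^n\widetilde x_0$. But $J_n=\ker\bigl(\widehat G\to G(\mathcal X_n\mid\mathcal X)\bigr)$ is in general infinite (e.g.\ $m_n\Z^{2N}$ in the torus example), so the $g_j$ need not admit a constant subsequence; if they escape to infinity the $\widetilde z_j$ have no limit point in $\widetilde{\mathcal X}$ and (i) over $\widetilde\pi^n\widetilde x_0$ is never violated. ``Letting $n\to\infty$ removes the slack'' does not address this: the obstruction is not the slack $\sqrt{\varepsilon_n}$ but the possible escape of the deck transformations. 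As written, the continuity argument is therefore incomplete.

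The gap can be closed by comparing continuous functions for as long as possible before passing to $\widetilde a$. For $m\ge n$ write $a_n=\sum_{[g]\in J_n/J_m}[g]\,a_m$, a \emph{finite} sum of continuous functions; the same expansion yields $a_m\,(a_n-a_m)\le\tfrac12\varepsilon_n$ pointwise for every $m\ge n$. Hence at each point of $\{a_n\circ\widetilde\pi^n>2\sqrt{\varepsilon_n}\}$ the continuous function $a_m\circ\widetilde\pi^m$ lies either below $\sqrt{\varepsilon_n}$ or above $a_n\circ\widetilde\pi^n-\sqrt{\varepsilon_n}$; on a connected special neighbourhood of $\widetilde x_0$ contained in that set it must, by connectedness, stay in one branch, and since $a_m(\widetilde\pi^m\widetilde x_0)\ge v$ it stays in the upper branch, \emph{uniformly in} $m$. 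Taking the infimum over $m$ gives $a_n\circ\widetilde\pi^n-\sqrt{\varepsilon_n}\le\widetilde a\le a_n\circ\widetilde\pi^n$ on that neighbourhood, which is precisely the uniform approximation of $\widetilde a$ by continuous functions that the paper establishes on the compact closure $\overline{\widetilde{\mathcal U}}$ of a special set; with it your dichotomy and your decay argument go through unchanged.
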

\begin{proof}
The proof of this lemma uses following notation
\newline
\begin{tabular}{|l|c|c|}
	\hline
 & Symbol & Comment (and/or) Meaning (and/or) Requirement\\
	\hline
	& & \\

1& $J_n = \ker\left(G\left( \widetilde{\mathcal X}|\mathcal X\right)\to G\left( \mathcal X_n|\mathcal X\right)\right) $&  \\
2&$J_n^0 = J_n \backslash \{e\}$ & $e \in G\left( \widetilde{\mathcal X}|\mathcal X\right)$ is  the trivial element \\

3&  $\widetilde{\pi}: \widetilde{\mathcal X} \to \mathcal X$, $\widetilde{\pi}^n: \widetilde{\mathcal X} \to \mathcal X_n$& Natural covering projections \\
4&	$a_n = \sum_{g \in J_n} g \widetilde{a};~ b_n = \sum_{g \in J_n} g \widetilde{a}^2$& From the Definition \ref{special_el_defn} it follows that\\
&&	 $a_n, b_n \in C\left(\mathcal X_n\right)$ \\
5&$\widetilde{\mathcal U} \subset \widetilde{\mathcal X}$ is an open connected set& The closure   $\overline{\widetilde{\mathcal U}}$ of $\widetilde{\mathcal U}$ is compact \\
& & and  homeomorphically mapped onto $\mathcal U = \widetilde{\pi}\left(\overline{\widetilde{\mathcal U}}\right)$ \\
6& $\widetilde{a}'= \widetilde{a}|_{\overline{\widetilde{\mathcal U}}}$ & The restriction of $\widetilde{a}$ on $\overline{\widetilde{\mathcal U}}$\\
7& $\widetilde{a}'_n,\widetilde{b}'_n \in C\left(\overline{\widetilde{\mathcal U}}\right)$  & The compact $\overline{\widetilde{\mathcal U}}$-lifts of $a_n|_{\widetilde{\pi}^n\left(\overline{\widetilde{\mathcal U}}\right)}$, $b_n|_{\widetilde{\pi}^n\left(\overline{\widetilde{\mathcal U}}\right)}$  (Definition \ref{lift_desc_defn})\\
8 & $\widetilde{a}'_g = \left(g \widetilde{a}\right)|_{\overline{\widetilde{U}}}$ & $\widetilde{a}'_g\in C\left(\overline{\widetilde{U}}\right)$ \\
	\hline
\end{tabular}
\newline
\newline
From the above notation it follows that
\begin{equation*}
\widetilde{a}'_n = \sum_{g \in J_n} \widetilde{a}'_g = \widetilde{a}' + \sum_{g \in J^0_n} \widetilde{a}'_g,
\end{equation*}
\begin{equation*}
\widetilde{b}'_n = \sum_{g \in J_n} \widetilde{a}'^2_g = \widetilde{a}'^2 + \sum_{g \in J^0_n} \widetilde{a}'^2_g
\end{equation*}
whence
\begin{equation*}
\widetilde{a}'^2_n- \widetilde{b}'_n = 2 \widetilde{a}'_n \sum_{g \in J_n^0} \widetilde{a}'_g + \sum_{g' \in J_n^0}~ \sum_{g'' \in J_n^0 \backslash \left\{g'\right\}} \widetilde{a}'_{g'} \widetilde{a}'_{g''}
\end{equation*}
and all members of the above sums are positive.
Let $ \varepsilon > 0$ be any number. From the Definition \ref{special_el_defn} it follows that there is $N \in \mathbb{N}$  such that for any $n \ge N$ 
	\begin{equation*}
 	\left\|a_n^2 - b_n\right\| < 2\varepsilon^2, 
 	\end{equation*}
 	whence
	\begin{equation}\tag{*}
 2\left\| \widetilde{a}'_n \sum_{g \in J_n^0} \widetilde{a}'_g\right\| \le \left\|2 \widetilde{a}'_n \sum_{g \in J_n^0} \widetilde{a}'_g +  \sum_{g' \in J_n^0}~ \sum_{g'' \in J_n^0 \backslash \left\{g'\right\}} \widetilde{a}'_{g'} \widetilde{a}'_{g''}  \right\| =	\left\| \widetilde{a}'^2_n  -  \widetilde{b}'_n \right\| \le 	\left\|a_n^2 - b_n\right\| < 2\varepsilon^2. 
 	\end{equation}
 
If $\widetilde{x}' \in \overline{\widetilde{U}}$ is such that $\widetilde{a}'_n\left(\widetilde{x}'\right) \ge \varepsilon$ then from (*) it follows that
	$$
 	\widetilde{a}'_n\left(\widetilde{x}'\right) \left( \sum_{g \in J_n^0} \widetilde{a}'_g\left(\widetilde{x}'\right)\right) < \varepsilon^2,
 	$$
\begin{equation*}
\widetilde{a}'_n\left(\widetilde{x}'\right)- \widetilde{a}'\left(\widetilde{x}'\right) = \sum_{g \in J^0_n} \widetilde{a}'_g\left(\widetilde{x}'\right) < \varepsilon. 
\end{equation*} 
If $\widetilde{x}'' \in \overline{\widetilde{U}}$ is such that $\widetilde{a}'_n\left(\widetilde{x}''\right) < \varepsilon$ then from $ 0 \le \widetilde{a}' \le \widetilde{a}'_n$ it follows that 
\begin{equation*}
\widetilde{a}'_n\left(\widetilde{x}''\right)- \widetilde{a}'\left(\widetilde{x}''\right) < \varepsilon, 
\end{equation*} 
whence
$$
\widetilde{a}'_n\left(\widetilde{x}\right)- \widetilde{a}'\left(\widetilde{x}\right) < \varepsilon
$$
for any $\widetilde{x} \in \overline{\widetilde{\mathcal U}}$, i.e.
\begin{equation*}
\left\|\widetilde{a}'_n- \widetilde{a}'\right\| < \varepsilon. 
\end{equation*}

So the sequence $\left\{\widetilde{a}'_n\right\} $ of continuous functions  uniformly converges  to $\widetilde{a}'$, thus $\widetilde{a}'$ is a continuous function. Any point $x \in \widetilde{\mathcal{X}}$ has a neighborhood $\widetilde{\mathcal{U}}$ such that the restriction  $\widetilde{a}_{\widetilde{\mathcal{U}}}$ is continuous, whence $\widetilde{a}$ is  continuous. If $\left\|\cdot\right\|_\infty$ is the $L^\infty$-norm then from $\left\|\widetilde{a}\right\|_\infty = \left\|\widetilde{a}\right\|$ it follows that $\widetilde{a}$ is bounded, i.e. $\widetilde{a}\in C_b\left( \widetilde{\mathcal X}\right)$. Since $\mathcal X$ is compact and for any $n \in \N$ the map $\mathcal X_n \to \mathcal X$ is a finite fold covering projection the space $\mathcal X_n$ is also compact. 
Let $\eps'> 0$, and let $f_{\eps'} \in C\left(\R \right)$ be given by 
\begin{equation*}
f_{\eps'}\left( t\right)  = \left\{
   	\begin{array}{c l}
   	0 & t \le \varepsilon'  \\
   	t - \varepsilon' & t > \varepsilon'
   	\end{array}\right..
   	\end{equation*}
Let $N' =  \mathbb{N}$ be such that for any $n \ge N'$ following condition holds
\begin{equation}\tag{**}
\left\|a_n^2 - b_n\right\| < 2 \varepsilon'^2.
\end{equation}
Let $\widetilde{a}_{\eps'}= f_{\eps'}\left( \widetilde{a}\right)\in C_b\left(\widetilde{\mathcal{X}} \right)$.  
If $\widetilde{K}_{\eps'}= \supp \widetilde{a}_{\eps'}=\supp f_{\eps'}\left(\widetilde{a}\right)$, then $\widetilde{K}_{\eps'} = \left\{\widetilde{x} \in \widetilde{\mathcal X} ~|~\widetilde{a}\left(\widetilde{x}\right)\ge \varepsilon'\right\}$. 
 If $n \ge N'$, $\pi^n: \widetilde{\mathcal{X}} \to \mathcal X_n$ and the restriction ${\pi^n}|_{\widetilde{K}_{\eps'}}$ is not injective then there are  points $\widetilde{x}_1, \widetilde{x}_2 \in \widetilde{K}_{\eps'}$ such that $\widetilde{x}_1\neq \widetilde{x}_2$ and $\widetilde{\pi}^n\left(\widetilde{x}_1\right)=\widetilde{\pi}^n\left(\widetilde{x}_2\right)=x$. Following condition holds
\begin{equation*}
\left(a_n\left(x\right)\right)^2 - b_n\left(x\right) \ge 2\widetilde{a}\left(\widetilde{x}_1\right)\widetilde{a}\left(\widetilde{x}_2\right) \ge 2 \varepsilon'^2.
\end{equation*}
Above equation contradicts with (**), whence the restriction $\widetilde{\pi}^n|_{\widetilde{K}_{\eps'}}$ is an invective map.  The closed set $K_{\eps'} = \supp f_{\eps'}\left( a_n\right) $ (resp. $\widetilde{K}_{\eps'} = \supp f_{\eps'}\left( \widetilde{a}\right) $) is the closure of the open set $\mathring{K}_{\eps'} = \left\{x \in \mathcal X_n ~|~ a_n(x)> \eps'\right\}$ (resp. $\mathring{\widetilde{K}}_{\eps'} = \left\{\widetilde{x} \in \widetilde{\mathcal X} ~|~ \widetilde{a}(\widetilde{x})> \eps'\right\}$).  From the Fact \ref{closure_fact} it follows that 
\begin{equation*}\tag{***}
\begin{split}
\widetilde{\pi}^n|_{\widetilde{K}_{\eps'}}\left(\widetilde{K}_{\eps'}\right)=\widetilde{\pi}^n|_{\widetilde{K}_{\eps'}}\left(\overline{\mathring{\widetilde{K}}}_{\eps'}\right)\subset \overline{\mathring{K}}_{\eps'}=K_{\eps'},
\\
\left(\widetilde{\pi}^n|_{\widetilde{K}_{\eps'}}\right)^{-1}\left(K_{\eps'}\right)=\left(\widetilde{\pi}^n|_{\widetilde{K}_{\eps'}}\right)^{-1}\left(\overline{\mathring{K}}_{\eps'}\right)\subset \overline{\mathring{\widetilde{K}}}_{\eps'}= \widetilde{K}_{\eps'}
\end{split}
\end{equation*}
where the $\overline{~\cdot~} $ operation means the topological closure.
It follows from (***) that $\widetilde{\pi}^n|_{\widetilde{K}_{\eps'}}\left(\widetilde{K}_{\eps'}\right)= K_{\eps'}$. It is known \cite{spanier:at} that any covering projection is an open map.  Any open bicontinuous map is a homeomorphism. So $\widetilde{\pi}^n|_{\widetilde{K}_{\eps'}}: \widetilde{K}_{\eps'} \to K_{\eps'}$ is a homeomorphism. The set $K_{\eps'}= \supp f_{\eps'}\left(  a_n\right) $ is compact, since it is a closed subset of the compact set $\mathcal X_n$. It follows that $\widetilde{K}_{\eps'}=\supp \widetilde{a}_{\eps'}$ is compact, whence $\widetilde{a}_{\eps'} \in C_c\left(\widetilde{\mathcal{X}} \right)$. From the norm limit $\lim_{\eps' \to 0} \widetilde{a}_{\eps'}= \widetilde{a}$ and the Definition \ref{c_c_def_1} it follows that $\widetilde{a} \in C_0\left(\widetilde{\mathcal{X}} \right)$.
\end{proof}


\begin{lem}\label{comm_c0_lem}
	On the notation of the Lemma \ref{comm_l_infty_lem} if $\mathcal X$ is locally compact then any special element $$\widetilde{a}  \in  L^\infty\left(\widetilde{\mathcal X}\right)$$ lies in $C_0\left(\widetilde{\mathcal X}\right)$.
\end{lem}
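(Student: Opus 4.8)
The plan is to re-run the proof of Lemma~\ref{comm_spec_cont_lem} almost verbatim, isolating the single place where compactness of $\mathcal X$ was used and replacing it by the fact that the elements $a_n \in A_n = C_0(\mathcal X_n)$ vanish at infinity. I keep the notation of Lemma~\ref{comm_l_infty_lem}: $\widehat{\mathcal X} = \varprojlim \mathcal X_n$, $\widetilde{\mathcal X} = \varprojlim \mathfrak S_{\mathcal X}$, the continuous bijection $\widetilde{\mathcal X} \to \widehat{\mathcal X}$, the natural covering projections $\widetilde\pi^n : \widetilde{\mathcal X} \to \mathcal X_n$, and the $G(\widetilde{\mathcal X}|\mathcal X)$-equivariant inclusion $L^\infty(\widehat{\mathcal X}) \hookrightarrow L^\infty(\widetilde{\mathcal X})$. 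Let $\widetilde a \in L^\infty(\widetilde{\mathcal X})$ be a special element, so $\widetilde a \ge 0$, and write $a_n = \sum_{g \in J_n} g\widetilde a$, $b_n = \sum_{g \in J_n} g\widetilde a^2$, which by Definition~\ref{special_el_defn} lie in $C_0(\mathcal X_n)$ and satisfy $\|a_n^2 - b_n\| \to 0$.

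First I would prove that $\widetilde a$ is continuous. Since $\mathcal X$, hence every $\mathcal X_n$, hence $\widetilde{\mathcal X}$, is locally compact Hausdorff and the special sets form a base of the topology of $\widetilde{\mathcal X}$, each point of $\widetilde{\mathcal X}$ has a special open neighbourhood $\widetilde{\mathcal U}$ whose closure $\overline{\widetilde{\mathcal U}}$ is compact and is mapped homeomorphically onto $\widetilde\pi(\overline{\widetilde{\mathcal U}})$. The key estimate $\|\widetilde a'_n - \widetilde a'\| < \eps$ for large $n$ (in the notation of Lemma~\ref{comm_spec_cont_lem}, with $\widetilde a'$, $\widetilde a'_n$ the compact $\overline{\widetilde{\mathcal U}}$-lifts of $\widetilde a$ and of $a_n$) was obtained there purely from $\|a_n^2 - b_n\| \to 0$ and positivity of the summands $\widetilde a'_g$, and makes no use of $\mathcal X$ being compact; it exhibits $\widetilde a|_{\overline{\widetilde{\mathcal U}}}$ as a uniform limit of continuous functions, hence continuous. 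Since continuity is local, $\widetilde a \in C_b(\widetilde{\mathcal X})$.

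Then I would upgrade continuity to decay at infinity exactly as in the compact case, with one substitution. For $\eps' > 0$ put $\widetilde a_{\eps'} = f_{\eps'}(\widetilde a)$ and $\widetilde K_{\eps'} = \supp \widetilde a_{\eps'} = \{\widetilde x : \widetilde a(\widetilde x) \ge \eps'\}$. For $n$ large enough that $\|a_n^2 - b_n\| < 2 \eps'^2$, the two-point estimate $(a_n(x))^2 - b_n(x) \ge 2\widetilde a(\widetilde x_1)\widetilde a(\widetilde x_2) \ge 2 \eps'^2$ of Lemma~\ref{comm_spec_cont_lem}, together with Fact~\ref{closure_fact}, shows that $\widetilde\pi^n|_{\widetilde K_{\eps'}}$ is a homeomorphism of $\widetilde K_{\eps'}$ onto $K_{\eps'} = \supp f_{\eps'}(a_n)$. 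Now, instead of invoking compactness of $\mathcal X_n$, I use $a_n \in C_0(\mathcal X_n)$: since $f_{\eps'}(a_n)$ vanishes wherever $a_n \le \eps'$, one has $K_{\eps'} \subset \{x \in \mathcal X_n : a_n(x) \ge \eps'\}$, which is compact because $a_n$ vanishes at infinity; hence $K_{\eps'}$, and via the homeomorphism $\widetilde K_{\eps'}$, is compact, i.e.\ $\widetilde a_{\eps'} \in C_c(\widetilde{\mathcal X})$. Since $\lim_{\eps' \to 0} \widetilde a_{\eps'} = \widetilde a$ in norm, Definition~\ref{c_c_def_1} gives $\widetilde a \in C_0(\widetilde{\mathcal X})$.

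The only genuinely new input beyond Lemma~\ref{comm_spec_cont_lem} is this last compactness remark, which is immediate once one notes $a_n \in A_n = C_0(\mathcal X_n)$; the point to be careful about is that the continuity step is truly local and therefore survives the weakening of the hypothesis on $\mathcal X$, in particular that special open sets with compact, homeomorphically embedded closures still exist over a locally compact base. Alternatively, one could deduce the lemma from Lemma~\ref{comm_spec_cont_lem} by restricting the whole covering sequence over a relatively compact open subset of $\mathcal X$, but the direct re-run is shorter and avoids verifying that such a restriction is again an object of $\mathfrak{FinTop}$.
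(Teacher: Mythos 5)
Your proof is correct, but it takes a genuinely different route from the paper's. The paper keeps Lemma \ref{comm_spec_cont_lem} as a black box: for a given $\varepsilon>0$ it picks a compact $K\subset\mathcal X$ with $\left\|a|_{\mathcal X\setminus K}\right\|<\varepsilon$, where $a=\sum_{\widetilde g\in G\left(\widetilde{\mathcal X}|\mathcal X\right)}\widetilde g\,\widetilde a\in C_0\left(\mathcal X\right)$, applies the compact-base lemma to the part of $\widetilde a$ lying over $K$ to get a compact $\widetilde K\subset\widetilde\pi^{-1}\left(K\right)$ outside of which $\widetilde a$ is small there, and controls $\widetilde a$ over $\mathcal X\setminus K$ by the pointwise positivity bound $0\le\widetilde a\le a\circ\widetilde\pi$; Definition \ref{c_c_def_2} then yields $\widetilde a\in C_0\left(\widetilde{\mathcal X}\right)$. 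You instead re-derive Lemma \ref{comm_spec_cont_lem} in the locally compact setting, replacing its single use of compactness of $\mathcal X_n$ (compactness of $K_{\eps'}=\supp f_{\eps'}\left(a_n\right)$) by the observation that $K_{\eps'}\subset\left\{x\,:\,a_n(x)\ge\eps'\right\}$, which is compact because $a_n\in C_0\left(\mathcal X_n\right)$. Both arguments are sound, and in fact they rest on the same underlying mechanism (the $C_0$-decay of the descended function controls $\widetilde a$ at infinity). Your version has the advantage of being self-contained: the paper's reduction implicitly treats the data restricted over $K$ as an instance of the compact-base lemma without verifying that the restricted sequence satisfies its hypotheses, and its write-up moreover confuses $\widetilde\pi^{-1}\left(K\right)$ with $\widetilde\pi^{-1}\left(\mathcal X\setminus K\right)$ at one point; your substitution is a clean one-line local modification. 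The paper's version, once cleaned up, is shorter on the page and isolates the positivity inequality $\widetilde a\le a\circ\widetilde\pi$ explicitly. Your closing caveat --- that over a locally compact base one must still have special neighbourhoods with compact, homeomorphically embedded closures --- is exactly the right thing to check, and it holds by local compactness of the spaces $\mathcal X_n$.
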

\begin{proof}
If $a = \sum_{\widetilde{g} \in G\left(\widetilde{\mathcal X} | \mathcal X\right)}\widetilde{g}~ \widetilde{a} \in C_0\left(\mathcal X \right)$ and $\varepsilon > 0$ then from the  Definition  \ref{c_c_def_2} it follows that there is a compact subset $K \subset \mathcal X$ such that $\left\|a|_{\mathcal X \backslash K}\right\| < \varepsilon$. From the Lemma \ref{comm_spec_cont_lem} it follows that  $\widetilde{a}|_{\widetilde{\pi}^{-1}\left(\mathcal X \backslash K\right)} \in C_0\left(\widetilde{\pi}^{-1}\left(\mathcal X \backslash K\right)\right)$, i.e. there is a compact subset $\widetilde{K} \subset \widetilde{\pi}^{-1}\left( \mathcal X\backslash K\right)$ such that $\left\|\widetilde{a}|_{\widetilde{\pi}^{-1}\left(K\right) \backslash \widetilde{K}}\right\|< \varepsilon$. From the positivity of $\widetilde{a}$ it follows that
	\begin{equation*}
	\left\|\widetilde{a}|_{\widetilde{\pi}^{-1}\left(\mathcal X \backslash K\right)}\right\| \le \left\|a|_{\mathcal X \backslash K}\right\|.
	\end{equation*}
 
 In result following condition holds $\left\|\widetilde{a}|_{\widetilde{\mathcal X} \backslash \widetilde{K}}\right\|< \varepsilon$ , and from the Definition \ref{c_c_def_2} it follows that $\widetilde{a} \in C_0\left(\widetilde{\mathcal{X}}\right)$.
\end{proof}

\begin{thm}\label{comm_main_thm}
	If $\mathfrak{S}_{\mathcal X} = \left\{\mathcal{X} = \mathcal{X}_0 \xleftarrow{}... \xleftarrow{} \mathcal{X}_n \xleftarrow{} ...\right\} \in \mathfrak{FinTop}$ is a topological  finite covering sequence then  there is the natural isomorphism:
$\varprojlim \mathfrak{S}_{C_0\left(\mathcal{X}\right)} \approx C_0\left(\varprojlim \mathfrak{S}_{\mathcal X}\right)$.

\end{thm}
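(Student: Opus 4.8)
The plan is to identify the abstractly defined $C^*$-algebra $\widetilde A := \varprojlim \mathfrak{S}_{C_0(\mathcal X)}$ with $C_0(\widetilde{\mathcal X})$, where $\widetilde{\mathcal X} := \varprojlim \mathfrak{S}_{\mathcal X}$ is the topological inverse limit of Definition \ref{top_topological_inv_lim_defn}, and then to note that this identification is natural. First I would pin down what $\widetilde A$ means in this situation. By Lemma \ref{comm_reg_lem} the sequence $\mathfrak{S}_{C_0(\mathcal X)}$ is regular, so $\widetilde A$ is well defined; by Lemma \ref{comm_l_infty_lem} there is a canonical $G\left(\widetilde{\mathcal X}~|~\mathcal X\right)$-equivariant injection $\left(\varinjlim C_0(\mathcal X_n)\right)'' = L^\infty\left(\varprojlim \mathcal X_n\right) \hookrightarrow L^\infty(\widetilde{\mathcal X})$, which serves as the datum $\varphi : \widehat A \hookrightarrow \overline A$ with $\overline A = L^\infty(\widetilde{\mathcal X})$ in Definition \ref{special_el_defn}. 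Thus a \emph{special element} of $\mathfrak{S}_{C_0(\mathcal X)}$ is a positive $\widetilde a \in L^\infty(\widetilde{\mathcal X})$ meeting conditions (a), (b) there, and $\widetilde A$ is the $C^*$-norm closure of the algebra generated by the $\mathbb C$-linear span of such elements.

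The inclusion $\widetilde A \subseteq C_0(\widetilde{\mathcal X})$ is essentially Corollary \ref{comm_a_in_c0_cor}: since $\mathcal X$ is locally compact by hypothesis, Lemma \ref{comm_c0_lem} gives that every special element lies in $C_0(\widetilde{\mathcal X})$; as $C_0(\widetilde{\mathcal X})$ is a norm-closed $*$-subalgebra of $L^\infty(\widetilde{\mathcal X})$, it then contains the algebra generated by the span of special elements together with its norm closure, i.e. all of $\widetilde A$.

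For the reverse inclusion I would argue that $C_c(\widetilde{\mathcal X}) \subseteq \widetilde A$ and pass to norm closures. By Lemma \ref{comm_c_is_spec_lem} every positive element of $C_c(\widetilde{\mathcal X})$ is special, hence lies in $\widetilde A$; an arbitrary $\widetilde f \in C_c(\widetilde{\mathcal X})$ is a $\mathbb C$-linear combination of the four positive compactly supported functions $(\mathrm{Re}\,\widetilde f)_{\pm}$ and $(\mathrm{Im}\,\widetilde f)_{\pm}$, so $\widetilde f \in \widetilde A$ as well. Since $\widetilde A$ is norm closed and $C_0(\widetilde{\mathcal X}) = \overline{C_c(\widetilde{\mathcal X})}$ by Definition \ref{c_c_def_1}, this yields $C_0(\widetilde{\mathcal X}) \subseteq \widetilde A$. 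Combining the two inclusions gives $\widetilde A = C_0(\widetilde{\mathcal X}) = C_0\left(\varprojlim \mathfrak{S}_{\mathcal X}\right)$, and the identification, being the one induced by the canonical embedding $\varphi$ and compatible with the structure maps $C_0(\mathcal X_n) \to \widetilde A$ and with the covering-transformation-group actions, is natural in $\mathfrak{S}_{\mathcal X}$.

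I expect essentially all the difficulty to be carried by the lemmas already established — Lemma \ref{comm_c_is_spec_lem} (that $C_c$-functions satisfy the algebraic specialness condition, the key point being that a compact support can be separated by passing far enough up the tower so that the defining series telescope to descents) and Lemmas \ref{comm_spec_cont_lem}, \ref{comm_c0_lem} (the converse, where the quantitative bound $\|a_n^2 - b_n\| < \varepsilon$ forces the lifted functions to converge uniformly and to have compact support). Given those, the only thing to be careful about in assembling the theorem itself is the routine but necessary verification that $\overline A = L^\infty(\widetilde{\mathcal X})$ is a legitimate ambient algebra in the sense of Definition \ref{special_el_defn} and that positivity decompositions and norm closures interact correctly with the definition of $\widetilde A$; no genuinely new obstacle arises at this stage.
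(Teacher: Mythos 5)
Your proposal is correct and follows essentially the same route as the paper: the inclusion $\varprojlim \mathfrak{S}_{C_0(\mathcal X)} \subseteq C_0(\widetilde{\mathcal X})$ via Lemmas \ref{special_in_w_lem}, \ref{comm_l_infty_lem}, \ref{comm_spec_cont_lem}, \ref{comm_c0_lem}, and the reverse inclusion via Lemma \ref{comm_c_is_spec_lem} together with the density of $C_c(\widetilde{\mathcal X})$ in $C_0(\widetilde{\mathcal X})$. If anything, your assembly is tidier than the paper's (which attributes the two inclusions to its lemmas somewhat inconsistently), and your explicit decomposition of an arbitrary element of $C_c(\widetilde{\mathcal X})$ into four positive parts fills a small step the paper leaves implicit.
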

\begin{proof} (a)
If $\widetilde{\mathcal X}= \varprojlim \mathfrak{S}_{\mathcal{X}}$ then from the Corollary  \ref{comm_a_in_c0_cor} it follows that $\varprojlim \mathfrak{S}_{C_0\left(\mathcal{X}\right)} \subset C_0\left(\widetilde{\mathcal X} \right)$. From the Lemma \ref{special_in_w_lem} it follows that any special element lies in $\left( C_0\left(\mathcal X_n\right)\right)''$. From the Lemma \ref{comm_l_infty_lem} it follows that $\left( C_0\left(\mathcal X_n\right)\right)'' \subset L^\infty\left(\widetilde{ \mathcal X}\right)$, whence any special element lies in $L^\infty\left(\widetilde{ \mathcal X}\right)$. From this fact and the Lemma \ref{comm_c0_lem} it follows that $C_0\left(\widetilde{\mathcal X} \right) \subset \varprojlim \mathfrak{S}_{C_0\left(\mathcal{X}\right)}$. So $\varprojlim \mathfrak{S}_{C_0\left(\mathcal{X}\right)}\approx C_0\left( \widetilde{\mathcal X}\right) = C_0\left(\varprojlim \mathfrak{S}_{\mathcal X}\right)$.

\end{proof}

\section{Noncommutative tori and Moyal planes}
     \subsection{Noncommutative torus $\mathbb{T}^n_{\Theta}$}\label{nt_descr_subsec}
     \paragraph*{} 	There is the natural norm on $\mathbb{Z}^n$ given by
     \begin{equation*}
     \left\|\left(k_1, ..., k_n\right)\right\|= \sqrt{k_1^2 + ... + k^2_n}
     \end{equation*}
     The space of complex-valued Schwartz functions is given by 
     \begin{equation*}
     \sS\left(\mathbb{Z}^n\right)= \left\{a = \left\{a_k\right\}_{k \in \mathbb{Z}^n} \in \mathbb{C}^{\mathbb{Z}^n}~|~ \mathrm{sup}_{k \in \mathbb{Z}^n}\left(1 + \|k\|\right)^s \left|a_k\right| < \infty, ~ \forall s \in \mathbb{N} \right\}.
     \end{equation*}

     Let $\mathbb{T}^n$ be an ordinary $n$-torus. We will often use real coordinates for $\mathbb{T}^n$, that is, view $\mathbb{T}^n$ as $\mathbb{R}^n / 2\pi\mathbb{Z}^n$. Let $\Coo\left(\mathbb{T}^n\right)$ be the algebra of infinitely differentiable complex-valued functions on $\mathbb{T}^n$. 
     There is the bijective Fourier transform  $\Coo\left(\mathbb{T}^n\right)\to\sS\left(\mathbb{Z}^n\right)$;  $f \mapsto \widehat{f}$ given by
     \begin{equation}\label{nt_fourier}
     \widehat{f}\left(p\right)= \mathcal F (f) (p)=\frac{1}{\left( 2\pi\right)^n }\int_{\mathbb{T}^n}e^{- i x \cdot p}f\left(x\right)dx
     \end{equation}
     where $dx$ is induced by the Lebesgue measure on $\mathbb{R}^n$ and   $\.$ is the  scalar
     product on the Euclidean space $\R^n$.
     The Fourier transform carries multiplication to convolution, i.e.
     \begin{equation*}
     \widehat{fg}\left(p\right) = \sum_{r +s = p}\widehat{f}\left(r\right)\widehat{g}\left(s\right).
     \end{equation*}
     Let $\Theta$ be a real skew-symmetric $n \times n$ matrix, we will define a new noncommutative product on $\sS\left(\mathbb{T}^n\right)$ given by
     \begin{equation}\label{nt_product_defn_eqn}
     \left(\widehat{f}\star_{\Theta}\widehat{g}\right)\left(p\right)= \sum_{r + s = p} \widehat{f}\left(r\right)\widehat{g}\left(s\right) e^{-\pi ir ~\cdot~ \Theta s}.
     \end{equation}
         and an involution
     \begin{equation*}
     \widehat{f}^*\left(p\right)=\overline{\widehat{f}}\left(-p\right)).
     \end{equation*}
     In result there is the involutive algebra $\Coo\left(\mathbb{T}^n_{\Theta}\right) =\left(\sS\left(\mathbb{Z}^n\right), + , \star_{\Theta}~, ^* \right)$. 
       There is a tracial  state on $\Coo\left(\mathbb{T}^n_{\Theta}\right)$ given by
     \begin{equation}\label{nt_state_eqn}
     \tau\left(f\right)= \widehat{f}\left(0\right).
     \end{equation}
       From  $\Coo\left(\mathbb{T}^n_{\Theta} \right) \approx \SS\left( \Z^n\right)$ it follows  that there is  $\C$-linear isomorphism 
           \begin{equation}\label{nt_varphi_inf_eqn}
           \varphi_\infty: \Coo\left(\mathbb{T}^n_{\Theta} \right) \xrightarrow{\approx}  \Coo\left(\mathbb{T}^n \right).
           \end{equation} 
           such that following condition holds
     \begin{equation}\label{nt_state_integ_eqn}
            \tau\left(f \right)=  \frac{1}{\left( 2\pi\right)^n }\int_{\mathbb{T}^n} \varphi_\infty\left( f\right) ~dx.
      \end{equation}
           
     Similarly to \ref{comm_gns_constr} there is the Hilbert space $L^2\left(\Coo\left(\mathbb{T}^n_{\Theta}\right), \tau\right)$ and the natural representation $\Coo\left(\mathbb{T}^n_{\Theta}\right) \to B\left(L^2\left(\Coo\left(\mathbb{T}^n_{\Theta}\right), \tau\right)\right)$ which induces the $C^*$-norm. The norm completion  $C\left(\mathbb{T}^n_{\Theta}\right)$ of $\Coo\left(\mathbb{T}^n_{\Theta}\right)$ is a $C^*$-algebra. We will write $L^2\left(C\left(\mathbb{T}^n_{\Theta}\right), \tau\right)$ instead $L^2\left(\Coo\left(\mathbb{T}^n_{\Theta}\right), \tau\right)$. There is the natural linear map  $\Coo\left(\mathbb{T}^n_{\Theta}\right) \to L^2\left(C\left(\mathbb{T}^n_{\Theta}\right), \tau\right) $  and since $\Coo\left(\mathbb{T}^n_{\Theta}\right) \approx \sS\left( \mathbb{Z}^n \right)$ there is a linear map $\varphi:\sS\left( \mathbb{Z}^n \right) \to L^2\left(C\left(\mathbb{T}^n_{\Theta}\right), \tau\right) $. If $k = \mathbb{Z}^n$ and $U_k \in  \sS\left( \mathbb{Z}^n \right)=\Coo\left(\mathbb{T}^n_{\Theta}\right)$ is such that 
     \begin{equation}\label{unitaty_nt_eqn}
     U_k\left( p\right)= \delta_{kp}: ~ \forall p \in \mathbb{Z}^n
     \end{equation}     
     then denote $\xi_k = \varphi\left(U_k \right)$. From \eqref{nt_product_defn_eqn} it follows that
     $$
     \tau\left(U^*_k \star_\Th U_l \right) = \left(\xi_k, \xi_l \right)  = \delta_{kl},
     $$ 
     i.e. the subset $\left\{\xi_k\right\}_{k \in \mathbb{Z}^n}\subset L^2\left(C\left(\mathbb{T}^n_{\Theta}\right)\right)$ is an orthogonal basis of  $L^2\left(C\left(\mathbb{T}^n_{\Theta}\right), \tau\right)$.
     Hence the Hilbert space  $(L^2\left(C\left(\mathbb{T}^n_{\Theta}\right), \tau\right)$ is naturally isomorphic to the Hilbert space $\ell^2\left(\mathbb{Z}^n\right)$ given by
     \begin{equation*}
     \ell^2\left(\mathbb{Z}^n\right) = \left\{\xi = \left\{\xi_k \in \mathbb{C}\right\}_{k\in \mathbb{Z}^n} \in \mathbb{C}^{\mathbb{Z}^n}~|~ \sum_{k\in \mathbb{Z}^n} \left|\xi_k\right|^2 < \infty\right\}
     \end{equation*}
     and the scalar product on $\ell^2\left(\mathbb{Z}^n\right)$ is given by
     \begin{equation*}
     \left(\xi,\eta\right)= \sum_{k\in \mathbb{Z}^n}    \xi^*_k\eta_k.
     \end{equation*}
   
    There is an alternative description of $\C\left(\mathbb{T}^n_{\Theta}\right)$.  If 
    $$
    \Th = \begin{pmatrix}
    0& \th_{12} &\ldots & \th_{1n}\\
    \th_{21}& 0 &\ldots & \th_{2n}\\
    \vdots& \vdots &\ddots & \vdots\\
    \th_{n1}& \th_{n2} &\ldots & 0
    \end{pmatrix}
    $$
    then $C\left(\mathbb{T}^n_{\Theta}\right)$ is the universal $C^*$-algebra generated by unitary elements   $u_1,..., u_n \in U\left( C\left(\mathbb{T}^n_{\Theta}\right)\right) $ such that following condition holds
    \begin{equation}\label{nt_com_eqn}
    u_ku_j = e^{2\pi i \theta_{jk} }u_ju_k.
    \end{equation}
    Unitary  operators $u_1,..., u_n$ correspond to the standard basis of $\mathbb{Z}^n$.
      \begin{defn}
      	Unitary elements 
      	$u_1,..., u_n \in U\left(C\left(\mathbb{T}^n_{\theta}\right)\right)$ which satisfy the relation \eqref{nt_com_eqn}
      	are said to be \textit{generators} of $C\left(\mathbb{T}^n_{\Theta}\right)$.
      \end{defn}
           If $a \in C\left(\mathbb{T}^n_{\th}\right)$ is presented by the series
           $$
           a = \sum_{l \in \mathbb{Z}^{n}}c_l U_l;~~ c_l \in \mathbb{C}
           $$
           and the series $\sum_{l \in \mathbb{Z}^{n}}\left| c_l\right| $ is convergent then from the triangle inequality it follows that
           \begin{equation}\label{nt_norm_estimation}
           \left\|a \right\| \le \sum_{l \in \mathbb{Z}^{n}}\left| c_l\right|.
           \end{equation}
      \begin{defn}
      	If  $\Theta$ to is nondegenerate, that is to say,
      	$\sigma(s,t) \stackrel{\mathrm{def}}{=} s\.\Theta t$ to be \textit{symplectic}. This implies even
      	dimension, $n = 2N$. One then selects
      	\begin{equation}\label{nt_simpectic_theta_eqn}
      	\Theta = \theta J
      	\stackrel{\mathrm{def}}{=} \th \begin{pmatrix} 0 & 1_N \\ -1_N & 0 \end{pmatrix}
      	\end{equation}
      	where  $\th > 0$ is defined by $\th^{2N} \stackrel{\mathrm{def}}{=} \det\Theta$.
      	Denote by $C\left(\mathbb{T}^{2N}_\th\right)\stackrel{\mathrm{def}}{=}C\left(\mathbb{T}^{2N}_\Th\right)$ and $\Coo\left(\mathbb{T}^{2N}_\th\right)\stackrel{\mathrm{def}}{=}\Coo\left(\mathbb{T}^{2N}_\Th\right)$.
      \end{defn}

\subsection{Finite noncommutative covering projections of $\mathbb{T}^n_{\Theta}$}\label{ntt_fin_cov}
     \paragraph{}  In this section we write $ab$ instead $a\star_\Th b$.
    Let $C\left(\mathbb{T}^n_\Theta\right)$ be a noncommutative torus and $\overline{k} = \left(k_1, ..., k_n\right) \in \mathbb{N}^n$. If 
               $$
               \widetilde{\Theta} = \begin{pmatrix}
               0& \widetilde{\theta}_{12} &\ldots & \widetilde{\theta}_{1n}\\
               \widetilde{\theta}_{21}& 0 &\ldots & \widetilde{\theta}_{2n}\\
               \vdots& \vdots &\ddots & \vdots\\
               \widetilde{\theta}_{n1}& \widetilde{\theta}_{n2} &\ldots & 0
               \end{pmatrix}
               $$
       is a skew-symmetric matrix such that
            \begin{equation*}
            e^{2\pi i \theta_{rs}}= e^{2\pi i \widetilde{\theta}_{rs}k_rk_s}
            \end{equation*}
            then there is a *-homomorphism $C\left(\mathbb{T}^n_\Th\right)\to C\left(\mathbb{T}^n_{\widetilde{\Th}}\right)$ given by
            \begin{equation}\label{nt_cov_eqn}
            u_j \to v~^{k_j}_j; ~ j = 1,...,n
            \end{equation}
            where $u_1,..., u_n \in C\left(\mathbb{T}^n_{\Th}\right)$ (resp. $v_1,..., v_n \in C\left(\mathbb{T}^n_{\widetilde{\Th}}\right)$) are unitary generators of $C\left(\mathbb{T}^n_{\Th}\right)$ (resp. $C\left(\mathbb{T}^n_{\widetilde{\Th}}\right)$).	
            There is an involutive action of $G=\mathbb{Z}_{k_1}\times...\times\mathbb{Z}_{k_n}$ on $C\left(\mathbb{T}^n_{\widetilde{\Th}}\right)$ given by
            \begin{equation*}
            \left(\overline{p}_1,..., \overline{p}_n\right)v_j = e^{\frac{2\pi i p_j}{k_j}}v_j
            \end{equation*}
            and $C\left(\mathbb{T}^n_{\Th}\right)=C\left(\mathbb{T}^n_{\widetilde{\Th}}\right)^G$, i.e. the condition (a) of the Definition \ref{fin_def} holds.
            Let $\{e_{\iota}^m\}_{\iota \in I_m}\in C(S^1)$ be given by \eqref{iota_def}.
            From \eqref{circ_sum} it follows that
            \begin{equation}\tag{*}
            \begin{split}
            \sum_{\iota \in I_{n_j} \ }  e^{k_j}_{\iota_j}(v_j) \left(\overline{p}  e^{k_j}_{\iota_j}(v_j)\right) =  \left\{
            \begin{array}{c l}
            1_{C\left(\mathbb{T}^n_{\widetilde{\Th}}\right)} & \overline{p}=\overline{0} \\
            0 & \overline{p}\neq\overline{0}
            \end{array}\right. \text{ where } \overline{p} \in \mathbb{Z}_{n_j} .
            \end{split}
            \end{equation}
            If $I = I_{k_1} \times ... \times I_{k_n}$, $\iota = \left(\iota_1,..., \iota_n\right) \in I$ and
            \begin{equation*}
            \begin{split}
            \al_{\iota}= e^{k_1}_{\iota_1}(v_1)\cdot...  \cdot e^{k_n}_{\iota_n}(v_n), \\
            \beta_{\iota}= \alpha^*_{\iota}= e^{k_n}_{\iota_n}(v_n)\cdot... \cdot e^{k_1}_{\iota_1}(v_1)
            \end{split}
            \end{equation*}
            
            then from (*) it follows that
            \begin{equation*} 	
            \begin{split}
            \sum_{\iota \in I}\beta_{\iota}\al_{\iota} = 	\sum_{\iota_1 \in I_1, ..., \iota_n \in I_n} e^{k_n}_{\iota_n}(v_n)\cdot... \cdot e^{k_1}_{\iota_1}(v_1)\cdot e^{k_1}_{\iota_1}(v_1)\cdot...  \cdot e^{k_n}_{\iota_n}(v_n)=\\ 
            = 	\sum_{\iota_2 \in I_2, ..., \iota_n \in I_n} \left( e^{k_n}_{\iota_n}(v_n)\cdot... \cdot e^{k_2}_{\iota_2}(v_2) \left(\sum_{\iota_1 \in I_1}\left(e^{k_1}_{\iota_1}(v_1)\right)^2\right)e^{k_2}_{\iota_2}(v_2)\cdot...  \cdot e^{k_n}_{\iota_n}(v_n) \right) =\\
            = 	\sum_{\iota_2 \in I_2, ..., \iota_n \in I_n} e^{k_n}_{\iota_n}(v_n)\cdot... \cdot e^{k_2}_{\iota_2}(v_2) \cdot 1_{C\left(\mathbb{T}^n_{\widetilde{\Th}}\right)}\cdot e^{k_2}_{\iota_2}(v_2)\cdot...  \cdot e^{k_n}_{\iota_n}(v_n) = \\
            = 	\sum_{\iota_2 \in I_2, ..., \iota_n \in I_n}  e^{k_n}_{\iota_n}(v_n)\cdot... \cdot e^{k_2}_{\iota_2}(v_2) \cdot e^{k_2}_{\iota_2}(v_2)\cdot...  \cdot e^{k_n}_{\iota_n}(v_n) 
            = ... = 1_{C\left(\mathbb{T}^n_{\widetilde{\Th}}\right)}
            \end{split}
            \end{equation*} 
            If $g = (\overline{p}_1, ...,\overline{p}_{n}) \in \mathbb{Z}_{k_1}\times ...\times \mathbb{Z}_{k_n}$ is such that $\overline{p}_1\neq \overline{0}$ then from (*) it follows that
            \begin{equation*}
            \begin{split}
            \sum_{\iota \in I}\beta_{\iota}\left(g\al_{\iota}\right) = 	\sum_{\iota_1 \in I_1, ..., \iota_n \in I_n} e^{k_n}_{\iota_n}(v_n)\cdot... \cdot e^{k_1}_{\iota_1}(v_1)\cdot \left(\overline{p}_1e^{k_1}_{\iota_1}(v_1)\right)\cdot...  \left(\overline{p}_ne^{k_n}_{\iota_n}(v_n)\right)=\\ 
            = 	\sum_{\iota_2 \in I_2, ..., \iota_n \in I_n} \left( e^{k_n}_{\iota_n}(v_n)\cdot... \cdot e^{k_2}_{\iota_2}(v_2) \left(\sum_{\iota_1 \in I_1}e^{k_1}_{\iota_1}(v_1)\left(\overline{p}_1e^{k_1}_{\iota_1}(v_1)\right)\right)\left(\overline{p}_2e^{k_2}_{\iota_2}(v_2)\right)\cdot...  \cdot \left(\overline{p}_ne^{k_n}_{\iota_n}(v_n)\right) \right) =\\
            = 	\sum_{\iota_2 \in I_2, ..., \iota_n \in I_n} e^{k_n}_{\iota_n}(v_n)\cdot... \cdot e^{k_2}_{\iota_2}(v_2) \cdot 0\cdot\left(\overline{p}_2e^{k_2}_{\iota_2}(v_2)\right)\cdot...  \cdot \left(\overline{p}_ne^{k_n}_{\iota_n}(v_n)\right)=  0.
            \end{split}
            \end{equation*} 
            
            If $\overline{p}_1 = 0$ and $\overline{p}_2 \neq 0$  then from (*) it follows that
            \begin{equation*}
            \begin{split}
            \sum_{\iota \in I}\beta_{\iota}\left(g\al_{\iota}\right) = 	\sum_{\iota_1 \in I_1, ..., \iota_n \in I_n} e^{k_n}_{\iota_n}(v_n)\cdot... \cdot e^{k_1}_{\iota_1}(v_1)\cdot \left(\overline{p}_1e^{k_1}_{\iota_1}(v_1)\right)\cdot...  \left(\overline{p}_ne^{k_n}_{\iota_n}(v_n)\right)=\\ 
            = 	\sum_{\iota_2 \in I_2, ..., \iota_n \in I_n} \left( e^{k_n}_{\iota_n}(v_n)\cdot... \cdot e^{k_2}_{\iota_2}(v_2) \left(\sum_{\iota_1 \in I_1}e^{k_1}_{\iota_1}(v_1)\left(\overline{p}_1e^{k_1}_{\iota_1}(v_1)\right)\right)\left(\overline{p}_2e^{k_2}_{\iota_2}(v_2)\right)\cdot...  \cdot \left(\overline{p}_ne^{k_n}_{\iota_n}(v_n)\right) \right) =\\
            = 	\sum_{\iota_2 \in I_2, ..., \iota_n \in I_n} e^{k_n}_{\iota_n}(v_n)\cdot... \cdot e^{k_2}_{\iota_2}(v_2) \cdot 	1_{C\left(\mathbb{T}^n_{\widetilde{\Th}}\right)}\cdot\left(\overline{p}_2e^{k_2}_{\iota_2}(v_2)\right)\cdot...  \cdot \left(\overline{p}_ne^{k_n}_{\iota_n}(v_n)\right)= \\
            =	\sum_{\iota_2 \in I_2, ..., \iota_n \in I_n} e^{k_n}_{\iota_n}(v_n)\cdot... \cdot e^{k_2}_{\iota_2}(v_2)\cdot \left(\overline{p}_2e^{k_2}_{\iota_2}(v_2)\right)\cdot...  \left(\overline{p}_ne^{k_n}_{\iota_n}(v_n)\right)=\\
            = 	\sum_{\iota_3 \in I_3, ..., \iota_n \in I_n} \left( e^{k_n}_{\iota_n}(v_n)\cdot... \cdot e^{k_3}_{\iota_3}(v_3) \left(\sum_{\iota_2 \in I_2}e^{k_2}_{\iota_2}(v_2)\left(\overline{p}_2e^{k_2}_{\iota_2}(v_2)\right)\right)\left(\overline{p}_3e^{k_3}_{\iota_3}(v_3)\right)\cdot...  \cdot \left(\overline{p}_ne^{k_n}_{\iota_n}(v_n)\right) \right) =\\
            =	\sum_{\iota_3 \in I_3, ..., \iota_n \in I_n} e^{k_n}_{\iota_n}(v_n)\cdot... \cdot e^{k_3}_{\iota_3}(v_3) \cdot 0\cdot\left(\overline{p}_3e^{k_3}_{\iota_3}(v_3)\right)\cdot...  \cdot \left(\overline{p}_ne^{k_n}_{\iota_n}(v_n)\right)=  0.
            \end{split}
            \end{equation*} 
            Similarly if $\overline{p}_1, \overline{p}_2 = 0$ and $\overline{p}_3 \neq 0$ then 	$\sum_{\iota \in I}\beta_{\iota}\left(g\al_{\iota}\right) = 0$ and so on.
            In result we have
            \begin{equation*}
            \sum_{\iota \in I}\beta_{\iota}\left(g\al_{\iota}\right)=\left\{
            \begin{array}{c l}
            1 & g \in G = \mathbb{Z}_{k_1}\times...\times\mathbb{Z}_{k_n} \text{ is trivial}\\
            0 & g \in  G = \mathbb{Z}_{k_1}\times...\times\mathbb{Z}_{k_n} \text{ is not trivial}
            \end{array}\right.,
            \end{equation*}
            i.e. condition (b) of the Definition \ref{fin_def} holds. It follows to the next lemma.
           
            \begin{lem}
            
           The triple $\left(C\left(\mathbb{T}^n_{\Th}\right), C\left(\mathbb{T}^n_{\widetilde{\Th}}\right),\mathbb{Z}_{k_1}\times...\times\mathbb{Z}_{k_n}\right)$ is a  noncommutative finite covering projection. 
           \end{lem}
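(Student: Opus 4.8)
The plan is to verify, one by one, the two conditions of Definition \ref{fin_def} for the $*$-homomorphism $\pi : C\left(\mathbb{T}^n_{\Th}\right)\to C\left(\mathbb{T}^n_{\widetilde{\Th}}\right)$ of \eqref{nt_cov_eqn} together with the finite group $G=\mathbb{Z}_{k_1}\times\cdots\times\mathbb{Z}_{k_n}$, since almost all of the substantive computation has already been carried out in the discussion preceding the statement.

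First I would record that $\pi$ is injective: it identifies $C\left(\mathbb{T}^n_{\Th}\right)$ with the fixed-point subalgebra $C\left(\mathbb{T}^n_{\widetilde{\Th}}\right)^{G}$, and the inclusion of a fixed-point subalgebra into the ambient $C^*$-algebra is automatically injective. Next I would note that the action $(\overline{p}_1,\dots,\overline{p}_n)v_j = e^{2\pi i p_j/k_j}v_j$ is involutive, because $g(v_j^{*})=e^{-2\pi i p_j/k_j}v_j^{*}=(gv_j)^{*}$ and this identity propagates to all of $C\left(\mathbb{T}^n_{\widetilde{\Th}}\right)$ by $*$-linearity and multiplicativity; and it is non-degenerate, since for a nontrivial $g=(\overline{p}_1,\dots,\overline{p}_n)$ at least one $p_j$ is not divisible by $k_j$, whence $gv_j\neq v_j$. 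Combined with the already-stated equality $C\left(\mathbb{T}^n_{\Th}\right)=C\left(\mathbb{T}^n_{\widetilde{\Th}}\right)^{G}$, this is exactly condition (a).

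For condition (b) I would take the finite index set $I=I_{k_1}\times\cdots\times I_{k_n}$ and the elements $a_{\iota}:=\beta_{\iota}=e^{k_n}_{\iota_n}(v_n)\cdots e^{k_1}_{\iota_1}(v_1)$, so that (each factor being self-adjoint by functional calculus) $a_{\iota}^{*}=\alpha_{\iota}$. The displayed computations above, which reduce factor by factor to the one-dimensional identity $(*)$ extracted from \eqref{circ_sum}, show precisely that
\begin{equation*}
\sum_{\iota\in I}a_{\iota}\left(ga_{\iota}^{*}\right)=\sum_{\iota\in I}\beta_{\iota}\left(g\alpha_{\iota}\right)=\begin{cases}1_{C\left(\mathbb{T}^n_{\widetilde{\Th}}\right)} & g\in G\text{ is trivial},\\ 0 & g\in G\text{ is not trivial},\end{cases}
\end{equation*}
and since $I$ is finite the series is a finite sum, so it converges (trivially) in the strict topology. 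This is the relation \eqref{can_nc}, i.e.\ condition (b). Hence $\pi$ is a finite noncommutative covering projection with $G\left(C\left(\mathbb{T}^n_{\widetilde{\Th}}\right)\,|\,C\left(\mathbb{T}^n_{\Th}\right)\right)=\mathbb{Z}_{k_1}\times\cdots\times\mathbb{Z}_{k_n}$.

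The only step I expect to require real care is the telescoping inside condition (b): one must confirm that inserting $\overline{p}_j\,e^{k_j}_{\iota_j}(v_j)$ and summing over $\iota_j\in I_j$ really yields $1_{C\left(\mathbb{T}^n_{\widetilde{\Th}}\right)}$ when $\overline{p}_j=\overline{0}$ and collapses the whole expression to $0$ as soon as the first nonzero $\overline{p}_j$ is met, which rests on the positivity and the partial-sum behaviour encoded in \eqref{circ_sum} together with the fact that the remaining factors $e^{k_m}_{\iota_m}(v_m)$ with $m>j$ pass harmlessly across the scalars $1_{C\left(\mathbb{T}^n_{\widetilde{\Th}}\right)}$ and $0$; everything else is formal.
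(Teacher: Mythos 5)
Your proposal is correct and follows essentially the same route as the paper: the paper's ``proof'' of this lemma is precisely the computation preceding it, which establishes condition (a) via the fixed-point identification $C\left(\mathbb{T}^n_{\Th}\right)=C\left(\mathbb{T}^n_{\widetilde{\Th}}\right)^{G}$ and condition (b) via the same telescoping sum $\sum_{\iota}\beta_{\iota}\left(g\alpha_{\iota}\right)$ built from the circle elements of Example \ref{circle_fin}. Your added remarks (injectivity, involutivity and non-degeneracy of the action, the identification $a_{\iota}=\beta_{\iota}$ with $a_{\iota}^{*}=\alpha_{\iota}$, and the triviality of strict convergence for a finite index set) only make explicit details the paper leaves implicit.
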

\subsection{Multidimensional grading of a noncommutative torus}\label{nt_grad_sec}

\paragraph{}It is known\cite{connes_landi:isospectral}   that  continuous actions of the torus $\T^2$ can define $\Z^2$-grading of $C^*$-algebras. Here similar idea is discussed. Let us define a continuous action of $\R$ on $C\left(u \right)\approx C\left( S^1\right)$ given by  
$$
x \bullet u = e^{2\pi ix} u;~ \forall x \in \R.
$$
Now we define a $\Z$-grading $C\left( u\right)= \bigoplus_{m \in \Z} C\left( u\right)_m$ where $$
C\left( u\right)_m = \left\{a \in C\left(u \right)~|~ x\bullet a =  e^{2\pi imx}a \right\}$$ and $\bigoplus$ means $C^*$-norm completion of the algebraic direct sum. If $C\left(u \right)\to C\left(v \right)$ is a *-homomorphism $u \mapsto v^r$ then the action $\bullet$ can be uniquely extended up to $C\left( v\right)$ and then $\Z$-grading of $C\left(u \right)$ induces the following grading
$$
C\left(v\right)= \bigoplus_{\frac{m}{r} \in \frac{\Z}{r}}C\left( v\right)_{\frac{m}{r}} 
$$
where $\frac{\Z}{r} = \left\{\frac{m}{r} \in \Q~|~ m   \in \Z\right\}$ and
$$
C\left( v\right)_{\frac{m}{r}} = \left\{a \in C\left(v \right)~|~ x\bullet a =  e^{\frac{2\pi imx}{r}}a \right\}.
$$    
Indeed $C\left( v\right)_{\frac{m}{r}}$ is a one-dimensional $\C$-space given by $C\left( v\right)_{\frac{m}{r}} =  \C v^m$. For any $j \in 1,..., n$ there is the action $\bullet_j$ of $\R$ on $C\left(\mathbb{T}^n_{\Th}\right)$ given by
$$
x \bullet_j u_k = e^{2\pi i \delta_{jk}x} u_k
$$
where $u_1, ..., u_n$ is a basis of $C\left(\mathbb{T}^n_{\Th}\right)$. If $j'\neq j''$ then the action $\bullet_{j'}$ commutes with the action $\bullet_{j''}$ and the sum of actions $\bullet_j$ ($j=1,...,n$) defines the action $\bullet$ of $\R^n$ on $C\left(\mathbb{T}^n_{\Th}\right)$. There is a $\Z^n$-grading
$$
C\left(\mathbb{T}^n_{\Th}\right)= \bigoplus_{\left(m_1,...,m_n \right)\in \Z^n }C\left(\mathbb{T}^n_{\Th}\right)_{\left(m_1,...,m_n \right)}
$$ 
where 
$$
C\left(\mathbb{T}^n_{\Th}\right)_{\left(m_1,...,m_n \right)}= \left\{a \in C\left(\mathbb{T}^n_{\Th}\right)~|~ \left(x_1,...,x_n \right)a=  e^{2\pi i \left(m_1x_1+...+m_nx_n \right) } a \right\}
$$
Indeed $
C\left(\mathbb{T}^n_{\Th}\right)_{\left(m_1,...,m_n \right)}$ is a one-dimensional $\C$ space such that
$
C\left(\mathbb{T}^n_{\Th}\right)_{\left(m_1,...,m_n \right)} = \C\cdot U_{\left(m_1,...,m_n \right) }$ where $U_{\left(m_1,...,m_n \right) }$ is given by \eqref{unitaty_nt_eqn}.
If $C\left(\mathbb{T}^n_{\Th}\right)\to C\left(\mathbb{T}^n_{\widetilde{\Th}}\right)$ is described in \ref{ntt_fin_cov} then $\Z^n$-grading of $C\left(\mathbb{T}^n_{\Th}\right)$ induces the $\frac{\Z}{k_1} \times ... \times \frac{\Z}{k_n}$-grading 
$$
C\left(\mathbb{T}^n_{\widetilde{\Th}}\right)= \bigoplus_{\left(\frac{m_1}{k_1},...,\frac{m_n}{k_n} \right)\in \frac{\Z}{k_1} \times ... \times \frac{\Z}{k_n} }C\left(\mathbb{T}^n_{\widetilde{\Th}}\right)_{\left(\frac{m_1}{k_1},...,\frac{m_n}{k_n} \right)}
$$ 
where 
$$
C\left(\mathbb{T}^n_{\widetilde{\Th}}\right)_{\left(\frac{m_1}{k_1},...,\frac{m_n}{k_n} \right)}= \left\{a \in C\left(\mathbb{T}^n_{\widetilde{\Th}}\right)~|~ \left(x_1,...,x_n \right)a=  e^{2\pi i \left(\frac{m_1}{k_1}x_1+...+\frac{m_n}{k_n}x_n \right) } a \right\}.
$$
If $v_1, ..., v_n\in U\left(C\left(\mathbb{T}^n_{\widetilde{\Th}}\right) \right) $ are generators of $C\left(\mathbb{T}^n_{\widetilde{\Th}}\right)$ which satisfy to \eqref{nt_cov_eqn} then
$$
C\left(\mathbb{T}^n_{\widetilde{\Th}}\right)_{\left(\frac{m_1}{k_1},...,\frac{m_n}{k_n} \right)} = \C\cdot v_1^{m^1}\cdot...\cdot v_n^{m_m},
$$
i.e. $C\left(\mathbb{T}^n_{\widetilde{\Th}}\right)_{\left(\frac{m_1}{k_1},...,\frac{m_n}{k_n} \right)}$ is a one-dimensional $\C$-space. 
       
\subsection{The Moyal plane}
\label{sec:Moyal-basics}

\paragraph{}Let  $\Theta$ be given by \eqref{nt_simpectic_theta_eqn}
  and let $\SS(\R^{2N})$ be the space of
complex Schwartz (smooth, rapidly decreasing) functions on $\R^{2N}$. One
defines, for $f,h \in \SS(\R^{2N})$, the corresponding Moyal
or twisted product:
\begin{equation}\tag{*}
f \star_\theta h(x) \stackrel{\mathrm{def}}{=} (2\pi)^{-k} \iint
f(x - \half\Theta u) \, h(x + t) \, e^{-iu\.t} \,du \,dt,
\end{equation}
where $\th > 0$ is defined by $\th^{2N} \stackrel{\mathrm{def}}{=} \det\Theta$. The
formula (*) may be rewritten as
\begin{equation*}
f \star_\theta h(x) = (\pi\th)^{-2N} \iint
f(x + s)\, h(x + t)\, e^{-2is\.\Theta^{-1}t} \,d^{2N}s \,d^{2N}t.
\label{eq:moyal-prod-gen}
\end{equation*}
\begin{defn}\label{r_2_N_repr}\cite{moyal_spectral} Let $\SS'\left(R^{2N} \right)$ be a vector space dual to $\SS\left(R^{2N} \right)$ and let 	$\left\|\cdot\right\|_{2}$ be the $L^2$-norm given by
	\begin{equation}\label{nt_l2_norm_eqn}
	\left\|f\right\|_{2} = \left|\int_{\R^{2N}} \left|f\right|^2 dx \right|^{\frac{1}{2}}.
	\end{equation}
	Denote by $C_b\left(\R^{2N}_\th\right)\stackrel{\mathrm{def}}{=} \set{T \in \SS'\left(\R^{2N}\right) : T \mop g \in L^2\left(\R^{2N}\right) \text{ for all } g \in L^2(\R^{2N})}$, provided with the operator norm $\|T\|_{\mathrm{op}} \stackrel{\mathrm{def}}{=}\sup\set{\|T \mop g\|_2/\|g\|_2 : 0 \neq g \in L^2(\R^{2N})}$ and $\Coo_{0}\left(\R^{2N}_\th\right) \stackrel{\mathrm{def}}{=}  \SS\left(\R^{2N}\right)$ and denote by $C_0\left(\R^{2N}_\th\right)$ the operator norm completion of $\Coo_{0}\left(\R^{2N}_\th\right)$.

\end{defn}
\begin{rem}
Obviously $\Coo_0\left(\R^{2N}_\th\right)  \hookto C_b\left(\R^{2N}_\th\right)$. But $\Coo_{0}\left(\R^{2N}_\th\right)$ is not dense in $C_b\left(\R^{2N}_\th\right)$, i.e. $C_0\left(\R^{2N}_\th\right) \subsetneq C_b\left(\R^{2N}_\th\right)$ (See \cite{moyal_spectral}).
\end{rem}
\begin{rem}
	Notation of the Definition \ref{r_2_N_repr} differs from \cite{moyal_spectral}. Here symbols $A_\th, \A_\th, A^0_\th$ are replaced with $C_b\left(\R^{2N}_\th\right), \Coo_{0}\left(\R^{2N}_\th\right), C_0\left(\R^{2N}_\th\right)$ respectively.
\end{rem}
\begin{lem}\label{nt_l_2_est_lem}\cite{moyal_spectral}
If $f,g \in L^2 \left(\R^{2N} \right)$, then $f\star_\th g \in L^2 \left(\R^{2N} \right)$ and $\left\|f\right\|_{\mathrm{op}} < \left(2\pi\th \right)^{-\frac{N}{2}} \left\|f\right\|_2$.

\end{lem}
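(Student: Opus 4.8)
The plan is to deduce both assertions from elementary facts about Hilbert--Schmidt operators, by realising $\star_\th$ as operator multiplication through the Weyl correspondence. I would use the matrix basis $\{f_{mn}\}_{m,n\in\N^{N}}$ of the Moyal algebra (constructed in \cite{moyal_spectral}), which lies in $\SS(\R^{2N})$ and satisfies $f_{mn}\star_\th f_{kl}=\delta_{nk}f_{ml}$, $f_{mn}^{*}=f_{nm}=\overline{f_{mn}}$ and $\int_{\R^{2N}}f_{mn}=(2\pi\th)^{N}\delta_{mn}$. Since the Moyal product is closed, $\int_{\R^{2N}}(a\star_\th b)=\int_{\R^{2N}}ab$, this gives $\langle f_{mn},f_{kl}\rangle_{L^{2}}=(2\pi\th)^{N}\delta_{mk}\delta_{nl}$, so $\{f_{mn}\}$ is an orthogonal basis of $L^{2}(\R^{2N})$ with $\left\|f_{mn}\right\|_{2}^{2}=(2\pi\th)^{N}$. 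For the standard basis $\{e_{m}\}$ of $\mathcal{H}:=\ell^{2}(\N^{N})$, writing $f=\sum_{m,n}c_{mn}f_{mn}$, the assignment $W(f):=\sum_{m,n}c_{mn}\ketbra{e_{m}}{e_{n}}$ defines a linear bijection of $L^{2}(\R^{2N})$ onto the Hilbert--Schmidt ideal $\mathcal{L}^{2}(\mathcal{H})$; it intertwines $\star_\th$ with operator composition, $W(f\star_\th g)=W(f)W(g)$, and $\left\|W(f)\right\|_{\mathrm{HS}}^{2}=\sum_{m,n}|c_{mn}|^{2}=(2\pi\th)^{-N}\left\|f\right\|_{2}^{2}$. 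A product of two Hilbert--Schmidt operators is again Hilbert--Schmidt (indeed trace-class), so $W(f\star_\th g)\in\mathcal{L}^{2}(\mathcal{H})$, hence $f\star_\th g\in L^{2}(\R^{2N})$; that is the first assertion.

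For the operator-norm bound, note that left $\star_\th$-multiplication $L_{f}\colon g\mapsto f\star_\th g$ on $L^{2}(\R^{2N})$ satisfies $W\circ L_{f}=M_{W(f)}\circ W$, where $M_{T}\colon A\mapsto TA$ is left operator multiplication on $\mathcal{L}^{2}(\mathcal{H})$. As $W$ is a positive scalar multiple of a unitary, conjugation by $W$ preserves operator norms, whence $\left\|f\right\|_{\mathrm{op}}=\left\|L_{f}\right\|=\left\|M_{W(f)}\right\|$. Now the operator norm of left multiplication by a bounded $T$ on the Hilbert--Schmidt class equals $\left\|T\right\|_{\mathcal{B}(\mathcal{H})}$ --- ``$\le$'' from $\left\|TA\right\|_{\mathrm{HS}}\le\left\|T\right\|_{\mathcal{B}(\mathcal{H})}\left\|A\right\|_{\mathrm{HS}}$, and ``$\ge$'' by testing on a rank-one $A$ whose range vector almost maximises $T$. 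Combining with $\left\|T\right\|_{\mathcal{B}(\mathcal{H})}\le\left\|T\right\|_{\mathrm{HS}}$ for $T=W(f)$ yields
\begin{equation*}
\left\|f\right\|_{\mathrm{op}}=\left\|W(f)\right\|_{\mathcal{B}(\mathcal{H})}\le\left\|W(f)\right\|_{\mathrm{HS}}=(2\pi\th)^{-N/2}\left\|f\right\|_{2}.
\end{equation*}

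The only non-routine ingredient is the matrix basis itself --- exhibiting $\{f_{mn}\}$ and, crucially, fixing the normalising constant $(2\pi\th)^{N}$ in its three defining identities; after that the argument is elementary functional analysis. If one wishes to avoid the matrix basis, the equivalent estimate $\left\|f\star_\th g\right\|_{2}\le(2\pi\th)^{-N/2}\left\|f\right\|_{2}\left\|g\right\|_{2}$ can be obtained directly from the oscillatory-integral formula for $\star_\th$ by means of the symplectic Fourier transform and the Plancherel theorem, after which $\left\|f\right\|_{\mathrm{op}}$ is the supremum of $\left\|f\star_\th g\right\|_{2}/\left\|g\right\|_{2}$ over $g\neq0$. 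One caveat: the bound is sharp but not strict in general --- for $f=f_{00}$ the operator $W(f)$ has rank one, so $\left\|f_{00}\right\|_{\mathrm{op}}=1=(2\pi\th)^{-N/2}\left\|f_{00}\right\|_{2}$; thus the intended inequality is ``$\le$''.
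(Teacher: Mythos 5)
The paper offers no proof of this lemma --- it is quoted from \cite{moyal_spectral} --- so the only comparison available is with that reference, and your argument is essentially the proof given there: the matrix basis $\{f_{mn}\}$ realises $\bigl(L^{2}(\R^{2N}),\star_\th\bigr)$ as the Hilbert--Schmidt class with scale factor $(2\pi\th)^{N}$, after which the first assertion is the statement that a product of Hilbert--Schmidt operators is Hilbert--Schmidt, and the norm bound is $\|T\|_{\mathcal{B}(\mathcal{H})}\le\|T\|_{\mathrm{HS}}$ combined with the identification $\|f\|_{\mathrm{op}}=\|W(f)\|_{\mathcal{B}(\mathcal{H})}$. Your closing caveat is also correct: the strict inequality ``$<$'' as printed in the lemma fails for $f=f_{00}$ (or any $f$ with $W(f)$ of rank one), and the statement should read $\|f\|_{\mathrm{op}}\le(2\pi\th)^{-N/2}\|f\|_{2}$, which is how it appears in \cite{moyal_spectral}.
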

\begin{defn}\label{nt_l_2_est_defn}
	Denote by  $L^2 \left(\R^{2N}_\th \right)$ the space $L^2 \left(\R^{2N}\right)$ with $\star_\th$ multiplication.
\end{defn}
Introduce the  the ordinary Fourier transform $\mathcal{F}$ and a symplectic Fourier transform $F$ 
\cite{miracle:ph_d} given by
\begin{equation}\label{eq:Fsympl}
\begin{split}
 \left(\mathcal{F}f\right)(u) = \int_{\mathbb{R}^{2N}} f(t)e^{-it\cdot u}dt,~\left(Ff\right)(u)= \int_{\mathbb{R}^{2N}} f(t)e^{-it\cdot J u}dt 
 \end{split}
\end{equation}
where
$$
J = \begin{pmatrix} 0 & 1_N \\ -1_N & 0 \end{pmatrix}.
$$
 The given by
\begin{equation}\label{nt_l2_scalar_prod_eqn}
\left(f,g\right) = \int_{\mathbb{R}^{2N}}f^*g dx.
\end{equation}
 sesquilinear scalar product  on $\sS\left(\mathbb{R}^k\right)$
is $\mathcal{F}$ invariant, i.e.
\begin{equation}\label{scalar_fourier_inv_eqn}
\left(f,g\right) = \left(\mathcal{F}f,\mathcal{F}g\right),
\end{equation}
and there is the tracial property \cite{moyal_spectral}  
\begin{equation}\label{nt_tracial_prop}
\int_{\R^{2N}} \left( f\star_\th g\right) \left(x \right)dx =  \int_{\R^{2N}}  f\left(x \right) g\left(x \right)dx.
\end{equation}
Consider the unitary dilation operators \cite{moyal_spectral}   $E_a$ given
by
\begin{equation}\label{dilation_eqn}
E_af(x) \stackrel{\mathrm{def}}{=} a^{N/2} f(a^{1/2}x),
\end{equation}
and it is immediate from~\eqref{eq:Fsympl} that $F E_a = E_{1/a} F$.
We also remark that
\begin{equation}\label{eq:starscale}
f \mop g =
(\th/2)^{-N/2} E_{2/\th}(E_{\th/2}f \star_2 E_{\th/2}g).
\end{equation}
Some formulas in this paper can be simplified when $\th = 2$. Thanks to
the scaling relation~\eqref{eq:starscale}, it is often enough, when
studying properties of the Moyal product, to work out the case
$\th = 2$. Denote $\times \stackrel{\text{def}}{=} \star_2$.
According to \cite{varilly_bondia:phobos} following conditions hold:
\begin{equation*}
(f\times g)(u) = \int\int f(v)g(w) \exp\left(
i(u\cdot Jv + v \cdot Jw + w\cdot Ju)\right)dvdw=
\end{equation*}\begin{equation*}
= \int\int f(u + s)g(u + t) e^{is\cdot Jt} ds dt
\end{equation*}

\begin{equation*} 
\left(f\times g\right)\left(u\right)= \int_{\mathbb{R}^{2N}} Ff\left(u-w\right)g\left(w\right)e^{iu\cdot Jw}dw.
\end{equation*}
The twisted convolution \cite{varilly_bondia:phobos} $f\diamond g$ is defined by
\begin{equation}\label{nt_diamond}
f\diamond g \left(u \right) = \int f\left(u - t\right)  g\left(t \right)e^{-iu \cdot J t}  dt.
\end{equation}
Following conditions hold \cite{varilly_bondia:phobos}:
\begin{equation}\label{nt_prod_duality}
\mathcal{F}\left(f \times g \right) = \mathcal{F} f \diamond \mathcal{F} f ; ~ \mathcal{F}\left(f \diamond g \right) = \mathcal{F} f \times \mathcal{F} f.
\end{equation}



  \subsection{The Moyal plane as the inverse noncommutative limit}
  \begin{empt}\label{nt_mp_prel_lim}
  	Let $\{p_j \in \mathbb{N}\}_{j \in \mathbb{N}}$ be an infinite sequence of natural numbers such that $p_j > 1$ for any $j$, and let $m_n = \Pi_{j = 1}^n p_j$. From the \ref{ntt_fin_cov} it follows that there is a  sequence of *-homomorphisms
  	
  	\begin{equation*}
 		C\left(\mathbb{T}^{2N}_\th\right) \to	C\left(\mathbb{T}^{2N}_{\th/m_1^{2}}\right) \to C\left(\mathbb{T}^{2N}_{\th/m_2^{2}}\right) \to... 
  	\end{equation*}
  	such that
  	\begin{enumerate}
  		\item[(a)] 	For any $n \in \mathbb{N}$ there are generators $u_{n,1},..., u_{n,2N}\in U\left(C\left(\mathbb{T}^{2N}_{\th/m_n^{2}}\right)\right)$ be a basis of $C\left(\mathbb{T}^{2N}_{\th/m_n^{2}}\right)$ such that the *-homomorphism $ C\left(\mathbb{T}^{2N}_{\th/m_{n-1}^{2}}\right)\to C\left(\mathbb{T}^{2N}_{\th/m_n^{2}}\right)$ is given by
  		$$
  		u_{n-1,j} \mapsto u^{p_n}_{n,j}; ~~ \forall j =1,..., 2N;
  		$$
  		and there are generators $u_1,...,u_n \in U\left( C\left(\mathbb{T}^{2N}_\th\right)\right) $ such that *-homomorphism $C\left(\mathbb{T}^{2N}_\th\right) \to C\left(\mathbb{T}^{2N}_{\th/m_1^{2}}\right)$ is given by
  	 		$$
  	 		u_{j} \mapsto u^{p_1}_{1,j}; ~~ \forall j =1,..., 2N.
  	 		$$
  	 			 
  		\item[(b)] For any $n \in \N$ the triple $\left(C\left(\mathbb{T}^{2N}_{\th/m_{n - 1}^{2}}, C\left(\mathbb{T}^{2N}_{\th/m_n^{2}}\right), \Z_{p_n}\right) \right)$ is a finite covering projection.
  		\item[(c)] There is the sequence of groups and epimorphisms
  		\begin{equation*}
  		\mathbb{Z}^{2N}_{m_1} \leftarrow\mathbb{Z}^{2N}_{m_2} \leftarrow ...;
  		\end{equation*}
  		which is equivalent to the sequence
  		$$
  		G\left(C\left(\mathbb{T}^{2N}_{\th/m_1^{2}}\right)~|~ C\left(\mathbb{T}^{2N}_{\th}\right)\right)\leftarrow G\left(C\left(\mathbb{T}^{2N}_{\th/m_2^{2}}\right)~|~ C\left(\mathbb{T}^{2N}_{\th}\right)\right)\leftarrow...
  		$$
  	\end{enumerate}
  	\begin{lem}
  		The sequence of of noncommutative finite covering projections
  		
  		\begin{equation*}
  		\mathfrak{S}_{C\left(\mathbb{T}^{2N}_\th\right)}= \left\{ 		C\left(\mathbb{T}^{2N}_\th\right) \xrightarrow{\pi^1}	C\left(\mathbb{T}^{2N}_{\th/m_1^{2}}\right) \xrightarrow{\pi^2} C\left(\mathbb{T}^{2N}_{\th/m_2^{2}}\right) \xrightarrow{\pi^3}... \right\}.
  		\end{equation*}
  		is composable.
  	\end{lem}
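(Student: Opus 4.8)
The plan is to verify the three conditions (a), (b), (c) of Definition~\ref{comp_defn} directly, exploiting the fact that every single step of this sequence has already been analyzed in Subsection~\ref{ntt_fin_cov}. Throughout I would set $m_0 = 1$ and $u_{0,j} = u_j$, and for $k<m$ abbreviate $q_{mk} = m_m/m_k = p_{k+1}\cdots p_m \in \mathbb{N}$; iterating the formula $\pi^n\colon u_{n-1,j}\mapsto u_{n,j}^{p_n}$ of~\ref{nt_mp_prel_lim} (and using that a $*$-homomorphism respects powers) shows that the composition $\pi^m\circ\cdots\circ\pi^{k+1}\colon A_k\to A_m$, where $A_n = C\left(\mathbb{T}^{2N}_{\th/m_n^2}\right)$, is given on generators by $u_{k,j}\mapsto u_{m,j}^{q_{mk}}$.

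For condition (a) I would note that, with $\Theta_n = (\th/m_n^2)J$ as in~\eqref{nt_simpectic_theta_eqn}, the identity $q_{mk}^2(\th/m_m^2) = \th/m_k^2$ gives $e^{2\pi i(\Theta_k)_{rs}} = e^{2\pi i(\Theta_m)_{rs}q_{mk}q_{mk}}$ for all $r,s$, which is exactly the compatibility hypothesis of the construction of Subsection~\ref{ntt_fin_cov} for the data $\overline{k} = (q_{mk},\dots,q_{mk})$. Hence the Lemma proved there applies verbatim: the triple $\left(A_k,A_m,\mathbb{Z}_{q_{mk}}^{2N}\right)$ is a finite noncommutative covering projection, the composition $\pi^m\circ\cdots\circ\pi^{k+1}$ corresponds to it, and $G\left(A_m~|~A_k\right) = \mathbb{Z}_{q_{mk}}^{2N}$ acts on $A_m$ by $\left(\overline{t}_1,\dots,\overline{t}_{2N}\right)u_{m,j} = e^{2\pi it_j/q_{mk}}u_{m,j}$.

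For conditions (b) and (c), fix $k<l<m$ and compute, for $g = \left(\overline{t}_1,\dots,\overline{t}_{2N}\right)\in G\left(A_m~|~A_k\right)$, the action of $g$ on the generator $u_{l,j} = u_{m,j}^{q_{ml}}$ of $A_l\subset A_m$: using $q_{mk} = q_{ml}q_{lk}$ one gets $g\,u_{l,j} = e^{2\pi it_jq_{ml}/q_{mk}}u_{l,j} = e^{2\pi it_j/q_{lk}}u_{l,j}\in A_l$. Since $g$ is a $*$-automorphism and $A_l$ is generated by the $u_{l,j}$, this yields $gA_l = A_l$, hence $G\left(A_m~|~A_k\right)A_l = A_l$, which is~(b). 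For~(c) I would read off from the same computation that, under the identifications of~(a), the restriction homomorphism $G\left(A_m~|~A_k\right)\to G\left(A_l~|~A_k\right)$ induced by~(b) is componentwise reduction $\mathbb{Z}_{q_{mk}}^{2N}\to\mathbb{Z}_{q_{lk}}^{2N}$ (surjective), while the inclusion $G\left(A_m~|~A_l\right)\hookrightarrow G\left(A_m~|~A_k\right)$ is componentwise multiplication by $q_{lk}$, i.e. $\mathbb{Z}_{q_{ml}}^{2N}\hookrightarrow\mathbb{Z}_{q_{mk}}^{2N}$ (injective). Because $q_{mk} = q_{ml}q_{lk}$, an element of $\mathbb{Z}_{q_{mk}}$ has all components divisible by $q_{lk}$ iff it lies in the image of multiplication by $q_{lk}$, iff it is killed by reduction mod $q_{lk}$; applying this componentwise gives exactness of $\{e\}\to G\left(A_m~|~A_l\right)\to G\left(A_m~|~A_k\right)\to G\left(A_l~|~A_k\right)\to\{e\}$.

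I expect the only delicate point to be~(c): not the arithmetic of cyclic groups, which is elementary, but the careful matching of the abstract covering transformation groups $G\left(A_m~|~A_l\right)$, $G\left(A_m~|~A_k\right)$, $G\left(A_l~|~A_k\right)$ furnished by the Lemma of Subsection~\ref{ntt_fin_cov} with the concrete groups $\mathbb{Z}_{q_{ml}}^{2N}$, $\mathbb{Z}_{q_{mk}}^{2N}$, $\mathbb{Z}_{q_{lk}}^{2N}$, and the verification that the two structural maps required by Definition~\ref{comp_defn} (the restriction induced by~(b) and the natural inclusion) really are the arithmetic maps described above. Everything else follows immediately from the single-step construction, so once that bookkeeping is pinned down the proof is complete.
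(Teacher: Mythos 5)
Your proposal is correct and follows essentially the same route as the paper: all three conditions of Definition~\ref{comp_defn} are checked by reducing each composed map to the single-step construction of Subsection~\ref{ntt_fin_cov} (identifying $G\left(A_m~|~A_k\right)$ with $\mathbb{Z}_{q_{mk}}^{2N}$) and then verifying exactness of $\{e\}\to\mathbb{Z}_{q_{ml}}^{2N}\to\mathbb{Z}_{q_{mk}}^{2N}\to\mathbb{Z}_{q_{lk}}^{2N}\to\{e\}$ by elementary arithmetic of cyclic groups. Your bookkeeping of the exponents in the action on the generators $u_{l,j}$ is in fact slightly more careful than the paper's, but the argument is the same.
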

\begin{proof}
	We should check conditions (a), (b), (c) of the Definition \ref{comp_defn}.
	\newline (a) Let $\pi^{n_1}\circ ...\circ\pi^{n_0+1}\circ\pi^{n_0}:C\left(\mathbb{T}^{2N}_{\th/m_{n_0}^{2}}\right)\to C\left(\mathbb{T}^{2N}_{\th/m_{n_1}^{2}}\right)$ be any composition, and $p = n_0,~q=n_1$. If  $u_{p,1},..., u_{p,2N}\in U\left(C\left(\mathbb{T}^{2N}_{\th/m_p^{2}}\right)\right)$ (resp. $u_{q,1},..., u_{q,2N}\in U\left(C\left(\mathbb{T}^{2N}_{\th/m_q^{2}}\right)\right)$) are generators of $C\left(\mathbb{T}^{2N}_{\th/m_p^{2}}\right)$ (resp $C\left(\mathbb{T}^{2N}_{\th/m_q^{2}}\right)$) that the *-homomorphism 
	\begin{equation*}\tag{*}
 C\left(\mathbb{T}^{2N}_{\th/m_{p}^{2}}\right)\to C\left(\mathbb{T}^{2N}_{\th/m_q^{2}}\right)
	\end{equation*}
 is given by
	$$
	u_{p,j} \mapsto u^{d}_{q,j}; ~~ \forall j =1,..., 2N
	$$
	where $d =  \Pi_{j = p+1}^q p_j$. From \ref{ntt_fin_cov} it follows that (*) is a finite noncommutative covering projection.
	\newline 
	(b) If $k < l < m$ then there the sequence
	\begin{equation*}
C\left(\mathbb{T}^{2N}_{\th/m_{k}^{2}}\right)\to C\left(\mathbb{T}^{2N}_{\th/m_{l}^{2}}\right) \to C\left(\mathbb{T}^{2N}_{\th/m_{m}^{2}}\right)
	\end{equation*}
	of noncommutative covering projections. There is an isomorphism
	$$
	G\left( C\left(\mathbb{T}^{2N}_{\th/m_{m}^{2}}\right)~|~C\left(\mathbb{T}^{2N}_{\th/m_{k}^{2}}\right) \right) \approx \Z^{2N}_{r}
	$$
	where $r = \Pi_{j = k+1}^m p_j$. If $u_{m,1},..., u_{m,2N}\in U\left(C\left(\mathbb{T}^{2N}_{\th/m_m^{2}}\right)\right)$ is a basis of $C\left(\mathbb{T}^{2N}_{\th/m_m^{2}}\right)$ then the action of $\Z^{2N}_{r}$ is given by
	$$
	\left(\overline{t}_1,...,\overline{t}_n \right)u_{m,j} = e^{{2\pi i t_j}}u_{m,j}.
	$$
	If $u_{l,1},..., u_{l,2N}\in U\left(C\left(\mathbb{T}^{2N}_{\th/m_l^{2}}\right)\right)$ are generators  of $C\left(\mathbb{T}^{2N}_{\th/m_l^{2}}\right)$   then the action of $\Z^{2N}_{r}$ on $C\left(\mathbb{T}^{2N}_{\th/m_{l}^{2}}\right) $ satisfies to the following condition
	\begin{equation*}
	\left(\overline{t}_1,...,\overline{t}_n \right)u_{l,j} = e^{\frac{2\pi i r t_j}{s}}u_{l,j}
	\end{equation*}
	where $s = \Pi_{j = l+1}^m p_j$.
	Thus $G\left( C\left(\mathbb{T}^{2N}_{\th/m_{m}^{2}}\right)~|~C\left(\mathbb{T}^{2N}_{\th/m_{k}^{2}}\right) \right)$ transforms generators of $C\left(\mathbb{T}^{2N}_{\th/m_l^{2}}\right)$ to generators of $C\left(\mathbb{T}^{2N}_{\th/m_l^{2}}\right)$, it follows that
	$$
		G\left( C\left(\mathbb{T}^{2N}_{\th/m_{m}^{2}}\right)~|~C\left(\mathbb{T}^{2N}_{\th/m_{k}^{2}}\right) \right)C\left(\mathbb{T}^{2N}_{\th/m_l^{2}}\right) = \Z^{2N}_{r}C\left(\mathbb{T}^{2N}_{\th/m_l^{2}}\right) = C\left(\mathbb{T}^{2N}_{\th/m_l^{2}}\right).
		$$
	\newline
	(c)	
	The sequence
	\begin{equation*}\tag{**}
	\begin{split}
	\{e\}\to		G\left( C\left(\mathbb{T}^{2N}_{\th/m_{m}^{2}}\right)~|~C\left(\mathbb{T}^{2N}_{\th/m_{l}^{2}}\right) \right) \to 		G\left( C\left(\mathbb{T}^{2N}_{\th/m_{m}^{2}}\right)~|~C\left(\mathbb{T}^{2N}_{\th/m_{k}^{2}}\right) \right)\to \\ \to 		G\left( C\left(\mathbb{T}^{2N}_{\th/m_{l}^{2}}\right)~|~C\left(\mathbb{T}^{2N}_{\th/m_{k}^{2}}\right) \right) \to \{e\}
	\end{split}
	\end{equation*}
	is equivalent to 
\begin{equation*}\tag{***}
\{e\}\to \Z^{2N}_{s} \xrightarrow{\times \frac{r}{s}} \Z^{2N}_{r} \xrightarrow{\mathrm{mod}~ s} \Z^{2N}_{\frac{r}{s}} \to \{e\}.
\end{equation*}	

From  exactness of (***) it follows that the sequence (**) is exact.
\end{proof}

   	If $n \in \mathbb{N}$ and $U_k\in U\left(  C\left(\mathbb{T}^{2N}_{\th/m_n^2}\right)\right) $ is given by  \eqref{unitaty_nt_eqn}  then there is the natural the inclusion $C\left(\mathbb{T}^{2N}_{\th/m_n^2}\right)  \to C_b\left(\mathbb{R}^{2N}_\th\right)$ given by 
  	\begin{equation}\label{u_l_mapsto_eqn}
  	U_k \mapsto \exp(2\pi i \frac{k}{m_n} \cdot-);~ k \in \mathbb{Z}^{2N}
  	\end{equation}
  such that the following diagram
  	\newline
  	\begin{tikzpicture}
  	\matrix (m) [matrix of math nodes,row sep=3em,column sep=4em,minimum width=2em]
  	{
  		C\left(\mathbb{T}^{2N}_{\th/m_{k_1}^2}\right) &	& \left(\mathbb{T}^{2N}_{\th/m_{k_2}^2}\right)    \\
  		& C_b\left(\R^{2N}_\th\right) &   \\};
  	\path[-stealth]
  	(m-1-1) edge node [left] {} (m-1-3)
  	(m-1-1) edge node [right] {} (m-2-2)
  	(m-1-3) edge node [left] {}  (m-2-2);
  	
  	\end{tikzpicture}
  	\newline
  	is commutative. From the Definition \ref{r_2_N_repr} it follows that there is a faithful representation $ C_b\left(\R^{2N}_\th\right) \to B\left(L^2\left(\R^{2N}\right)\right)$
  	which for any $n \in \mathbb{N}$ induces the representation $C\left(\mathbb{T}^{2N}_{\th/m_n^2}\right) \to B\left(L^2\left(\R^{2N}\right)\right)$, whence there are the natural faithful injective homomorphisms:
 
  	\begin{equation}\label{nt_psi_eqn}
  	\psi: \varinjlim C\left(\mathbb{T}^{2N}_{\th/m_n^2}\right) \xrightarrow{\subset} C_b\left(\R^{2N}_\th\right),
  	  	\end{equation} 
  	  	
  	\begin{equation*}
  	\left(\varinjlim C\left(\mathbb{T}^{2N}_{\th/m_n^2}\right)\right)''\xrightarrow{\subset} \left(C_b\left(\R^{2N}_\th\right)\right)''
  	\end{equation*}
   There is the  action of $\mathbb{Z}^{2N}$ on $\mathbb{R}^{2N}$ given by
  	\begin{equation}\label{nt_group_action}
  	g x = 2\pi g + x; ~ g \in \mathbb{Z}^{2N}, ~ x \in \mathbb{R}^{2N} 
  	\end{equation}
  	and the action naturally induces the action of $\mathbb{Z}^{2N}$  on  $C_b\left(\R^{2N}_\th\right)$.  For any $ n \in \mathbb{N}$ there is the natural surjective homomorphism 
  	$$h_n: \mathbb{Z}^{2N}\to G\left(C\left(\mathbb{T}^{2N}_{\th/m_n^2}\right)~|~C\left(\mathbb{T}^{2N}_{\th}\right)\right) \approx\mathbb{Z}^{2N}_{m_n}.
  	$$
  	From the inclusion \ref{nt_psi_eqn} it follows that for any $n \in \mathbb{N}$ 
  	the action of $\mathbb{Z}^{2N}$ on $C_b\left(\R^{2N}_\th\right)$ induces the action $\mathbb{Z}^{2N}$ on $C\left(\mathbb{T}^{2N}_{\th/m_n^2}\right)$ such that for any $a \in C\left(\mathbb{T}^{2N}_{\th/m_n^2}\right)$
  \begin{equation}\label{nt_action_desc}
  	 \psi\left( h_n\left(g \right)a \right) = g\psi\left( a \right) 
  \end{equation}
 It follows that there is the action of $\mathbb{Z}^{2N}$ on $\varinjlim C\left(\mathbb{T}^{2N}_{\th/m_n^2}\right)$, i.e.
  	\begin{equation}\label{nt_zn_act}
  		\psi\left( g\widehat{a} \right) = g\psi\left( \widehat{a} \right);~ \forall \widehat{a} \in \varinjlim C\left(\mathbb{T}^{2N}_{\th/m_n^2}\right)
  \end{equation}
 and the inclusion \eqref{nt_psi_eqn} is $\Z^{2N}$ equivariant. Let
  	\begin{equation}\label{nt_pr_k_eqn}
  	\mathrm{pr}_n: \sS\left(\mathbb{R}^{2N}\right)\to \Coo_0\left(\mathbb{T}^{2N}_{\th/m_n^{2}}\right): ~~ a \mapsto \sum_{g \in J_n} ga
  	\end{equation} 
  	where $J_n = \ker\left(  \mathbb{Z}^{2N}\to\mathbb{Z}^{2N}_{m_n}\right) $. The map  $\mathrm{pr}_n$ can be defined by Fourier transform. Let $a \in \sS\left(\mathbb{R}^{2N}\right)$ and let $\hat a(u)$ be the Fourier transform 
  	$$ 
  	\hat a(u) = \left( \mathcal{F}a\right) \left(u \right)  = \int_{\mathbb{R}^{2N}} a(t)e^{-it\cdot u} dt.
  	$$
  	If $a_n = \mathrm{pr}_n\left( a\right) = \sum_{g \in J_n} ga \in \left(\mathbb{T}^{2N}_{\th/m_n^2}\right)$  
  	then from  \eqref{nt_fourier} it follows that 
  	\begin{equation}\label{nt_proj_fourier}
  	\hat a_n(p)= \left(\mathcal F\mathrm{pr}_n\left( a\right)\right) \left( p\right) = \frac{1}{\left(2\pi m_n\right)^{2N}} \hat a\left(\frac{2\pi p}{m_n}\right)
  	\end{equation}
  	where $p \in \mathbb{Z}^{2N}$ and $\hat a_n \in \sS\left(\mathbb{Z}^{2N}\right)$ is the Fourier transform of $a_n$ given by \eqref{nt_fourier}.
  	For simplicity the we use the dilation transformation $E_{2/\th}$ given by \eqref{dilation_eqn} which induces following:
  	\begin{itemize}
  		\item $\theta = 2$, $\star_\th = \times$,
  		\item The equation \eqref{u_l_mapsto_eqn} is replaced with
  		\begin{equation*}
  		U_l \mapsto \left(\th /2 \right)^{N/2} \exp(2\pi~\sqrt{2/\th}~ i \frac{l}{m_n} \cdot-);~ \forall l \in \mathbb{Z}^N.
  		\end{equation*}
  		
  \end{itemize}

  \end{empt}

\begin{lem}\label{nt_long_delta_lem}
Let $a \in \sS\left(\mathbb{R}^{2N}\right)$, and let   $a_\Delta\in \sS\left(\mathbb{R}^{2N}\right)$ be given by
\begin{equation}\label{nt_a_delta_eqn}
a_\Delta\left(x\right)= a(x + \Delta);~\forall x \in \mathbb{R}^{2N}
\end{equation}
where $\Delta \in \mathbb{R}^{2N}$.
For any $m \in \mathbb{N}$ there is  a dependent on $a$ real constant $C_m > 0$  such  that for any $n \in \mathbb{N}$ following condition holds
$$
 \left\|\mathrm{pr}_n\left(a_{\Delta} \times a\right) \right\|  \le \frac{C_m}{\left\| \Delta\right\|^m}.
$$

\end{lem}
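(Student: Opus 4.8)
The plan is to pass to the Fourier picture. On the torus side, the $C^*$-norm of an element of $C\left(\mathbb{T}^{2N}_{\th/m_n^2}\right)$ is dominated by the $\ell^1$-norm of its Fourier coefficients via \eqref{nt_norm_estimation}; after the rescaling \eqref{nt_proj_fourier} this $\ell^1$-norm becomes a Riemann sum whose mesh tends to $0$, hence is bounded uniformly in $n$ by an $L^1$-type quantity; and that quantity — more precisely a weighted sup-norm of $\mathcal{F}(a_\Delta\star_\th a)$ — will be shown to decay faster than any power of $\|\Delta\|$ by a non-stationary phase argument.

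Concretely, I would first put $b:=a_\Delta\star_\th a$ (in the dilation normalisation of \ref{nt_mp_prel_lim}, $\th=2$, $\star_\th=\times$). Since $\sS\left(\mathbb{R}^{2N}\right)$ is closed under $\star_\th$, $b\in\sS\left(\mathbb{R}^{2N}\right)$, so $\mathrm{pr}_n(b)\in\Coo_0\left(\mathbb{T}^{2N}_{\th/m_n^2}\right)$ by \eqref{nt_pr_k_eqn}. Expanding $\mathrm{pr}_n(b)=\sum_{p\in\mathbb{Z}^{2N}}\widehat{\mathrm{pr}_n(b)}(p)\,U_p$ and using \eqref{nt_norm_estimation} and \eqref{nt_proj_fourier},
\[
\|\mathrm{pr}_n(b)\|\ \le\ \sum_{p\in\mathbb{Z}^{2N}}\bigl|\widehat{\mathrm{pr}_n(b)}(p)\bigr|\ =\ \frac{1}{(2\pi m_n)^{2N}}\sum_{p\in\mathbb{Z}^{2N}}\Bigl|(\mathcal{F}b)\!\left(\tfrac{2\pi}{m_n}p\right)\Bigr|.
\]
Because $m_n=\prod_{j\le n}p_j\ge 2$, the right-hand side is a Riemann sum of mesh at most $\pi$; majorising $|(\mathcal{F}b)(u)|$ by $q_{2N+1}(b)\,(1+\|u\|)^{-(2N+1)}$, where $q_s(f):=\sup_u(1+\|u\|)^s|(\mathcal{F}f)(u)|$, and splitting the sum into $\|p\|\le m_n$ and $\|p\|>m_n$, one gets a bound $\|\mathrm{pr}_n(a_\Delta\star_\th a)\|\le C\,q_{2N+1}(a_\Delta\star_\th a)$ with $C$ independent of $n$ and $\Delta$. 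This reduces the lemma to the estimate $q_{2N+1}(a_\Delta\star_\th a)\le C_m\|\Delta\|^{-m}$.

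For the latter I would use that $(\mathcal{F}a_\Delta)(v)=e^{i\Delta\cdot v}(\mathcal{F}a)(v)$ (immediate from \eqref{nt_a_delta_eqn}), together with the product–convolution duality \eqref{nt_prod_duality} and the twisted convolution \eqref{nt_diamond}, to write
\[
(\mathcal{F}b)(u)\ =\ e^{i\Delta\cdot u}\int_{\mathbb{R}^{2N}} e^{-i\Delta\cdot t}\,\phi_u(t)\,dt,\qquad \phi_u(t):=(\mathcal{F}a)(u-t)\,(\mathcal{F}a)(t)\,e^{-iu\cdot Jt}.
\]
Fixing $m$ and integrating by parts $m$ times in $t$ against $e^{-i\Delta\cdot t}$, using $\Delta\cdot\nabla_t e^{-i\Delta\cdot t}=-i\|\Delta\|^2 e^{-i\Delta\cdot t}$, produces the prefactor $\|\Delta\|^{-m}$ at the cost of $m$ derivatives of $\phi_u$; by Leibniz, and since $\partial_{t_j}e^{-iu\cdot Jt}$ is linear in $u$ while every $\partial^\beta(\mathcal{F}a)$ is still Schwartz, $\int\max_{|\alpha|\le m}|\partial_t^\alpha\phi_u(t)|\,dt$ is bounded by $(1+\|u\|)^m$ times a finite sum of convolutions $\bigl(|\partial^\beta\mathcal{F}a|\ast|\partial^\gamma\mathcal{F}a|\bigr)(u)$. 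A Peetre-type splitting of $\mathbb{R}^{2N}$ into $\{\|t\|\le\|u\|/2\}$ and its complement shows that the convolution of two Schwartz functions decays like $(1+\|u\|)^{-2M}$ for any $M$, giving $|(\mathcal{F}b)(u)|\le C_{m,M}\,\|\Delta\|^{-m}(1+\|u\|)^{m-2M}$; taking $2M\ge m+2N+1$ yields $q_{2N+1}(a_\Delta\star_\th a)\le C_m'\|\Delta\|^{-m}$. Combined with the first step this proves the stated bound for $\|\Delta\|\ge 1$, and for $\|\Delta\|\le 1$ the trivial uniform bound $|(\mathcal{F}b)(u)|\le(|\mathcal{F}a|\ast|\mathcal{F}a|)(u)\le C(1+\|u\|)^{-(2N+1)}$ lets one enlarge the constant so that the inequality holds for all $\Delta$.

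The main obstacle is the second step, and specifically the coupling of the two variables by the twisting phase $e^{-iu\cdot Jt}$: each $t$-derivative needed for the non-stationary phase gain costs a power of $\|u\|$, so one cannot estimate $(\mathcal{F}b)(u)$ uniformly in $u$ without first building in extra decay of $\mathcal{F}a$. This is the only place where being Schwartz — rather than merely $L^1$ or $L^2$ — is genuinely used, and it dictates the choice of the auxiliary order $M$; the torus-side bookkeeping in the first step, by contrast, is routine once \eqref{nt_proj_fourier} is in hand.
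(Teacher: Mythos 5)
Your proof is correct, but the mechanism by which you extract the decay in $\left\| \Delta\right\|$ is genuinely different from the paper's. The shared part is the opening reduction: both you and the paper use \eqref{nt_norm_estimation} together with \eqref{nt_proj_fourier} to dominate $\left\|\mathrm{pr}_n\left(a_{\Delta}\times a\right)\right\|$ by $\frac{1}{\left(2\pi m_n\right)^{2N}}\sum_{l}\left|\mathcal{F}\left(a_\Delta\times a\right)\left(\tfrac{2\pi l}{m_n}\right)\right|$, and then pass to the twisted convolution $\mathcal{F}a_\Delta\diamond\mathcal{F}a$ via \eqref{nt_diamond} and \eqref{nt_prod_duality}. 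From there the paper substitutes $\mathcal{F}a_\Delta\left(u-t\right)=\mathcal{F}a\left(u-t-\Delta\right)$, so that $\Delta$ sits inside the argument of a Schwartz function; the decay in $\left\|\Delta\right\|$ is then harvested directly from the pointwise decay of $\mathcal{F}a$ through a Peetre-type inequality, after splitting the lattice sum at $\left\|\tfrac{2\pi l}{m_n}\right\|=\tfrac{\left\|\Delta\right\|}{2}$ and counting lattice points. You instead use the correct identity $\mathcal{F}a_\Delta\left(v\right)=e^{i\Delta\cdot v}\mathcal{F}a\left(v\right)$ (a translate Fourier-transforms to a modulation, not to a translate), which leaves $\Delta$ only in an oscillatory factor, and you are then forced to produce the decay by non-stationary phase: $m$ integrations by parts against $e^{-i\Delta\cdot t}$, paying $\left(1+\left\|u\right\|\right)^{m}$ from differentiating the twisting phase $e^{-iu\cdot Jt}$ and recovering it from the arbitrarily fast decay of $\left|\partial^\beta\mathcal{F}a\right|\ast\left|\partial^\gamma\mathcal{F}a\right|$. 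This buys two things. First, the lattice sum is bounded uniformly in $n$ and $\Delta$ by the single weighted sup-norm $q_{2N+1}$, so no lattice-point count and no case split on $\left\|\tfrac{2\pi l}{m_n}\right\|$ are needed. Second, and more importantly, your route is the one actually consistent with the definition $a_\Delta\left(x\right)=a\left(x+\Delta\right)$ in \eqref{nt_a_delta_eqn}: the paper's substitution treats the spatial translation as if it became a translation on the Fourier side, so its computation as literally written establishes the estimate for $x\mapsto e^{i\Delta\cdot x}a\left(x\right)$ rather than for $a_\Delta$; your stationary-phase argument repairs this while also streamlining the bookkeeping. The only points you should write out in full are the uniformity in $n$ of the Riemann-sum bound (the prefactor $\left(2\pi m_n\right)^{-2N}$ is, up to a fixed constant, the $2N$-th power of the mesh $\tfrac{2\pi}{m_n}$, so comparison with $\int\left(1+\left\|u\right\|\right)^{-\left(2N+1\right)}du$ works for every $m_n\ge 2$) and the trivial enlargement of the constant for $\left\|\Delta\right\|\le 1$; both are indicated correctly in your sketch.
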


\begin{proof}
From the definition of Schwartz functions it follows that for any $f \in \sS\left( \mathbb{R}^{2N} \right)$ and any $m \in \mathbb{N}$ there is $C^f_m$ such that
\begin{equation}\tag{*}
\left|f \left(u\right)\right|<\frac{C^f_m}{\left( 1 + \left\|u\right\|\right)^m }.
\end{equation}
From \eqref{nt_norm_estimation} and \eqref{nt_proj_fourier} it follows that
$$
\left\|\mathrm{pr}_n\left(a_{\Delta} \times a\right) \right\| \le \frac{1}{\left(2\pi m_n\right)^{2N}}~~\sum_{l \in \mathbb{Z}^{2N}} \left| \mathcal{F}\left(a_{\Delta} \times a\right)\left(\frac{2\pi l}{m_n} \right) \right|.
$$
From \eqref{nt_diamond}, \eqref{nt_prod_duality} it follows that
$$
 \frac{1}{\left(2\pi m_n\right)^{2N}}~~\sum_{l \in \mathbb{Z}^{2N}} \left| \mathcal{F}\left(a_{\Delta} \times a\right)\left(\frac{2\pi l}{m_n} \right) \right| = \frac{1}{\left(2\pi m_n\right)^{2N}}~~\sum_{l \in \mathbb{Z}^{2N}} \left| \left( \mathcal{F}a_{\Delta} \diamond \mathcal{F}a\right) \left(\frac{2\pi l}{m_n} \right) \right|=
$$
$$
=\frac{1}{\left(2\pi m_n\right)^{2N}}\sum_{l \in \mathbb{Z}^{2N}} \left|\int \mathcal{F}a\left(\frac{2\pi l}{m_n} - t - \Delta \right)  \mathcal{F}a\left(t \right)e^{-i\frac{2\pi l}{m_n} \cdot J t}  dt\right|\le
$$
$$\le \frac{1}{\left(2\pi m_n\right)^{2N}}\sum_{l \in \mathbb{Z}^{2N}} \int \left| b\left( t + \Delta - \frac{2\pi l}{m_n} \right)  c\left(t \right)e^{-i\frac{2\pi l}{m_n} \cdot J t}  \right|dt=
$$

  \begin{equation*}
=
  \frac{1}{\left(2\pi m_n\right)^{2N}}~~\sum_{l \in \mathbb{Z}^{2N}, ~\left\| \frac{2\pi l}{m_n}\right\|\le \frac{\left\| \Delta\right\|}{2 }}~ \int \left| b\left( t + \Delta - \frac{2\pi l}{m_n} \right)  c\left(t \right)e^{-i\frac{2\pi l}{m_n} \cdot J t}  \right|dt +
  \end{equation*}
  \begin{equation*}
+
  \frac{1}{\left(2\pi m_n\right)^{2N}}~~\sum_{l \in \mathbb{Z}^{2N}, ~\left\| \frac{2\pi l}{m_n}\right\|> \frac{\left\| \Delta\right\|}{2 }}~ \int \left| b\left( t + \Delta - \frac{2\pi l}{m_n} \right)  c\left(t \right)e^{-i\frac{2\pi l}{m_n} \cdot J t}  \right|dt.
  \end{equation*}
where $b\left( u\right) = \mathcal{F}a\left(-u \right)$, $c\left( u\right) = \mathcal{F}a\left(u \right)$.  From (*) it follows that
    \begin{equation*}
     \frac{1}{\left(2\pi m_n\right)^{2N}}~~\sum_{l \in \mathbb{Z}^{2N}, ~\left\| \frac{2\pi l}{m_n}\right\|\le \frac{\left\| \Delta\right\|}{2 }}~ \int \left| b\left( t + \Delta - \frac{2\pi l}{m_n} \right)  c\left(t \right)e^{-i\frac{2\pi l}{m_n} \cdot J t}  \right|dt \le
    \end{equation*}
    \begin{equation*}
   \le \frac{1}{\left(2\pi m_n\right)^{2N}}~~\sum_{l \in \mathbb{Z}^{2N}, ~\left\| \frac{2\pi l}{m_n}\right\|\le \frac{\left\| \Delta\right\|}{2 }}~ \int_{\mathbb{R}^{2N}}\frac{C^{b}_{m}}{\left(1 + \left\|t + \Delta - \frac{2\pi l}{m_n}\right\|  \right)^{m}}~\frac{C^{c}_{2m}}{\left(1 + \left\|t\right\|  \right)^{2m}} dt  =
    \end{equation*}
   \begin{equation*}
   = \frac{1}{\left(2\pi m_n\right)^{2N}}~~\sum_{l \in \mathbb{Z}^{2N}, ~\left\| \frac{2\pi l}{m_n}\right\|\le \frac{\left\| \Delta\right\|}{2 }}~ \int_{\mathbb{R}^{2N}}\frac{C^{b}_{m}}{\left(1 + \left\|t + \Delta - \frac{2\pi l}{m_n}\right\|  \right)^{m}\left(1 + \left\|t\right\|  \right)^{m} }~\frac{C^{c}_{2m}}{\left(1 + \left\|t\right\|  \right)^{m}} dt  \le
    \end{equation*}
       \begin{equation*}
       \le \frac{N^\Delta_{m_n}}{\left(2\pi m_n\right)^{2N}}~~\sup_{l \in \mathbb{Z}^{2N}, ~\left\| \frac{2\pi l}{m_n}\right\|\le \frac{\left\| \Delta\right\|}{2 },~s\in \mathbb{R}^{2N}}~ \frac{C^{b}_{m}C^{c}_{2m}}{\left(1 + \left\|s + \Delta - \frac{2\pi l}{m_n}\right\|  \right)^{m} \left(1 + \left\|s\right\|  \right)^{m}}~\times
   \end{equation*}
        \begin{equation*}
    \times  \int_{\mathbb{R}^{2N}}\frac{1}{\left(1 + \left\|t\right\|  \right)^{m}} dt.
        \end{equation*}
        where $N^\Delta_{m_n} = \left|\left\{l \in \mathbb{Z}^{2N}~| ~\left\| \frac{2\pi l}{m_n}\right\|\le \frac{\left\| \Delta\right\|}{2 }\right\}\right|$. The number $N^\Delta_{m_n}$ can be estimated as a number of points with integer coordinates inside $2N$-dimensional cube
        $$
        N^\Delta_{m_n} < \left\|\frac{m_n \Delta}{2\pi}\right\|^{2N}.
        $$  
        
        If $m \ge 2N + 1$ then integral $ \int_{\mathbb{R}^{2N}}\frac{1}{\left(1 + \left\|t\right\|  \right)^{m}} dt$ is convergent, whence
        
        $$
         \frac{1}{\left(2\pi m_n\right)^{2N}}~\sum_{l \in \mathbb{Z}^{2N}, ~\left\| \frac{2\pi l}{m_n}\right\|\le \frac{\left\| \Delta\right\|}{2 }}~~\left|\int b\left( t + \Delta - \frac{2\pi l}{m_n} \right)  c\left(t \right)e^{-i\frac{2\pi l}{m_n} \cdot J t}  dt\right| \le
        $$
        $$
        \le C_1'  \sup_{l \in \mathbb{Z}^{2N}, ~\left\| \frac{2\pi l}{m_n}\right\|\le \frac{\left\| \Delta\right\|}{2 },~s\in \mathbb{R}^{2N}}~  \frac{ \left\|\Delta\right\|^{2N} }{\left(1 + \left\|s + \Delta - \frac{2\pi l}{m_n}\right\|  \right)^{m} \left(1 + \left\|s\right\|  \right)^{m}}
        $$
        where 
        $$
        C_1' = \frac{1}{\left(2\pi \right) ^{4N}} C^{b}_mC^c_{2m}\int_{\mathbb{R}^{2N}}\frac{1}{\left(1 + \left\|t\right\|  \right)^{m}} dt.
        $$
        If $x,y \in \mathbb{R}^{2N}$  then from the triangle inequality it follows that $\left\|x + y\right\|>  \left\|y\right\| - \left\|x\right\|$, whence 
        $$
        \left(1 + \left\|x\right\| \right)^m \left(1 + \left\|x+ y\right\| \right)^m \ge \left(1 + \left\|x\right\| \right)^m \left(1 +  \max\left(0, \left\|y\right\| - \left\|x\right\| \right)\right)^m
        $$
        If $ \left\|x\right\| \le \frac{\left\|y\right\|}{2}$ then $\left\|y\right\| - \left\|x\right\| \ge \frac{\left\|y\right\|}{2}$ and 
        \begin{equation}\tag{**}
       \left(1 + \left\|x\right\| \right)^m \left(1 + \left\|x+ y\right\| \right)^m > \left( \frac{\left\|y\right\|}{2}\right)^m. 
        \end{equation}
    
        Clearly if $ \left\|x\right\| > \frac{\left\|y\right\|}{2}$ then condition (**) also holds, whence (**) is always true.
 It follows from the (**) that
        $$
        \mathrm{inf}_{l \in \mathbb{Z}^{2N}, ~\left\| \frac{2\pi l}{m_n}\right\|\le \frac{\left\| \Delta\right\|}{2 },~s\in \mathbb{R}^{2N}} \left(1 + \left\|s + \Delta - \frac{2\pi l}{m_n}\right\|  \right)^{m} \left(1 + \left\|s\right\|  \right)^{m} >\left\|\frac{\Delta}{4}\right\|^m.
          $$
   If $m > 2N+1$   then from above equations it follows that
   $$
     \frac{1}{\left(2\pi m_n\right)^{2N}}~\sum_{l \in \mathbb{Z}^{2N}, ~\left\| \frac{2\pi l}{m_n}\right\|\le \frac{\left\| \Delta\right\|}{2 }}~~\left|\int b\left( t + \Delta - \frac{2\pi l}{m_n} \right)  c\left(t \right)e^{-i\frac{2\pi l}{m_n} \cdot J t}  dt\right| \le \frac{C_1}{\left\| \Delta\right\|^{m}} 
    $$  
    where $C_1=C'_1/4^{m}$. 
    
     Clearly 
          \begin{equation*}
    \frac{1}{\left(2\pi m_n\right)^{2N}}~\sum_{l \in \mathbb{Z}^{2N}, ~\left\| \frac{2\pi l}{m_n}\right\|> \frac{\left\| \Delta\right\|}{2 }}~~\left|\int b\left( t + \Delta - \frac{2\pi l}{m_n} \right)  c\left(t \right)e^{-i\frac{2\pi l}{m_n} \cdot J t}  dt\right|=
   \end{equation*} 
          \begin{equation*}
          \frac{1}{\left(2\pi m_n\right)^{2N}}~\sum_{l \in \mathbb{Z}^{2N}, ~\left\| \frac{2\pi l}{m_n}\right\|> \frac{\left\| \Delta\right\|}{2 }}~~\left|\int \left( \left( b\left( t + \Delta - \frac{2\pi l}{m_n} \right)\right)^* \right)^*   c\left(t \right)e^{-i\frac{2\pi l}{m_n} \cdot J t}  dt\right|=
          \end{equation*}  
   $$
   \frac{1}{\left(2\pi m_n\right)^{2N}}~~\sum_{l \in \mathbb{Z}^{2N}, ~\left\| \frac{2\pi l}{m_n}\right\|> \frac{\left\| \Delta\right\|}{2 }}~ \left|\left(  \left(  b\left( \bullet+ \Delta-\frac{2\pi l}{m_n}\right)\right)^* ,~ c\left(\bullet\right)e^{-i\frac{2\pi l}{m_n}\cdot J\bullet} \right)  \right|
   $$
   where $\left( \cdot, \cdot \right) $ (resp. $^*$) means the given by \eqref{nt_l2_scalar_prod_eqn} scalar product (resp. complex adjunction).
   From \eqref{scalar_fourier_inv_eqn} it follows that 
   $$
      \frac{1}{\left(2\pi m_n\right)^{2N}}~~\sum_{l \in \mathbb{Z}^{2N}, ~\left\| \frac{2\pi l}{m_n}\right\|> \frac{\left\| \Delta\right\|}{2 }}~ \left|\left(  \left(  b\left( \bullet+ \Delta-\frac{2\pi l}{m_n}\right)\right)^* ,~ c\left(\bullet\right)e^{-i\frac{2\pi l}{m_n}\cdot J\bullet} \right)  \right|=
      $$
  $$
  \frac{1}{\left(2\pi m_n\right)^{2N}}~~\sum_{l \in \mathbb{Z}^{2N}, ~\left\| \frac{2\pi l}{m_n}\right\|> \frac{\left\| \Delta\right\|}{2 }}~ \left|\left(  \mathcal{F}\left(  b\left( \bullet+ \Delta-\frac{2\pi l}{m_n}\right)\right)^* ,~ \mathcal{F}\left( c\left(\bullet\right)e^{-i\frac{2\pi l}{m_n}\cdot J\bullet}\right)  \right)  \right|=
  $$
    
   \begin{equation*}
  = \frac{1}{\left(2\pi m_n\right)^{2N}}~~\sum_{l \in \mathbb{Z}^{2N}, ~\left\| \frac{2\pi l}{m_n}\right\|> \frac{\left\| \Delta\right\|}{2 }}~ \left|\int_{\mathbb{R}^{2N}} 
  \left(\mathcal{F}\left( b\right)^*\left( \bullet + \Delta -\frac{2\pi l}{m_n}\right)\right)^*\left(u\right)\mathcal{F}\left(c\left(\bullet\right)e^{-i\frac{2\pi l}{m_n}\cdot J\bullet}\right)\left(u\right) du \right|\le
   \end{equation*}
    
  \begin{equation*}
  \le \frac{1}{\left(2\pi m_n\right)^{2N}}~~\sum_{l \in \mathbb{Z}^{2N}, ~\left\| \frac{2\pi l}{m_n}\right\|> \frac{\left\| \Delta\right\|}{2 }}~ \int_{\mathbb{R}^{2N}}\left| e^{-i\left(\Delta - \frac{2\pi l}{m_n}\right) \cdot u}\left(\left(\mathcal{F}\left( b\right)^*\right)\right)^*\left( u\right) \mathcal{F}\left(c\right)\left(u+J\frac{2\pi l}{m_n}\right)\right| du \le
   \end{equation*} 
  \begin{equation*}
 \le \frac{1}{\left(2\pi m_n\right)^{2N}}~~\sum_{l \in \mathbb{Z}^{2N}, ~\left\| \frac{2\pi l}{m_n}\right\|> \frac{\left\| \Delta\right\|}{2 }}~ \int_{\mathbb{R}^{2N}}\frac{C^{\left(\mathcal{F}\left( b\right)^*\right)^*}_{3m}}{\left(1 + \left\|u\right\|  \right)^{3m}}\frac{C^{\mathcal{F}\left(c\right)}_{2m}}{\left(1 + \left\|u-J\frac{2\pi l}{m_n}\right\|  \right)^{2m}} du  \le
 \end{equation*}
 \begin{equation*}
 \le \frac{1}{\left(2\pi m_n\right)^{2N}}~~\sup_{l \in \mathbb{Z}^{2N}, ~\left\| \frac{2\pi l}{m_n}\right\|> \frac{\left\| \Delta\right\|}{2 },~ s \in \mathbb{R}^{2N}}~ \frac{C^{\left(\mathcal{F}\left( b^*\right)\right)^*}_{3m}}{\left(1 + \left\|s\right\|  \right)^{m}}\frac{C^{\mathcal{F}\left(c\right)}_{2m}}{\left(1 + \left\|s-J\frac{2\pi l}{m_n}\right\|  \right)^{m}}
 \times
 \end{equation*}
 
 \begin{equation*}
 \times \sum_{l \in \mathbb{Z}^{2N}, ~\left\| \frac{2\pi l}{m_n}\right\|> \frac{\left\| \Delta\right\|}{2 }}     
 \int_{\mathbb{R}^{2N}}\frac{1}{\left(1 + \left\|u-J\frac{2\pi l}{m_n}\right\|  \right)^{m}\left(1 + \left\|u\right\|  \right)^{m}}\frac{1}{\left(1 + \left\|u\right\|  \right)^{m}} du.
 \end{equation*}
 Since we consider the asymptotic dependence $\left\|\Delta\right\|\to \infty$ only  large values of $\left\|\Delta\right\|$ are interested, so we can suppose that  $\left\|\Delta\right\| > 2$.
If $\left\|\Delta\right\| > 2$ then from $\left\|\frac{2\pi l}{m_n}\right\|> \frac{\left\| \Delta\right\|}{2 }$ it follows that $\left\|\frac{2\pi l}{m_n}\right\| > 1$, and  from (**) it follows that
 $$
 \left(1 + \left\|u\right\|  \right)^{m}\left(1 + \left\|u-J\frac{2\pi l}{m_n}\right\|  \right)^{m} > \left(\frac{\left\|J\frac{2\pi l}{m_n}\right\|}{2} \right)^m, 
 $$
 $$
 \inf_{l \in \mathbb{Z}^{2N}, ~\left\| \frac{2\pi l}{m_n}\right\|> \frac{\left\| \Delta\right\|}{2 },~ s \in \mathbb{R}^{2N}}~ \left(1 + \left\|s\right\|  \right)^{m}\left(1 + \left\|s-J\frac{2\pi l}{m_n}\right\|  \right)^{m} > \left\|\frac{\Delta}{4}\right\|^m
 $$
 whence
 $$
 \frac{1}{\left(2\pi m_n\right)^{2N}}~~\sum_{l \in \mathbb{Z}^{2N}, ~\left\| \frac{2\pi l}{m_n}\right\|> \frac{\left\| \Delta\right\|}{2 }}~ \left|\left(  \left(  b\left( \bullet+ \Delta-\frac{2\pi l}{m_n}\right)\right)^* ,~ c\left(\bullet\right)e^{i\frac{2\pi l}{m_n}\cdot J\bullet} \right)  \right|\le
 $$
 $$
 \le \frac{1}{\left(2\pi m_n\right)^{2N}} \frac{C_2'}{\left\| \Delta\right\|^m} \sum_{l \in \mathbb{Z}^{2N}, ~\left\| \frac{2\pi l}{m_n}\right\|> 1}\int_{\R^{2N}}\frac{1}{\left\|J\frac{2\pi l}{m_n}\right\|^m}\frac{1}{\left(1+\left\|u\right\| \right)^m }=
 $$
 $$
 \frac{C_2'}{\left\| \Delta\right\|^m}  \frac{1}{\left(2\pi m_n\right)^{2N}} \left(  \sum_{l \in \mathbb{Z}^{2N}, ~\left\| \frac{2\pi l}{m_n}\right\|> 1}~ \frac{1}{\left\|J\frac{2\pi l}{m_n}\right\|^m}  \right) \left(  \int_{\R^{2N}}\frac{1}{\left(1+\left\|u\right\| \right)^m }du\right) 
 $$
 where $C'_2=C^{\left(\mathcal{F}\left( b^*\right)\right)^*}_{3m}C^{\mathcal{F}\left(c\right)}_{2m}$.
 If $m \ge 2N + 1$ then  $\int_{\mathbb{R}^{2N}}\frac{1}{\left(1 + \left\|u\right\|  \right)^{m}} du$ is convergent. Any sum can be represented as an integral of step function, in particular  following condition holds
 $$
 \frac{1}{\left(2\pi m_n\right)^{2N}} \sum_{l \in \mathbb{Z}^{2N},\left\| \frac{2\pi l}{m_n}\right\|> 1}~ \frac{1}{\left\|J\frac{2\pi l}{m_n}\right\|^m} = \int_{\R^{2N} - \left\{x \in \R^{2N}~|~\left\|x\right\|> 1 \right\}} f_{m_n}\left( x\right) dx
 $$
 where $f_{m_n}$ is a multidimensional step function such that
 $$
 f_{m_n}\left(J\frac{2\pi l}{m_n} \right) = \frac{1}{\left(2\pi \right)^{2N} }\frac{1}{\left\|J\frac{2\pi l}{m_n}\right\|^m}
 $$
 From $$f_{m_n}\left(x\right)  < \frac{2}{\left(2\pi \right)^{2N} \left\|x\right\|^m}$$ it follows that
$$
\int_{\R^{2N} - \left\{x \in \R^{2N}~|~\left\|x\right\|> 1 \right\}} f_{m_n}\left( x \right)dx < \int_{\R^{2N} - \left\{x \in \R^{2N}~|~\left\|x\right\|> 1 \right\}} \frac{2}{\left(2\pi \right)^{2N} \left\|x\right\|^m}dx.
$$
If $m > 2N$ then the integral $$\int_{\R^{2N} - \left\{x \in \R^{2N}~|~\left\|x\right\|> 1 \right\}} \frac{2}{\left(2\pi \right)^{2N} \left\|x\right\|^m}dx$$ is convergent, whence
 $$
 \frac{1}{\left(2\pi m_n\right)^{2N}} \sum_{l \in \mathbb{Z}^{2N},\left\| \frac{2\pi l}{m_n}\right\|> 1}~ \frac{1}{\left\|J\frac{2\pi l}{m_n}\right\|^m} < C''_2= \int_{\R^{2N} - \left\{x \in \R^{2N}~|~\left\|x\right\|> 1 \right\}} \frac{2}{\left(2\pi \right)^{2N} \left\|x\right\|^m}dx.
 $$
 From above equations it follows that
 $$
 \frac{1}{\left(2\pi m_n\right)^{2N}}~~\sum_{l \in \mathbb{Z}^{2N}, ~\left\| \frac{2\pi l}{m_n}\right\|> \frac{\left\| \Delta\right\|}{2 }}~ \left|\int_{\mathbb{R}^{2N}} b\left( w-\frac{2\pi l}{m_n}\right)c\left(w+\Delta\right)e^{i\frac{2\pi l}{m_n}\cdot Jw} dt \right| \le \frac{C_2}{\left\| \Delta\right\|^m} 
 $$  
 where $m \ge 2N+1$ and $C_2 = C'_2C''_2\int_{\R^{2N}}\frac{1}{\left(1+\left\|u\right\| \right)^m }du$. In result for any $m > 0$ there is $C_m \in \mathbb{R}$ such that 
 \begin{equation}
 \begin{split}
 \left\|\mathrm{pr}_n\left(a_{\Delta} \times a\right) \right\|  \le 
 \frac{1}{\left(2\pi m_n\right)^{2N}}\times\\\times\sum_{l \in \mathbb{Z}^{2N}}  \left|\int_{\mathbb{R}^{2N}} b\left(t + \Delta - \frac{2\pi l}{m_n}\right)c\left(w+\Delta\right)e^{i\frac{2\pi l}{m_n}\cdot Jw} dt \right| \le \frac{C_m}{\left\| \Delta\right\|^m}
 \end{split}.
 \end{equation}
 \end{proof}
 In the following lemma we use $ab$ instead $a \times b$.
 \begin{lem}\label{nt_w_spec_lem} 
 Any positive element $\overline{a}$ in $\Coo_{0}\left( \mathbb{R}^{2N}_\th\right) \approx \sS\left(\mathbb{R}^{2N} \right) $  is  special.	
 \end{lem}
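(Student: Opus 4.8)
The plan is to verify, for a positive $\overline a\in\Coo_0(\mathbb{R}^{2N}_\th)=\sS(\mathbb{R}^{2N})$, the two conditions of Definition \ref{special_el_defn} in the situation fixed in \ref{nt_mp_prel_lim}: $\overline A=C_b(\mathbb{R}^{2N}_\th)$, $A_n=C(\mathbb{T}^{2N}_{\th/m_n^2})$, $\widehat A=\varinjlim A_n$ embedded in $\overline A$ by \eqref{nt_psi_eqn}, the covering transformation group $\widehat G=\mathbb{Z}^{2N}$ acting by the translations \eqref{nt_group_action}, and $J_n=\ker(\mathbb{Z}^{2N}\to\mathbb{Z}^{2N}_{m_n})=m_n\mathbb{Z}^{2N}$. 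I will use two standard facts about the Moyal calculus: each $g\in\widehat G$ acts on $\overline A$ by a $\star_\th$-automorphism that is isometric for the operator norm, and the map $\mathrm{pr}_n$ of \eqref{nt_pr_k_eqn} satisfies $\mathrm{pr}_n\circ g=\mathrm{pr}_n$ for $g\in J_n$ (since $J_n$ is a group). Set $a_n=\sum_{g\in J_n}g\overline a=\mathrm{pr}_n(\overline a)$ and $b_n=\sum_{g\in J_n}g\overline a^2=\mathrm{pr}_n(\overline a^2)$, where $\overline a^2=\overline a\star_\th\overline a\in\sS(\mathbb{R}^{2N})$ is again a positive element.

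For condition (a) I would argue as in the commutative Lemma \ref{comm_c_is_spec_lem}: since $\overline a\ge 0$ and $\widehat G$ preserves positivity, the partial sums of $\sum_{g\in J_n}g\overline a$ over finite subsets of $J_n$ form an increasing net of positive elements; viewed as a function this series is the periodization of the Schwartz function $\overline a$ over $2\pi m_n\mathbb{Z}^{2N}$, whose Fourier coefficients are given by \eqref{nt_proj_fourier} and are rapidly decreasing, so it represents an element of $\Coo_0(\mathbb{T}^{2N}_{\th/m_n^2})\subset A_n$. Hence the net is norm-bounded, and Lemma \ref{increasing_convergent_w} shows it is strongly convergent, necessarily to $a_n\in A_n$; the same applied to the positive Schwartz function $\overline a^2$ gives $b_n=\mathrm{pr}_n(\overline a^2)\in A_n$. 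This establishes (a).

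Condition (b) is the substantive part, and here Lemma \ref{nt_long_delta_lem} is the decisive input. Expanding the product, splitting off the diagonal (which reproduces $b_n$, because $(g\overline a)\star_\th(g\overline a)=g(\overline a^2)$), and reindexing the off-diagonal terms by $\delta=h-g\in J_n\setminus\{0\}$, I get
\begin{equation*}
a_n^2-b_n=\sum_{g\in J_n}\ \sum_{0\ne\delta\in J_n}(g\overline a)\star_\th\big((g+\delta)\overline a\big)=\sum_{0\ne\delta\in J_n}\mathrm{pr}_n\big(\overline a_{2\pi\delta}\star_\th\overline a\big),
\end{equation*}
where $\overline a_\Delta(x)=\overline a(x+\Delta)$ as in \eqref{nt_a_delta_eqn}; the last step uses $(g\overline a)\star_\th((g+\delta)\overline a)=g(\overline a\star_\th(\delta\overline a))$, $\sum_{g\in J_n}g(\cdot)=\mathrm{pr}_n(\cdot)$, and $\mathrm{pr}_n(\overline a\star_\th(\delta\overline a))=\mathrm{pr}_n((\delta^{-1}\overline a)\star_\th\overline a)$ together with $\delta^{-1}\overline a=\overline a_{2\pi\delta}$, and is legitimized a posteriori by the absolute convergence proved next. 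Fix $m>2N$ and put $S_m=\sum_{0\ne k\in\mathbb{Z}^{2N}}\|k\|^{-m}<\infty$. Writing $\delta=m_n k$ with $0\ne k\in\mathbb{Z}^{2N}$, so $\|2\pi\delta\|=2\pi m_n\|k\|$, Lemma \ref{nt_long_delta_lem} — whose constant $C_m$ depends only on $\overline a$ and $m$, uniformly in $n$ — yields
\begin{equation*}
\big\|a_n^2-b_n\big\|\ \le\ \sum_{0\ne\delta\in J_n}\frac{C_m}{\|2\pi\delta\|^m}\ =\ \frac{C_m\,S_m}{(2\pi m_n)^m}.
\end{equation*}
Since $m_n=\prod_{j=1}^n p_j\to\infty$, this tends to $0$; given $\eps>0$, choosing $N$ with $C_mS_m/(2\pi m_N)^m<\eps$ gives $\|a_n^2-b_n\|<\eps$ for all $n\ge N$. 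Hence $\overline a$ is special.

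The step I expect to require the most care is the passage from the formal expansion of $a_n\star_\th a_n$ to the single lattice series $\sum_{0\ne\delta\in J_n}\mathrm{pr}_n(\overline a_{2\pi\delta}\star_\th\overline a)$: one must justify interchanging $\mathrm{pr}_n$ with the summation over $\delta$ and the rearrangement into diagonal plus off-diagonal, which is exactly why the \emph{uniform-in-$n$} decay bound of Lemma \ref{nt_long_delta_lem} is needed rather than just pointwise statements. One also has to be sure that translations act on $C_b(\mathbb{R}^{2N}_\th)$ by isometric $\star_\th$-automorphisms, so that $g(\overline a\star_\th\overline b)=(g\overline a)\star_\th(g\overline b)$, operator norms are translation-invariant, and positivity is preserved. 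Granting these routine points, the displayed estimate does all the work; positivity and Schwartz-ness of $\overline a^2$, membership of the periodizations in $A_n$, and $m_n\to\infty$ are immediate.
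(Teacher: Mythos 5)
Your proposal is correct and follows essentially the same route as the paper: the same diagonal/off-diagonal decomposition of $a_n^2-b_n$ into $\sum_{0\neq\delta\in J_n}\mathrm{pr}_n\left(\overline a_{2\pi\delta}\star_\th\overline a\right)$, the same appeal to Lemma \ref{nt_long_delta_lem} termwise, and the same summation over the lattice giving a bound of order $m_n^{-m}$. The only (welcome) additions are your explicit verification of condition (a) via the periodization argument and Lemma \ref{increasing_convergent_w}, which the paper's proof silently omits, and your working directly with $\star_\th$ rather than rescaling to $\th=2$ as the paper does.
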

 \begin{proof}
 	If
 	$$
 	a_n = \sum_{g \in J_n  }g \overline{a}
 	$$
 	$$
 	b_n = \sum_{g \in J_n  }  g \overline{a}^2
 	$$
  where $J_n = \ker\left( \mathbb{Z}^{2N} \to \mathbb{Z}^{2N}_{m_n}\right)= m_n\Z^n$,	then
 \begin{equation}\tag{*}
 	a^2_n - b_n = \sum_{g \in J_n  }g\overline{a} ~ \left(  \sum_{g' \in J_n  \backslash \{g\} }g'' \overline{a}\right) .
 \end{equation}
 	From \eqref{nt_group_action} and dilation transformation $E_{\th/2}$ it follows that $g \overline{a}= \overline{a}_{g 2\pi~\sqrt{2/\th}}$ where  $\overline{a}_{g 2\pi~\sqrt{2/\th}}\left(x \right)  = \overline{a}\left( x +g 2\pi~\sqrt{2/\th}\right)$ for any $x \in \R^{2N}$. Hence the equation (*) is equivalent to
	$$
	a^2_n - b_n = \sum_{g \in J_n  }\overline{a}_{g 2\pi~\sqrt{2/\th}}  \sum_{g' \in J_n  \backslash \{g\} } \overline{a}_{g' 2\pi~\sqrt{2/\th}} = \sum_{g \in \mathbb{Z}^{2N}  }\overline{a}_{m_ng 2\pi~\sqrt{2/\th}}  \sum_{g' \in \mathbb{Z}^{2N}  \backslash \{g\} } \overline{a}_{m_ng' 2\pi~\sqrt{2/\th}}=
	$$
	$$
	=\mathrm{pr}_n\left( \overline{a}  \sum_{g \in \mathbb{Z}^{2N}  \backslash \{0\} } \overline{a}_{m_ng 2\pi~\sqrt{2/\th}} \right) 
	$$
	where $\mathrm{pr}_n$ is given by \eqref{nt_pr_k_eqn}. From the Lemma \ref{nt_long_delta_lem} it follows that there is $C \in \mathbb{R}$ such that
	$$
	\left\|\mathrm{pr}_n\left( aa_\Delta\right) \right\| < \frac{C}{\left\| \Delta\right\|^m}.
	$$
	From the triangle inequality it follows that
	$$
	\left\|a^2_n - b_n\right\|=\left\|\mathrm{pr}_n\left( \overline{a}  \sum_{g \in \mathbb{Z}^{2N}  \backslash \{0\} } \overline{a}_{m_ng 2\pi~\sqrt{2/\th}} \right)\right\| \le
	$$
	$$
	\le \sum_{g' \in \mathbb{Z}^{2N}  \backslash \{0\} } \left\|\mathrm{pr}_n\left( \overline{a}  \sum_{g \in \mathbb{Z}^{2N}  \backslash \{0\} } \overline{a}_{m_ng 2\pi~\sqrt{2/\th}} \right)\right\|   \le \sum_{g \in \mathbb{Z}^{2N}  \backslash \{0\} } \frac{C}{\left( m_ng 2\pi~\sqrt{2/\th}\right)^m }.
	$$
	If $m > 2N+1$ then the series 
	$$
	C' = \sum_{g \in \mathbb{Z}^{2N}  \backslash \{0\} } \frac{C}{\left( g 2\pi~\sqrt{2/\th}\right)^m }
	$$
	is convergent and 
	$$
	\sum_{g \in \mathbb{Z}^{2N}  \backslash \{0\} } \frac{C}{\left( m_ng 2\pi~\sqrt{2/\th}\right)^m } = \frac{C'}{\left(m_n\right)^m }
	$$
	If $\varepsilon > 0$ is a small number and $N\in \mathbb{N}$ is such $N \ge \sqrt[m]{\frac{C'}{\varepsilon}}$ then from above equations it follows that for any $n \ge N$ following condition holds
		$$
		\left\|a^2_n - b_n\right\| < \eps.
		$$
 \end{proof}
\paragraph{} In the following lemma we use $\star_\th$ notion for the star product.
\begin{lem}\label{nt_l2_spec_lem}
 Use the notation of the proof of the Lemma \ref{nt_w_spec_lem}. The special element $\widetilde{a}\in \varinjlim C\left(\mathbb{T}^{2N}_{\th/m_n^2}\right) $ lies in  $L^2 \left(\R^{2N}_\th \right)$ in sense of the Definition  \ref{nt_l_2_est_defn}. Moreover if $b_n = \sum_{g \in J_n}g\left( \widetilde{a}\star_\th\widetilde{a}\right) \in C\left(\mathbb{T}^{2N}_\th\right)$ then
$$
\left\| \widetilde{a}\right\|_2 = \sqrt{\frac{1}{\left(2\pi m_n\right)^{2N} }\tau\left(b_n \right)} < \infty
$$
where $\tau$ is the tracial state on $C\left(\mathbb{T}^{2N}_{\th/m_n^{2}}\right)$ given by \eqref{nt_state_eqn}, \eqref{nt_state_integ_eqn} and $\left\|\cdot\right\|_2$ is given by \eqref{nt_l2_norm_eqn}.
\end{lem}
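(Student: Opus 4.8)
The plan is to compute $\tau\left(b_n\right)$ explicitly by Fourier analysis and recognize the answer as a multiple of $\left\|\widetilde a\right\|_2^2$; finiteness and the membership $\widetilde a\in L^2\left(\R^{2N}_\th\right)$ then come out as corollaries. By the proof of Lemma \ref{nt_w_spec_lem} the special element $\widetilde a$ is a positive element of $\Coo_{0}\left(\R^{2N}_\th\right)\approx\sS\left(\R^{2N}\right)$; being positive it is self-adjoint, hence (the Moyal involution being complex conjugation) $\widetilde a$ is a real-valued Schwartz function and $\widetilde a\star_\th\widetilde a\in\sS\left(\R^{2N}\right)$ is again positive. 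In particular $\widetilde a\in L^2\left(\R^{2N}\right)$ already because Schwartz functions are square-integrable; the substance of the lemma is the quantitative formula, which I would extract as follows.

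First I would observe that $b_n=\sum_{g\in J_n}g\left(\widetilde a\star_\th\widetilde a\right)$ is exactly $\mathrm{pr}_n\left(\widetilde a\star_\th\widetilde a\right)$ in the notation of \eqref{nt_pr_k_eqn}, because $J_n=\ker\left(\Z^{2N}\to\Z^{2N}_{m_n}\right)$ is precisely the index set there; hence $b_n\in\Coo_{0}\left(\mathbb{T}^{2N}_{\th/m_n^2}\right)$, it is positive as a sum of images of the positive element $\widetilde a\star_\th\widetilde a$ under $*$-automorphisms, and as an element of the $C^*$-algebra $C\left(\mathbb{T}^{2N}_{\th/m_n^2}\right)$ we get $0\le\tau\left(b_n\right)\le\left\|b_n\right\|<\infty$. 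Next, since by \eqref{nt_state_eqn} the tracial state reads off the zero Fourier coefficient, I would apply the Poisson-type identity \eqref{nt_proj_fourier} at $p=0$:
\begin{equation*}
\tau\left(b_n\right)=\widehat{b_n}\left(0\right)=\frac{1}{\left(2\pi m_n\right)^{2N}}\,\mathcal{F}\left(\widetilde a\star_\th\widetilde a\right)\left(0\right)=\frac{1}{\left(2\pi m_n\right)^{2N}}\int_{\R^{2N}}\left(\widetilde a\star_\th\widetilde a\right)\left(x\right)\,dx ,
\end{equation*}
the last step being the definition \eqref{eq:Fsympl} of $\mathcal{F}$ at the origin. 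By the tracial property \eqref{nt_tracial_prop} applied with $f=g=\widetilde a$, together with the fact that $\widetilde a$ is real-valued,
\begin{equation*}
\int_{\R^{2N}}\left(\widetilde a\star_\th\widetilde a\right)\left(x\right)\,dx=\int_{\R^{2N}}\widetilde a\left(x\right)^2\,dx=\int_{\R^{2N}}\left|\widetilde a\left(x\right)\right|^2\,dx=\left\|\widetilde a\right\|_2^2 ,
\end{equation*}
with $\left\|\cdot\right\|_2$ the norm \eqref{nt_l2_norm_eqn}. Combining the two displays yields $\left\|\widetilde a\right\|_2^2=\left(2\pi m_n\right)^{2N}\tau\left(b_n\right)$, which is the identity asserted in the statement once one tracks on which side of the fraction the normalizing factor $\left(2\pi m_n\right)^{2N}$ lands (as dictated by the conventions \eqref{nt_fourier}, \eqref{eq:Fsympl}); note that the right-hand side is automatically independent of $n$, a built-in consistency check, since $\tau\left(b_n\right)$ carries a compensating factor of order $m_n^{-2N}$.

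Finally, since $\tau\left(b_n\right)<\infty$ the formula shows $\left\|\widetilde a\right\|_2<\infty$, so $\widetilde a\in L^2\left(\R^{2N}\right)$; by Lemma \ref{nt_l_2_est_lem} the product $\star_\th$ is a well-defined operation on $L^2\left(\R^{2N}\right)$, hence $\widetilde a\in L^2\left(\R^{2N}_\th\right)$ in the sense of Definition \ref{nt_l_2_est_defn}. The only genuine ingredient is the Fourier/Poisson identity \eqref{nt_proj_fourier} and keeping its normalization straight; everything else is the trace property \eqref{nt_tracial_prop} and positivity, so I expect the bookkeeping of the $2\pi m_n$ factors to be the single place that demands care.
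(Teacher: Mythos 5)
Your argument is correct and is essentially the paper's own proof: both express $\tau\left(b_n\right)$ as the normalized integral over $\mathcal X_n\approx\R^{2N}/2\pi m_n\Z^{2N}$ of the periodization of $\widetilde a\star_\th\widetilde a$, unfold that integral to $\int_{\R^{2N}}\left(\widetilde a\star_\th\widetilde a\right)dx$, and invoke the tracial property \eqref{nt_tracial_prop} to identify the result with $\left\|\widetilde a\right\|_2^2$, whence finiteness and membership in $L^2\left(\R^{2N}_\th\right)$. The only point worth flagging is the constant: your version $\left\|\widetilde a\right\|_2^2=\left(2\pi m_n\right)^{2N}\tau\left(b_n\right)$ is the one forced by your (correct) observation that the right-hand side must be independent of $n$, whereas the printed statement puts $\left(2\pi m_n\right)^{2N}$ in the denominator and the paper's own displayed chain ends with $\left\|\widetilde a\right\|_2^2=\tau\left(b_n\right)$ after silently dropping the volume factor when unfolding the periodization over the fundamental domain --- so the discrepancy you noticed is a defect of the paper's bookkeeping, not of your proof.
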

\begin{proof}
From \eqref{nt_state_integ_eqn} it follows that
\begin{equation}\tag{*}
\tau\left(b_n \right) = \frac{1}{\left(2\pi m_n \right)^{2N}}  \int_{\mathcal X_n} b_n dx_n   < \infty
\end{equation}
where $dx_n$ is associated with Lebesgue measure on $\R^{2N}$ and a homeomorphism $\mathcal X_n \approx \R^{2N}/2\pi m_n\Z^{2N}$. 
From \eqref{nt_tracial_prop} and (*) it follows that
\begin{equation*}
\begin{split}
\left( \left\| \widetilde{a}\right\|_2\right)^2 =\left\| \widetilde{a}\widetilde{a}^*\right\|_2=\left\| \widetilde{a}\widetilde{a}\right\|_2 = \int_{\R^{2N}}\widetilde{a}\left(x \right) \widetilde{a}\left( x\right) ~ dx=\\= \int_{\R^{2N}}\left( \widetilde{a}\star_\th\widetilde{a} \right)\left(x \right)  dx  
=\frac{1}{\left(2\pi m_n \right)^{2N}}  \int_{\mathcal X_n} b_n~ dx_n =\tau\left(b_n \right) < \infty 
\end{split}
\end{equation*}

whence $\left\| \widetilde{a}\right\|_2 <  \infty$, therefore $\widetilde{a}\in L^2\left(\R^{2N}_\th \right)$. 
\end{proof}
\begin{empt}
	We need the notion of essential support  functions described in \cite{elliot:an}. For a given Borel measure $\mu$ let $f$ be a Borel-measurable function. Consider the collection $\widetilde{\Om}$ of open subsets $\om$ with the property that $f\left(x \right)=0$ for $\mu$-almost every $x \in \om$ and let the open set $\om^*$ be the union of all $\om$'s in  $\widetilde{\Om}$. Now we define \textit{essential support} of $f$ or $\mathfrak{ess~supp}\left( f\right)$ to be the complement of $\om^*$. There is the following formula
	$$
	\mathfrak{ess~supp}\left( f\right) = \mathcal X \backslash \bigcup \left\{\om \in \mathcal X~|~ \om \text{ is open and } f=0~\mu-\text{almost everywhere in }\om\right\}.
	$$
	If $x \notin \mathfrak{ess~supp}\left( f\right)$ then there is an open neighborhood $\mathcal U$ of $x$ and representative $\varphi$ of $f$ such that $\varphi\left({\mathcal U}\right) = \{0\}$. 
\end{empt}
 \begin{thm}
 	The sequence $\mathfrak{S}_{C\left(\mathbb{T}^{2N}_\th\right)}= \left\{ 		C\left(\mathbb{T}^{2N}_\th\right) \to	C\left(\mathbb{T}^{2N}_{\th/m_1^{2}}\right) \to C\left(\mathbb{T}^{2N}_{\th/m_2^{2}}\right) \to... \right\}$ is regular and  $G\left( \varinjlim C\left(\mathbb{T}^{2N}_{\th/m_n^{2}}\right)  ~|~C\left(\mathbb{T}^{2N}_{\th}\right)\right)  \approx \Z^{2N}$.
 
 \end{thm}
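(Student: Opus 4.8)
The statement to prove is that the sequence $\mathfrak{S}_{C\left(\mathbb{T}^{2N}_\th\right)}$ of noncommutative finite covering projections of tori with shrinking deformation parameters is \emph{regular} in the sense of Definition \ref{regular_defn}, and that its covering transformation group is $\Z^{2N}$. I would organize the proof around the concrete model already assembled in \ref{nt_mp_prel_lim}: the faithful inclusion $\psi\colon \varinjlim C\left(\mathbb{T}^{2N}_{\th/m_n^2}\right)\hookrightarrow C_b\left(\R^{2N}_\th\right)$ of \eqref{nt_psi_eqn}, together with the translation action \eqref{nt_group_action} of $\Z^{2N}$ on $\R^{2N}$ and the surjections $h_n\colon\Z^{2N}\twoheadrightarrow\Z^{2N}_{m_n}$. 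The whole argument reduces to identifying $\widehat{G}$ and then checking conditions (a) and (b) of Definition \ref{regular_defn}.

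\textbf{Step 1: identify the candidate group and the invariant subalgebra.} First I would set $\widehat{A}=\varinjlim C\left(\mathbb{T}^{2N}_{\th/m_n^2}\right)$ and take $\widehat{G}$ to be the image of $\Z^{2N}$ acting on $\widehat{A}$ via \eqref{nt_zn_act}; I must check this action is faithful, i.e. that distinct $g\in\Z^{2N}$ induce distinct automorphisms of $\widehat{A}$, so that $\widehat{G}\approx\Z^{2N}$. This follows because $g$ acts on the generator $u_{n,j}$ of $C\left(\mathbb{T}^{2N}_{\th/m_n^2}\right)$ by the scalar $e^{2\pi i g_j/m_n}$, and as $n\to\infty$ the numbers $m_n\to\infty$ separate any nonzero $g$. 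Next I would verify $\widehat{A}^{\widehat{G}}=C\left(\mathbb{T}^{2N}_\th\right)$: on the dense subalgebra spanned by the $U_k$, an element is $\widehat{G}$-invariant iff every $k$ appearing is in $m_n\Z^{2N}$ for all relevant $n$ (using the $\Z^{2N}$-grading of \ref{nt_grad_sec}), hence lies in the image of $C\left(\mathbb{T}^{2N}_\th\right)$; passing to norm closure via \eqref{nt_norm_estimation} gives the full statement. Each composite covering $C\left(\mathbb{T}^{2N}_\th\right)\to C\left(\mathbb{T}^{2N}_{\th/m_n^2}\right)$ already has covering group $\approx\Z^{2N}_{m_n}$ by \ref{ntt_fin_cov}.

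\textbf{Step 2: conditions (a) and (b) of regularity.} Condition (a), $\widehat{G}A_n=A_n$, is immediate from the explicit formula \eqref{nt_action_desc}: $\psi(h_n(g)a)=g\psi(a)$ shows that the $\Z^{2N}$-action on $C_b\left(\R^{2N}_\th\right)$ restricts to the $\Z^{2N}_{m_n}$-action on $C\left(\mathbb{T}^{2N}_{\th/m_n^2}\right)$ through $h_n$, so $C\left(\mathbb{T}^{2N}_{\th/m_n^2}\right)$ is $\widehat{G}$-stable. Condition (b), that $h_n\colon\widehat{G}\to G\left(A_n~|~A\right)\approx\Z^{2N}_{m_n}$ is surjective, is exactly the surjectivity of the reduction map $\Z^{2N}\twoheadrightarrow\Z^{2N}_{m_n}$, which is obvious. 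Together with Step 1 this establishes both that $\mathfrak{S}$ is regular and that $G\left(\widehat{A}~|~A\right)=\widehat{G}\approx\Z^{2N}$, which is the claim.

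\textbf{The main obstacle.} The routine parts are the two regularity conditions; the genuinely delicate point is Step 1, specifically showing that the abstractly-defined group $\widehat{G}$ appearing in Definition \ref{regular_defn} — the automorphisms of $\varinjlim A_n$ with fixed-point algebra $A$ — coincides with $\Z^{2N}$ and not something larger, and that the inductive-limit action is well-defined and faithful. One must be careful that $\widehat{G}$ in Definition \ref{regular_defn} is \emph{postulated} to satisfy $A=\widehat{A}^{\widehat{G}}$, so the real content is exhibiting \emph{some} such group and verifying it is $\Z^{2N}$; the separation argument via $m_n\to\infty$ handles faithfulness, and the grading computation of \ref{nt_grad_sec} handles the fixed-point identification. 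The remaining bookkeeping — that the action extends compatibly along the inductive system, using commutativity of the triangle diagram in \ref{nt_mp_prel_lim} — is straightforward from $\Z^{2N}$-equivariance of $\psi$ in \eqref{nt_zn_act}.
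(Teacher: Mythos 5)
Your Steps 1 and 2 are fine as far as they go: the translation action of $\Z^{2N}$ is faithful, has fixed-point algebra $C\left(\mathbb{T}^{2N}_\th\right)$ (via the grading of \ref{nt_grad_sec}), preserves each $C\left(\mathbb{T}^{2N}_{\th/m_n^2}\right)$, and surjects onto each $\Z^{2N}_{m_n}$. But this only exhibits $\Z^{2N}$ as \emph{one} group satisfying the hypotheses of Definition \ref{regular_defn}; it does not prove the second assertion of the theorem, namely that the covering transformation group $G\left(\varinjlim C\left(\mathbb{T}^{2N}_{\th/m_n^2}\right)~|~C\left(\mathbb{T}^{2N}_\th\right)\right)$ is no larger than $\Z^{2N}$. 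You flag this as ``the genuinely delicate point'' and then dispose of it by reading the definition as asking only for \emph{some} such group, but that reading empties the isomorphism claim of content, and the concern is not hypothetical: the grading argument shows that any automorphism $\widehat{g}$ fixing $C\left(\mathbb{T}^{2N}_\th\right)$ pointwise restricts on each $C\left(\mathbb{T}^{2N}_{\th/m_n^2}\right)$ to some $g_n\in\Z^{2N}_{m_n}$, so a priori $\widehat{G}$ is only a subgroup of the profinite group $\varprojlim \Z^{2N}_{m_n}$; and that whole profinite group acts on $\varinjlim C\left(\mathbb{T}^{2N}_{\th/m_n^2}\right)$ with the same fixed-point algebra and also satisfies conditions (a) and (b). So candidate groups strictly larger than $\Z^{2N}$ really exist, and the upper bound $\widehat{G}\subseteq\Z^{2N}$ is the substantive half of the theorem.

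The paper closes exactly this gap in part (b) of its proof: it takes an arbitrary $\widehat{g}\in\widehat{G}$, obtains the coherent sequence $\left(g_n\right)$ from the grading, and then applies $\widehat{g}$ to a positive Schwartz function $\widetilde{a}$ supported in a small open set $\widetilde{\mathcal U}\subset\R^{2N}$. By Lemma \ref{ext_group_action_lem} the element $\widehat{g}\widetilde{a}$ is again special, by Lemma \ref{nt_l2_spec_lem} it is realized as a nonzero $L^2$-function on $\R^{2N}$, and the identity $\sum_{\widehat{g}'\in J_n}\widehat{g}'\left(\widehat{g}\widetilde{a}\right)=g_na_n$ forces its essential support into $\left(\widetilde{\pi}^n\right)^{-1}\left(g_n\mathcal U_n\right)$ for every $n$. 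Nonemptiness of the essential support then forces $\left(g_n\right)$ to be the reduction of a single $g\in\Z^{2N}$, i.e. $\widehat{g}=g$. Some argument of this kind --- exploiting that elements of $\widehat{G}$ must preserve special elements, which live on $\R^{2N}$ rather than on the solenoid $\varprojlim\Z^{2N}_{m_n}$ --- is indispensable, and your proposal contains no substitute for it.
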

 \begin{proof}
 	We should proof (a) and (b) of the Definition \ref{regular_defn}. 
 	\newline
 	(a)
 	If $u_1,..., u_n \in C\left(\mathbb{T}^{2N}_{\th}\right)$ (resp. $v_1,..., v_{2N} \in C\left(\mathbb{T}^{2N}_{\th/m_n^{2}}\right)$) are  generators then from \eqref{nt_cov_eqn} it follows that
 	$$
 	v_j^{m_n}= u_j;~ j = 1,...,2N.
 	$$
 	If $g \in G\left( \varinjlim C\left(\mathbb{T}^{2N}_{\th/m_n^{2}}\right)  ~|~C\left(\mathbb{T}^{2N}_{\th}\right)\right)$ then 
 	\begin{equation}\tag{*}
 	ga= a; ~ \forall a\in C\left(\mathbb{T}^{2N}_{\th/m_n^{2}}\right) .
 	\end{equation}
 	Since $g$ is a homomorphism the following condition holds
 	\begin{equation}\tag{**}
 	\left(g v_j \right)^{m_n} = g v_j^{m_n}=gu_j=u_j\in C\left(\mathbb{T}^{2N}_{\th}\right).
 	\end{equation}
 	From \ref{nt_grad_sec} it follows that there is $\Z^{2N}$-grading of $C\left(\mathbb{T}^{2N}_\th\right)$ and induced $\left(\frac{\Z}{m_n} \right)^{2N}$-grading of  $C\left(\mathbb{T}^{2N}_{\th/{m_n^2}}\right)$. Moreover if $$\Q'=\left\{\frac{a}{b}\in \Q~|~a \in \Z, ~b \in \left\{m_n\right\}_{n \in \N}\right\}$$ then there is $\Q'^{2N}$-grading of $\widehat{A} =  C\left(\mathbb{T}^{2N}_{\th/m_n^{2}}\right)$, i.e. $\widehat{A} = \bigoplus_{q \in \Q'^{2N}} \widehat{A}_q$ where $\bigoplus$ means the $C^*$-norm completion of the direct sum. This grading depends on generators $u_1,...,u_n \in U\left( C\left(\mathbb{T}^{2N}_\th\right)\right) $.
 	Since $\widehat{G}=G\left( \varinjlim C\left(\mathbb{T}^{2N}_{\th/m_n^{2}}\right)~|~C\left(\mathbb{T}^{2N}_{\th}\right)\right)$ trivially acts on $C\left(\mathbb{T}^{2N}_\th\right)$ the grading is $\widehat{G}$-invariant, i.e. $\widehat{G} \widehat{A}_q = \widehat{A}_q$ for any $q \in \Q'^{2N}$. If 
 	$$q=\left(0,..., \underbrace{\frac{1}{m_n}}_{j^{\text{th}}-\text{place}},...,0 \right)$$ then $gv_j, v_j \in  \widehat{A}_q$. However $\widehat{A}_q$ is a one-dimensional $\C$-space, so there is $c \in \C$ such that $g v_j = cv_j$. It follows that $g$ transforms a set of unitary generators of $C\left(\mathbb{T}^{2N}_{\th/m_n^{2}}\right)$ to the set of generators, 
 	or equivalently $$G\left( \varinjlim C\left(\mathbb{T}^{2N}_{\th/m_n^{2}}\right)  ~|~C\left(\mathbb{T}^{2N}_{\th}\right)\right)~C\left(\mathbb{T}^{2N}_{\th/m_n^{2}}\right) = C\left(\mathbb{T}^{2N}_{\th/m_n^{2}}\right).$$
 	\newline
 	(b)
 	Let us $\mathfrak{S}_{\T^{2N}}	= \mathfrak{S} = \left\{\T^{2N} = \mathcal{X}_0 \xleftarrow{}... \xleftarrow{} \mathcal{X}_n \xleftarrow{} ...\right\}$ be a topological finite covering sequence such that
 	\begin{itemize}
 		\item $\mathcal{X}_n \approx \T^{2N}$ for any $n \in \mathbb{N}$ (it is known \cite{spanier:at} that $\pi_1\left( \T^{2N}, x_0\right)\approx \Z^{2N}$ );
 		\item If $f_n : \mathcal{X}_n \to \mathcal{X}_0$ then corresponding homomorphism of fundamental groups $\pi_1\left(f_n \right): \pi_1\left(\mathcal{X}_n, x_0\right) \to \pi_1\left( \mathcal{X}_0, f_n\left( x_0\right) \right)$ is given by an integer valued diagonal matrix 
 		$$
 		\begin{pmatrix}
 		m_n& 0 &\ldots & 0\\
 		0& m_n &\ldots & 0\\
 		\vdots& \vdots &\ddots & \vdots\\
 		0& 0 &\ldots & m_n
 		\end{pmatrix}.
 		$$
 	\end{itemize}
 	
 	Clearly there are covering projections $\widetilde{\pi}^n:\R^{2N}\to \mathcal X_n$   such that the following diagram
 	\newline
 	\begin{tikzpicture}
 	\matrix (m) [matrix of math nodes,row sep=3em,column sep=4em,minimum width=2em]
 	{
 		& \R^{2N}	&     \\
 		\mathcal X_m	&  & \mathcal X_n  \\};
 	\path[-stealth]
 	(m-1-2) edge node [left] {$\widetilde{\pi}^m~$} (m-2-1)
 	(m-1-2) edge node [right] {$~\widetilde{\pi}^n~$} (m-2-3)
 	(m-2-1) edge node [left] {}  (m-2-3);
 	
 	\end{tikzpicture}
 	\newline
 	is commutative. Let $\widetilde{x}_0 \in \R^{2N}$, and let $\widetilde{\mathcal U} \subset \R^{2N}$ is an open connected subset such that $\widetilde{x}_0 \in \widetilde{\mathcal U}$ and the restriction $\widetilde{\pi}^n|_{\widetilde{\mathcal U}}: \widetilde{\mathcal U} \to \widetilde{\pi}^n\left( \widetilde{\mathcal U}\right) $ is a homeomorphism.  Denote by ${\mathcal U}_n = \widetilde{\pi}^n\left(\widetilde{\mathcal U} \right)$,   $\overline{\mathcal U}_n = \widetilde{\pi}^n\left(\overline{\widetilde{\mathcal U}}\right) $. Let $\widetilde{a} \in \sS\left(\R^{2N} \right)\approx \Coo_0\left( \R^{2N}_\th\right) $ be a positive  element of $\Coo_0\left( \R^{2N}_\th\right)$ such that $\widetilde{a}\left( \widetilde{x}_0\right) = 1$ and  $\mathfrak{supp}\left( \widetilde{a}\right) \subset \widetilde{\mathcal U}$. If $\widehat{g} \in  	G\left( \varinjlim C\left(\mathbb{T}^{2N}_{\th/m_n^{2}}\right)  ~|~C\left(\mathbb{T}^{2N}_{\th}\right)\right)$ then from  of the Lemma  \ref{ext_group_action_lem} it follows that $\widehat{g}\widetilde{a}\in \left( \varinjlim C\left(\mathbb{T}^{2N}_{\th/m_n^{2}}\right)\right)''$ is a special element. From (a) it follows that there is a the sequence $$\left\{g_n \in G\left( \varinjlim C\left(\mathbb{T}^{2N}_{\th/m_n^{2}}\right)  ~|~C\left(\mathbb{T}^{2N}_{\th}\right)\right)\right\}_{n \in \N}$$ such that
 	\begin{equation}\tag{****}
 	\widehat{g}a = g_na;~ \forall a \in C\left(\mathbb{T}^{2N}_{\th/m_n^{2}}\right),~ \forall n \in \N
 	\end{equation}
 	
 	For any $n \in \N$ there is the natural epimorphism
 	$$
 	G\left(\R^{2N}~|~\T^{2N}\right) \to G\left(\mathcal X_n~|~\T^{2N}\right)
 	$$
 	where $G\left(\R^{2N}~|~\mathcal \T^{2N}\right) = \Z^{2N}$ and $G\left(\mathcal X_n~|~\T^{2N}\right) = \Z^{2N}_{m_n}$ and there is the natural isomorphism
 	$$
 	G\left(\mathcal X_n~|~\T^{2N}\right) \approx G\left(  C\left(\mathbb{T}^{2N}_{\th/m_n^{2}}\right)  ~|~C\left(\mathbb{T}^{2N}_{\th}\right)\right).
 	$$
 	There is the action of $\Z^{2N}$ (resp. $\Z^{2N}_{m_n}$) on $\sS\left( \R^{2N}\right)$ (resp. $\Coo\left(\mathcal X_n\right)$) arising from the action of $\Z^{2N}$ on $\R^{2N}$. If $J_n = \ker\left(\Z^{2N} \to \Z^{2N}_{m_n}\right)$ then
 	$$
 	a_n = \sum_{g \in J_n}g \widetilde{a} = \mathfrak{Desc}_{\widetilde{\pi}^n}\left(\widetilde{a} \right) \in \Coo\left(\mathcal X_n \right)\approx \Coo\left(\mathbb{T}^{2N}_{\th/m_n^{2}}\right)  
 	$$ 
 	If $\widetilde{a}_{\widehat{g}}= \widehat{g}\widetilde{a}$ then from the proof of the Lemma \ref{ext_group_action_lem} it follows that $\widetilde{a}_{\widehat{g}}$ is special and from the Lemma \ref{nt_l2_spec_lem} it follows that $\widetilde{a}_{\widehat{g}}$ corresponds to a Borel-measured function on $\R^{2N}$. From (****) it follows that
 \begin{equation*}
 	\sum_{\widehat{g}'\in J_n}\widehat{g}' \widetilde{a}_{\widehat{g}} = g_na_n;
 \end{equation*}
 From the above equation and proprieties of essential support it follows that
 $$
 \widetilde{\pi}^n\left(\mathfrak{ess~supp}\left(\widetilde{a}_{\widehat{g}} \right)  \right) = \mathfrak{ess~supp}\left(g_na_n \right)  =\mathfrak{supp}\left(g_na_n \right) \subset g_n{\mathcal U}_n.
 $$
 Since $\mathfrak{ess~supp}\left(\widetilde{a}_{\widehat{g}}\right) $ is not empty there is nonempty special subset $\widetilde{\mathcal U}_{\widehat{g}}\subset \R^{2N}$ such that for any $n \in \N$ there is a homeomorphism
 	$$
 	\widetilde{\pi}^n|_{{\widetilde{\mathcal U}}_{\widehat{g}}}: {\widetilde{\mathcal U}}_{\widehat{g}} \to g_n {\mathcal U}_n.
 	$$
  There is the unique $g \in G\left( \R^{2N}~|~\T^{2N}\right)=\mathbb{Z}^{2N}$ such that $\widetilde{\mathcal U}_{\widehat{g}} = g \widetilde{\mathcal U}$. From  \eqref{nt_group_action}, \eqref{nt_action_desc} it follows that $g$ uniquely defines *-automorphism of $\varinjlim C\left(\T^{2N}_{\th/m_n} \right)$ such that
 	$$
 	g|_{C\left(\T^{2N}_{\th/m_n} \right)}=g_n = \widehat{g}|_{C\left(\T^{2N}_{\th/m_n} \right)}; ~\forall n \in \N
 	$$
 	From the above equation it follows that $\widehat{g} = g$. Hence $G\left(\varinjlim C\left(\T^{2N}_{\th/m_n} \right)~|~ \T^{2N}_{\th}\right) \approx \Z^{2N}$. The homomorphism $G\left(\varinjlim C\left(\T^{2N}_{\th/m_n} \right)~|~ \T^{2N}_{\th}\right) \to G\left( C\left(\T^{2N}_{\th/m_n} \right)~|~ \T^{2N}_{\th}\right)$ is surjective because $\Z^{2N}\to \Z^{2N}_{m_n}$ is surjective.
 
 \end{proof}
 
 \begin{thm}
If  $\mathfrak{S}_{C\left(\mathbb{T}^{2N}_\th\right)}= \left\{ 		C\left(\mathbb{T}^{2N}_\th\right) \to	C\left(\mathbb{T}^{2N}_{\th/m_1^{2}}\right) \to C\left(\mathbb{T}^{2N}_{\th/m_2^{2}}\right) \to... \right\}$ then 
  $$\varprojlim \mathfrak{S}_{C\left(\mathbb{T}^{2N}_\th\right)} = C_0\left(\R^{2N}_\th\right).$$
  \end{thm}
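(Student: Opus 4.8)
The plan is to prove the two inclusions $C_0(\R^{2N}_\th)\subseteq\widetilde A$ and $\widetilde A\subseteq C_0(\R^{2N}_\th)$, where $\widetilde A=\varprojlim\mathfrak{S}_{C(\mathbb{T}^{2N}_\th)}$ is well defined because the preceding theorem shows the sequence is regular (with $G(\widetilde A~|~C(\mathbb{T}^{2N}_\th))\approx\Z^{2N}$). Both algebras are viewed as $C^*$-subalgebras of $C_b(\R^{2N}_\th)$ through the faithful representation on $L^2(\R^{2N})$ of Definition \ref{r_2_N_repr} and the equivariant inclusion \eqref{nt_psi_eqn}. The whole argument is the noncommutative mirror of Theorem \ref{comm_main_thm}: the role of Lemma \ref{comm_c_is_spec_lem} is played by Lemma \ref{nt_w_spec_lem}, and the role of Lemma \ref{comm_c0_lem} is played by Lemma \ref{nt_l2_spec_lem} combined with the elementary embedding $L^2(\R^{2N}_\th)\subseteq C_0(\R^{2N}_\th)$.

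For $C_0(\R^{2N}_\th)\subseteq\widetilde A$: by Lemma \ref{nt_w_spec_lem} every positive element of $\Coo_0(\R^{2N}_\th)\approx\sS(\R^{2N})$ is special, hence lies in $\widetilde A$; since $\widetilde A$ contains the $\C$-linear span of the special elements and is closed under $\star_\th$, it contains every finite $\C$-combination of $\star_\th$-products of positive Schwartz functions. For arbitrary $c,d\in\sS(\R^{2N})$ the polarization identity $c\star_\th d=\tfrac{1}{4}\sum_{k=0}^{3}i^k\,(d+i^k c^*)^*\star_\th(d+i^k c^*)$ exhibits $c\star_\th d$ as such a combination, each term $(d+i^k c^*)^*\star_\th(d+i^k c^*)$ being a positive element of $\Coo_0(\R^{2N}_\th)$; and since $\sS(\R^{2N})$ is idempotent under $\star_\th$, i.e. $\sS(\R^{2N})\star_\th\sS(\R^{2N})=\sS(\R^{2N})$ (see \cite{varilly_bondia:phobos}, or the matrix-basis description of Section \ref{sec:Moyal-basics}), this gives $\Coo_0(\R^{2N}_\th)\subseteq\widetilde A$. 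Passing to operator-norm closures and using that, by Definition \ref{r_2_N_repr}, $C_0(\R^{2N}_\th)$ is the completion of $\Coo_0(\R^{2N}_\th)$, we obtain $C_0(\R^{2N}_\th)\subseteq\widetilde A$.

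For the reverse inclusion $\widetilde A\subseteq C_0(\R^{2N}_\th)$: let $\overline a$ be any special element. By Lemma \ref{special_in_w_lem} it lies in the enveloping $W^*$-algebra of $\widehat A=\varinjlim C(\mathbb{T}^{2N}_{\th/m_n^2})$, hence in $C_b(\R^{2N}_\th)''$ via \eqref{nt_psi_eqn}, and by Lemma \ref{nt_l2_spec_lem} (the argument there applies verbatim to an arbitrary special element, since its $b_n=\sum_{g\in J_n}g(\overline a\star_\th\overline a)$ lies in $C(\mathbb{T}^{2N}_{\th/m_n^2})$ by Definition \ref{special_el_defn} and the tracial computation is unchanged) it in fact lies in $L^2(\R^{2N}_\th)$ in the sense of Definition \ref{nt_l_2_est_defn}. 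Now $L^2(\R^{2N}_\th)\subseteq C_0(\R^{2N}_\th)$: given $\xi\in L^2(\R^{2N})$, pick $\xi_k\in\sS(\R^{2N})$ with $\|\xi-\xi_k\|_2\to0$; Lemma \ref{nt_l_2_est_lem} gives $\|\xi-\xi_k\|_{\mathrm{op}}\le(2\pi\th)^{-N/2}\|\xi-\xi_k\|_2\to0$, and since each $\xi_k\in\Coo_0(\R^{2N}_\th)\subseteq C_0(\R^{2N}_\th)$ and $C_0(\R^{2N}_\th)$ is operator-norm closed, $\xi\in C_0(\R^{2N}_\th)$. Therefore every special element lies in $C_0(\R^{2N}_\th)$; as $C_0(\R^{2N}_\th)$ is a $C^*$-algebra it contains the algebra generated by the special elements and, being norm-closed, its completion $\widetilde A$. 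Combined with the first inclusion this yields $\widetilde A=C_0(\R^{2N}_\th)$.

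The substantive analysis is already absorbed into results established earlier: Lemma \ref{nt_w_spec_lem} rests on the $\|\Delta\|^{-m}$ decay estimate of Lemma \ref{nt_long_delta_lem}, and Lemma \ref{nt_l2_spec_lem} on the tracial identity $\|\overline a\|_2^{2}=(2\pi m_n)^{-2N}\tau(b_n)$; granted these, the assembly above is routine. The only step needing a little extra care is the density argument in the first inclusion — that the positive cone of $\Coo_0(\R^{2N}_\th)$ linearly spans a norm-dense $*$-subalgebra of $C_0(\R^{2N}_\th)$ — for which the idempotency of $\sS(\R^{2N})$ under $\star_\th$ is the required input.
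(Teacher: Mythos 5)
Your proof is correct and follows essentially the same route as the paper: Lemma \ref{nt_w_spec_lem} yields $C_0\left(\R^{2N}_\th\right)\subseteq\varprojlim\mathfrak{S}_{C\left(\mathbb{T}^{2N}_\th\right)}$, while Lemma \ref{nt_l2_spec_lem} combined with the operator-norm estimate of Lemma \ref{nt_l_2_est_lem} yields the reverse inclusion. You in fact assemble the two inclusions more carefully than the paper does (whose ``whence'' clauses attach each lemma to the opposite inclusion), and your polarization/idempotency step, passing from positive special Schwartz functions to all of $\Coo_0\left(\R^{2N}_\th\right)$, fills in a detail the paper leaves implicit.
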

 \begin{proof}
 	 From  the Lemma \ref{nt_w_spec_lem} it follows that any positive $\widetilde{a}\in \Coo_0\left(\R^{2N}_\th \right)$ is  special, whence $\varprojlim \mathfrak{S}_{C\left(\mathbb{T}^{2N}_\th\right)} \subset C_0\left(\R^{2N}_\th \right)$, because $C_0\left(\R^{2N}_\th \right)$ is the operator norm completion of $\Coo_0\left(\R^{2N}_\th \right)$ . From the Lemma \ref{nt_l2_spec_lem} it follows that any special element $\widetilde{a} \in \left(\varinjlim C\left(\mathbb{T}^{2N}_{\th/m_n^{2}}\right) \right)''$ lies in  $L^2 \left(\R^{2N}_\th \right)$. The algebra $\Coo_0\left(\R^{2N}_\th \right)\approx \sS\left( \R^{2N}\right) $ is a dense subset of $L^2 \left(\R^{2N} \right)$ with respect to the $L^2$-norm given by \eqref{nt_l2_norm_eqn}. From the Lemma \ref{nt_l_2_est_lem} it follows $\Coo_0\left(\R^{2N}_\th \right)$ is dense subalgebra of $L^2 \left(\R^{2N}_\th \right)$ with respect to the operator norm.  Since $\varprojlim \mathfrak{S}_{C\left(\mathbb{T}^{2N}_\th\right)}$ is the operator norm completion of generated by special elements algebra, following condition holds
  		$$
  		C_0\left(\R^{2N}_\th \right) \subset \varprojlim \mathfrak{S}_{C\left(\mathbb{T}^{2N}_\th\right)}.
  		$$
  		From $C_0\left(\R^{2N}_\th \right) \subset \varprojlim \mathfrak{S}_{C\left(\mathbb{T}^{2N}_\th\right)}$ and $\varprojlim \mathfrak{S}_{C\left(\mathbb{T}^{2N}_\th\right)} \subset C_0\left(\R^{2N}_\th \right)$ it follows that
  		$$
  	\varprojlim	\mathfrak{S}_{C\left(\mathbb{T}^{2N}_\th\right)} = C_0\left(\R^{2N}_\th \right).
  		$$

 \end{proof}

 

\section{Acknowledgment}
\paragraph{} I would like to acknowledge the seminar "Noncommutative geometry and topology" of the Lomonosov Moscow State University. Vladimir Manuilov explained me some aspects of the theory of $W^*$-algebras. Theodore Popelensky gave a consultation about coverings and fundamental groups. I would like to thank Joseph Varilly for fruitful discussions via academia.edu.

\end{document}